\documentclass[english]{myarticle}

\usepackage{enumitem}
\setlist[enumerate]{leftmargin=.5in}
\setlist[itemize]{leftmargin=.5in}

\title{Characteristic Operators and Spectral Properties of Periodic Evolutionary Systems}

\author{Bram Lentjes \thanks{Department of Mathematics, Hasselt University, Diepenbeek Campus, 3590 Diepenbeek, Belgium \email{(bram.lentjes@uhasselt.be)}}
\and Babette A. J. de Wolff\thanks{Department of Mathematics, University of Hamburg, 20146 Hamburg, Germany \email{(babette.de.wolff@uni-hamburg.de)}}}

\begin{document}
\maketitle
\begin{abstract}
In this paper, we introduce the notion of a characteristic operator for closable linear operators and explore their connected spectral properties via equivalence. Additionally, we develop an explicit scheme for constructing characteristic operators for a broad class of closable linear operators which are commonly encountered in periodic evolution equations. Our findings are illustrated through examples involving classical delay differential equations, delay differential equations with infinite delay and mixed functional differential equations. Notably, we resolve an open problem concerning the discrete spectral structure of the Floquet exponents for this latter class of differential equations. This work can be regarded as a natural and significant extension of the powerful framework developed by Kaashoek and Verduyn Lunel \cite{Kaashoek1992} on characteristic matrices and spectral properties induced by autonomous evolution equations.
\end{abstract}

% REQUIRED
\begin{keywords}
characteristic operators, closable linear operators, spectral theory, equivalences, periodic evolution equations.
\end{keywords}

% REQUIRED
\begin{MSCcodes}
34K20, 47A10, 47A56.
\end{MSCcodes}

\begin{sloppypar}
\section{Introduction} \label{sec:introduction}
Determining the stability of an invariant set is a fundamental task in the field of dynamical systems. Nowadays, various methods are available to tackle this problem, but spectral theory offers a particularly powerful approach. For example, consider the autonomous ordinary differential equation
\begin{equation} \tag{ODE} \label{intro:ODE}
    \dot{x}(t) = f(x(t)),
\end{equation}
where $x(t) \in \mathbb{C}^n$ and $f : \mathbb{C}^n \to \mathbb{C}^n$ denotes a sufficiently smooth vector field. Let $\overline{x} \in \mathbb{C}^n$ be an equilibrium of \eqref{intro:ODE} and recall from the classical Lyapunov stability theorem \cite{Lyapunov1892}, also known as the principle of linearized stability, that the stability of $\overline{x}$ is largely determined by the location of the spectrum $\sigma(A)$ of the matrix $A = Df(\overline{x})$ in $\mathbb{C}$. Now, let $\gamma : \mathbb{R} \to \mathbb{C}^n$ be a $T$-periodic solution of \eqref{intro:ODE} for some (minimal) period $T > 0$, and consider the (limit) cycle $\Gamma = \gamma(\mathbb{R})$ in $\mathbb{C}^n$. At first glance, one would expect that the stability of $\Gamma$ is now somehow determined by the family of spectra $\{\sigma(A(t)) : t \in \mathbb{R}\}$, where $A(t) = Df(\gamma(t))$. However, the simple counterexamples \cite{Coppel1978,Wu1974} illustrate that in general no principle of linearized stability holds in terms of $\sigma(A(t))$. Instead, one can compute the spectrum $\sigma(U(T,0))$ of the monodromy matrix $U(T,0)$, where $U$ denotes the fundamental matrix solution of the linearization of \eqref{intro:ODE} around $\Gamma$. A \emph{Floquet multiplier} $\lambda \in \sigma(U(T,0))$ is of the form $\lambda = e^{\sigma T}$, where $\sigma$ is called a \emph{Floquet exponent}, and it is well-known that the location of the Floquet exponents in $\mathbb{C}$ largely determines the stability of $\Gamma$, analogous to the principle of linearized stability for equilibria \cite{Hale2009}. Although this approach provides a satisfying answer for stability in terms of spectra, one still has to compute the solution operator $U$, which is often not explicitly available. In contrast, the stability of the equilibrium $\overline{x}$ could be directly determined in terms of the known matrix $A$. To address this issue, a recent observation in \cite{Article2}, obtained by recasting a result of Iooss (and Adelmeyer) \cite{Iooss1988,Iooss1999} in a functional analytic framework, introduces the closed linear operator $\mathcal{A} : \mathcal{D}(\mathcal{A}) \subseteq C_T(\mathbb{R},\mathbb{C}^n) \to C_T(\mathbb{R},\mathbb{C}^n)$ defined by
\begin{equation*}
    \mathcal{D}(\mathcal{A}) = C_T^1(\mathbb{R},\mathbb{C}^n), 
\qquad 
(\mathcal{A}\varphi)(t) = A(t)\varphi(t) - \dot{\varphi}(t).
\end{equation*}
Here, $C_T(\mathbb{R},\mathbb{C}^n)$ denotes the Banach space of $\mathbb{C}^n$-valued $T$-periodic functions on $\mathbb{R}$ equipped with the supremum norm, and the dot indicates differentiation with respect to $t$. With this formulation, one obtains the important characterization
\begin{equation} \label{intro:spectraequal}
    \sigma(\mathcal{A}) = \sigma_p(\mathcal{A})= \{ \sigma \in \mathbb{C} : \sigma \text{ is a Floquet exponent} \},
\end{equation}
where $\sigma_p(\mathcal{A})$ denotes the point spectrum of $\mathcal{A}$. Moreover, spectral properties of $\mathcal{A}$ have been fundamental in the study of codimension one \cite{Kuznetsov2005} and two \cite{Witte2014,Witte2013} bifurcations of limit cycles in ODEs. For example, the (generalized) eigenfunctions of $\mathcal{A}$ associated with the center subspace form a $T$-periodic basis along $\Gamma$, which in turn provides a natural coordinate system on the nearby periodic center manifold, see \cite{Iooss1988,Iooss1999,LentjesCMODE,Kuznetsov2023a} for more information. Explicit computational formulas for the critical normal form coefficients are expressed in these $T$-periodic (generalized) eigenfunctions and determine the nature and direction (degenerate, subcritical or supercritical) of these bifurcations. These formulas are implemented in several numerical bifurcation analysis software packages such as \verb|MatCont| \cite{Dhooge2003,Dhooge2008} and \verb|BifurcationKit| \cite{Veltz2020}, and are widely used by researchers across a broad range of scientific disciplines.

For infinite-dimensional dynamical systems, determining the stability of an invariant set is in general more challenging due to its infinite-dimensional nature. For certain subclasses, it is possible to make a dimension reduction so that the stability of an invariant set can be determined more easily. For example, consider the autonomous delay differential equation
\begin{equation} \tag{DDE} \label{intro:DDE}
    \dot{x}(t) = F(x_t), \quad t \geq 0,
\end{equation}
where $x(t) \in \mathbb{C}^n$ and $x_t \in X \coloneqq C([-h,0],\mathbb{C}^n)$ represents the \emph{history} of the unknown $x$ at time $t$ defined by $x_t(\theta) \coloneqq x(t+\theta)$ for all $\theta \in [-h,0]$. Here, $0 < h < \infty$ denotes the upper bound of (finite) delays and the infinite-dimensional \emph{state space} $X$ becomes a Banach space when equipped with the supremum norm. To study the stability of an equilibrium $\overline{x} \in X$ of \eqref{intro:DDE}, we are interested in the stability of the trivial solution of the linear DDE
\begin{equation} \tag{LDDE}\label{intro:LDDE}
    \dot{y}(t) = Ly_t, \quad t \geq 0,
\end{equation}
where $L = DF(\overline{x}) : X \to \mathbb{C}^n$ is a bounded linear operator. The solutions of \eqref{intro:LDDE} are generated by a strongly continuous semigroup on $X$ whose (infinitesimal) generator $A : \mathcal{D}(A) \subseteq X \to X$ reads
\begin{equation*}
    \mathcal{D}(A) = \{ \varphi \in X : \varphi' \in X, \ \varphi'(0) = L\varphi \}, \quad A \varphi = \varphi',
\end{equation*}
see \cite{Diekmann1995,Hale1993,Engel2000} for more information. From the same references, it follows that $A$ is a closed linear operator whose spectrum $\sigma(A)$ largely determines the stability of $\overline{x}$, ensuring that the principle of linearized stability holds for \eqref{intro:LDDE}. Computing $\sigma(A)$ may seem tedious at first, but Kaashoek and Verduyn Lunel provided in \cite{Kaashoek1992} a powerful framework to reduce the rather difficult spectral problem from the infinite-dimensional state space $C([-h,0],\mathbb{C}^n)$ towards a much simpler problem in its finite-dimensional codomain $\mathbb{C}^n$. In particular, they construct an equivalence after extension between the generator $A$ and the holomorphic matrix-valued function $\Delta : \mathbb{C} \to \mathbb{C}^{n \times n}$ with action
\begin{equation} \label{intro:Characmatrix}
    \Delta(z)q = zq - L[\theta  \mapsto e^{z \theta}q], \quad \forall q \in \mathbb{C}^n,
\end{equation}
of the form
\begin{equation} \label{intro:equivalence}
        \begin{pmatrix}
            \Delta(z) & 0 \\
            0 & I_{X}
        \end{pmatrix}
        =
        F(z)
        \begin{pmatrix}
            I_{\mathbb{C}^n}& 0 \\
            0 & zI - A
        \end{pmatrix}
        E(z), \quad \forall z \in \mathbb{C}.
\end{equation}
Here, $\Delta$ can be naturally obtained by substituting eigensolutions of the form $t \mapsto e^{zt}q$ into \eqref{intro:LDDE}, and $E$ and $F$ are holomorphic operator-valued functions whose values are bounded and bijective mappings between suitable Banach spaces. A direct consequence of \eqref{intro:equivalence} is that $\sigma(A)$ consists solely of isolated eigenvalues of finite type and satisfies
\begin{equation} \label{intro:spectraDDE}
    \sigma(A) = \sigma_p(A) = \{ z \in \mathbb{C} : \det \Delta(z) = 0 \}.
\end{equation}
Moreover, the equivalence \eqref{intro:equivalence} provides naturally an explicit representation of the resolvent operator $z \mapsto (zI - A)^{-1}$ in terms of $z \mapsto \Delta(z)^{-1}$. This leads to a concise proof of the so-called ``folk theorem in autonomous functional differential equations'' by reformulating it as a multiplicity theorem. It also yields an explicit description of the (generalized) eigenfunctions of $A$ in terms of the Jordan chains of $\Delta$, and ultimately, all elementary solutions of \eqref{intro:LDDE} can be expressed in terms of these Jordan chains, see \cite{Kaashoek1992,Hale1993,Diekmann1995} for further details and applications. In this way, the matrix-valued function $\Delta$ characterizes the spectral properties of the linear operator $A$, and is therefore called a \emph{characteristic matrix} for $A$. Moreover, the connection between $A$ and $\Delta$ serves as the foundation of (numerical) bifurcation analysis for \eqref{intro:DDE} near equilibria, see \cite{Bosschaert2020,Bosschaert2024a,Janssens2010,Diekmann1995} for further details as well as its implementation in numerical bifurcation analysis software packages such as \verb|DDE-BifTool| \cite{Sieber2014,Engelborghs2002,Krauskopf2022} and \verb|BifurcationKit|.

To formulate an analogous result to that for periodic orbits in the ODE-setting, let $\gamma$ be a $T$-periodic solution of \eqref{intro:DDE} and consider the (limit) cycle $\Gamma = \{\gamma_t \in X : t \in \mathbb{R} \}$ in $X$. To determine the stability of $\Gamma$, we are interested in the stability of the trivial solution of the periodic linear DDE
\begin{equation} \tag{LDDE$_T$} \label{intro:LDDET}
    \dot{y}(t) = L(t)y_t, \quad t \geq 0,
\end{equation}
where $L(t) = DF(\gamma_t) : X \to \mathbb{C}^n$ is a bounded linear operator. Recall from \cite{Diekmann1995,Hale1993} that its solutions are generated by a strongly continuous forward evolutionary system $U$ on $X$ whose generalized (infinitesimal) generator (at time $t$) $A(t) : \mathcal{D}(A(t)) \subseteq X \to X$ reads
\begin{equation*}
    \mathcal{D}(A(t)) = \{\varphi \in X : \varphi' \in X, \ \varphi'(0) = L(t)\varphi\}, \quad A(t)\varphi = \varphi',
\end{equation*}
see \cite{Article2,Article1,Clement1988} for more information. In this infinite-dimensional setting, the monodromy operator $U(T,0)$ becomes an iteratively compact bounded linear operator on $X$ for which the principle of linearized stability can be proven to hold as well \cite{Diekmann1995}. The natural question that arises is whether a characteristic matrix approach, as illustrated above for \eqref{intro:LDDE}, can also be exploited to analyse the spectral structure of \eqref{intro:LDDET}. 

The first results in this direction are due to Hale and Verduyn Lunel, who introduced in \cite{Hale1993} a characteristic matrix for $U(T,0)$ in the case of discrete delays under the assumption that the maximal delay $h$ is an integer multiple of the period $T$. The proof of this result relies on the previously mentioned framework of characteristic matrices developed in \cite{Kaashoek1992}. Later, Kaashoek and Verduyn Lunel (re-)proved in \cite{Kaashoek2022} the same statement using the general framework of characteristic matrices for classes of compact operators. Recently, these results were extended to systems with symmetry, relevant to equivariant Pyragas control \cite{Pyragas1992}, where a characteristic matrix can be constructed when the delay is related to a spatio-temporal symmetry of the solution \cite{Wolff2022a}. All these results demonstrate that characteristic matrices provide an effective tool for analysing the Floquet spectrum of \eqref{intro:LDDET}. However, they are confined to situations in which a specific relation between the period $T$ and the maximal delay $h$ is imposed. A first step towards removing these restrictions was undertaken by Szalai et al. \cite{Szalai2006}, who constructed a characteristic matrix $\Delta_n(z) \in \mathbb{C}^{n \times n}$ for the (inverse of the) Floquet multipliers of \eqref{intro:LDDET} without imposing additional assumptions on the relation between $h$ and $T$. Later, Sieber and Szalai \cite{Sieber2011} observed that $\Delta_n$ has poles in $\mathbb{C}$, and that these poles may coincide with the (inverse of the) Floquet multipliers, so that not all multipliers are captured by the constructed characteristic matrix. To remedy this, Sieber and Szalai modified $\Delta_n(z)$ to a characteristic matrix $\Delta_{nk}(z) \in \mathbb{C}^{nk \times nk}$ such that $\Delta_{nk}$ has no poles inside a disk of finite radius (dependent on $k$) in $\mathbb{C}$, but may still have poles outside this disk. Hence, the constructed characteristic matrix $\Delta_{nk}$ is guaranteed to capture the (inverse of the) Floquet multipliers within the given disk, but may fail to capture (inverted) multipliers outside this disk.

All in all, to date there is no construction of a characteristic matrix that captures the complete Floquet spectrum of \eqref{intro:LDDET} without imposing additional assumptions on the relation between the maximal delay $h$ and the period $T$. The present paper aims to contribute precisely in this direction. The price we pay for capturing all Floquet multipliers/exponents is that the resulting characteristic object is no longer matrix-valued, as in the previous constructions, but operator-valued. In other words, we construct a \emph{characteristic operator} that encodes the full Floquet spectrum of \eqref{intro:LDDET}. Our approach to constructing this characteristic operator is based upon the very recent observation from \cite{Bosschaert2025,Lentjes2026} that the spectral equality \eqref{intro:spectraequal} also holds for \eqref{intro:LDDET}, where now the non-closed but closable linear operator $\mathcal{A} : \mathcal{D}(\mathcal{A}) \subseteq C_T(\mathbb{R},X) \to C_T(\mathbb{R},X)$ takes the form
\begin{equation} \label{eq:introcurlyA}
    \mathcal{D}(\mathcal{A}) = \{\varphi \in C_T^1(\mathbb{R},X) : \varphi(t) \in \mathcal{D}(A(t)) \mbox{ for all } t \in \mathbb{R} \}, \quad (\mathcal{A}\varphi)(t) = A(t)\varphi(t) - \dot{\varphi}(t).
\end{equation}
Moreover, the authors of \cite{Bosschaert2025,Lentjes2026} established various interesting spectral relations between $\mathcal{A}$ and the holomorphic operator-valued function $\Delta$ with action
\begin{equation} \label{intro:Characoperator}
    (\Delta(z)q)(t) = \dot{q}(t) + zq(t) - L(t)[\theta \mapsto e^{z \theta}q(t+\theta)], \quad \forall q \in C_T^1(\mathbb{R},\mathbb{C}^n),
\end{equation}
which is naturally obtained by substituting Floquet eigensolutions of the form $t \mapsto e^{zt}q(t)$ into \eqref{intro:LDDET} with $z \in \mathbb{C}$. These results were subsequently applied in \cite{Bosschaert2025} to study codimension one bifurcations of limit cycles for \eqref{intro:DDE}, where the characteristic operator $\Delta$ from \eqref{intro:Characoperator} for $\mathcal{A}$ played an analogous role as the characteristic matrix $\Delta$ from \eqref{intro:Characmatrix} for $A$ in the analysis of bifurcations of equilibria \cite{Bosschaert2020}. In view of these results, it is natural to expect an equivalence after extension between $\mathcal{A}$ and $\Delta$ of the form \eqref{intro:equivalence}. In this paper, we confirm this expectation. However, establishing this equivalence, along with the associated spectral results, proved to be significantly more interesting and technical than we initially expected.

In order to contextualize our results, we point out that the existence of a characteristic operator, rather than a characteristic matrix, to capture all Floquet multipliers of \eqref{intro:LDDET} in the general case, is in line with the previously known results. In the construction by Sieber and Szalai \cite{Sieber2011}, the dimension of the characteristic matrix $\Delta_{nk}(z) \in \mathbb{C}^{nk \times nk}$ increases unboundedly when the radius (dependent on $k$) of the disk, on which the (inverted) Floquet multipliers are captured, is increased. Relatedly, the monograph \cite{Kaashoek2022} contains several examples where the maximal delay $h$ and period $T$ of \eqref{intro:LDDET} are rationally related. In these examples, the dimension of the characteristic matrix increases as the ratio $T/h$ becomes more irrational. We also note that similar observations were previously reported by Verduyn Lunel in \cite{Lunel2001}, Just in \cite{Just2000}, and Skubachevskii and Walter in \cite{Skubachevskii2006}. This all suggests that, if we aim to capture the full Floquet spectrum of \eqref{intro:LDDET} without any additional assumptions on the relation between the delay and the period, it is natural to work in the framework of characteristic operators rather than that of characteristic matrices.

Although our reduction of the spectral problem to a characteristic operator does not yield a dimension reduction in the classical sense (of characteristic matrices), it nevertheless leads to a substantial simplification of the spectral problem: whereas the original spectral problem for $\mathcal{A}$ is posed on $C_T(\mathbb{R},X)$ with $X = C([-h,0],\mathbb{C}^n)$, the characteristic operator $\Delta$ acts on the simpler space $C_T(\mathbb{R},\mathbb{C}^n)$. Moreover, working with the characteristic operator $\Delta$ rather than with the operator $\mathcal{A}$ has both computational and analytical advantages. Computationally, it turns out that the critical normal form coefficients of all codimension one bifurcations of limit cycles in \eqref{intro:DDE} can be expressed in terms of the characteristic operator $\Delta$, see \cite{Bosschaert2025} for further details. The key observation is that, analogous to the finite-dimensional ODE-setting described above, the (generalized) eigenfunctions of $\mathcal{A}$ associated with the center subspace form a $T$-periodic basis along the limit cycle $\Gamma$, see \cite{Article2} for further details. Most importantly, these (generalized) eigenfunctions of $\mathcal{A}$ can be expressed in terms of the Jordan chains of $\Delta$. In this framework, the connection between $\mathcal{A}$ and $\Delta$ serves as the foundation of (numerical) bifurcation analysis of \eqref{intro:DDE} near limit cycles, see \cite{Bosschaert2025,Article2,Article1} for further details as well as its implementation into the numerical bifurcation analysis software package \verb|PeriodicNormalizationDDEs| \cite{Bosschaert2024c}, which is compatible with \verb|BifurcationKit|. Analytically, the characteristic operator framework we present in this article is not only limited to \eqref{intro:LDDET} but also applies to a much broader class of periodic evolution equations, such as DDEs with infinite delay and functional differential equations of mixed type. In these contexts, we use the existence of a characteristic operator to derive new spectral results for these classes of differential equations. In particular, we establish, under relatively mild conditions, the existence of a spectral gap around the imaginary axis for periodic functional differential equations of mixed type, which was an open question in the literature before.

\subsection{Challenges and overview} 
\label{subsec:aims}
In the upcoming construction of a characteristic operator $\Delta$ for a linear operator $\mathcal{A}$, we encounter several functional analytical challenges.

The first challenge stems from the fact that the linear operator $\mathcal{A}$ is in general not closed, but only closable. In the analysis of its spectrum, it becomes clear that the conventional definition of the spectrum $\sigma(\mathcal{A})$ of $\mathcal{A}$ cannot be applied (\cref{remark:definitionresolvent}). Therefore, we present in \cref{subsec:spectralprop} a spectral theory, with an emphasis on the point spectrum and the isolated points in the spectrum, for closable linear operators. We start by analyzing the point spectrum algebraically using Jordan chains, then proceed to studying isolated spectral points analytically via contour integrals. However, the resulting spectral projections only identify the invariant subspaces of the closure, thereby failing to fully capture the distinct spectral structure of the original closable linear operator (\cref{remark:isolatedpoints}).

The second challenge arises from the need to define a suitable notion of equivalence for closable linear operators, as the literature is mainly concerned about a bounded equivalence between closed (or bounded) linear operators (\cref{remark:equivalencetypes}). With this extended definition of equivalence, we can still naturally define Jordan chains and prove their important one-to-one correspondences via equivalence, see \cref{subsec:equivalence} for this construction. Furthermore, we demonstrate that specific spectral numbers remain invariant under this equivalence.

The third challenge lies in proving that certain spectral properties are preserved under the equivalence between a closable linear operator $\mathcal{A}$ defined on a Banach space $X$ and a characteristic operator $\Delta$ whose values are defined on a (possibly different) Banach space $Y$. This task will be accomplished in \cref{subsec:char operators} under three additional (spectral) hypotheses: \ref{hyp:SH1}, \ref{hyp:SH2} and \ref{hyp:SH3}. As a consequence, all our results from \cref{sec:characoperators} are a natural extension of the mentioned work by Kaashoek and Verduyn Lunel on characteristic matrices, see \cref{remark:Yfinitedim} for further details.

In \cref{sec:construction}, we also develop a general scheme to construct for a given linear operator $\mathcal{A} : \mathcal{D}(\mathcal{A}) \subseteq \mathcal{F}_T(\mathbb{R},X) \to \mathcal{F}_T(\mathbb{R},X)$ of the form
\begin{equation} \label{intro:curlyAgeneral}
    \mathcal{D}(\mathcal{A}) = \{ \varphi \in \mathcal{F}_T(\mathbb{R},X) : \varphi \in \mathcal{D}(D), \ MD\varphi = K \varphi \}, \quad \mathcal{A}\varphi = D\varphi,
\end{equation}
a characteristic operator $\Delta$ with values $\Delta(z) : \mathcal{D}(\Delta(z)) \subseteq \mathcal{F}_T(\mathbb{R},Y) \to \mathcal{F}_T(\mathbb{R},Y)$, where $z \in \Omega \subseteq \mathbb{C}$. Here, $D$ and $K$ are closable linear operators while $M$ is a bounded linear operator. In applications, $D$ can be regarded as a maximal operator while $K$ and $M$ can be interpreted as generalized boundary-value operators. For $Z \in \{X,Y\}$, the Banach space $\mathcal{F}_T(\mathbb{R},Z)$ stands for an arbitrary function space consisting of $T$-periodic $Z$-valued functions defined on $\mathbb{R}$. Since many periodic evolution equations can be expressed in the form \eqref{intro:curlyAgeneral}, this scheme can be a applied to a wide range of periodic evolutionary systems, demonstrating its robust applicability. However, we already mention that this construction is significantly more technical than the original construction of Kaashoek and Verduyn Lunel, mainly due to the fact that the linear operator $\mathcal{A}$ is not necessarily closed and $\Delta$ is not necessarily matrix-valued, which prevents a strong spectral characterization like \eqref{intro:spectraDDE} in terms of point spectra and determinants. However, the characteristic operator still simplifies a challenging spectral problem in $\mathcal{F}_T(\mathbb{R},X)$ by reducing it towards a simpler problem in $\mathcal{F}_T(\mathbb{R},Y)$. In our upcoming applications (\cref{sec:applications}), the state space $X$ will typically be of the form $\mathcal{F}(I,Y)$ so that the spectral reduction is clear. Analogous to the characteristic matrix approach, we provide an explicit representation of the resolvent operator $z \mapsto (zI-\mathcal{A})^{-1}$ in terms of $z \mapsto \Delta(z)^{-1}$ (\cref{cor:spectralrelations}), a proof of the ``folk theorem in periodic functional differential equations'' by rephrasing it as an abstract multiplicity theorem (\cref{cor:multiplicity}), an explicit representation of the (generalized) eigenfunctions of $\mathcal{A}$ in terms of the Jordan chains of $\Delta$ (\cref{cor:Jordanchains}), and eventually all elementary solutions for a large class of periodic linear functional differential equations can be computed in terms of these Jordan chains (\cref{cor:FloquetsolDDE}). Moreover, we illustrate in \cref{remark:autonomousDDE} that the characteristic matrix from Kaashoek and Verduyn Lunel for linear autonomous systems can be obtained from our construction in two equivalent ways: setting naturally the period $T=0$ or restricting to the subspace of constant functions in $\mathcal{F}_T(\mathbb{R},Z)$, where this latter interpretation may be more insightful for \eqref{intro:LDDET} with $T/h \in \mathbb{Q}$, see \cref{remark:charoperatorTh} for further details. In addition, we show in \cref{subsec:periodicspectral} that the underlying $T$-periodic structure of $\mathcal{A}$ and $\Delta$ give rise to a $\frac{2 \pi i}{T} \mathbb{Z}$-periodic pattern in their spectrum. As a result, the spectral analysis of these linear operators can be further simplified by restricting it to a (subset of the) horizontal strip $\{z \in \mathbb{C} : \Im(z) \in (-\frac{\pi}{T}, \frac{\pi}{T}] \}$ rather than considering the entire complex plane.

In \cref{sec:applications} we apply our results to functional differential equations \eqref{eq:FDE}. As the class of periodic linear FDEs is rather large, we will illustrate the results from \cref{sec:construction} using three representative subclasses that frequently arise in applications:

\textbf{1) Classical DDEs (\cref{subsec:classicalDDEs}):} This subclass, introduced as our motivating example in \eqref{intro:LDDET}, naturally warrants inclusion here. We present detailed proofs for the results in this case, whereas the proofs for the next two subclasses follow similar reasoning and are therefore partly omitted. 

\textbf{2) iDDEs (\cref{subsec:infinitedelay}):} The subclass of DDEs with infinite delay is particularly interesting because the validity of the principle of linearized stability depends on the selected state space $X$. Furthermore, we note that exactly the same challenges arise, and can be addressed, as in the construction of a characteristic matrix for autonomous iDDEs \cite[Section II.1.3]{Kaashoek1992}. 

\textbf{3) Mixed FDEs (\cref{subsec:MFDEs}):} This subclass of FDEs highlights the strength of the characteristic operator framework, particularly since a forward evolutionary system $U$ is generally unavailable. Consequently, we affirm a conjecture by Hupkes and Verduyn Lunel \cite[Hypothesis (HF)]{Hupkes2008} concerning the discrete spectral structure of the Floquet exponents, see \cref{remark:MFDEspectral} for further details.

In \cref{sec:conclusion}, we summarize and discuss additional examples of periodic evolution equations on which our construction could be applied. Moreover, we list some conjectures and open problems that arise naturally from our results and may guide further research in this area.

\begin{remark}
We aim to engage two types of audiences: those interested in operator theory and those focused on applications in dynamics. For the former, we suggest exploring the theory presented in \cref{sec:characoperators}, while for the latter, we recommend starting with the applications in \cref{sec:applications} and later referring to the general scheme outlined in \cref{sec:construction}. \hfill $\lozenge$
\end{remark}

\section{Characteristic operators for closable linear operators} \label{sec:characoperators}

\subsection{Spectral properties of closable linear operators} \label{subsec:spectralprop}
We first introduce notation that we will use throughout the rest of the manuscript. Let $Y$ and $Z$ be normed (but not necessarily complete) spaces. The set of all linear operators from (a linear subspace of) $Y$ to $Z$ is denoted by $L(Y,Z)$ and we write $L(Z) \coloneqq L(Z,Z)$. Likewise, the set of all bounded linear operators from $Y$ to $Z$ is denoted by $\mathcal{L}(Y,Z)$ and we write $\mathcal{L}(Z) \coloneqq \mathcal{L}(Z,Z)$. Note that $\mathcal{L}(Y,Z)$ becomes a normed space when equipped with the operator norm, forms a Banach space when $Z$ is a Banach space, and forms a Banach algebra under composition when $Y=Z$ are Banach spaces.

Let $Y$ and $Z$ be Banach spaces and consider a linear operator $A: \mathcal{D}(A) \subseteq Y \to Z$ with \emph{domain} $\mathcal{D}(A)$, a linear subspace of $Y$. We call $\mathcal{N}(A) \coloneqq \{ \varphi \in \mathcal{D}(A) : A \varphi = 0 \}$ the \emph{kernel} of $A$, $\mathcal{R}(A) \coloneqq \{ A\varphi \in Z : \varphi \in \mathcal{D}(A) \}$ the \emph{range} of $A$ and $\Gamma(A) \coloneqq \{ (A\varphi, \varphi) \in Z \times Y : \varphi \in \mathcal{D}(A) \}$ the \emph{graph} of $A$. Throughout the rest of this paper, we will frequently use the notion of a closed and closable linear operator.

\begin{definition} \label{def:closedandclosable}
We call the linear operator $A$ \emph{closed} if its graph $\Gamma(A)$ is a closed subset of $Z \times Y$. Moreover, we call the linear operator $A$ \emph{closable} if $A$ admits a closed linear extension, meaning that there exists a closed linear operator $B : \mathcal{D}(B) \subseteq Y \to Z$ satisfying $\mathcal{D}(A) \subseteq \mathcal{D}(B)$ and $B \varphi = A \varphi$ for all $\varphi \in \mathcal{D}(A)$. \hfill $\lozenge$
\end{definition}

Let $A: \mathcal{D}(A) \subseteq Y \to Z$ be a closable linear operator, then there exists a smallest closed linear extension $\overline{A}$ of $A$, called the \emph{closure} of $A$, which can be characterized by $\Gamma(\overline{A}) = \overline{\Gamma(A)}$, see \cite[Proposition XIV.1.3]{Gohberg1990} for a detailed proof. As a direct consequence of the preceding definition and characterization, we obtain the following result concerning the connection between the domain, kernel and range of $A$ and $\overline{A}$. It is important to note that the inclusions in \eqref{eq:closurelemma} might be strict and that the topological closure is taken with respect to the norm on the associated Banach space.
\begin{lemma} \label{lemma:inclusions}
The following inclusions hold:
\begin{equation} \label{eq:closurelemma}
    \mathcal{D}(A) \subseteq \mathcal{D}(\overline{A}) \subseteq \overline{\mathcal{D}(A)}, \quad\mathcal{N}(A) \subseteq \overline{\mathcal{N}(A)} \subseteq \mathcal{N}(\overline{A}), \quad \mathcal{R}(A) \subseteq \mathcal{R}(\overline{A}) \subseteq \overline{\mathcal{R}(A)}.
\end{equation}
\end{lemma}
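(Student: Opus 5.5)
The plan is to derive all six inclusions from two facts: $\overline{A}$ extends $A$, and $\Gamma(\overline{A}) = \overline{\Gamma(A)}$, where the closure is taken in $Z \times Y$ with the product topology. Each inclusion will follow by unwinding one of these.

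\textbf{Left-hand inclusions.} These are immediate from the extension property. Since $\mathcal{D}(A) \subseteq \mathcal{D}(\overline{A})$ and $\overline{A}\varphi = A\varphi$ on $\mathcal{D}(A)$, every $\varphi \in \mathcal{N}(A)$ lies in $\mathcal{N}(\overline{A})$. Likewise every $A\varphi$ equals $\overline{A}\varphi$, so $\mathcal{R}(A) \subseteq \mathcal{R}(\overline{A})$. The trivial inclusion $\mathcal{N}(A) \subseteq \overline{\mathcal{N}(A)}$ needs no argument.

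\textbf{Domain and range of $\overline{A}$.} Take $\varphi \in \mathcal{D}(\overline{A})$. Then $(\overline{A}\varphi,\varphi) \in \overline{\Gamma(A)}$, so there is a sequence $\varphi_n \in \mathcal{D}(A)$ with $A\varphi_n \to \overline{A}\varphi$ in $Z$ and $\varphi_n \to \varphi$ in $Y$. The second convergence gives $\varphi \in \overline{\mathcal{D}(A)}$. The first convergence gives $\overline{A}\varphi \in \overline{\mathcal{R}(A)}$, hence $\mathcal{R}(\overline{A}) \subseteq \overline{\mathcal{R}(A)}$. Put differently, the two coordinate projections of $Z \times Y$ are continuous, so they map the closure of the graph into the closures of its projections.

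\textbf{Kernel.} It remains to show $\overline{\mathcal{N}(A)} \subseteq \mathcal{N}(\overline{A})$. Take $\varphi_n \in \mathcal{N}(A)$ with $\varphi_n \to \varphi$ in $Y$. Then $(A\varphi_n,\varphi_n) = (0,\varphi_n) \to (0,\varphi)$, so $(0,\varphi) \in \overline{\Gamma(A)} = \Gamma(\overline{A})$. This means $\varphi \in \mathcal{D}(\overline{A})$ and $\overline{A}\varphi = 0$.

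No step is a genuine obstacle. The only point needing care is this last inclusion: we must use closedness of the graph, rather than merely $\varphi \in \overline{\mathcal{D}(A)}$, to guarantee that $\varphi$ actually lies in $\mathcal{D}(\overline{A})$.
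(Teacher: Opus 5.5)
Your proof is correct and follows essentially the same route as the paper: the left-hand inclusions come from $\overline{A}$ extending $A$, and the right-hand inclusions come from approximating points of $\Gamma(\overline{A}) = \overline{\Gamma(A)}$ by graph points $(A\varphi_n,\varphi_n)$, with $(0,\varphi)$ used for the kernel case. Your remark about the continuity of the coordinate projections is simply a compact restatement of that same sequence argument.
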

\begin{proof}
We first prove the statement on the domains. Here, the first inclusion is clear. To prove the second inclusion, let $\varphi \in \mathcal{D}(\overline{A})$ be given. Then $(\overline{A}\varphi,\varphi) \in \Gamma(\overline{A}) = \overline{\Gamma(A)}$. Thus, there exists a sequence $(\varphi_m)_m$ in $\mathcal{D}(A)$ such that $(A\varphi_m, \varphi_m) \to (\overline{A}\varphi, \varphi)$ as $m \to \infty$. This implies $\varphi_m \to \varphi$ as $m \to \infty$, meaning that $\varphi \in \overline{\mathcal{D}(A)}$.

We next prove the statement on the kernels. Here, the first inclusion is clear. To prove the second inclusion, let $\varphi \in \overline{\mathcal{N}(A)}$ be given. Then there exists a sequence $(\varphi_m)_m$ in $\mathcal{N}(A)$ such that $\varphi_m \to \varphi$ as $m \to \infty$ and thus $A\varphi_m = 0$ for all $m \in \mathbb{N}$. Hence, $(A\varphi_m,\varphi_m) \to (0,\varphi) \in \overline{\Gamma(A)} = \Gamma(\overline{A})$ as $m \to \infty$. Hence, $\varphi \in \mathcal{D}(\overline{A})$ and $\overline{A}\varphi = 0$ so that $\varphi \in \mathcal{N}(\overline{A})$.

We finally prove the statement on the ranges. Here, the first inclusion is clear. To prove the second inclusion, let $\psi \in \mathcal{R}(\overline{A})$ be given. Then there exists a $\varphi \in \mathcal{D}(\overline{A})$ such that $\overline{A}\varphi = \psi$ and thus $(\psi,\varphi) \in \Gamma(\overline{A}) = \overline{\Gamma(A)}$. Hence, there exists a sequence $(\varphi_m)_m$ in $\mathcal{D}(A)$ such that $\varphi_m \to \varphi$ and $A\varphi_m \to \overline{A}\varphi = \psi$ as $m \to \infty$. We conclude that $\psi \in \overline{\mathcal{R}(A)}$.  
\end{proof}
So if we write (with somewhat unusual, but later useful notation) $Y_A \coloneqq \mathcal{D}(\overline{A}) \subseteq Y$, then this normed space becomes a Banach space with respect to the \emph{graph norm} $\| \cdot \|_{\overline{A}} \coloneqq \| \cdot \| + \|\overline{A} \cdot \|$. In particular, $\overline{A}$ becomes a continuous (bounded) linear operator on $Y_A$ and $\mathcal{D}(A)$ is dense in $Y_A$ in the graph norm. Using the earlier introduced notation, we have that $A \in L(Y,Z)$ and $\overline{A} \in \mathcal{L}(Y_A,Z)$.

We now proceed to characterize the resolvent set and spectrum of the closable linear operator $A$. To this end, we assume that the Banach spaces $Y$ and $Z$ are complex and identical, and we denote this common space by $X$. As mentioned in the introduction, the classical notion of the spectrum is for our purposes not suitable for closable linear operators. Instead, we will work with the following extended notion of the spectrum introduced in \cite[Chapter 5]{Taylor1986}.

\begin{definition} \label{def:resolvent}
Let $X$ be a complex Banach space and $A : \mathcal{D}(A) \subseteq X \to X$ a closable linear operator. A complex number $z$ belongs to the \emph{resolvent set} $\rho(A)$ of $A$ if the operator $zI - A$ is injective, has dense range $\mathcal{R}(zI-A)$ in $X$, and the \emph{resolvent} of $A$ at $z$ 
defined by $R(z,A) \coloneqq (zI-A)^{-1}: \mathcal{R}(zI-A) \to \mathcal{D}(A)$ is a bounded linear operator. In this case, we say that the linear operator $zI - A$ is \emph{invertible}.

The \emph{spectrum} $\sigma(A)$ of $A$ is defined to be the complement of $\rho(A)$ in $\mathbb{C}$, and the \emph{point spectrum} $\sigma_p(A) \subseteq \sigma(A)$ of $A$ is the set of those $\sigma \in \mathbb{C}$ such that $\sigma I - A$ is not injective, i.e. $A \varphi = \sigma \varphi$ for some nonzero \emph{eigenvector} $\varphi \in \mathcal{D}(A)$. If $\sigma \in \sigma_p(A)$, we will also call $\sigma$ an \emph{eigenvalue} of $A$. \hfill $\lozenge$
\end{definition}

\begin{remark} \label{remark:definitionresolvent}
We emphasize that the above definition of the resolvent set for closable linear operators extends the classical definition (\cite[Definition IV.1.1]{Engel2000}) for closed linear operators, which requires only that $zI - A$ is a linear bijection. In our setting, the surjectivity condition is relaxed to the requirement that $zI - A$ has dense range, while additionally insisting that $R(z, A)$ is bounded. If $A$ is a closed (and thus closable) linear operator with $z \in \rho(A)$, then it can be shown by an application of the closed graph theorem that $zI - A$ is actually surjective and its inverse is automatically bounded \cite[Theorem 4.2.E]{Taylor1986}. Therefore, for closed linear operators, \cref{def:resolvent} agrees with the classical one.

The reason we cannot work with the usual definition is that most of our operators of interest are in general not closed but only closable (\cref{prop:DAnotclosed}), and the domains of their resolvents are not necessarily the full space $X$ (\cref{prop:rangenotequal}). With the usual standard definition, recall from \cite[Proposition XIV.1.2]{Gohberg1990} that $\rho(A) = \emptyset$ for a non-closed operator $A$, which would be very impractical for our purposes. \hfill $\lozenge$
\end{remark}

For the remainder of this section, we fix a complex Banach space $X$ and a closable linear operator $A : \mathcal{D}(A) \subseteq X \to X$. The first fundamental result on the resolvent and spectrum of $A$ is stated below. Since it follows directly from \cite[Theorems 5.1.A and 5.1.B]{Taylor1986}, the proof is omitted.

\begin{proposition} \label{prop:resolventopen}
The resolvent set $\rho(A)$ is open in $\mathbb{C}$, and hence the spectrum $\sigma(A)$ is closed. In particular, if $z_2 \in \rho(A)$ and $z_1 \in \mathbb{C}$ is such that $|z_2-z_1| < 1/ \|R(z_2,A)\|$, then $z_1 \in \rho(A)$.
\end{proposition}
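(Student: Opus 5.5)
The plan is to reduce everything to the closure $\overline{A}$, for which the classical Neumann-series argument is available. As a first step I will show that for $z \in \rho(A)$ the resolvent $R(z,A)$, which is bounded on the dense subspace $\mathcal{R}(zI-A)$, extends uniquely to a bounded operator $\overline{R}(z) \in \mathcal{L}(X)$ with $\|\overline{R}(z)\| = \|R(z,A)\|$. I will then identify $\overline{R}(z)$ with $(zI-\overline{A})^{-1}$.

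For the identification, take $\psi \in X$ and $\psi_m \in \mathcal{R}(zI-A)$ with $\psi_m \to \psi$, and set $\varphi_m = R(z,A)\psi_m \in \mathcal{D}(A)$. Then $\varphi_m \to \overline{R}(z)\psi$ and $A\varphi_m = z\varphi_m - \psi_m \to z\overline{R}(z)\psi - \psi$. Since $\Gamma(\overline{A}) = \overline{\Gamma(A)}$, it follows that $\overline{R}(z)\psi \in \mathcal{D}(\overline{A})$ and $(zI-\overline{A})\overline{R}(z)\psi = \psi$, so $zI-\overline{A}$ is surjective.

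For injectivity, let $\varphi \in \mathcal{D}(\overline{A})$ and choose $\varphi_m \in \mathcal{D}(A)$ with $\varphi_m \to \varphi$ and $A\varphi_m \to \overline{A}\varphi$. Then $\varphi_m = R(z,A)(zI-A)\varphi_m$. Passing to the limit, using boundedness of $\overline{R}(z)$, gives $\varphi = \overline{R}(z)(zI-\overline{A})\varphi$. Hence $zI-\overline{A}$ is a bijection with bounded inverse $\overline{R}(z)$, so $z \in \rho(\overline{A})$ in the classical sense, and $\overline{R}(z)$ restricts to $R(z,A)$ on $\mathcal{R}(zI-A)$.

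Now fix $z_2 \in \rho(A)$ and $z_1$ with $|z_2-z_1|\,\|R(z_2,A)\|<1$. Write
\[
z_1I - \overline{A} = \bigl(I - (z_2-z_1)\overline{R}(z_2)\bigr)(z_2I-\overline{A}).
\]
The Neumann series shows the first factor is invertible in $\mathcal{L}(X)$, so $z_1I-\overline{A}$ is bijective onto $X$ with bounded inverse. It remains to transfer the three defining properties back to $A$.

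Injectivity of $z_1I-A$ is immediate, since it is a restriction of $z_1I-\overline{A}$. Boundedness of $R(z_1,A)$ also follows, because it is the restriction of $(z_1I-\overline{A})^{-1}$ to $\mathcal{R}(z_1I-A)$.

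The main obstacle is density of $\mathcal{R}(z_1I-A)$. Given $\psi \in X$, let $\varphi = (z_1I-\overline{A})^{-1}\psi \in \mathcal{D}(\overline{A})$ and approximate $\varphi$ by $\varphi_m \in \mathcal{D}(A)$ in the graph norm. Then $(z_1I-A)\varphi_m \to (z_1I-\overline{A})\varphi = \psi$, so the range is dense. In fact this argument shows $\mathcal{R}(z_1I-A)$ is dense whenever $z_1I-\overline{A}$ is surjective, by density of $\mathcal{D}(A)$ in $Y_A$. Therefore $z_1 \in \rho(A)$, so $\rho(A)$ is open and $\sigma(A)$ is closed.

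As a byproduct, the argument shows $\rho(A) = \rho(\overline{A})$.
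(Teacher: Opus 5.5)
Your proof is correct, and it takes a different route from the paper only in the sense that the paper gives no argument here at all: it defers to \cite[Theorems 5.1.A and 5.1.B]{Taylor1986}. What you have written is therefore a self-contained substitute. It passes through the closure and then applies the classical Neumann-series perturbation to the closed operator $\overline{A}$.

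Your first step shows that the continuous extension $\overline{R}(z)$ of $R(z,A)$ equals $(zI-\overline{A})^{-1}$. This is exactly the identity $\overline{R(z,A)}=R(z,\overline{A})$ that the paper uses later in \cref{lemma:spectraequalclosure}, where it is only asserted (``one can check''). Your graph-norm approximation for the density of $\mathcal{R}(z_1I-A)$ is the range inclusion from \cref{lemma:inclusions}, which the paper uses in that same lemma to get $\rho(\overline{A})\subseteq\rho(A)$. That lemma does not depend on the present proposition, so there is no circularity, and you have in effect proved both results at once.

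Your version shows that completeness of $X$ enters only through the bounded extension and the Neumann series. The norm equality $\|\overline{R}(z_2)\|=\|R(z_2,A)\|$ is exactly what gives the stated radius $1/\|R(z_2,A)\|$. The citation, by contrast, buys brevity.

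One small economy is available. Injectivity of $z_1I-A$ and boundedness of $R(z_1,A)$ already follow without the closure from $\|(z_1I-A)\varphi\|\ge(\|R(z_2,A)\|^{-1}-|z_1-z_2|)\|\varphi\|$ for $\varphi\in\mathcal{D}(A)$. So $\overline{A}$ is really needed only for density of the range, which you correctly single out as the crux.
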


The next step is to analyse in detail the relationship between the spectral objects associated with $A$ and $\overline{A}$.

\begin{lemma} \label{lemma:spectraequalclosure}
There holds $\rho(A) = \rho(\overline{A})$ and in particular
\begin{equation} \label{eq:resolventclosure}
    R(z,A) = R(z,\overline{A})|_{\mathcal{R}(zI-A)}, \quad \forall z \in \rho(A).
\end{equation}
Consequently, $\sigma(A) = \sigma(\overline{A})$ and there holds $\sigma_p(A) \subseteq \sigma_p(\overline{A})$. Moreover, if $\sigma_p(A)\cap\Omega=\sigma(A)\cap\Omega$ for some $\Omega \subseteq \mathbb{C}$, then $\sigma_p(A) \cap \Omega = \sigma_p(\overline{A}) \cap \Omega$.
\end{lemma}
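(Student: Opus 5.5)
We prove the two inclusions $\rho(A) \subseteq \rho(\overline{A})$ and $\rho(\overline{A}) \subseteq \rho(A)$ directly from \cref{def:resolvent}, using the graph characterization $\Gamma(\overline{A}) = \overline{\Gamma(A)}$. Throughout we use that $z I - \overline{A} = \overline{zI - A}$, since adding the bounded operator $zI$ commutes with taking graph closures.

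\textbf{Step 1: $\rho(A) \subseteq \rho(\overline{A})$.} Let $z \in \rho(A)$ and write $B = zI - A$. The key estimate is
\begin{equation*}
\|\varphi\| \leq \|R(z,A)\| \, \|B\varphi\|, \qquad \varphi \in \mathcal{D}(A).
\end{equation*}
For $\varphi \in \mathcal{D}(\overline{A})$, choose $\varphi_m \in \mathcal{D}(A)$ with $\varphi_m \to \varphi$ and $B\varphi_m \to \overline{B}\varphi$. Passing to the limit gives $\|\varphi\| \leq \|R(z,A)\|\,\|\overline{B}\varphi\|$.

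This estimate yields injectivity of $\overline{B}$ and boundedness of $\overline{B}^{-1}$ with the same norm bound. The range of $\overline{B}$ contains $\mathcal{R}(B)$, so it is dense by \cref{lemma:inclusions}. Hence $z \in \rho(\overline{A})$.

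The bound also gives \eqref{eq:resolventclosure}. For $\psi = B\varphi$ with $\varphi \in \mathcal{D}(A)$, we have $\overline{B}\varphi = \psi$, so $R(z,\overline{A})\psi = \varphi = R(z,A)\psi$.

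As a side remark, $\overline{B}$ is closed with a bounded inverse, so its range is in fact closed and therefore all of $X$. This is consistent with \cref{remark:definitionresolvent}.

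\textbf{Step 2: $\rho(\overline{A}) \subseteq \rho(A)$.} Let $z \in \rho(\overline{A})$. Then $B = zI - A$ is injective, being a restriction of the injective operator $\overline{B}$. Its inverse $B^{-1}$ is the restriction of the bounded operator $\overline{B}^{-1}$ to $\mathcal{R}(B)$, so it is bounded.

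The main obstacle is density of $\mathcal{R}(B)$, since a priori only $\mathcal{R}(\overline{B})$ is known to be dense. I would argue as follows. Take $\psi \in \mathcal{R}(\overline{B})$, say $\psi = \overline{B}\varphi$, and pick $\varphi_m \in \mathcal{D}(A)$ with $B\varphi_m \to \overline{B}\varphi = \psi$. This shows $\mathcal{R}(\overline{B}) \subseteq \overline{\mathcal{R}(B)}$, which is exactly the range inclusion in \cref{lemma:inclusions}. Taking closures, $X = \overline{\mathcal{R}(\overline{B})} \subseteq \overline{\mathcal{R}(B)}$, so $\mathcal{R}(B)$ is dense. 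Hence $z \in \rho(A)$.

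\textbf{Step 3: consequences.} Taking complements gives $\sigma(A) = \sigma(\overline{A})$. The inclusion $\sigma_p(A) \subseteq \sigma_p(\overline{A})$ holds because every eigenvector of $A$ lies in $\mathcal{D}(\overline{A})$ and is an eigenvector of $\overline{A}$ with the same eigenvalue.

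Now assume $\sigma_p(A) \cap \Omega = \sigma(A) \cap \Omega$. Then
\begin{equation*}
\sigma_p(\overline{A}) \cap \Omega \subseteq \sigma(\overline{A}) \cap \Omega = \sigma(A) \cap \Omega = \sigma_p(A) \cap \Omega \subseteq \sigma_p(\overline{A}) \cap \Omega,
\end{equation*}
so all these sets coincide, which is the final claim.
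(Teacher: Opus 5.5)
Your proof is correct and follows the paper's argument essentially step for step: the reverse inclusion via restriction plus the range inclusion $\mathcal{R}(zI-\overline{A}) \subseteq \overline{\mathcal{R}(zI-A)}$, the restriction formula for the resolvent, the point-spectrum inclusion, and the closing chain of inclusions all match the paper. The only variation is in $\rho(A) \subseteq \rho(\overline{A})$. There the paper extends $R(z,A)$ by continuity to $X$ and identifies this extension with $R(z,\overline{A})$ via $\Gamma(\overline{A}) = \overline{\Gamma(A)}$, whereas your bound $\|\varphi\| \leq \|R(z,A)\|\,\|(zI-\overline{A})\varphi\|$ on $\mathcal{D}(\overline{A})$ is an explicit way of carrying out that same identification.
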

\begin{proof}
We first prove that $\rho(A) \subseteq \rho(\overline{A})$, and let therefore $z \in \rho(A)$ be given. Since $\mathcal{D}(R(z,A)) = \mathcal{R}(zI - A)$ is dense in $X$, the bounded linear operator $R(z,A)$ extends uniquely to a bounded linear operator $\overline{R(z,A)} \in \mathcal{L}(X)$. Since $\Gamma(\overline{A}) = \overline{\Gamma(A)}$, one can check that $\overline{R(z,A)} = R(z,\overline{A})$, meaning that $z \in \rho(\overline{A})$. To prove the other inclusion, let $z \in \rho(\overline{A})$ be given. Note that $zI-A$ is injective as the linear extension $zI-\overline{A}$ is injective. To show that $\mathcal{R}(zI-A)$ is dense in $X$, we apply \cref{lemma:inclusions} onto the linear operator $zI-A$, which is closable as $A$ is closable and $I$ is bounded (\cref{lemma:closablesum}). This yields $X = \mathcal{R}(zI-\overline{A}) \subseteq \overline{\mathcal{R}(zI-A)}$ and thus $\overline{\mathcal{R}(zI-A)} = X$. Clearly, $R(z,A)$ is bounded since $R(z,\overline{A})$ is a bounded linear extension. We conclude that $z \in \rho(A)$.

To prove \eqref{eq:resolventclosure}, it remains to show that $R(z,\overline{A})|_{\mathcal{R}(zI-A)}$ maps into $\mathcal{D}(A)$. Indeed, if $\psi \in \mathcal{R}(zI-A)$, then there exists $\varphi \in \mathcal{D}(A)$ such that $(zI-A)\varphi = \psi$. Since $\overline{A}$ extends $A$, we have $(zI-\overline{A})\varphi = \psi$, which implies $R(z,\overline{A})\psi = \varphi \in \mathcal{D}(A)$.

To prove the claim regarding the point spectrum, let $\sigma \in \sigma_p(A)$ be given. The result now follows by applying the kernel statement of \cref{lemma:inclusions} to the closable linear operator $\sigma I - A$.

The last claim follows from $\sigma_p(\overline{A}) \cap \Omega \subseteq \sigma(\overline{A}) \cap \Omega = \sigma(A) \cap \Omega = \sigma_p(A) \cap \Omega \subseteq \sigma_p(\overline{A}) \cap \Omega$. Here, the first equality follows from the first part of the proof, and the second from the additional assumption.
\end{proof}

It is important to observe that the inclusion of point spectra in \cref{lemma:spectraequalclosure} may be strict since there exist closable linear operators that are injective but whose closures are not, see \cite[Section 19.2.5]{Mortad2022} for further details. However, one can prove that spectral values in $\sigma_p(\overline{A}) \setminus \sigma_p(A)$ actually lie in the approximate point spectrum of $A$, though we will not pursue this direction further. Recall that this situation cannot arise if $\sigma_p(A) \cap \Omega = \sigma(A) \cap \Omega$ for a subset $\Omega \subseteq \mathbb{C}$, see the applications presented in \cref{sec:applications}.

To further characterize the point spectrum of $A$, let $\sigma \in \sigma_p(A)$ be given. The linear subspace $\mathcal{N}(\sigma I - A)$ of $X$ is called the \emph{eigenspace} of $A$ at $\sigma$, and its dimension $m_g(\sigma, A) \in \mathbb{N} \cup \{\infty\}$ is referred to as the \emph{geometric multiplicity} of $A$ at $\sigma$. Consider the following closed linear subspaces
\begin{equation*}
    E_\sigma(A) \coloneqq \overline{\bigcup_{l \geq 1} \mathcal{N}((\sigma I - A)^l)}, \quad Q_\sigma(A) \coloneqq \overline{\bigcap_{l \geq 1} \mathcal{R}((\sigma I - A)^l)},
\end{equation*}
of $X$. Here, $E_\sigma(A)$ is called the \emph{generalized eigenspace} of $A$ at $\sigma$ and $Q_\sigma(A)$ is called the \emph{complementary generalized eigenspace} of $A$ at $\sigma$. The dimension $m_a(\sigma,A) \in \mathbb{N} \cup \{ \infty \}$ of $E_\sigma(A)$ is referred to as the \emph{algebraic multiplicity} of $A$ at $\sigma$. Observe that $E_\sigma(A)$ is the smallest closed linear subspace of $X$ that contains the ascending chain of linear subspaces $\mathcal{N}((\sigma I - A)^l)$ for all integers $l \geq 1$. Moreover, $m_g(\sigma,A) \leq m_a(\sigma,A)$ and the condition $m_g(\sigma,A)=\infty$ implies that $m_a(\sigma,A)=\infty$. For the case $m_g(\sigma,A) < \infty$, a more detailed description of $E_\sigma(A)$ requires the notion of Jordan chains for closable linear operators.

\begin{definition} \label{def:Jordanchain}
Let $X$ be a complex Banach space and $A : \mathcal{D}(A) \subseteq X \to X$ a closable linear operator. An ordered set $\{\varphi_0,\dots,\varphi_{k-1}\}$ of vectors in $\mathcal{D}(A)$ is called a \emph{Jordan chain} of $A$ at $\sigma \in \mathbb{C}$ if $\varphi_0 \neq 0$ and
\begin{equation*}
    (A-\sigma I) \varphi_i = 
    \begin{dcases}
        0, \quad &i=0, \\
        \varphi_{i-1}, \quad &i=1,\dots,k-1.
    \end{dcases}
\end{equation*}
The vectors $\varphi_1,\dots,\varphi_{k-1}$ are called \emph{generalized eigenvectors} of $A$ at $\sigma$. The number $k \in \mathbb{N}$ is called the \emph{length} of the chain. If there exists a maximal length of the chain starting with $\varphi_0$, then we call this length the \emph{rank} of $\varphi_0$. If there is no maximal length, then we say that $\varphi_0$ has \emph{infinite rank}. \hfill $\lozenge$
\end{definition}

Let us assume that $p = m_g(\sigma,A) < \infty$. Following the algebraic procedure of Gohberg and Sigal \cite{Gohberg1971}, we may arrange the Jordan chains as follows. Let $\varphi_{1,0},\dots,\varphi_{p,0}$ be a basis of $\mathcal{N}(\sigma I - A)$ chosen such that the ranks $k_j \in \mathbb{N} \cup \{\infty\}$ of the eigenvector $\varphi_{j,0}$ are maximized sequentially. The (possibly infinite) integers $k_1 \leq \dots \leq k_p$, which are independent of the choice of such a basis, represent the sizes of the Jordan blocks and are referred to as the \emph{partial multiplicities} of $\sigma$. We define the largest partial multiplicity $k_p \eqqcolon k(\sigma,A) \in \mathbb{N} \cup \{\infty\}$ to be the \emph{ascent} of $A$ at $\sigma$. In this framework, the algebraic multiplicity satisfies $m_a(\sigma,A) = \sum_{j=1}^p k_j$. If in addition $m_a(\sigma,A) < \infty$, then $\sigma$ is called an \emph{eigenvalue of finite type} of $A$ since in that case $m_g(\sigma,A)$, $m_a(\sigma,A)$ and $k(\sigma,A)$ are all finite. In particular, $k(\sigma,A)$ is the smallest integer such that 
\begin{equation} \label{eq:EsigmaQsigma}
    E_\sigma(A) = \mathcal{N}((\sigma I - A)^{k(\sigma,A)}), \quad Q_\sigma(A) \subseteq \overline{\mathcal{R}((\sigma I - A)^{k(\sigma,A)})},
\end{equation}
where we recall that $E_\sigma(A)$ is closed as it is finite-dimensional. Note that the second inclusion might be strict (\cite[Problem 5.4.2]{Taylor1986}). Moreover, $E_\sigma(A)$ admits a basis of the form
\begin{equation} \label{eq:canonicalbasis}
\varphi_{1,0},\dots,\varphi_{1,k_1-1},\dots,\varphi_{p,0},\dots,\varphi_{p,k_p-1},
\end{equation}
which is called a \emph{canonical basis of (generalized) eigenvectors} of $A$ at $\sigma$. With respect to this specific basis, the matrix representation of $A|_{E_\sigma(A)} \in \mathcal{L}(E_{\sigma}(A))$ is a Jordan matrix with $\sigma$ on the main diagonal. The partitioning of the vectors in \eqref{eq:canonicalbasis} corresponds precisely to the decomposition of this Jordan matrix into individual Jordan blocks. The following lemma clarifies the relationship between these notions for $A$ and $\overline{A}$.

\begin{lemma} \label{lemma:mgakineq}
If $\sigma \in \sigma_p(A)$, then $m_g(\sigma,A) \leq m_g(\sigma,\overline{A})$, $m_a(\sigma,A) \leq m_a(\sigma,\overline{A})$ and $ k(\sigma,A) \leq k(\sigma,\overline{A})$, and we have the inequalities
\begin{align} 
\begin{split} \label{eq:chainineq}
    &m_g(\sigma,A) + k(\sigma,A) - 1 \leq m_a(\sigma,A) \leq m_g(\sigma,A) k(\sigma,A), \\
    &m_g(\sigma,\overline{A}) + k(\sigma,\overline{A}) - 1 \leq m_a(\sigma,\overline{A}) \leq m_g(\sigma,\overline{A}) k(\sigma,\overline{A}).  
\end{split}
\end{align}   
\end{lemma}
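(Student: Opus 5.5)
The comparisons between $A$ and $\overline{A}$ all come from one inclusion: $\overline{A}$ extends $A$, so $(\sigma I - A)^l \subseteq (\sigma I - \overline{A})^l$ for every $l \geq 1$. This holds by induction on $l$. If $\varphi \in \mathcal{D}((\sigma I - A)^l)$, then every intermediate vector $(\sigma I - A)^j \varphi$ lies in $\mathcal{D}(A) \subseteq \mathcal{D}(\overline{A})$, and $\overline{A}$ agrees with $A$ there. Consequently $\mathcal{N}((\sigma I - A)^l) \subseteq \mathcal{N}((\sigma I - \overline{A})^l)$ for all $l$. The case $l=1$ gives $m_g(\sigma,A) \leq m_g(\sigma,\overline{A})$. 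Taking the union over $l$ and then closures gives $E_\sigma(A) \subseteq E_\sigma(\overline{A})$, hence $m_a(\sigma,A) \leq m_a(\sigma,\overline{A})$.

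For the ascent, any Jordan chain $\{\varphi_0,\dots,\varphi_{k-1}\}$ of $A$ at $\sigma$ is also a Jordan chain of $\overline{A}$ at $\sigma$, by the same extension property. So the rank of $\varphi_0$ with respect to $\overline{A}$ is at least its rank with respect to $A$. The ascent is the supremum of ranks of eigenvectors, since the sequential maximization makes $k_p$ the maximal rank over the eigenspace. Therefore $k(\sigma,A) \leq k(\sigma,\overline{A})$, with the convention that the inequality is trivial when the right side is $\infty$.

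For \eqref{eq:chainineq} it suffices to prove the statement for an arbitrary closable operator $B$ with $\sigma\in\sigma_p(B)$, and then apply it to $B=A$ and $B=\overline{A}$ (note $\sigma \in \sigma_p(\overline{A})$ by \cref{lemma:spectraequalclosure}).
\begin{itemize}
\item If $m_g(\sigma,B)=\infty$, then $m_a(\sigma,B)=\infty$ and both inequalities hold in $\mathbb{N}\cup\{\infty\}$.
\item If $p = m_g(\sigma,B)<\infty$, use the Gohberg--Sigal arrangement with partial multiplicities $1 \leq k_1 \leq \dots \leq k_p = k(\sigma,B)$ and $m_a(\sigma,B) = \sum_{j=1}^p k_j$.
\end{itemize}
In the finite case, the upper bound follows from $k_j \leq k_p$ for every $j$, giving $\sum_j k_j \leq p\,k_p$. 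The lower bound follows from $k_j \geq 1$ for $j<p$, giving $\sum_j k_j \geq (p-1) + k_p$. If $k_p=\infty$, both sides are $\infty$ and the inequalities are consistent.

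The main obstacle I expect is making rigorous the identity $m_a = \sum_j k_j$, or at least the two bounds, when some ranks are infinite. The closure in the definition of $E_\sigma$ could a priori enlarge the dimension. In the infinite-rank case, however, the union already contains infinitely many linearly independent vectors from a single chain, since chain vectors are independent. In the finite case the union is finite-dimensional and hence closed, so the closure causes no trouble. I would treat these two cases separately and otherwise rely on the canonical-basis facts quoted before the lemma.
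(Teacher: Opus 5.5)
Your proposal is correct and follows essentially the same route as the paper: the inclusions $\mathcal{N}((\sigma I - A)^l) \subseteq \mathcal{N}((\sigma I - \overline{A})^l)$, together with the fact that Jordan chains of $A$ are Jordan chains of $\overline{A}$, give the three comparisons, and the bounds $1 \le k_j \le k_p$ on $m_a = \sum_j k_j$, with the infinite cases treated separately, give \eqref{eq:chainineq} for both $A$ and $\overline{A}$. One small wording fix: infinite rank only guarantees chains of every finite length starting at $\varphi_0$, not necessarily a single infinite chain; still, each chain of length $m$ contributes $m$ linearly independent vectors to $\mathcal{N}((\sigma I - A)^m)$, and this already forces $m_a = \infty$.
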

\begin{proof}
Let us first recall from \cref{lemma:spectraequalclosure} that $\sigma$ is also in $\sigma_p(\overline{A})$. The first claim follows from the fact that $\mathcal{N}(\sigma I - A) \subseteq \mathcal{N}(\sigma I - \overline{A})$ due to \cref{lemma:inclusions}. To prove the second claim, an inductive argument shows that $\mathcal{N}((\sigma I -A)^l) \subseteq \mathcal{N}((\sigma I - \overline{A})^l)$ for all integers $l \geq 1$. As the union and closure operations preserve the order of inclusion, we obtain $E_\sigma(A) \subseteq E_\sigma(\overline{A})$, which proves the second claim. Since any Jordan chain of $A$ at $\sigma$ is also a Jordan chain of $\overline{A}$ at $\sigma$, the third claim holds. 

We now prove \eqref{eq:chainineq} for $A$. First, if $m_g(\sigma,A) = \infty$, then $m_a(\sigma,A) = \infty$ and $k(\sigma,A) \geq 1$ so that \eqref{eq:chainineq} holds. Second, if $m_g(\sigma,A) < \infty$ while $m_a(\sigma,A) = \infty$, then necessarily $k(\sigma,A) = \infty$, and \eqref{eq:chainineq} holds since $m_g(\sigma,A) \geq 1$. Third, if $m_g(\sigma,A) < \infty$ but $k(\sigma,A) = \infty$, then $m_a(\sigma,A) = \infty$ and thus \eqref{eq:chainineq} is again immediate. It therefore remains to consider the case where $\sigma$ is of finite type. Set $p = m_g(\sigma,A) < \infty$ and recall that we have ordered the canonical basis of (generalized) eigenvectors of $A$ at $\sigma$ in such a way that $k(\sigma,A) = k_p$. Then
\begin{align*}
m_g(\sigma,A) + k(\sigma,A) - 1
&= \underbrace{1 + \cdots + 1 }_{p-1\ \text{times} }  +  k_p
\leq \sum_{j=1}^p k_j
= m_a(\sigma,A),
\end{align*}
where the inequality follows from the fact that $k_j \geq 1$ for all $j = 1,\dots,p$. This proves the first inequality in \eqref{eq:chainineq}. The second inequality follows from $m_a(\sigma,A) = \sum_{j=1}^p k_j \leq pk_p = m_g(\sigma,A) k(\sigma,A)$, as $k_p$ is maximal. Since $\overline{A}$ is closed (and thus closable), the second chain of inequalities in \eqref{eq:chainineq} holds as well.
\end{proof}

With the framework for the point spectrum of the closable linear operator $A$ established, we extend our analysis to the full spectrum $\sigma(A)$. This requires first a thorough investigation of the resolvent set $\rho(A)$ and the associated \emph{resolvent operator} $R(\cdot,A)$ of $A$. In order to develop a coherent spectral theory for closable linear operators, paralleling to the classical framework available for closed linear operators, one needs to make additional assumptions on the behaviour of $R(\cdot,A)$. In particular, we would like to use a \emph{resolvent identity} of the form:
\begin{equation*}
    R(z_1,A)-R(z_2,A) = (z_2-z_1)R(z_1,A)R(z_2,A), \quad \forall z_1,z_2 \in \rho(A).
\end{equation*}
However, note that the expressions in this identity are not necessarily well-defined, since i) on the left-hand side of the equation, the domains $\mathcal{D}(R(z_i, A)) = \mathcal{R}(z_iI - A)$ might be different for $i \in \{1,2\}$, and ii) on the right-hand side of the equation, the operator $R(z_2, A)$, which has range $\mathcal{R}(R(z_2, A)) = \mathcal{D}(A)$, might not map into the domain $\mathcal{D}(R(z_1, A)) = \mathcal{R}(z_1I - A)$. In order to bypass these issues, we introduce the following (spectral) hypothesis:
\begin{enumerate}[label=({SH}{{\arabic*}})]
    \item \label{hyp:SH1} The domain $\mathcal{D}(A) \subseteq \mathcal{R}(zI-A)$ for all $z \in \rho(A)$.
\end{enumerate}
Under the assumption of \ref{hyp:SH1}, we show in the next result that $\mathcal{D}(R(z,A)) = \mathcal{R}(zI - A)$ is independent of $z \in \rho(A)$. This ensures that both sides of the resolvent identity are well-defined and makes the equality itself true. It is worth noting that \ref{hyp:SH1} represents a deviation from the treatment in \cite[Section 5.1]{Taylor1986}. Specifically, the analogue of the following result in that work (\cite[Theorem 5.1.C]{Taylor1986}) is established under the stronger assumption that $\mathcal{R}(zI-A) = X$ whenever $z \in \rho(A)$.

\begin{proposition} \label{prop:resolventproperties}
Assume that $A$ satisfies \ref{hyp:SH1} and let $z_1, z_2 \in \rho(A)$ be given. Then
\begin{equation} \label{eq:rangeequality}
    \mathcal{R}(z_1I - A) = \mathcal{R}(z_2I - A),
\end{equation}
and the resolvent identity
\begin{equation} \label{eq:resolventeq}
R(z_1,A)-R(z_2,A) = (z_2-z_1)R(z_1,A)R(z_2,A),
\end{equation}
holds. Moreover, the resolvent operator $R(\cdot,A)$ of $A$ is analytic on $\rho(A)$ since
\begin{equation} \label{eq:resolventseries}
    R(z_1,A) = \sum_{l=0}^{\infty} (z_2-z_1)^l R(z_2,A)^{l+1},
\end{equation}
for all $z_2 \in \rho(A)$ and $z_1 \in \mathbb{C}$ such that $|z_1 - z_2| < 1/ \|R(z_2,A)\|$.
\end{proposition}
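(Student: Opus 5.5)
The proof has three steps: the range equality via the algebraic identity $z_1I-A = (z_2I-A) + (z_1-z_2)I$ together with \ref{hyp:SH1}, then the resolvent identity on the common domain, then the Neumann series.

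\textbf{Range equality.} Let $\psi \in \mathcal{R}(z_1I-A)$, so $\psi = (z_1I-A)\varphi$ for some $\varphi \in \mathcal{D}(A)$. Then
\[
\psi = (z_2I-A)\varphi + (z_1-z_2)\varphi .
\]
The first term lies in $\mathcal{R}(z_2I-A)$. By \ref{hyp:SH1} applied at $z_2$, the vector $\varphi\in\mathcal{D}(A)$ also lies in $\mathcal{R}(z_2I-A)$. Since ranges are linear subspaces, $\psi \in \mathcal{R}(z_2I-A)$. Swapping the roles of $z_1$ and $z_2$ gives \eqref{eq:rangeequality}. Call the common range $V$. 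By \ref{hyp:SH1}, $V \supseteq \mathcal{D}(A)$.

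\textbf{Resolvent identity.} Both resolvents are defined on $V$ and map into $\mathcal{D}(A)\subseteq V$. So $R(z_2,A)$ maps $V$ into the domain of $R(z_1,A)$, and the right-hand side of \eqref{eq:resolventeq} is well defined on $V$. Fix $\psi\in V$ and set $\varphi_i = R(z_i,A)\psi$. Then
\[
(z_1I-A)(\varphi_1-\varphi_2) = \psi - (z_1I-A)\varphi_2 = \psi - \bigl(\psi + (z_1-z_2)\varphi_2\bigr) = (z_2-z_1)\varphi_2 .
\]
Applying $R(z_1,A)$ to both sides, which is legitimate because $\varphi_2\in\mathcal{D}(A)\subseteq V$ and $\varphi_1-\varphi_2\in\mathcal{D}(A)$, gives \eqref{eq:resolventeq}.

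\textbf{Series expansion.} For $|z_1-z_2|<1/\|R(z_2,A)\|$, \cref{prop:resolventopen} gives $z_1\in\rho(A)$. Each power $R(z_2,A)^{l+1}$ is a well-defined bounded operator on $V$, since $R(z_2,A)$ maps $V$ into $\mathcal{D}(A)\subseteq V$. It is more convenient to work with continuous extensions. By \cref{lemma:spectraequalclosure}, each $R(z_2,A)^{l+1}$ is the restriction to $V$ of $R(z_2,\overline A)^{l+1}\in\mathcal{L}(X)$, with norm at most $\|R(z_2,A)\|^{l+1}$. Hence the series in \eqref{eq:resolventseries} converges in operator norm on $V$.

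To identify the sum $S$, rewrite \eqref{eq:resolventeq} as
\[
R(z_1,A)\bigl(I-(z_2-z_1)R(z_2,A)\bigr) = R(z_2,A)\quad\text{on } V .
\]
Iterating gives
\[
R(z_1,A) = \sum_{l=0}^{N}(z_2-z_1)^lR(z_2,A)^{l+1} + (z_2-z_1)^{N+1}R(z_1,A)R(z_2,A)^{N+1}.
\]
The remainder has norm at most $\|R(z_1,A)\|\,\bigl(|z_2-z_1|\,\|R(z_2,A)\|\bigr)^{N+1}$, which tends to zero, so $R(z_1,A)=S$. Analyticity then follows because $R(\cdot,A)$ is locally a norm-convergent power series in $\mathcal{L}(V,X)$, with $V$ carrying the norm of $X$.

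\textbf{Main obstacle.} The only delicate point is domain bookkeeping. One must check at every step that compositions stay within $V$ and that no inverse is applied outside its domain. Once \ref{hyp:SH1} is used to get $\mathcal{D}(A)\subseteq V$, everything reduces to the classical algebra.
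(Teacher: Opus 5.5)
Your proof is correct and follows the paper's argument essentially step for step. The paper obtains the range equality from $\psi=(z_2I-A)\varphi-(z_2-z_1)\varphi$ and \ref{hyp:SH1}, the resolvent identity from $(z_2-z_1)I=(z_2I-A)-(z_1I-A)$, which you verify pointwise with more explicit domain bookkeeping, and the series from the iterated identity with a vanishing remainder; the detour through \cref{lemma:spectraequalclosure} for the norm bound is harmless but unnecessary, since $\|R(z_2,A)^{l+1}\|\le\|R(z_2,A)\|^{l+1}$ follows directly from composing bounded operators on $V$.
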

\begin{proof}
To prove \eqref{eq:rangeequality}, we first show that $\mathcal{R}(z_1I - A) \subseteq \mathcal{R}(z_2I - A)$ for $z_{1},z_{2} \in \rho(A)$. The other inclusion then follows by interchanging $z_1$ and $z_2$. Fix $\psi \in \mathcal{R}(z_1I- A)$, then there exists a $\varphi \in \mathcal{D}(A)$ such that $(z_1I-A)\varphi = \psi$. We next observe that $(z_2 I - A) \varphi = \psi + (z_2 - z_1) \varphi$ or equivalently 
\begin{equation*}
\psi = (z_2 I - A) \varphi- (z_2 - z_1) \varphi.
\end{equation*}
The first term on the right-hand side is an element of $\mathcal{R}(z_2 I - A)$, and the second term is an element of $\mathcal{D}(A) \subseteq \mathcal{R}(z_2 I - A)$ by \ref{hyp:SH1}. So $\psi \in \mathcal{R}(z_2 I - A)$, and thus $\mathcal{R}(z_1 I - A) \subseteq \mathcal{R}(z_2 I - A)$, as claimed. 

To prove \eqref{eq:resolventeq}, we first observe that $(z_2 - z_1)I = (z_2 I - A) - (z_1 I - A)$. Composing this identity from the left with $R(z_1, A)$, which is well-defined as $z_2 I - A$ maps into 
$\mathcal{D}(R(z_2, A)) = \mathcal{R}(z_2 I - A) = \mathcal{R}(z_1 I - A)$ by \eqref{eq:rangeequality}, and composing the resulting identity from the right by $R(z_2, A)$ yields the result. 

To prove \eqref{eq:resolventseries}, consider $z_2 \in \rho(A)$ and $z_1 \in \mathbb{C}$ satisfying $|z_1 - z_2| < 1/ \|R(z_2,A)\|$. Then \cref{prop:resolventopen} tells us that $z_1 \in \rho(A)$. By subsequently applying \eqref{eq:resolventeq}, induction yields the expression
\begin{equation} \label{eq:resolventexp}
    R(z_1,A) = \sum_{l=0}^{m} (z_2-z_1)^l R(z_2,A)^{l+1} + (z_2-z_1)^{m+1} R(z_1,A)R(z_2,A)^{m+1}, \quad \forall m \in \mathbb{N},
\end{equation}
which is well-defined due to \ref{hyp:SH1} and \eqref{eq:rangeequality}. Note that the remainder term in \eqref{eq:resolventexp} vanishes (in norm) as $m \to \infty$ since $\|R(z_1,A)\| [ |z_1-z_2| \|R(z_2,A)\|]^{m+1} \to 0$ as $m \to \infty$ since $|z_1 - z_2| \|R(z_2,A)\| < 1$. This proves \eqref{eq:resolventseries} and so the proof is complete.
\end{proof}

With the framework for the point spectrum and the resolvent set of a closable linear operator $A$ established, we extend our analysis to isolated points of $\sigma(A)$ using \ref{hyp:SH1}. Our goal is to recover a spectral decomposition of $X$ into two $A$-invariant linear subspaces similar to the classical setting of closed linear operators, see for example the results in \cite[Section XV.2]{Gohberg1990}. However, this turns out to be ambitious. Indeed, since $A$ is not necessarily closed, we cannot guarantee that the residue of $A$ at an isolated eigenvalue $\sigma$ of $A$ is a projection operator, as will be detailed below. Instead, we decompose with respect to invariant subspaces of $\overline{A}$ and subsequently try to establish the connection with the corresponding subspaces related to $A$. We start with the following result, which lifts the classical construction of a Laurent series for $R(z,\overline{A})$ towards $R(z,A)$ around an isolated point in $\sigma(A)$.

\begin{lemma} \label{lemma:Dunford}
Assume that $A$ satisfies \ref{hyp:SH1}. If $\sigma$ is an isolated point of $\sigma(A)$, then the resolvent of $A$ at $\sigma$ can be locally expanded as a Laurent series
\begin{equation} \label{eq:Dunford}
    R(z,A) = \sum_{l = -\infty}^{\infty} (z - \sigma)^l R_l(\sigma,A), \quad R_l(\sigma,A) = \frac{1}{2 \pi i} \oint_{\Gamma_{\sigma}} (z-\sigma)^{-(l+1)}R(z,A) dz,
\end{equation}
whenever $0 < |z-\sigma| < \delta$ for some sufficiently small $\delta$ and $\Gamma_{\sigma}$ denotes a positively oriented boundary of a disc centered at $\sigma$ with radius strictly smaller than $\delta$. 
\end{lemma}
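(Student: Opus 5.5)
The plan is to reduce everything to a fixed Banach space on which $R(\cdot,A)$ is a genuine analytic operator-valued function, and then run the classical Laurent expansion argument. By \cref{prop:resolventproperties}, under \ref{hyp:SH1} the domain $V \coloneqq \mathcal{R}(zI-A)$ of $R(z,A)$ does not depend on $z \in \rho(A)$. Since each $R(z,A)$ is bounded on the dense subspace $V$, we may regard $z \mapsto R(z,A)$ as a map $\rho(A) \to \mathcal{L}(V,X)$, where $V$ carries the norm of $X$. The target $\mathcal{L}(V,X)$ is a Banach space because $X$ is complete. One could also pass to the closure, since by \cref{lemma:spectraequalclosure} we have $R(z,A) = R(z,\overline{A})|_{V}$. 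However, working directly in $\mathcal{L}(V,X)$ keeps everything in terms of $A$.

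\textbf{Step 1 (analyticity).} Equation \eqref{eq:resolventseries} expresses $R(\cdot,A)$ near each $z_2 \in \rho(A)$ as a power series. This series converges in the operator norm of $\mathcal{L}(V,X)$: the powers $R(z_2,A)^{l+1}$ are well defined on $V$ because $\mathcal{D}(A)\subseteq V$ by \ref{hyp:SH1}, and $\|R(z_2,A)^{l+1}\| \le \|R(z_2,A)\|^{l+1}$. Hence $R(\cdot,A)$ is holomorphic on $\rho(A)$ as an $\mathcal{L}(V,X)$-valued function.

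\textbf{Step 2 (punctured disc).} Since $\sigma$ is isolated in $\sigma(A)$ and $\sigma(A)$ is closed by \cref{prop:resolventopen}, there is a $\delta>0$ such that the punctured disc $\{0<|z-\sigma|<\delta\}$ lies in $\rho(A)$.

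\textbf{Step 3 (Laurent expansion).} Apply the standard Laurent theorem for Banach-space-valued holomorphic functions on an annulus to $R(\cdot,A)$ on this punctured disc. This yields the expansion \eqref{eq:Dunford} with coefficients $R_l(\sigma,A)$ given by the stated contour integrals, which are Bochner or Riemann integrals in $\mathcal{L}(V,X)$. The series converges absolutely and locally uniformly in operator norm on $0<|z-\sigma|<\delta$. By Cauchy's theorem, the integrals do not depend on the radius of $\Gamma_\sigma$, provided it stays below $\delta$. If one prefers to reprove the Laurent theorem rather than cite it, the usual route works: apply continuous functionals $\psi \mapsto \langle \ell, R(z,A)\psi\rangle$ for $\psi\in V$ and $\ell \in X^*$, use the scalar Laurent theorem, and then upgrade to norm convergence via the Cauchy estimates $\|R_l\| \le r^{-l}\max_{|z-\sigma|=r}\|R(z,A)\|$.

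The main obstacle is Step 1, namely making sense of holomorphy when the domain of the resolvent might move with $z$. This is exactly where \ref{hyp:SH1} and the range equality \eqref{eq:rangeequality} enter: together they guarantee a common domain $V$, so that the difference quotients and the contour integrals are meaningful. Once that is settled, the rest is classical vector-valued complex analysis. The coefficients $R_l(\sigma,A)$ are bounded operators from $V$ into $X$, and they are not necessarily projections; the lemma makes no claim about that.
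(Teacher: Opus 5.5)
Your argument is correct, but it reaches the expansion by a somewhat different route from the paper.

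The paper never establishes holomorphy of $R(\cdot,A)$ directly. It proceeds in three moves:
\begin{itemize}
\item It uses $\sigma(A)=\sigma(\overline{A})$ from \cref{lemma:spectraequalclosure} to import the classical Laurent expansion of $R(\cdot,\overline{A})$ for the closed operator $\overline{A}$.
\item It restricts that expansion to $\mathcal{R}(zI-A)$ via \eqref{eq:resolventclosure}.
\item It invokes \ref{hyp:SH1}, through \cref{prop:resolventproperties}, only to check that the contour integral defining $R_l(\sigma,A)$ has the $z$-independent domain $\mathcal{R}(zI-A)$. Hence that integral coincides with $R_l(\sigma,\overline{A})|_{\mathcal{R}(zI-A)}$.
\end{itemize}

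You instead prove norm-holomorphy intrinsically in $\mathcal{L}(V,X)$ from \eqref{eq:resolventseries}, and then apply the vector-valued Laurent theorem. Your check that the powers $R(z_2,A)^{l+1}$ are defined on $V$ (because $\mathcal{D}(A)\subseteq V$), together with the submultiplicative norm bound, is exactly what makes this work.

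The two routes are equivalent in substance. Restriction to the dense subspace $V$ is an isometric isomorphism $\mathcal{L}(X)\to\mathcal{L}(V,X)$, and it carries $R(z,\overline{A})$ to $R(z,A)$.

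Each route has its own advantage. Yours is self-contained and avoids appealing to the theory of closed operators. The paper's route delivers the identity $R_l(\sigma,A)=R_l(\sigma,\overline{A})|_{\mathcal{R}(zI-A)}$ for free. That identity is used immediately afterwards in \cref{lemma:orderpole} and in the discussion of the Riesz projection $P_\sigma(\overline{A})$. With your approach it costs one extra, easy step: the isometric identification just mentioned.
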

\begin{proof}
We first recall from \cref{lemma:spectraequalclosure} that $\sigma(A) = \sigma(\overline{A})$ and hence $\sigma$ is also an isolated point of $\sigma(\overline{A})$. Since $\overline{A}$ is a closed linear operator, the results from \cite[Section IV.1.17]{Engel2000} tell us that
\begin{equation} \label{eq:Dunfordclosure}
    R(z,\overline{A}) = \sum_{l = -\infty}^{\infty} (z - \sigma)^l R_l(\sigma,\overline{A}), \quad R_l(\sigma,\overline{A}) \coloneqq \frac{1}{2 \pi i} \oint_{\Gamma_{\sigma}} (z-\sigma)^{-(l+1)}R(z,\overline{A}) dz,
\end{equation}
for all $z \in \mathbb{C}$ sufficiently close to $\sigma$. Due to \eqref{eq:resolventclosure}, it is clear that $R_l(\sigma,A) = R_l(\sigma,\overline{A})|_{\mathcal{R}(zI-A)}$ for all $l \in \mathbb{Z}$. We next want to prove the expression for $R_l(\sigma, A)$ claimed in \eqref{eq:Dunford}. To that end, denote by $W_l(\sigma,A)$ the integral term in \eqref{eq:Dunford}, so that we want to prove the equality $W_l(\sigma, A) = R_l(\sigma, A)$. We observe that this equality follows
if we can prove that $\mathcal{D}(W_l(\sigma,A)) = \mathcal{R}(zI-A)$. To show this, note that $\Gamma_\sigma \subset \rho(A)$ and so \cref{prop:resolventproperties} tells us that $\mathcal{R}(wI - A)$ is independent of $w \in \Gamma_\sigma$. Hence, $\mathcal{D}(W_l(\sigma,A)) = \cap_{w \in \Gamma_\sigma} \mathcal{R}(wI-A) = \mathcal{R}(zI-A)$ for all $l \in \mathbb{Z}$, which proves the claim. Restricting both sides of the first equality in \eqref{eq:Dunfordclosure} to $\mathcal{R}(zI-A)$ proves the first equality in \eqref{eq:Dunford}.
\end{proof}
If there is an $r \in \mathbb{N}$ such that $R_{-r}(\sigma,A) \neq 0$ while $R_{-l}(\sigma,A) = 0$ for all $l > r$, then $\sigma$ is called a \emph{pole} of $R(\cdot,A)$ of \emph{order} $r \eqqcolon r(\sigma,A)$ and we set $r(\sigma,A) = \infty$ if $R(\cdot,A)$ has an essential singularity at $\sigma$.

\begin{lemma} \label{lemma:orderpole}
If $A$ satisfies \ref{hyp:SH1} and $\sigma$ is an isolated point of $\sigma(A)$, then $r(\sigma,A) = r(\sigma,\overline{A})$.    
\end{lemma}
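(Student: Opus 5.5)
The plan is to compare the Laurent coefficients of the two resolvents, using the relation obtained in the proof of \cref{lemma:Dunford}:
\begin{equation*}
R_l(\sigma,A) = R_l(\sigma,\overline{A})|_{\mathcal{R}(zI-A)}, \quad l \in \mathbb{Z}.
\end{equation*}
Here $\mathcal{R}(zI-A)$ is independent of $z \in \rho(A)$ by \ref{hyp:SH1} and \cref{prop:resolventproperties}. It is also dense in $X$, because $z \in \rho(A)$ (\cref{def:resolvent}). The order of the pole is determined solely by which negative-index coefficients vanish. It therefore suffices to show, for every $l \in \mathbb{Z}$, that
\begin{equation*}
R_l(\sigma,A) = 0 \iff R_l(\sigma,\overline{A}) = 0.
\end{equation*}
This gives $r(\sigma,A) = r(\sigma,\overline{A})$, and it includes the essential-singularity case, where both orders are $\infty$.

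The direction ``$\Leftarrow$'' is immediate, since $R_l(\sigma,A)$ is a restriction of $R_l(\sigma,\overline{A})$. For ``$\Rightarrow$'', recall that $R_l(\sigma,\overline{A}) \in \mathcal{L}(X)$ is bounded: it is a contour integral of the norm-continuous function $z \mapsto (z-\sigma)^{-(l+1)}R(z,\overline{A})$ over the compact contour $\Gamma_\sigma \subset \rho(\overline{A})$. If $R_l(\sigma,A)=0$, then this bounded operator vanishes on the dense subspace $\mathcal{R}(zI-A)$, and continuity forces $R_l(\sigma,\overline{A}) = 0$ on all of $X$. Equivalently, $R_l(\sigma,\overline{A})$ is the unique bounded extension of $R_l(\sigma,A)$, just as $\overline{R(z,A)} = R(z,\overline{A})$ in \cref{lemma:spectraequalclosure}.

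The only point needing care is the identification $R_l(\sigma,A) = R_l(\sigma,\overline{A})|_{\mathcal{R}(zI-A)}$ with a single common domain. This is exactly where \ref{hyp:SH1} enters: all resolvents $R(w,A)$, $w \in \Gamma_\sigma$, share the domain $\mathcal{R}(zI-A)$, so integrating the identity \eqref{eq:resolventclosure} along $\Gamma_\sigma$ gives the restriction formula. I will cite that step from the proof of \cref{lemma:Dunford} rather than redo it. The density and boundedness argument is then routine, and I expect no genuine obstacle.
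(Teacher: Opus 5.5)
Your proposal is correct and follows the paper's proof: both use the restriction identity $R_l(\sigma,A) = R_l(\sigma,\overline{A})|_{\mathcal{R}(zI-A)}$ from \cref{lemma:Dunford}, the density of $\mathcal{R}(zI-A)$ in $X$, and the boundedness of $R_l(\sigma,\overline{A})$. Together these give $R_l(\sigma,A)=0$ if and only if $R_l(\sigma,\overline{A})=0$ for every $l \in \mathbb{Z}$, and hence the pole orders coincide.
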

\begin{proof}
Recall from \cref{lemma:spectraequalclosure} that $\sigma$ is also an isolated point of $\sigma(\overline{A})$. Since $R_l(\sigma,A) = R_l(\sigma,\overline{A})|_{\mathcal{R}(zI-A)}$ is bounded for all $l \in \mathbb{Z}$ and $\mathcal{R}(zI-A)$ is dense in $X$ for $z \in \rho(A)$, it follows that $R_l(\sigma,A) = 0$ if and only if $R_l(\sigma,\overline{A}) = 0$, which proves the result. 
\end{proof}

Consider the bounded linear operator $P_{\sigma}(A) \coloneqq R_{-1}(\sigma,A)$ at an isolated point $\sigma$ in $\sigma(A)$, which is precisely the \emph{residue} of $R(\cdot,A)$ at $\sigma$. Since $A$ is not necessarily closed, we cannot ensure in general that the Dunford integral representation of $P_\sigma(A)$ from \eqref{eq:Dunford} maps into $\mathcal{D}(A)$. Consequently, $P_\sigma(A)$ may not define a projection on $X$, as the composition $P_\sigma(A)^2$ may fail to be well-defined. However, as the closure $\overline{A}$ of $A$ is a closed linear operator, the standard results on closed linear operators (\cite[Section XV.2]{Gohberg1990}) imply that the \emph{Riesz spectral projection} $P_\sigma(\overline{A}) = R_{-1}(\sigma, \overline{A})$ is a projection on $X$ and decomposes this space into two closed $\overline{A}$-invariant linear subspaces as
\begin{equation*}
    X = \mathcal{R}(P_\sigma(\overline{A})) \oplus \mathcal{R}(I-P_\sigma(\overline{A})), \quad \mathcal{N}(P_\sigma(\overline{A})) = \mathcal{R}(I-P_\sigma(\overline{A})).
\end{equation*}
Moreover, the identity \eqref{eq:resolventclosure} implies that $P_\sigma(A) = P_\sigma(\overline{A})|_{\mathcal{R}(zI - A)}$ for some (and thus all) $z \in \rho(A)$. This in turn implies that $P_\sigma(A)$ is densely defined, and that its (unique) bounded linear extension $\overline{P_\sigma(A)}$ coincides with $P_\sigma(\overline{A})$. By combining these facts, we obtain a decomposition of $X$ into the topological closure of the ranges of $P_\sigma(A)$ and $I-P_\sigma(A)$, as we state in the following result.

\begin{lemma} \label{lemma:directsum}
If $A$ satisfies \ref{hyp:SH1} and $\sigma$ is an isolated point of $\sigma(A)$, then
\begin{equation} \label{eq:directsumXA}
    X = \overline{\mathcal{R}(P_\sigma(A))} \oplus \overline{\mathcal{R}(I-P_\sigma(A))}, \quad \overline{\mathcal{N}(P_\sigma(A))} \subseteq \overline{\mathcal{R}(I-P_\sigma(A))}.
\end{equation}
\end{lemma}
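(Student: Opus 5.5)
Write $P \coloneqq P_\sigma(\overline{A})$ and $D \coloneqq \mathcal{R}(zI-A)$, the common domain of $P_\sigma(A)$ (independent of $z \in \rho(A)$ by \cref{prop:resolventproperties}). The plan is to transfer the classical Riesz decomposition $X = \mathcal{R}(P) \oplus \mathcal{R}(I-P)$ of the closed operator $\overline{A}$ to $P_\sigma(A) = P|_{D}$, using only that $D$ is dense in $X$ and that $P$ is a bounded projection. The key facts I intend to prove are the two identities
\begin{equation*}
    \overline{\mathcal{R}(P_\sigma(A))} = \mathcal{R}(P), \qquad \overline{\mathcal{R}(I-P_\sigma(A))} = \mathcal{R}(I-P).
\end{equation*}
Once these hold, the first part of \eqref{eq:directsumXA} is exactly the classical decomposition recalled before the lemma. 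Here $I - P_\sigma(A)$ is understood as $(I-P)|_{D}$.

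For the first identity, the inclusion $\mathcal{R}(P|_D) \subseteq \mathcal{R}(P)$ is immediate. The range of a bounded projection equals $\mathcal{N}(I-P)$, so it is closed, and therefore $\overline{\mathcal{R}(P|_D)} \subseteq \mathcal{R}(P)$. For the reverse inclusion, take $x \in \mathcal{R}(P)$, so $x = Px$. By density, choose $d_m \in D$ with $d_m \to x$. Then $Pd_m \to Px = x$ by continuity of $P$, and each $Pd_m$ lies in $\mathcal{R}(P|_D)$. The second identity follows the same way, using $I-P$, which is also a bounded projection and has closed range $\mathcal{N}(P)$.

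For the inclusion of kernels, note that $\mathcal{N}(P_\sigma(A)) = \mathcal{N}(P) \cap D \subseteq \mathcal{N}(P)$. Since $\mathcal{N}(P)$ is closed, we get $\overline{\mathcal{N}(P_\sigma(A))} \subseteq \mathcal{N}(P) = \mathcal{R}(I-P) = \overline{\mathcal{R}(I-P_\sigma(A))}$. This argument does not show the reverse inclusion, which is consistent with the lemma stating only an inclusion.

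The main point needing care is that $D$ is dense. This holds by \cref{def:resolvent}, since $z \in \rho(A)$ near $\sigma$. A second point is the identification $P_\sigma(A) = P|_D$, which comes from \eqref{eq:resolventclosure} and \cref{lemma:Dunford}. Beyond these, I expect no real obstacle.
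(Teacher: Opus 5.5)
Your proposal is correct and follows essentially the paper's argument. Both identify $\overline{\mathcal{R}(P_\sigma(A))}$ and $\overline{\mathcal{R}(I-P_\sigma(A))}$ with the closed ranges of the Riesz projection $P_\sigma(\overline{A})$ and of $I-P_\sigma(\overline{A})$, and then invoke the classical decomposition. The differences are cosmetic: you prove the density step directly rather than citing \cref{lemma:inclusions} for the closure $\overline{P_\sigma(A)} = P_\sigma(\overline{A})$, and for the kernel inclusion you pass through the closed set $\mathcal{N}(P_\sigma(\overline{A}))$, whereas the paper first observes $\mathcal{N}(P_\sigma(A)) \subseteq \mathcal{R}(I-P_\sigma(A))$ and then takes closures.
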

\begin{proof}
\cref{lemma:inclusions} together with the fact that $\overline{P_\sigma(A)} = P_\sigma(\overline{A})$ tells us that $\mathcal{R}(P_\sigma(A)) \subseteq \mathcal{R}(P_\sigma(\overline{A})) \subseteq \overline{\mathcal{R}(P_\sigma(A))}$. Since $P_\sigma(\overline{A})$ is a bounded linear projection, its range is closed. Thus, taking the closure of these inclusions implies $\mathcal{R}(P_\sigma(\overline{A})) = \overline{\mathcal{R}(P_\sigma(A))}$. A similar argument for the closable linear operator $I - P_\sigma(A)$, with closure the projection $I - P_\sigma(\overline{A})$, shows that $\mathcal{R}(I - P_\sigma(\overline{A})) = \overline{\mathcal{R}(I - P_\sigma(A))}$. 

To prove the remaining claim, let $\varphi \in \mathcal{N}(P_\sigma(A))$ be given. Then $(I - P_\sigma(A))\varphi = \varphi$ and thus $\varphi \in \mathcal{R}(I-P_\sigma(A))$. Hence, $\mathcal{N}(P_\sigma(A)) \subseteq \mathcal{R}(I-P_\sigma(A))$ and taking closures yields the claim.
\end{proof}

It is important to note that the reverse inclusion in the second part of \eqref{eq:directsumXA} does not hold in general because $P_\sigma(A)$ is not necessarily a projection. This observation is consistent with the general kernel properties from \cref{lemma:inclusions} and the earlier derived identity $\mathcal{N}(P_\sigma(\overline{A})) = \overline{\mathcal{R}(I-P_\sigma(A))}$ from \cref{lemma:directsum}.

To study the isolated points of $\sigma(A)$ in more detail, let us now assume that $\sigma \in \sigma(A)$ is isolated and satisfies $r(\sigma,A) < \infty$. Combining \cref{lemma:spectraequalclosure} and \cref{lemma:orderpole} yields $\sigma \in \sigma(\overline{A})$ and $r(\sigma, \overline{A}) < \infty$. Moreover, the spectral theory for closed linear operators tells us that $\sigma \in \sigma_p(\overline{A})$ and
\begin{equation*}
    E_\sigma(\overline{A}) = \mathcal{R}(P_\sigma(\overline{A}))= \mathcal{N}((\sigma I - \overline{A})^{k(\sigma,\overline{A})}), \quad Q_{\sigma}(\overline{A}) = \mathcal{N} (P_\sigma(\overline{A})) = \mathcal{R}((\sigma I - \overline{A})^{k(\sigma,\overline{A})}),
\end{equation*}
where it is important to note that $k(\sigma,\overline{A}) = r(\sigma,\overline{A})$, and that $\sigma$ is not necessarily an eigenvalue of $A$ (\cref{lemma:spectraequalclosure}). If in addition $\sigma \in \sigma_p(A)$, then \cref{lemma:mgakineq} and \cref{lemma:orderpole} yields the possible strict inequality $k(\sigma,A) \leq r(\sigma,A)$ and inclusion $E_\sigma(A) \subseteq E_\sigma(\overline{A})$.

\begin{remark} \label{remark:isolatedpoints}
Even when $\sigma$ is an isolated eigenvalue of finite type of $A$, implying that $\mathcal{R}(P_\sigma(A))$ is finite-dimensional and therefore closed, this range may differ from our more algebraic definition of the generalized eigenspace $E_\sigma(A)$ of $A$ at $\sigma$. This discrepancy arises because $P_\sigma(A)$ can map vectors out of $\mathcal{D}(A)$, leading to the possible strict inclusion $E_\sigma(A) \subsetneq \mathcal{R}(P_\sigma(A)) = E_\sigma(\overline{A})$. Consequently, a spectral analysis based purely on the isolated points of $A$ does not fully reflect the spectral structure of $A$ itself, but it rather captures most of the relevant spectral information of its closure $\overline{A}$. \hfill $\lozenge$
\end{remark}

In the sequel, we aim to analyse the spectral structure of two specific linear operators associated to a given closable linear operator $A$. First, we study spectral relations between $A$ and a restriction of $A$ to a subspace (\cref{def:partof}). Second, we investigate spectral relations between $A$ and a conjugated form of $A$ (\cref{def:similar}).

\begin{definition} \label{def:partof}
Let $X_{|}$ be a linear subspace of a complex Banach space $X$ and consider a linear operator $A : \mathcal{D}(A) \subseteq X \to X$. The linear operator $A_{|} : \mathcal{D}(A_{|}) \subseteq \overline{X_{|}} \to \overline{X_{|}}$ defined by
\begin{equation*}
    \mathcal{D}(A_{|}) \coloneqq \{ \varphi \in \mathcal{D}(A) \cap X_{|}  : A\varphi \in X_{|} \}, \quad A_{|}\varphi \coloneqq A\varphi.
\end{equation*}
is called the \emph{part of} $A$ in $X_{|}$. \hfill $\lozenge$
\end{definition}

The following result considers a closable linear operator $A$ and relates the spectral data of $A$ with that of $A_{|}$, which can be regarded as an extension of \cite[Lemma IV.1.15]{Engel2000} and \cite[Proposition 1.1]{Kaashoek1992} from closed to closable linear operators.

\begin{proposition} \label{prop:spectraequal}
Let $A : \mathcal{D}(A) \subseteq X \to X$ be a closable linear operator and let $A_{|}$ denote the part of $A$ in $X_{|} \subseteq X$. Then $A_{|}$ is closable and the following statements hold:
\begin{enumerate}
    \item If $\mathcal{D}(A) \subseteq X_{|}$, then there holds:
    \begin{enumerate}
        \item $\sigma_p(A_{|}) = \sigma_p(A)$.
        \item If $\sigma \in \sigma_p(A_{|})$, then $m_g(\sigma,A_{|}) = m_g(\sigma,A), m_a(\sigma,A_{|}) = m_a(\sigma,A)$ and $k(\sigma,A_{\vert}) = k(\sigma,A)$. If in addition $\sigma$ is of finite type, then the partial multiplicities of $\sigma$ considered as an eigenvalue of $A_{|}$ are equal to the partial multiplicities of $\sigma$ considered as an eigenvalue of $A$.
    \end{enumerate}

    \item If $\mathcal{D}(A) \subseteq X_{|} \subseteq \mathcal{R}(zI-A)$ for all $z \in \rho(A)$, then $A$ satisfies \ref{hyp:SH1}, and there holds:
    \begin{enumerate}
        \item $\sigma(A_{|}) \subseteq \sigma(A)$.
        \item If $\sigma \in \sigma(A_{|})$ is isolated, then $r(\sigma,A_{|}) = r(\sigma,A)$.
    \end{enumerate}
\end{enumerate}
\end{proposition}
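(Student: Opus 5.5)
Closability of $A_{|}$ comes first. The closure $\overline{A}$ is a closed extension of $A$ on $X$. Define $B$ as the restriction of $\overline{A}$ to $\{\varphi\in\mathcal{D}(\overline{A})\cap\overline{X_{|}} : \overline{A}\varphi\in\overline{X_{|}}\}$. The graph of $B$ is $\Gamma(\overline{A})\cap(\overline{X_{|}}\times\overline{X_{|}})$, an intersection of closed sets, so $B$ is closed. It also extends $A_{|}$, hence $A_{|}$ is closable by \cref{def:closedandclosable}.

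For part 1, assume $\mathcal{D}(A)\subseteq X_{|}$. Then $\mathcal{D}(A_{|})=\{\varphi\in\mathcal{D}(A): A\varphi\in X_{|}\}$. The key observation is that every vector of a Jordan chain of $A$ at $\sigma$ lies in $\mathcal{D}(A_{|})$. Indeed, if $\varphi_i\in\mathcal{D}(A)$, then $A\varphi_i=\sigma\varphi_i+\varphi_{i-1}$ (or $\sigma\varphi_0$ when $i=0$), which lies in $\mathcal{D}(A)\subseteq X_{|}$. More generally, an induction on $l$ should give $\mathcal{N}((\sigma I-A)^l)=\mathcal{N}((\sigma I-A_{|})^l)$ for all $l\geq1$. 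The inclusion $\supseteq$ is trivial because $A_{|}\subseteq A$. For $\subseteq$, take $\varphi\in\mathcal{N}((\sigma I-A)^l)$. Then $\varphi\in\mathcal{D}(A)$ and $A\varphi=\sigma\varphi-(\sigma I-A)\varphi$. Here $(\sigma I-A)\varphi$ lies in $\mathcal{N}((\sigma I-A)^{l-1})\subseteq\mathcal{D}(A)\subseteq X_{|}$, so $\varphi\in\mathcal{D}(A_{|})$, and the induction hypothesis applied to $(\sigma I-A)\varphi$ finishes the step. This identity immediately gives equality of point spectra, eigenspaces and generalized eigenspaces; the latter are closures of the same unions. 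It also shows that Jordan chains of $A$ and $A_{|}$ coincide. Hence $m_g$, $m_a$, the ranks, the ascent $k$ and, in the finite-type case, the partial multiplicities all agree via the Gohberg--Sigal arrangement.

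For part 2, assume $\mathcal{D}(A)\subseteq X_{|}\subseteq\mathcal{R}(zI-A)$ for all $z\in\rho(A)$. Hypothesis \ref{hyp:SH1} then holds trivially. For 2(a), fix $z\in\rho(A)$; injectivity of $zI-A_{|}$ is inherited from $zI-A$. Next I claim $\mathcal{R}(zI-A_{|})=X_{|}$. Given $\psi\in X_{|}\subseteq\mathcal{R}(zI-A)$, set $\varphi=R(z,A)\psi\in\mathcal{D}(A)\subseteq X_{|}$. Then $A\varphi=z\varphi-\psi\in X_{|}$, so $\varphi\in\mathcal{D}(A_{|})$ and $(zI-A_{|})\varphi=\psi$. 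Consequently the range is dense in $\overline{X_{|}}$, and $R(z,A_{|})=R(z,A)|_{X_{|}}$ is bounded. Thus $z\in\rho(A_{|})$, i.e. $\rho(A)\subseteq\rho(A_{|})$, which is 2(a).

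The main obstacle is 2(b), since isolatedness of $\sigma$ in $\sigma(A_{|})$ alone need not make $\sigma$ isolated in $\sigma(A)$. I would first try to show that $\sigma(A_{|})=\sigma(A)$, and hence that the resolvent sets coincide. Take $z\in\rho(A_{|})$. Then $\mathcal{D}(A)\subseteq X_{|}=\mathcal{R}(zI-A_{|})\subseteq\mathcal{R}(zI-A)$, and injectivity of $zI-A$ follows because any kernel element lies in $\mathcal{D}(A)\subseteq X_{|}$ and is then in $\mathcal{D}(A_{|})$. For boundedness of $R(z,A)$, I would use the identity $R(z,A)\psi=\frac{1}{z}\bigl(\psi+A R(z,A)\psi\bigr)$ for $z\neq0$, or equivalently a resolvent-type factorization through a point $w\in\rho(A)$ when one exists. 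If $\rho(A)=\emptyset$ the hypothesis is vacuous; in that case I would settle for the weaker local argument near $\sigma$. Once $\sigma$ is isolated in both spectra, $R(\cdot,A_{|})=R(\cdot,A)|_{X_{|}}$ on a punctured disc. The Laurent coefficients then satisfy $R_l(\sigma,A_{|})=R_l(\sigma,A)|_{X_{|}}$. Since $X_{|}\supseteq\mathcal{D}(A)$ is dense in $X$ (because $\mathcal{R}(zI-A)$ is dense) and $R_l(\sigma,A)$ is bounded, $R_l(\sigma,A_{|})=0$ if and only if $R_l(\sigma,A)=0$, exactly as in \cref{lemma:orderpole}. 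Hence $r(\sigma,A_{|})=r(\sigma,A)$.
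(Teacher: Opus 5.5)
Your closability argument (restricting $\overline{A}$ to $\overline{X_{|}}$, which is slightly more careful than the paper's since $A_{|}$ acts in $\overline{X_{|}}$), your kernel induction in part 1 and your proof of 2(a) are essentially the paper's. For 2(b), the paper's proof consists only of the observation $R_l(\sigma,A_{|})=R_l(\sigma,A)|_{X_{|}}$. You rightly notice that this tacitly needs two things: (i) a punctured disc around $\sigma$ inside $\rho(A)$, and (ii) that vanishing of $R_l(\sigma,A)$ on $X_{|}$ forces $R_l(\sigma,A)=0$. However, neither of your justifications works as written.

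For (ii), the density claim is a non sequitur. You have $X_{|}\subseteq\mathcal{R}(zI-A)$, so density of $\mathcal{R}(zI-A)$ says nothing about $X_{|}$, and $\mathcal{D}(A)$ is not assumed dense. For example, $A\varphi=\varphi'$ on $C([0,1],\mathbb{C})$ with $\varphi(0)=0$ and $X_{|}=\mathcal{D}(A)$ satisfies the hypothesis, yet $\overline{X_{|}}\neq X$. Unlike in \cref{lemma:orderpole}, the restricted domain here is $X_{|}$ rather than $\mathcal{R}(zI-A)$. What does work is the resolvent identity. Fix $w\in\rho(A)$, take $\psi\in\mathcal{R}(zI-A)$, and set $\varphi=R(w,A)\psi\in\mathcal{D}(A)\subseteq X_{|}$. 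Then $R(z,A)\psi=\varphi+(w-z)R(z,A)\varphi$, and integrating against $(z-\sigma)^{-(l+1)}$ over $\Gamma_\sigma$ yields
\[
R_l(\sigma,A)\psi=(w-\sigma)R_l(\sigma,A)\varphi-R_{l-1}(\sigma,A)\varphi, \qquad l\le -1 .
\]
Hence if $R_j(\sigma,A)|_{X_{|}}=0$ for all $j\le m$ (some $m\le -1$), then $R_j(\sigma,A)=0$ for all $j\le m$. This gives $r(\sigma,A_{|})=r(\sigma,A)$, including the essential-singularity case.

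For (i), your sketch of $\rho(A_{|})\subseteq\rho(A)$ uses $X_{|}=\mathcal{R}(zI-A_{|})$ for $z\in\rho(A_{|})$, which you only proved for $z\in\rho(A)$. The identity $R(z,A)\psi=z^{-1}(\psi+AR(z,A)\psi)$ is circular, because $AR(z,A)=zR(z,A)-I$ is bounded only if $R(z,A)$ is. The factorization through $w\in\rho(A)$ that you allude to does the job:
\begin{itemize}
\item If $(zI-A)u=\chi$ with $u\in\mathcal{D}(A)$, then $\chi=(wI-A)u+(z-w)u\in\mathcal{R}(wI-A)$, because $\mathcal{D}(A)\subseteq X_{|}\subseteq\mathcal{R}(wI-A)$.
\item Put $u_1=R(w,A)\chi$. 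Then $u,u_1\in\mathcal{D}(A)\subseteq X_{|}$ and $A(u-u_1)=zu-wu_1\in X_{|}$, so $u-u_1\in\mathcal{D}(A_{|})$ with $(zI-A_{|})(u-u_1)=(w-z)u_1$.
\item Hence $u=[I+(w-z)R(z,A_{|})]R(w,A)\chi$, which is the required bound.
\item For density of $\mathcal{R}(zI-A)$, write $\psi=(zI-A)R(w,A)\psi+(w-z)R(w,A)\psi$ for $\psi$ in the dense set $\mathcal{R}(wI-A)$; the last term lies in $\mathcal{D}(A)\subseteq X_{|}\subseteq\overline{\mathcal{R}(zI-A_{|})}$.
\end{itemize}
Thus $\sigma(A_{|})=\sigma(A)$ whenever $\rho(A)\neq\emptyset$. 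If $\rho(A)=\emptyset$, there is no local argument to fall back on: $R(\cdot,A)$ exists nowhere, $\sigma(A)=\mathbb{C}$ has no isolated points, and $r(\sigma,A)$ is undefined. So 2(b) has to be read under $\rho(A)\neq\emptyset$.
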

\begin{proof}
To prove that $A_{|}$ is closable, we observe that $A$ is a linear extension of $A_{|}$, and $\overline{A}$ is a closed linear extension of $A$. Therefore $\overline{A}$ is a closed linear extension of $A_{|}$, which proves the claim. To prove the first statement, assume that $\mathcal{D}(A) \subseteq X_{|}$. Since $A$ is a linear extension of $A_{|}$, it follows that $\sigma_p(A_{|}) \subseteq \sigma_p(A)$. To prove the other inclusion, let $\sigma \in \sigma_p(A)$ be given. Then there exists a nonzero $\varphi \in \mathcal{D}(A)$ such that $A \varphi = \sigma \varphi \in \mathcal{D}(A) \subseteq X_{|}$. Hence, \cref{def:partof} tells us that $\varphi \in \mathcal{D}(A_{|})$ and therefore $\sigma \in \sigma_p(A_{|})$, which proves (a). To prove (b), let $\sigma \in \sigma_p(A_{|})$ be given. Clearly, $\mathcal{N}(\sigma I-A) = \mathcal{N}(\sigma I-A_{|})$ due to \cref{def:partof} and $\mathcal{D}(A) \subseteq X_{|}$, and thus $m_g(\sigma, A_{|}) = m_g(\sigma, A)$. An induction argument shows that $\mathcal{N}((\sigma I - A)^l) = \mathcal{N}((\sigma I - A_{|})^l) \subseteq \mathcal{D}(A_{|}) \subseteq \mathcal{D}(A)$ for any integer $l \geq 1$, and thus $m_a(\sigma,A_{|}) = m_a(\sigma,A)$. Clearly, $k(\sigma,A_{|}) = k(\sigma,A)$ if one of them is infinite. Assume now in addition that $\sigma$ is of finite type. Since the action of $A_{|}$ equals that of $A$ on $\mathcal{D}(A_{|})$, we have that any canonical basis of (generalized) eigenvectors of $A$ at $\sigma$ is also a canonical basis of (generalized) eigenvectors of $A_{|}$ at $\sigma$, and vice versa. This proves the claim regarding the partial multiplicities and the ascent.

To prove the second statement, assume that $\mathcal{D}(A) \subseteq X_{|} \subseteq \mathcal{R}(zI-A)$ for all $z \in \rho(A)$. This in particular implies that $A$ satisfies \ref{hyp:SH1}. To prove (a), fix $z \in \rho(A)$ and note that $zI - A_{|}$ is injective as $\mathcal{D}(A_{|}) \subseteq \mathcal{D}(A)$. To prove that $zI-A_{|}$ has dense range in $\overline{X_{|}}$, we show that $\mathcal{R}(zI-A_{|}) = X_{|}$. First, note from \cref{def:partof} that $\mathcal{R}(zI - A_{|}) \subseteq X_{|}$. Conversely, if $\varphi \in X_{|}$, then $\varphi \in \mathcal{R}(zI-A)$ by the additional assumption and thus $R(z,A)\varphi = \psi \in \mathcal{D}(A) \subseteq X_{|}$ is well-defined. Hence, $\varphi = (zI-A_{|})\psi \in \mathcal{R}(zI-A_{|})$, which proves the claim. Note that $R(z,A)_{|} : X_{|} \to \mathcal{D}(A_{|})$ is the algebraic inverse of $zI - A_{|}$ since $(zI-A_{|})R(z,A)_{|} = I_{X_{|}}$ and $R(z,A)_{|} (zI-A_{|}) = I_{\mathcal{D}(A_{|})}$. Hence, $R(z,A_{|}) = R(z,A)_{|}$, where the latter operator is bounded as $R(z,A)$ is bounded. We conclude that $z \in \rho(A_{|})$, which proves the claim. To prove (b), let $\sigma$ be an isolated point of $\sigma(A_{|})$. As $R(z,A_{|}) = R(z,A)_{|}$ for $z \in \rho(A) \subseteq \rho(A_{|})$, there holds $R_l(\sigma,A_{|}) = R_l(\sigma,A)_{|}$ for all $l \in \mathbb{Z}$, which proves the claim.
\end{proof}

\begin{definition} \label{def:similar}
Let $X$ and $Y$ be complex Banach spaces, and let $A: \mathcal{D}(A) \subseteq X \to X$ and  $B: \mathcal{D}(B) \subseteq Y \to Y$ be linear operators. If there exists an invertible bounded linear operator $V: X \to Y$ such that 
\begin{equation*}
    \mathcal{D}(B) = \{ \varphi \in Y :  V^{-1}\varphi \in \mathcal{D}(A) \}, \quad B = VAV^{-1},
\end{equation*}
then we say that $A$ and $B$ are \emph{similar} (by $V$) and denote this by $A \sim B$. \hfill $\lozenge$
\end{definition}

The following result considers a closable linear operator $A$ and relates the spectral data of $A$ with that of $B$, which can be regarded as an extension of the findings in \cite[Section II.2.1]{Engel2000} from closed to closable linear operators.

\begin{proposition} \label{prop:similarity}
Let $A: \mathcal{D}(A) \subseteq X \to X$ be a closable linear operator. If $A \sim B$ by $V : X \to Y$, then $B$ is closable, and the following statements hold:
\begin{enumerate}
    \item $\sigma_p(B) = \sigma_p(A)$ and $\sigma(B) = \sigma(A)$.
    \item If $\sigma \in \sigma_p(A)$, then $m_g(\sigma,B) = m_g(\sigma,A), m_a(\sigma,B) = m_a(\sigma,A)$ and $k(\sigma, B) = k(\sigma, A)$. If in addition $\sigma$ is of finite type, then the partial multiplicities of $\sigma$ considered as an eigenvalue of $A$ are equal to the partial multiplicities of $\sigma$ considered as an eigenvalue of $B$.
    \item If $A$ satisfies \ref{hyp:SH1}, then $B$ satisfies \ref{hyp:SH1}. If in addition $\sigma \in \sigma(A)$ is isolated, then $r(\sigma,B) = r(\sigma,A)$.
\end{enumerate}
\end{proposition}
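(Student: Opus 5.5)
The plan is to transport every object attached to $A$ to $B$ through the bounded isomorphism $V$, and then read off each claim. I will use that $V$ maps $\mathcal{D}(A)$ bijectively onto $\mathcal{D}(B)$ and that $B = VAV^{-1}$ on $\mathcal{D}(B)$.

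For closability, I take the closed extension $\overline{A}$ and set $\widetilde{B} \coloneqq V\overline{A}V^{-1}$ with domain $V\mathcal{D}(\overline{A})$. Its graph is the image of $\Gamma(\overline{A})$ under the homeomorphism $(\psi,\varphi)\mapsto (V\psi,V\varphi)$ of $X\times X$ onto $Y\times Y$, so it is closed. Since $\widetilde{B}$ extends $B$, the operator $B$ is closable; in fact $\overline{B} = V\overline{A}V^{-1}$.

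For statements 1 and 2, I will use the identity
\begin{equation*}
    zI - B = V(zI - A)V^{-1} \quad \text{on } \mathcal{D}(B).
\end{equation*}
This gives $\mathcal{N}(zI-B) = V\mathcal{N}(zI-A)$ and $\mathcal{R}(zI-B) = V\mathcal{R}(zI-A)$. Because $V$ is a homeomorphism, injectivity and density of the range transfer in both directions. Moreover $R(z,B) = VR(z,A)V^{-1}$ on $\mathcal{R}(zI-B)$, so boundedness transfers as well. Hence $\rho(B)=\rho(A)$, and therefore $\sigma(B)=\sigma(A)$ and $\sigma_p(B)=\sigma_p(A)$.

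By induction, $(\sigma I - B)^l = V(\sigma I-A)^lV^{-1}$ with the natural domains, so $\mathcal{N}((\sigma I-B)^l) = V\mathcal{N}((\sigma I-A)^l)$. The map $V$ sends closures of unions to closures of unions, so $E_\sigma(B) = VE_\sigma(A)$, and dimensions are preserved. Likewise, $\{\varphi_0,\dots,\varphi_{k-1}\}$ is a Jordan chain of $A$ if and only if $\{V\varphi_0,\dots,V\varphi_{k-1}\}$ is a Jordan chain of $B$, with the same lengths and ranks. Consequently a canonical basis for $A$ at $\sigma$ maps to a canonical basis for $B$. This gives equality of $m_g$, $m_a$, the ascent $k$, and the partial multiplicities.

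For statement 3, fix $z \in \rho(B)=\rho(A)$. Hypothesis \ref{hyp:SH1} for $A$ gives $\mathcal{D}(A)\subseteq\mathcal{R}(zI-A)$, and applying $V$ yields $\mathcal{D}(B)\subseteq\mathcal{R}(zI-B)$, which is \ref{hyp:SH1} for $B$. If $\sigma$ is isolated, \cref{lemma:Dunford} applies to both operators. Since $V$ is bounded, it commutes with the contour integral, so $R_l(\sigma,B) = VR_l(\sigma,A)V^{-1}$ for all $l \in \mathbb{Z}$. As $V$ is invertible, $R_l(\sigma,B)=0$ exactly when $R_l(\sigma,A)=0$, and hence $r(\sigma,B)=r(\sigma,A)$.

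There is no real obstacle here; the argument is bookkeeping of domains. The one point needing care is checking that the composed operators, in particular the powers $(\sigma I-B)^l$ and the resolvents, have exactly the conjugated domains. This follows from $\mathcal{D}(B)=V\mathcal{D}(A)$ by a short induction.
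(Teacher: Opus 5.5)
Your proof is correct and follows the paper's argument essentially step by step: you conjugate $zI-A$, its powers, the resolvent and the Laurent coefficients $R_l(\sigma,\cdot)$ by $V$, and transfer \ref{hyp:SH1} via $\mathcal{R}(zI-B)=V\mathcal{R}(zI-A)$. The only variation is the closability step. The paper uses the sequential criterion of \cref{lemma:closabledef}, whereas you push the closed graph $\Gamma(\overline{A})$ through the homeomorphism $(\psi,\varphi)\mapsto(V\psi,V\varphi)$; this is equally valid and also gives $\overline{B}=V\overline{A}V^{-1}$.
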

\begin{proof}
We first prove that $B$ is closable by an application of \cref{lemma:closabledef}. Let $(\varphi_m)_m$ be a sequence in $\mathcal{D}(B)$ converging in norm to zero and suppose that $(B\varphi_m)_m$ converges in norm to some $\psi \in Y$. Since $V \in \mathcal{L}(X,Y)$ is invertible, the bounded inverse theorem guarantees $V^{-1} \in \mathcal{L}(Y,X)$. Consequently, the sequence $(V^{-1}\varphi_m)_m$ lies in $\mathcal{D}(A)$ and converges to zero. We observe that $A V^{-1} \varphi_m = V^{-1} B \varphi_m \to V^{-1}\psi$ as $m \to \infty$ by the continuity of $V^{-1}$. Since $A$ is closable, $V^{-1}\varphi_m \to 0$ and $AV^{-1}\varphi_m \to V^{-1}\psi$ as $m \to \infty$, it follows that $V^{-1}\psi = 0$. Since $V^{-1}$ is injective, $\psi = 0$ and thus $B$ is closable.

To prove the first statement, observe that $V$ (or $V^{-1}$) provides a one-to-one map from eigenvectors of $B$ (or $A$) onto eigenvectors of $A$ (or $B$), and so $\sigma_p(B) = \sigma_p(A)$. To prove $\sigma(B) = \sigma(A)$, let $z \in \rho(A)$ be given. Since $zI - A$ is injective, $zI - B = V(zI - A) V^{-1}$ is injective as well. Moreover, it holds that $Y = V(\overline{\mathcal{R}(zI-A)}) \subseteq \overline{V(\mathcal{R}(zI-A))} = \overline{\mathcal{R}(zI-B)}$ due to the continuity of $V$, so $zI - B$ has dense range. Moreover, as $R(z,B) = V R(z,A)V^{-1}$, we see that $R(z,B)$ is bounded as $V$ and $V^{-1}$ are bounded. Hence, $z \in \rho(B)$ and so $\rho(A) \subseteq \rho(B)$. The other inclusion can be proven by interchanging the roles of $A$ and $B$, and thus $\sigma(B) = \sigma(A)$. 

To prove the second statement, recall from the first part of the proof that $\mathcal{N}(\sigma I-B) = V(\mathcal{N}(\sigma I-A))$ and therefore $m_g(\sigma,B) = m_g(\sigma,A)$ as $V$ is bijective. An induction argument shows that $\mathcal{N}((\sigma I-B)^l) = V(\mathcal{N}((\sigma I-A)^l))$ for any integer $l \geq 1$, and thus $m_a(\sigma,B) = m_a(\sigma,A)$. Clearly, $k(\sigma,B)  = k(\sigma,A)$ if one of them is infinite. Assume now in addition that $\sigma$ is of finite type. If $\varphi_{1,0},\dots,\varphi_{1,k_{1}-1},\dots,\varphi_{p,0},\dots,\varphi_{p,k_{p}-1}$ is a canonical basis of (generalized) eigenvectors of $A$ at $\sigma$, then $V\varphi_{1,0},\dots,V\varphi_{1,k_{1}-1},\dots,V\varphi_{p,0},\dots,V\varphi_{p,k_{p}-1}$ yields a canonical basis of (generalized) eigenvectors of $B$ at $\sigma$ since $V$ is bounded and invertible. For the converse, one uses the fact that $V^{-1}$ is also bounded and invertible. This proves the claim regarding the partial multiplicities and the ascent.

To prove the third statement, let us assume that $A$ satisfies \ref{hyp:SH1} and let $\varphi \in \mathcal{D}(B)$ be given. Then $V^{-1}\varphi \in \mathcal{D}(A) \subseteq \mathcal{R}(zI-A)$ for all $z \in \rho(A)$. Note that $\varphi = V(V^{-1} \varphi) \in V(\mathcal{R}(zI-A)) = \mathcal{R}(zI-B)$, which shows that $\mathcal{D}(B) \subseteq \mathcal{R}(zI-B)$. Now, let $\sigma$ be an isolated point of $\sigma(A)$. As $R(z,B) = VR(z,A)V^{-1}$ for $z \in \rho(A)$, there holds $R_l(\sigma,B) = VR_l(\sigma,A)V^{-1}$ for all $l \in \mathbb{Z}$, which proves the claim.
\end{proof}

\subsection{Equivalence and Jordan chains} \label{subsec:equivalence}
Let $X,X',Y$ and $Y'$ be complex Banach spaces and $\Omega \subseteq \mathbb{C}$ a nonempty open set. In this section, we will consider an operator-valued function $L : \Omega \to L(X,Y)$. We call $L$ a \emph{bounded (closed or closable) operator-valued function} if the linear operator $L(z) : \mathcal{D}(L(z)) \to Y$ is bounded (closed or closable) for all $z \in \Omega$. We recall the definition of holomorphicity of type (A) from \cite[Section VII.2.1]{Kato1995} for operator-valued functions.

\begin{definition}
An operator-valued function $L : \Omega \to L(X,Y)$ is called \emph{holomorphic} on $\Omega$ if the domain $\mathcal{D}(L(z)) = \mathcal{X}$ is $z$-independent for all $z \in \Omega$ and the map $\Omega \ni z \to L(z)\varphi \in Y$ is holomorphic for all $\varphi \in \mathcal{X}$. \hfill $\lozenge$
\end{definition}

If $A$ is a closable linear operator, the operator-valued function $\mathbb{C} \ni z \mapsto zI-A \in L(X)$ is holomorphic and $\rho(A) \ni z \mapsto R(z,A) \in L(X)$ is holomorphic under the additional assumption of \ref{hyp:SH1}. This holds because i) $\mathbb{C}$ and $\rho(A)$ are open in the complex plane (\cref{prop:resolventopen}), ii) the domains $\mathcal{D}(zI-A) = \mathcal{D}(A)$ and $\mathcal{D}(R(z,A)) = \mathcal{R}(zI-A)$ are $z$-independent (\cref{prop:resolventproperties}), and iii) the following derivatives exist:
\begin{equation} \label{eq:examplederi}
\frac{d}{dz} (zI-A)\varphi = \varphi, \quad \frac{d}{dz} R(z,A)\psi = -R(z,A)^2 \psi,
\end{equation}
for all $\varphi \in \mathcal{D}(A)$ and $\psi \in \mathcal{R}(zI-A)$, see also \cite[Theorem 5.1-C]{Taylor1986}  for a proof of the second equality in \eqref{eq:examplederi}. The following definition is an extension of the notion of equivalence between closed and bounded linear operators presented in \cite[Section I.1.2]{Kaashoek1992}.

\begin{definition} \label{def:equi}
Two holomorphic closable operator-valued functions $L : \Omega \to L(X,Y)$ and $M : \Omega \to L(X',Y')$, with $z$-independent domains $\mathcal{D}(L(z)) =: \mathcal{X}$ and $\mathcal{D}(M(z)) =: \mathcal{X}'$, are called \emph{equivalent} on $\Omega$ if the following statements hold:
\begin{enumerate}
    \item There exist holomorphic closable operator-valued functions $E : \Omega \to L(X',X)$ and $F : \Omega \to L(Y,Y')$ whose values $E(z) : \mathcal{X}' \to \mathcal{X}$ and $F(z) : \mathcal{Y} \to \mathcal{Y}'$ are bijective with $\mathcal{D}(F(z)) \eqqcolon \mathcal{Y}$ and $\mathcal{R}(F(z)) \eqqcolon \mathcal{Y}'$ being $z$-independent for all $z \in \Omega$.
    \item $E(\cdot)^{-1} : \Omega \to L(X,X')$ and $F(\cdot)^{-1} : \Omega \to L(Y',Y)$ are holomorphic closable operator-valued functions.
    \item $\mathcal{R}(L(z)) \subseteq \mathcal{Y}$ so that we can consider the restriction $\tilde{F}(z) \coloneqq F(z)|_{\mathcal{R}(L(z))}$ for all $z \in \Omega$.
    \item $L$ and $M$ are conjugated by $E$ and $F$:
    \begin{equation} \label{eq:equivalenceM}
        M(z) = F(z)L(z)E(z), 
    \end{equation}
    for all $z \in \Omega$. \hfill $\lozenge$
\end{enumerate}
\end{definition}

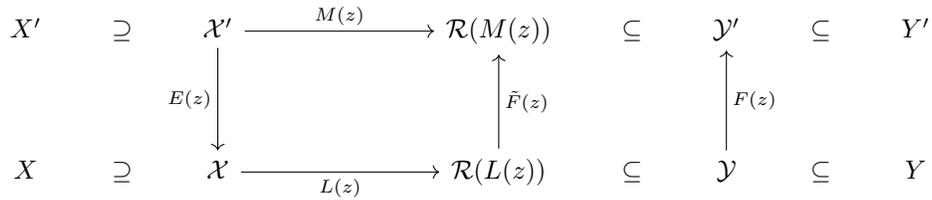
\begin{figure}[ht]
\centering
\[\begin{tikzcd}[column sep=scriptsize]
	{X'} & \supseteq & {\mathcal{X}'} &&&& {\mathcal{R}(M(z))} & \subseteq & {\mathcal{Y}'} & \subseteq & {Y'} \\
	\\
	X & \supseteq & {\mathcal{X}} &&&& {\mathcal{R}(L(z))} & \subseteq & {\mathcal{Y}} & \subseteq & Y
	\arrow["{M(z)}", from=1-3, to=1-7]
	\arrow["{E(z)}"', from=1-3, to=3-3]
	\arrow["{L(z)}"', from=3-3, to=3-7]
	\arrow["{\tilde{F}(z)}"', from=3-7, to=1-7]
	\arrow["{F(z)}"', from=3-9, to=1-9]
\end{tikzcd}\]
\caption{Relations between the spaces and operators involved in \cref{def:equi}.}
\label{fig:comm-diagram}
\end{figure}

Compared to the definition of equivalence for closed and bounded linear operators from \cite[Section I.1.2]{Kaashoek1992}, the above definition makes additional assumptions on the domains and ranges of the operators $E,F,L$ and $M$, see \cref{fig:comm-diagram} for a schematic overview. In particular, the first condition in \cref{def:equi} ensures that the domains and ranges of $E(z)$ and $F(z)$ are $z$-independent, which is essential for studying the holomorphicity of these operators and their inverses. In addition, this setup permits a well-defined analysis of the Jordan chains (\cref{def:Jordanchain2}) of $L$ and $M$ in \cref{prop:equivalent chains}. The second condition is necessary because $E$ and $F$ need not be closed operator-valued functions, for which this additional assumption would follow automatically, see \cite[Section I.2]{Gohberg1990} for further details. This condition also plays an important role in the proofs of \cref{prop:equivalent chains} and \cref{thm:spectralequal}. The third condition ensures that the conjugation in \eqref{eq:equivalenceM} is well-defined, and note for all $z \in \Omega$ that $\mathcal{R}(\tilde{F}(z)) = \mathcal{R}(M(z))$ since $E(z)$ maps onto $\mathcal{D}(L(z))$. As a consequence, $\tilde{F}(z) : \mathcal{R}(L(z)) \to \mathcal{R}(M(z))$ is a bijection for all $z \in \Omega$. It is also valuable to note that neither $E(z)$ and $F(z)$ nor their inverses $E(z)^{-1}$ and $F(z)^{-1}$ are assumed to be bounded for $z \in \Omega$. This additional assumption would limit our ability to construct the important upcoming equivalence from \cref{thm:rhoAcharac} in \cref{sec:construction}. For the sake of simplicity, we still call the closable linear operators $E(z)$ and $F(z)$ \emph{(algebraically) invertible}.

\begin{remark} \label{remark:equivalencetypes}
Equivalence between closed (and bounded) linear operators was first introduced by Gohberg et al. \cite{Gohberg1978}, see also \cite{Gohberg1990,Kaashoek2022} for more information and applications. More recently, Engstr\"om and Torshage introduced in \cite{Engstroem2017} a notion of equivalence of the form \eqref{eq:equivalenceM} between (holomorphic) closable linear operator-valued functions $L$ and $M$, but the values of $E$ and $F$ were assumed to be bounded, which is too restrictive for our upcoming construction. \hfill $\lozenge$
\end{remark}

Next, we introduce the notions of a characteristic value and a Jordan chain of a closable operator-valued function $L$. The following definition may be viewed as a natural extension of the classical concepts of eigenvalues and Jordan chains for the linear case $L(z)=zI-A$ with $A\in L(X)$, as presented in \cref{def:resolvent} and \cref{def:Jordanchain}, respectively. We also refer to \cite[Section I.1.2]{Kaashoek1992} for more information in the setting of closed linear operators.

\begin{definition} \label{def:Jordanchain2}
Let $L : \Omega \to L(X,Y)$ be a holomorphic closable operator-valued function with $\mathcal{X} = \mathcal{D}(L(z))$ for all $z \in \Omega$. A point $\sigma \in \Omega$ is called a \emph{characteristic value} of $L$ at $\sigma$ if there exists a nonzero \emph{eigenvector} $q_0 \in \mathcal{X}$ such that $L(\sigma)q_0 = 0$. An ordered set $\{q_0,\dots,q_{k-1}\}$ of vectors in $\mathcal{X}$ is called a \emph{Jordan chain} of $L$ at $\sigma$ if $q_0 \neq 0$ and
\begin{equation*}
    L(z)[q_0 + (z-\sigma)q_1 + \cdots + (z-\sigma)^{k-1}q_{k-1}] = \mathcal{O}((z-\sigma)^k).
\end{equation*}
The vectors $q_1,\dots,q_{k-1}$ are called \emph{generalized eigenvectors} of $L$ at $\sigma$. The number $k \in \mathbb{N}$ is called the \emph{length} of the chain. If there exists a maximal length of the chain starting with $q_0$, then we call this length the \emph{rank} of $q_0$ and the holomorphic function $\mathbb{C} \ni z \mapsto \sum_{l=0}^{k-1}(z-\sigma)^l q_l \in \mathcal{X}$ the \emph{root function} of $L$ corresponding to $\sigma$. If there is no maximal length, then we say that $q_0$ has \emph{infinite rank}. \hfill $\lozenge$
\end{definition}

Let us now introduce the notions of geometric and algebraic multiplicity of $L$ at a characteristic value $\sigma \in \Omega$. The linear subspace $\mathcal{N}(L(\sigma))$ of $X$ is called the \emph{eigenspace} of $L$ corresponding to $\sigma$, and its dimension $m_g(L(\sigma)) \in \mathbb{N} \cup \{ \infty \}$ is referred to as the \emph{geometric multiplicity} of $L$ at $\sigma$. To introduce the algebraic multiplicity $m_a(L(\sigma)) \in \mathbb{N} \cup \{ \infty \}$ of $L$ at $\sigma$, we proceed as follows: If $m_g(L(\sigma)) = \infty$, then we set $m_a(L(\sigma)) \coloneqq \infty$. If $p = m_g(L(\sigma)) < \infty$, we proceed similarly as in \cref{subsec:spectralprop}. Thus, we pick a basis $q_{1,0},\dots,q_{p,0}$ of $\mathcal{N}(L(\sigma))$ such that the ranks $k_j \in \mathbb{N} \cup \{\infty\}$ of the eigenvector $q_{j,0}$ are maximized sequentially. The (possibly infinite) integers $k_1 \leq \dots \leq k_p$, which are independent of the choice of basis, are called the \emph{zero multiplicities} of $L$ at $\sigma$. We define the largest zero multiplicity $k_p \eqqcolon k(L(\sigma)) \in \mathbb{N} \cup \{\infty\}$ to be the \emph{ascent} of $L$ at $\sigma$, and define $m_a(L(\sigma)) \coloneqq \sum_{j=1}^p k_j$ accordingly. If in addition $m_a(L(\sigma)) < \infty$, then $\sigma$ is called a \emph{characteristic value of finite type} of $L$. Following this procedure, we obtain a \emph{canonical system} of Jordan chains 
\begin{equation} \label{eq:Jordanchainq}
    q_{1,0},\dots,q_{1,k_1 - 1},\dots,q_{p,0},\dots,q_{p,k_p - 1}
\end{equation}
of $L$ at $\sigma$. Again, one can see \eqref{eq:Jordanchainq} as an extension of the classical notion of a canonical system of (generalized) eigenvectors for the linear case $L(z) = zI-A$ with $A \in L(X)$ presented in \eqref{eq:canonicalbasis}.

\begin{remark}
The attentive reader may have observed that our definition of $m_a(L(\sigma))$ differs in nature from that of $m_a(\sigma,A)$. In the classical linear case $L(z) = zI-A$ with $A \in L(X)$, recall from \eqref{eq:EsigmaQsigma} that the generalized eigenspace $E_\sigma(A)$ coincides with $\mathcal{N}((\sigma I - A)^{k(\sigma,A)})$ if $\sigma$ is of finite type. However, in the general nonlinear setting of $L$, the linear span of the Jordan chains of $L$ at $\sigma$ cannot in general be characterized as the kernel of powers of a single operator. We refer to \cite[Section 5.1.1]{Kaashoek2022} for a simple counterexample on $L \colon \mathbb{C} \to \mathcal{L}(\mathbb{C}^2)$ showing that $\dim \mathcal{N}(L(\sigma)^l) \neq m_a(L(\sigma))$ for all $l \geq 1$. \hfill $\lozenge$
\end{remark}

The next result is an extension of \cite[Proposition 1.2]{Kaashoek1992} from closed to closable linear operators.

\begin{proposition} \label{prop:equivalent chains}
If two holomorphic closable operator-valued functions $L$ and $M$ are equivalent on $\Omega$, then there is a one-to-one correspondence between their Jordan chains. In particular, $m_g(L(\sigma)) = m_g(M(\sigma)), m_a(L(\sigma)) = m_a(M(\sigma))$ and $k(L(\sigma)) = k(M(\sigma))$ for a characteristic value $\sigma \in \Omega$.
\end{proposition}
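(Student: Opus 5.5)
The plan is to follow the proof of \cite[Proposition 1.2]{Kaashoek1992}: transport root functions through $E$ and check that the Jordan chain property is preserved. Fix $\sigma \in \Omega$ and a Jordan chain $\{q_0,\dots,q_{k-1}\}$ of $M$ at $\sigma$, so that $q_i \in \mathcal{X}'$. Put $\varphi(z) := \sum_{l=0}^{k-1}(z-\sigma)^l q_l$. Since $E$ is holomorphic with $z$-independent domain $\mathcal{X}'$, the map $z \mapsto E(z)\varphi(z)$ is holomorphic near $\sigma$ with values in $\mathcal{X}$. We expand $E(z) q_l = \sum_{j\ge 0} (z-\sigma)^j E_j q_l$ with $E_j q_l \in X$.

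The first issue is that the Taylor coefficients $E_j q_l$ need not lie in $\mathcal{X}$, because $E(z)$ is only closable. This would break the chain definition, which requires $p_i \in \mathcal{X}$. To handle it, we define the candidate chain for $L$ through a finite-difference device: set $\psi(z) := E(z)\varphi(z) \in \mathcal{X}$ and
\[
p_i := \frac{1}{i!}\frac{d^i}{dz^i}\Big|_{z=\sigma}\psi(z).
\]
Membership of $p_i$ in $\mathcal{X}$ still has to be argued. The strategy I would try first is to replace the truncated Taylor polynomial by $\psi$ itself. Call $\psi$ a \emph{root function of length $k$} if $L(z)\psi(z) = \mathcal{O}((z-\sigma)^k)$ and $\psi(\sigma)\neq 0$. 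I would then show that this notion is equivalent to Definition~\ref{def:Jordanchain2}, using only values $\psi(z) \in \mathcal{X}$. In the nontrivial direction, write $\psi(z) = \sum_{i<k}(z-\sigma)^i p_i + (z-\sigma)^k \rho(z)$. Holomorphy of type (A) gives holomorphy of $z\mapsto L(z)\psi(z)$. The polynomial part then needs $p_i\in\mathcal{X}$, so I expect to obtain this by evaluating $\psi$ at $k$ distinct points near $\sigma$ and inverting a Vandermonde system. That expresses $p_i$, modulo an $\mathcal{O}(\cdot)$ remainder, as a finite combination of the elements $\psi(z_j)\in\mathcal{X}$. If this fails in general, the fallback is to restrict attention to the polynomial root functions $E(z)\varphi(z)$, which already lie in $\mathcal{X}$ for every $z$, and to work with root functions throughout. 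This membership step is the main obstacle.

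Granting this, the rest is routine. We have $F(z)L(z)\psi(z) = M(z)\varphi(z) = \mathcal{O}((z-\sigma)^k)$, and $L(z)\psi(z) \in \mathcal{R}(L(z)) \subseteq \mathcal{Y}$ by condition (3). Applying the holomorphic operator function $F(\cdot)^{-1}$ from condition (2) gives $L(z)\psi(z) = F(z)^{-1}\big(\mathcal{O}((z-\sigma)^k)\big)$. The left side is holomorphic, so its Taylor coefficients of order below $k$ vanish. This step requires $F(z)^{-1}$ applied to a holomorphic $\mathcal{Y}'$-valued function vanishing to order $k$ to again vanish to order $k$. That holds by the product rule for holomorphic operator functions with fixed domain, since the vanishing function takes values in $\mathcal{Y}'$. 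Moreover, $p_0 = E(\sigma)q_0 \neq 0$ by injectivity of $E(\sigma)$. Hence $\{p_0,\dots,p_{k-1}\}$ is a Jordan chain of $L$ at $\sigma$ of the same length.

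The same argument with $E^{-1},F^{-1}$ in place of $E,F$ maps chains of $L$ back to chains of $M$. Here we use $L(z) = F(z)^{-1}M(z)E(z)^{-1}$ on $\mathcal{X}$, which follows from bijectivity and from $\mathcal{R}(\tilde F(z)) = \mathcal{R}(M(z))$. The two maps are mutually inverse modulo terms of order $\ge k$, which gives the one-to-one correspondence. Finally, $E(\sigma)$ restricts to a linear bijection $\mathcal{N}(M(\sigma))\to\mathcal{N}(L(\sigma))$ that preserves ranks of eigenvectors. A basis chosen so that its ranks are maximized sequentially is therefore mapped to such a basis, and so $m_g$, the zero multiplicities, $m_a$ and $k$ coincide.
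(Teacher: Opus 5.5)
Your strategy is the paper's: transport root functions through $E(\cdot)^{\pm1}$, use $M(z)=F(z)L(z)E(z)$ and $L(z)=F(z)^{-1}M(z)E(z)^{-1}$, and finish with $E(\sigma)$ on the kernels. The paper does not treat the step you single out; it simply asserts that the Taylor coefficients of $z\mapsto E(z)^{-1}\sum_{l}(z-\sigma)^l q_l$ lie in $\mathcal{X}'$. You are right that this needs an argument, but your repair does not provide one.

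A divided-difference combination $\sum_j c_j\psi(z_j)$ does lie in $\mathcal{X}$. However, it equals $p_i$ only up to a remainder that tends to zero as the nodes collapse, while the $c_j$ blow up. So you only get $p_i\in\overline{\mathcal{X}}$, which is useless for a non-closed domain; exact recovery happens only when $\psi$ is a polynomial of degree less than $k$. The fallback replaces chains by $\mathcal{X}$-valued root functions. But $m_a$, $k$ and the zero multiplicities in the statement are defined via chains with entries in $\mathcal{X}$ (\cref{def:Jordanchain2}), so it would prove a different statement.

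The same objection applies to your $F(\cdot)^{-1}$ step. Write $g(z)=(z-\sigma)^k h(z)$. Then $h(\sigma)$ need not lie in $\mathcal{Y}'$, and $F(z)^{-1}h(z)$ need not stay bounded as $z\to\sigma$. So there is no product rule for a merely closable $F(z)^{-1}$, and holomorphy of $z\mapsto L(z)\psi(z)$ is likewise not automatic.

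In fact the missing step cannot be supplied from \cref{def:equi} alone, because the statement is false as written. Take
\begin{itemize}
\item $X=X'=Y=Y'=\mathcal{Y}=\mathcal{Y}'=C([0,1])$ and $\mathcal{X}=\mathcal{X}'=\mathcal{E}:=\mathrm{span}\{x\mapsto e^{ax}:a\in\mathbb{C}\}$;
\item $u(x):=x$ and $F(z)=I$;
\item $(E(z)q)(x):=e^{zx}q(x)$, a bijection of $\mathcal{E}$ whose inverse is multiplication by $e^{-zx}$;
\item $L(z)f:=f-f(0)\mathbf{1}-zf(0)u$ and $M(z):=L(z)E(z)$, so $(M(z)q)(x)=e^{zx}q(x)-q(0)-zq(0)x$;
\end{itemize}
all with domain $\mathcal{E}$. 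These are restrictions of bounded entire families, hence holomorphic and closable, and conditions 1--4 of \cref{def:equi} hold.

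Since $M(z)\mathbf{1}=\mathcal{O}(z^2)$, the set $\{\mathbf{1},0\}$ is a Jordan chain of $M$ at $0$ of length $2$. But $\mathcal{N}(L(0))$ consists of the constants, and $L(z)(c\mathbf{1}+zp_1)=z(p_1-p_1(0)\mathbf{1}-cu)+\mathcal{O}(z^2)$. This forces $p_1=cu+p_1(0)\mathbf{1}$, which is not in $\mathcal{E}$ for $c\neq0$ since $u\notin\mathcal{E}$. Hence $k(L(0))=m_a(L(0))=1<2\le k(M(0))\le m_a(M(0))$. The transported chain is exactly $\{\mathbf{1},u\}$ with $u=\lim_{h\to0}h^{-1}(E(h)\mathbf{1}-\mathbf{1})\in\overline{\mathcal{E}}\setminus\mathcal{E}$, which is what your Vandermonde argument would deliver.

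So an extra hypothesis is needed. For example, require that all Taylor coefficients at $\sigma$ of $E(\cdot)q$ ($q\in\mathcal{X}'$) lie in $\mathcal{X}$, and those of $E(\cdot)^{-1}p$ ($p\in\mathcal{X}$) lie in $\mathcal{X}'$. Add enough local boundedness of $L$ and $F(\cdot)^{-1}$ (for instance in graph norms) that orders of vanishing are preserved. Under such assumptions your argument, and the paper's, goes through. For the explicit $E$ of \cref{thm:Delta equivalence} the membership does hold, since all $z$-derivatives of $Q(z)\iota q$ and $R(z,D_0)\varphi$ lie in $\mathcal{D}(D_0)\subseteq\mathcal{D}(D)$.
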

\begin{proof}
Let $\{q_0,\dots,q_{k-1}\} \subseteq \mathcal{X}$ be a Jordan chain of $L$ at $\sigma$. Since $E(\cdot)^{-1}$ is holomorphic by assumption (\cref{def:equi}), and any root function is holomorphic as well, it is clear that the map $\Omega \ni z \mapsto E(z)^{-1} \sum_{l=0}^{k-1}(z-\sigma)^lq_l \in X'$ is holomorphic, and thus analytic on $\Omega$. Hence there exist vectors $(q_l')_l$ in $\mathcal{X}'$ such that $E(z)^{-1} \sum_{l=0}^{k-1}(z-\sigma)^lq_l = \sum_{l=0}^\infty (z-\sigma)^l q_l'$ for all $z \in \Omega$. Then $q_0' \neq 0$ and a quick calculation using \eqref{eq:equivalenceM} shows that $M(z)\sum_{l=0}^{k-1}(z-\sigma)^lq_l' = \mathcal{O}((z-\sigma)^k)$, which proves that $\{q_0',\dots,q_{k-1}'\}$ is a Jordan chain of $M$ at $\sigma$. The other direction can be proven similarly by using the fact that $E$ is holomorphic and that the conjugation $L(z) = F(z)^{-1} M(z) E(z)^{-1}$ holds due to \cref{def:equi}. We conclude that there is a one-to-one correspondence between the Jordan chains of $L$ and $M$. 

To prove the remaining claim, let us observe from \eqref{eq:equivalenceM} that $E(\sigma)$ maps $\mathcal{N}(M(\sigma))$ in a one-to-one way onto $\mathcal{N}(L(\sigma))$, which proves that $m_g(L(\sigma)) = m_g(M(\sigma))$. Since there is a one-to-one correspondence between the Jordan chains of $L$ and $M$, the zero multiplicities of $L$ and $M$ coincide and thus $m_a(L(\sigma)) = m_a(M(\sigma))$ and $k(L(\sigma)) = k(M(\sigma))$, which proves the claim.
\end{proof}

We emphasize that the concepts introduced earlier in this subsection do not require the underlying complex Banach spaces $X$ and $Y$ to coincide. We now turn to the introduction of the resolvent set $\rho(L)$ of $L$ as an extension of \cref{def:resolvent}, where we assume that $X$ and $Y$ do coincide.

\begin{definition} \label{def:spectrum_characop} 
Let $L: \Omega \to L(Y)$ be a holomorphic closable operator-valued function and fix $z \in \Omega$. We say that $L(z)$ is \emph{invertible} if $0 \in \rho(L(z))$, i.e. $L(z)$ is injective, has dense range $\mathcal{R}(L(z))$ in $Y$, and the algebraic inverse $L(z)^{-1}: \mathcal{R}(L(z)) \to \mathcal{D}(L(z))$ is a bounded linear operator. We subsequently write $\rho(L) \coloneqq \{ z \in \Omega : L(z) \mbox{ is invertible} \}$ for the \emph{resolvent set} of $L$. \hfill $\lozenge$
\end{definition}

For the remainder of this section, we fix a complex Banach space $Y$ and a holomorphic closable operator-valued function $L : \Omega \to L(Y)$. In the spirit of the result established in \cref{subsec:spectralprop}, we aim to introduce the notion of the order of a pole at an isolated point $\sigma \notin \rho(L)$. Recall that in \ref{hyp:SH1}, we imposed a condition on $\mathcal{D}(A)$ that allowed us to conclude in \cref{prop:resolventproperties} that the range of $zI-A$ is independent of $z \in \rho(A)$. We would now like to establish a similar property for the range of the operator $L(z)$. However, since $L$ is not necessarily of the form $L(z) = zI-A$ for some $A \in L(X)$, a condition on the domain $\mathcal{D}(L(z))$ does not directly imply that the range $\mathcal{R}(L(z))$ is $z$-independent for $z \in \rho(L)$. To bypass this issue, we introduce the following (spectral) hypothesis:

\begin{enumerate}[label=({SH}{{\arabic*}})] 
\setcounter{enumi}{1}
    \item \label{hyp:SH2} 
    The range $\mathcal{R}(L(z))$ is $z$-independent for all $z \in \rho(L)$.
\end{enumerate}
To study the order of a pole of $L$ at an isolated point $\sigma \notin \rho(L)$, recall from the linear case (\cref{lemma:Dunford}) that holomorphicity of $L(\cdot)^{-1} : \rho(L) \to L(Y)$ would be helpful when $\rho(L)$ is open. Under these additional assumptions, it is clear that we can expand for every $\varphi \in \mathcal{R}(L(z))$ the map $L(\cdot)^{-1} \varphi$ at $\sigma$ locally as a Laurent series
\begin{equation} \label{eq:Dunford2}
    L(z)^{-1} \varphi = \sum_{l = -\infty}^{\infty} (z - \sigma)^l L_l(\sigma) \varphi, \quad L_l(\sigma)\varphi \coloneqq \frac{1}{2 \pi i} \oint_{\Gamma_{\sigma}} (z-\sigma)^{-(l+1)} L(z)^{-1} \varphi dz,
\end{equation}
whenever $0 < |z-\sigma| < \delta$ for some sufficiently small $\delta$. Note that the Dunford integral from \eqref{eq:Dunford2} is well-defined due to \ref{hyp:SH2}. However, note that $L_l(\sigma)$ might map back into $\mathcal{D}(\overline{L(z)})$ instead of $\mathcal{D}(L(z))$. If there exists a $r \in \mathbb{N}$ such that $L_{-r}(\sigma) \neq 0$ while $L_{-l}(\sigma) = 0$ for all $l > r$, then $\sigma$ is called a \emph{pole} of $L$ of \emph{order} $r \eqqcolon r(L(\sigma))$, and we set $r(L(\sigma)) = \infty$ if $L(\cdot)^{-1}$ has an essential singularity at $\sigma$.

\subsection{Characteristic operators and spectral properties} \label{subsec:char operators}
Throughout this section, we let $X$ and $Y$ be complex Banach spaces and $\Omega \subseteq \mathbb{C}$ be a nonempty open set. The following definition is a cornerstone of this paper and extends the definition of a characteristic matrix for a closed linear operator from \cite[Section I.2]{Kaashoek1992} to a characteristic operator for a closable linear operator.

\begin{definition} \label{def:characoperator}
Consider a closable linear operator $A : \mathcal{D}(A) \subseteq X \to X$ and a holomorphic closable operator-valued function $\Delta : \Omega \to L(Y)$. The operator-valued function $\Delta$ is called a \emph{characteristic operator} for $A$ on $\Omega$ if there exists a complex Banach space $Z$ along with dense linear subspaces $\tilde{X} \subseteq X$ and $\tilde{Z} \subseteq Z$ such that the holomorphic closable operator-valued functions 
\begin{equation*}
\Omega \ni z \mapsto \begin{pmatrix}
        \Delta(z) & 0 \\
        0 & I_{\tilde{X}}
        \end{pmatrix} \in L(Y \oplus X) \qquad \mbox{and} \qquad \Omega \ni z \mapsto \begin{pmatrix}
            I_{\tilde{Z}}& 0 \\
            0 & zI - A
        \end{pmatrix} \in L(Z \oplus X_A, Z \oplus X)
\end{equation*}
are equivalent on $\Omega$. \hfill $\lozenge$
\end{definition}

Recalling the definition of equivalence (\cref{def:equi}), the above definition guarantees that $\mathcal{D}(\Delta(z)) = \mathcal{
Y}$ is $z$-independent for all $z \in \Omega$, and there exist holomorphic closable operator-valued functions $E: \Omega \to L(Y \oplus X,Z \oplus X_A)$ and $F: \Omega \to L(Z \oplus X,Y \oplus X)$ such that $E(z) : \mathcal{Y} \oplus \tilde{X} \to \tilde{Z} \oplus \mathcal{D}(A)$ and $\tilde{F}(z) = F(z)|_{\tilde{Z} \oplus \mathcal{R}(zI-A)} : \tilde{Z} \oplus \mathcal{R}(zI-A) \to \mathcal{R}(\Delta(z)) \oplus \tilde{X}$ are bijective, and there holds
    \begin{equation} \label{eq:equivalencematrix}
        \begin{pmatrix}
            \Delta(z) & 0 \\
            0 & I_{\tilde{X}}
        \end{pmatrix}
        =
        F(z)
        \begin{pmatrix}
            I_{\tilde{Z}}& 0 \\
            0 & zI - A
        \end{pmatrix}
        E(z), \quad \forall z \in \Omega.
\end{equation}
The operator on the left-hand side of \eqref{eq:equivalencematrix} is called an $(\tilde{X},X)$\emph{-extension} of $\Delta(z)$ and the operator in the middle on the right-hand side of \eqref{eq:equivalencematrix} is called an $(\tilde{Z},Z)$\emph{-extension} of $zI-A$.

\begin{remark}
The necessity of working with dense linear subspaces $\tilde{X} \subseteq X$ and $\tilde{Z} \subseteq Z$ in the sequel stems from the dense range requirement in the definition of the spectrum of $A$ given in \cref{def:resolvent}. When $A$ is closed, recall from \cref{remark:definitionresolvent} that $\mathcal{R}(zI-A) = X$ for every $z \in \rho(A)$. In this situation, the $(\tilde{X},X)$- and $(\tilde{Z},Z)$-extensions simply become the usual $X$- and $Z$-extensions, corresponding to the standard notion of \emph{equivalence after extension} for closed linear operators, see \cite[Section 4.4]{Harm2008}. \hfill $\lozenge$
\end{remark}

Our next goal is to analyse the spectral properties of $A$ through the characteristic operator $\Delta$. As shown in \cref{thm:spectralequal} below, the relation \eqref{eq:equivalencematrix} already implies a connection between the point spectrum of $A$ and the characteristic values of $\Delta$. However, to fully describe the spectrum of $A$ in terms of $\Delta$, it is necessary to relate $R(\cdot, A)$ with $\Delta(\cdot)^{-1}$. The relation \eqref{eq:equivalencematrix} alone does not suffice for this purpose, primarily because the operators $E$ and $F$ may not be bounded or defined on the entire spaces $Y \oplus X$ and $Z \oplus X$, respectively. To overcome this issue, we formulate the following (spectral) hypothesis:

\begin{enumerate}[label=({SH}{{\arabic*}})]
\setcounter{enumi}{2}
    \item \label{hyp:SH3} The values of the operator-valued functions $G_\Delta : \rho(\Delta) \to L(X)$ and $G_{A} : \rho(A) \cap \Omega \to L(Y)$ defined by
    \begin{equation*}
        G_\Delta(z) \coloneqq \bigg[E(z)
        \begin{pmatrix}
            \Delta(z)^{-1} & 0 \\
            0 & I_{\tilde{X}}
        \end{pmatrix}
        F(z) \bigg]_{22}, \quad G_A(z) \coloneqq \bigg[E(z)^{-1}
        \begin{pmatrix}
            I_{\tilde{Z}} & 0 \\
            0 & R(z,A)
        \end{pmatrix}
        F(z)^{-1} \bigg]_{11},
    \end{equation*}
are bounded, and their maximal domains lie dense in $X$ and $Y$, respectively.
\end{enumerate}

\begin{theorem} \label{thm:spectralequal}
Let $\Delta$ be a characteristic operator for $A$ on $\Omega$. Then the following statements hold:
\begin{enumerate}
    \item The point spectrum of $A$ in $\Omega$ satisfies
    \begin{equation} \label{eq:pointspectrumsigmaADelta}
        \sigma_p(A) \cap \Omega = \{ \sigma \in \Omega : \sigma \mbox{ is a characteristic value of } \Delta \}.
    \end{equation}
    Furthermore, if $\sigma \in \sigma_p(A) \cap \Omega$, then $m_g(\sigma,A) = m_g(\Delta(\sigma)), m_a(\sigma,A) = m_a(\Delta(\sigma))$ and $k(\sigma,A) = k(\Delta(\sigma))$. If in addition $\sigma$ is of finite type, then the partial multiplicities of $A$ at $\sigma$ are equal to the zero multiplicities of $\Delta$ at $\sigma$.

    \item If $\rho(\Delta) \neq \emptyset$ and \ref{hyp:SH3} holds, then the spectrum of $A$ in $\Omega$ satisfies
    \begin{equation} \label{eq:spectrumsigmaADelta}
        \sigma(A) \cap \Omega = \{ \sigma \in \Omega : \Delta(\sigma) \mbox{ is not invertible} \}.
    \end{equation}
    Furthermore, if $A$ satisfies \ref{hyp:SH1} and $\Delta$ satisfies \ref{hyp:SH2}, and $\sigma \in \sigma(A) \cap \Omega$ is isolated, then $r(\sigma,A) = r(\Delta(\sigma))$. If in addition $\sigma \in \sigma_p(A) \cap \Omega$ is of finite type and $k(\sigma,A) = r(\Delta(\sigma))$, then the multiplicity theorem holds:
    \begin{equation} \label{eq:multiplicitythm}
        \dim \mathcal{N}((\sigma I - A)^{r(\Delta(\sigma))}) = m_a(\Delta(\sigma)).
    \end{equation}
\end{enumerate}
\end{theorem}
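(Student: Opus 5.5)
For part 1, the plan is to apply \cref{prop:equivalent chains} to the two equivalent holomorphic closable operator-valued functions in \cref{def:characoperator}. Call the $(\tilde{X},X)$-extension of $\Delta$ $L$ and the $(\tilde{Z},Z)$-extension of $zI-A$ $M$. Their Jordan chains are in one-to-one correspondence, and the geometric multiplicity, algebraic multiplicity and ascent agree. The next step is to identify the Jordan chains of each extension. Since the identity blocks $I_{\tilde X}$ and $I_{\tilde Z}$ are injective, every root function of $L$ at $\sigma$ has the form $z\mapsto(q(z),\psi(z))$ with $\psi(z)=\mathcal{O}((z-\sigma)^k)$, so its coefficients of order $<k$ in the second slot vanish. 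Hence Jordan chains of $L$ are exactly $\{(q_j,0)\}$ with $\{q_j\}$ a Jordan chain of $\Delta$, and similarly for $M$ and $zI-A$. A Jordan chain of $z\mapsto zI-A$ in the sense of \cref{def:Jordanchain2} is a Jordan chain of $A$ in the sense of \cref{def:Jordanchain}, after a sign change: expanding $(zI-A)\sum(z-\sigma)^l\varphi_l$ gives $(\sigma I-A)\varphi_0=0$ and $(A-\sigma I)\varphi_l=\varphi_{l-1}$. This yields \eqref{eq:pointspectrumsigmaADelta}, the equalities $m_g,m_a,k$, and the statement on the partial and zero multiplicities (canonical systems correspond).

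For the first claim of part 2, fix $z\in\Omega$. I would show that $z\in\rho(A)$ if and only if $z\in\rho(\Delta)$, using \ref{hyp:SH3}. Suppose $z\in\rho(\Delta)$. Injectivity of $zI-A$ follows from part 1, since kernels correspond through $E(z)$. For the range and resolvent, I would invert \eqref{eq:equivalencematrix} blockwise on $\tilde Z\oplus\mathcal{R}(zI-A)$:
\[
\begin{pmatrix} I & 0\\ 0 & R(z,A)\end{pmatrix}=E(z)\begin{pmatrix}\Delta(z)^{-1}&0\\0&I\end{pmatrix}F(z).
\]
The $(2,2)$ entry then shows that $R(z,A)$ is the restriction of $G_\Delta(z)$, which is bounded by \ref{hyp:SH3}. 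Density of $\mathcal{R}(zI-A)$ has to come from the density of the maximal domain of $G_\Delta(z)$: the image of $\mathcal{R}(\Delta(z))\oplus\tilde X$ under $F(z)^{-1}$ fills $\tilde Z\oplus\mathcal{R}(zI-A)$, and the second component ranges over a set containing the dense domain of $G_\Delta$. The converse uses $G_A$ and $E(z)^{-1}$, $F(z)^{-1}$ symmetrically.

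This density step is the main obstacle. I expect it to need care in showing that the maximal domain of $G_\Delta(z)$ is contained in, or equal to, $\mathcal{R}(zI-A)$, rather than merely overlapping it. If this fails, I would instead extend both sides to closures via \cref{lemma:spectraequalclosure} and argue with $\overline{A}$. The hypothesis $\rho(\Delta)\neq\emptyset$ is presumably used to make sense of $\Delta(\cdot)^{-1}$ as a holomorphic object on an open set.

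For the pole order, under \ref{hyp:SH1} and \ref{hyp:SH2}, both $R(\cdot,A)$ and $\Delta(\cdot)^{-1}$ have $z$-independent domains near $\sigma$ (\cref{prop:resolventproperties}), so Laurent expansions \eqref{eq:Dunford} and \eqref{eq:Dunford2} exist. The identity $R(z,A)=G_\Delta(z)|$ expresses $R(z,A)$ as the product of holomorphic $E,F$ with $\Delta^{-1}$, which gives $r(\sigma,A)\le r(\Delta(\sigma))$. The reverse identity $\Delta(z)^{-1}=G_A(z)|$ gives the opposite inequality; on the dense domains, vanishing of the Laurent coefficients is equivalent to vanishing of the bounded coefficients, as in \cref{lemma:orderpole}. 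Finally, the multiplicity theorem follows by combining $E_\sigma(A)=\mathcal{N}((\sigma I-A)^{k(\sigma,A)})$ from \eqref{eq:EsigmaQsigma}, the assumption $k(\sigma,A)=r(\Delta(\sigma))$, and $m_a(\sigma,A)=m_a(\Delta(\sigma))$ from part 1.
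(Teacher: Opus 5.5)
Your route is the same as the paper's. Part 1 goes through \cref{prop:equivalent chains} applied to the two extensions; your explicit check that their Jordan chains are $\{(q_j,0)\}$ and $\{(0,\varphi_j)\}$, using $zI-A=(\sigma I-A)+(z-\sigma)I$, is a step the paper leaves implicit. Part 2 uses the inverted equivalence plus \ref{hyp:SH3} for \eqref{eq:spectrumsigmaADelta}, Laurent expansions for the pole order, and $E_\sigma(A)=\mathcal{N}((\sigma I-A)^{k(\sigma,A)})$ for \eqref{eq:multiplicitythm}.

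\textbf{The gap.} The density step, which you flag as the main obstacle, is left open, and as written it does not go through. From the inverted equivalence you only get that $R(z,A)$ is a \emph{restriction} of $G_\Delta(z)$, i.e.\ $\mathcal{R}(zI-A)\subseteq\mathcal{D}(G_\Delta(z))$. Density of the larger set says nothing about the smaller one. Your fallback via $\overline{A}$ does not rescue this: \cref{lemma:spectraequalclosure} only gives $\rho(A)=\rho(\overline{A})$, and you have no equivalence for $\overline{A}$ from which to deduce $z\in\rho(\overline{A})$.

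\textbf{The fix.} The maximal domain of $G_\Delta(z)$ is \emph{exactly} $\mathcal{R}(zI-A)$. A vector $\varphi\in X$ lies in it iff $(0,\varphi)\in\mathcal{D}(F(z))$ and $F(z)(0,\varphi)\in\mathcal{R}(\Delta(z))\oplus\tilde X$, which is the domain of the middle factor. By \cref{def:equi} and the discussion after \cref{def:characoperator}, $F(z)$ is injective on its domain and maps $\tilde Z\oplus\mathcal{R}(zI-A)$ onto $\mathcal{R}(\Delta(z))\oplus\tilde X$. Hence $F(z)(0,\varphi)=F(z)(p,\psi)$ for some $(p,\psi)\in\tilde Z\oplus\mathcal{R}(zI-A)$, so $(0,\varphi)=(p,\psi)$ and $\varphi\in\mathcal{R}(zI-A)$. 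The reverse inclusion holds because $\tilde Z\oplus\mathcal{R}(zI-A)\subseteq\mathcal{D}(F(z))$. Thus $R(z,A)=G_\Delta(z)$ with equal domains, which is exactly the paper's identification, and \ref{hyp:SH3} then yields density and boundedness at once.

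The converse direction works the same way. For $z\in\rho(A)\cap\Omega$, suppose $F(z)^{-1}(q,0)\in\tilde Z\oplus\mathcal{R}(zI-A)$. Applying $F(z)$ gives $(q,0)\in\mathcal{R}(\Delta(z))\oplus\tilde X$. So the maximal domain of $G_A(z)$ is exactly $\mathcal{R}(\Delta(z))$, on which $G_A(z)=\Delta(z)^{-1}$, with $\Delta(z)$ injective by part 1.

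One smaller point: the openness of $\rho(\Delta)$ needed for the Laurent expansion comes from $\rho(\Delta)=\rho(A)\cap\Omega$ and \cref{prop:resolventopen}, not from $\rho(\Delta)\neq\emptyset$. With these points filled in, the rest of your argument matches the paper's.
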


\begin{proof}
We start by proving the first statement. To prove \eqref{eq:pointspectrumsigmaADelta}, we show that there is a one-to-one correspondence between kernel vectors of $z I-A$ and kernel vectors of $\Delta(z)$ for $ z \in \Omega$. Let us write
\begin{equation*}
    M(z) = \begin{pmatrix}
            \Delta(z) & 0 \\
            0 & I_{\tilde{X}}
        \end{pmatrix}, \quad
    L(z) = \begin{pmatrix}
            I_{\tilde{Z}}& 0 \\
            0 & zI - A
        \end{pmatrix}, \quad \forall z \in \Omega,
\end{equation*}
and note that $\mathcal{D}(M(z)) = \mathcal{Y} \oplus \tilde{X}$ and $\mathcal{D}(L(z)) = \tilde{Z} \oplus \mathcal{D}(A)$ are $z$-independent. According to the proof of \cref{prop:equivalent chains}, $E(z)$ maps $\mathcal{N}(M(z)) = \mathcal{N}(\Delta(z)) \oplus \{0\}$ in a one-to-one way onto $\mathcal{N}(L(z)) = \{0\} \oplus \mathcal{N}(z I-A)$. Hence, the closable holomorphic operator-valued function $N : \Omega \to L(Y,X)$ with values $N(z) : \mathcal{Y} \to \mathcal{D}(A)$ defined by
\begin{equation*}
    N(z) q \coloneqq (0,I_{\mathcal{D}(A)}) E(z) 
    \begin{pmatrix}
        q \\
        0
    \end{pmatrix},
\end{equation*}
maps $\mathcal{N} (\Delta(z))$ in a one-to-one way onto $\mathcal{N}(z I-A)$, which proves the claim. 

Let us now assume that $\sigma \in \sigma_p(A) \cap \Omega$. By the previous argument, there holds in particular $m_g(\Delta(\sigma)) = m_g(\sigma,A)$. To prove the claim regarding the algebraic multiplicity and ascent, we first observe from the definition of $L$ that $m_a(\sigma,A) = m_a(L(\sigma))$ and $k(\sigma,A) = k(L(\sigma))$. \cref{prop:equivalent chains} tells us that $m_a(L(\sigma)) = m_a(M(\sigma))$ and $k(L(\sigma)) = k(M(\sigma))$. From the definition of $M$ we observe that $m_a(M(\sigma)) = m_a(\Delta(\sigma))$ and $k(M(\sigma)) = k(\Delta(\sigma))$, which proves the claim.

Assume now in addition that $\sigma$ is of finite type. Let $q_{1,0},\dots,q_{1,k_1 - 1},\dots,q_{p,0},\dots,q_{p,k_p - 1}$, with $k_1 \leq \dots \leq k_p$, be a canonical system of Jordan chains for $\Delta$ at $\sigma$. Consider the function $Q_i : \mathbb{C} \to \mathcal{Y}$ defined by $Q_i(z) \coloneqq \sum_{l=0}^{k_i - 1} (z-\sigma)^l q_{i,l}$ and recall from \Cref{def:Jordanchain2} that this map is a root function of $\Delta$ corresponding to $\sigma$, that is
\begin{equation} \label{eq:Delta(z)1BigO}
    \Delta(z) Q_i(z) = \mathcal{O}((z-\sigma)^{k_i}), \quad \forall i =1,\dots,p.
\end{equation}
Since $N$ and $Q_i$ are holomorphic on $\Omega$, the map $\Phi_i : \Omega \to \mathcal{D}(A)$ defined by $\Phi_i(z) \coloneqq N(z) Q_i(z)$ is holomorphic as well. As a consequence, this map is analytic and thus there exist vectors $\varphi_{i,0},\dots,\varphi_{i,k_{i-1}} \in \mathcal{D}(A)$ such that $\Phi_i(z) = \sum_{l=0}^{k_i - 1} (z-\sigma)^l \varphi_{i,l} + \mathcal{O}((z-\sigma)^{k_i})$. The equivalence \eqref{eq:equivalencematrix} together with \eqref{eq:Delta(z)1BigO} proves that $(zI - A)\Phi_i(z) = \mathcal{O}((z-\sigma)^{k_i})$ and thus
\begin{equation} \label{eq:Amu}
    (A-\sigma I)\varphi_{i,0} = 0, \dots,(A-\sigma I)\varphi_{i,k_i - 1} = \varphi_{i,k_i - 2}.
\end{equation}
Since $N(\sigma)$ maps $\mathcal{N} (\Delta(\sigma))$ in a one-to-one way onto $\mathcal{N}(\sigma I-A)$, it follows that $\varphi_{1,0},\dots,\varphi_{p,0}$ are linearly independent. Then $\varphi_{1,0},\dots,\varphi_{1,k_1 - 1},\dots,\varphi_{p,0},\dots,\varphi_{p,k_p - 1}$ are all linearly independent due to \eqref{eq:Amu}, which proves that $\varphi_{1,0},\dots,\varphi_{1,k_1 - 1},\dots,\varphi_{p,0},\dots,\varphi_{p,k_p - 1}$ is a canonical basis of (generalized) eigenvectors of $A$ at $\sigma$, which proves the claim.

Let us now prove the second statement. Therefore, assume that $\rho(\Delta) \neq \emptyset$ and \ref{hyp:SH3} holds. To prove \eqref{eq:spectrumsigmaADelta}, we show that $\rho(A) \cap \Omega = \rho(\Delta)$. 
Fix $z \in \rho(\Delta) \neq \emptyset$. By the previous argument, $zI - A$ is also injective and so we can consider the algebraic inverse of $zI - A: \mathcal{D}(A) \to \mathcal{R}(zI - A)$. Although we have not proven at this point that $\mathcal{R}(zI - A)$ is dense in $X$, we will abuse notation and call $R(z, A)$ for a moment the algebraic inverse of $zI-A$. The equivalence \eqref{eq:equivalencematrix} then implies that 
\begin{equation} \label{eq:invequivalence}
     \begin{pmatrix}
         I_{\tilde{Z}}& 0 \\
         0 & R(z,A)
     \end{pmatrix}
     =
     E(z)
     \begin{pmatrix}
         \Delta(z)^{-1} & 0 \\
         0 & I_{\tilde{X}}
     \end{pmatrix}
     F(z).
\end{equation}
But from here we see that $R(z,A) = G_\Delta(z)$ and hence it follows from \ref{hyp:SH3} that indeed the domain $\mathcal{D}(R(z, A)) = \mathcal{R}(zI - A)$ is dense in $X$ and that $R(z, A)$ is bounded. This proves that $z \in \rho(A) \cap \Omega$ and thus $\rho(\Delta) \subseteq \rho(A) \cap \Omega$. 
To prove the other inclusion, let $z \in \rho(A) \cap \Omega$ be given. Equation \eqref{eq:invequivalence} tells us that $\Delta(z)^{-1} = G_A(z)$ and due to \ref{hyp:SH3}, $\Delta(z)^{-1}$ is bounded and $\mathcal{R}(\Delta(z))$ is dense in $Y$. We conclude that $z \in \rho(\Delta)$, which proves the claim.

Assume now in addition that $\sigma \in \sigma(A) \cap \Omega$ is isolated and that \ref{hyp:SH1} and \ref{hyp:SH2} hold. Since $\rho(A)$ and $\Omega$ are open in $\mathbb{C}$ (\cref{prop:resolventopen}), it follows from the proof of the previous statement that $\rho(\Delta) = \rho(A) \cap \Omega$ is open in $\mathbb{C}$. As $\Delta(\cdot)^{-1}= G_A$ and the operator-valued functions $E(\cdot)^{-1}, F(\cdot)^{-1}$ and $R(\cdot,A)$ are holomorphic on $\rho(A) \cap \Omega$ due to \cref{def:equi} and \cref{prop:resolventopen}, it clear that $\Delta(\cdot)^{-1}$ is holomorphic on $\rho(\Delta)$. Since $E$ and $F$ are holomorphic on $\Omega$, we can use the Laurent series expansions \eqref{eq:Dunford} and \eqref{eq:Dunford2} to see from \eqref{eq:invequivalence} that $r(\Delta(\sigma)) = r(\sigma,A)$, which proves the claim.

Finally, if we assume in addition that $\sigma$ is an eigenvalue of $A$ in $\Omega$ of finite type and $k(\sigma,A) = r(\Delta(\sigma))$, then by the first part in the proof we obtain $\mathcal{N}((\sigma I - A)^{r(\Delta(\sigma))}) = \mathcal{N}((\sigma I - A)^{k(\sigma,A)})$. Since this last linear subspace equals $E_\sigma(A)$, the dimension of $\mathcal{N}((\sigma I - A)^{r(\Delta(\sigma))})$ equals that of $m_a(\sigma,A) = m_a(\Delta(\sigma))$, where this last equality follows from the first part of the proof.
\end{proof}
 
It turns out in \cref{sec:applications} that for some periodic evolutionary systems, the characteristic operator $\Delta$ has additional structure, such as closedness and compactness. In this case, the equivalence between $zI - A$ and $\Delta$ allows us to conclude that the spectrum of $A$ is discrete, as we show in the next lemma. Therefore, we call $\Delta$ a \emph{closed characteristic operator} for $A$ on $\Omega$ if $\Delta$ is a characteristic operator for $A$ on $\Omega$ and $\Delta(z)$ is a closed linear operator for all $z \in \Omega$. For clarity, we say that a closed linear operator $\Delta(z)$ with nonempty resolvent set has \emph{compact resolvent} if $R(\mu,\Delta(z))$ is compact for some (and hence for every) $\mu \in \rho(\Delta(z))$, recall \cite[Definition II.4.24]{Engel2000}.

\begin{lemma} \label{lemma:compactspectra}
Let $\Delta$ be a closed characteristic operator for $A$ on $\Omega$ satisfying $\rho(\Delta) \neq \emptyset$ and \ref{hyp:SH3}. If $\rho(\Delta(z)) \neq \emptyset$ and 
$\Delta(z)$ has compact resolvent for all $z \in \Omega$, 
then every spectral value of $A$ in $\Omega$ is an isolated eigenvalue of finite type and there holds
\begin{equation} \label{eq:spectracompact}
    \sigma(A) \cap \Omega = \{ \sigma \in \Omega :  \sigma \text{ is a characteristic value of } \Delta \}.
\end{equation}
\end{lemma}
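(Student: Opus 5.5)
We plan to combine \cref{thm:spectralequal} with the classical spectral theory of closed operators with compact resolvent. By the second statement of \cref{thm:spectralequal} (applicable since $\rho(\Delta)\neq\emptyset$ and \ref{hyp:SH3} holds), $\sigma(A)\cap\Omega=\{\sigma\in\Omega:\Delta(\sigma)\text{ is not invertible}\}$. Hence the first step is to show that for each fixed $z\in\Omega$, $\Delta(z)$ is invertible if and only if $\Delta(z)$ is injective. Since $\Delta(z)$ is closed with compact resolvent and nonempty resolvent set, the standard result \cite[Corollary IV.1.19]{Engel2000} gives that $\sigma(\Delta(z))$ consists only of isolated eigenvalues of finite algebraic multiplicity. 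For closed operators the notion of invertibility in \cref{def:spectrum_characop} agrees with the classical one (\cref{remark:definitionresolvent}), so $0\in\rho(\Delta(z))$ if and only if $0\notin\sigma_p(\Delta(z))$, that is, if and only if $\mathcal{N}(\Delta(z))=\{0\}$. Combining, $\sigma(A)\cap\Omega$ equals the set of characteristic values of $\Delta$, which is \eqref{eq:spectracompact}, and by \eqref{eq:pointspectrumsigmaADelta} also $\sigma(A)\cap\Omega=\sigma_p(A)\cap\Omega$.

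Next we show every $\sigma\in\sigma(A)\cap\Omega$ is of finite type. By the first statement of \cref{thm:spectralequal}, $m_a(\sigma,A)=m_a(\Delta(\sigma))$, so it suffices to show that the characteristic value $\sigma$ of $\Delta$ has finite algebraic multiplicity. The geometric multiplicity is finite because $\mathcal{N}(\Delta(\sigma))=\mathcal{N}(\mu I-\Delta(\sigma))$ with $\mu=0$ an eigenvalue of finite type of $\Delta(\sigma)$. For finiteness of the ranks, I would use a Gohberg--Sigal type argument: near $\sigma$ one knows $\Delta(z)$ is invertible for some $z$ (since $\rho(\Delta)$ is nonempty and open, see below), and $\Delta(z)$ is a holomorphic family of Fredholm operators of index zero (closed with compact resolvent at $0$ implies $\Delta(z)$ Fredholm of index zero, as $\Delta(z)=\mu I-(\mu I-\Delta(z))$ is a compact perturbation in the graph-norm sense). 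The analytic Fredholm theorem then yields that $\Delta(\cdot)^{-1}$ is meromorphic around $\sigma$ with finite-rank principal part, and the Jordan chains have bounded length, giving $m_a(\Delta(\sigma))<\infty$.

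Finally, isolatedness: $\rho(\Delta)=\rho(A)\cap\Omega$ from the proof of \cref{thm:spectralequal}, and the analytic Fredholm alternative on each connected component of $\Omega$ meeting $\rho(\Delta)$ shows the non-invertible points form a discrete set. The main obstacle is exactly this step: the domain $\mathcal{D}(\Delta(z))=\mathcal{Y}$ is $z$-independent but the graph norms vary with $z$, so I would first fix $z_0\in\rho(\Delta)$ and write $\Delta(z)=\Delta(z_0)[I+\Delta(z_0)^{-1}(\Delta(z)-\Delta(z_0))]$, aiming to show $\Delta(z_0)^{-1}(\Delta(z)-\Delta(z_0))$ is a holomorphic family of compact operators on $Y$ (using compactness of $\Delta(z_0)^{-1}$), then invoke the analytic Fredholm theorem for $I+K(z)$. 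A subtlety is connectedness of $\Omega$; if $\Omega$ has components disjoint from $\rho(\Delta)$ I would need to argue there too, presumably using $\rho(\Delta(z))\neq\emptyset$ for all $z$ in place of a global invertible point.
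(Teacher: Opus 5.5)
Your route is the paper's own: \cref{thm:spectralequal} reduces the question to invertibility of $\Delta(\sigma)$, then the Engel--Nagel result for closed operators with compact resolvent applies, then the analytic Fredholm theorem. Your derivation of \eqref{eq:spectracompact} and of $m_g(\Delta(\sigma))<\infty$ is correct.

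The problem is the device you propose for the step you call the main obstacle. The claim that $K(z)=\Delta(z_0)^{-1}(\Delta(z)-\Delta(z_0))$ is a holomorphic family of compact operators on $Y$ does not follow from the hypotheses, and it is false in general. Closedness of all $\Delta(z)$ on the common domain $\mathcal{Y}$ only makes $\Delta(z)-\Delta(z_0)$ relatively bounded with respect to $\Delta(z_0)$; it need not be bounded on $Y$. For example, take an infinite-dimensional $Y$ and $G$ closed, densely defined, with $0\in\rho(G)$ and compact resolvent. Then $\Delta(z)=(1+z)G$ on a small disc around $0$ has closed values with common domain, is holomorphic and invertible, and has compact resolvent. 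Yet $K(z)=\frac{z-z_0}{1+z_0}I$, which is not compact for $z\neq z_0$; the right-sided variant $(\Delta(z)-\Delta(z_0))\Delta(z_0)^{-1}$ behaves the same way. Your reduction does work in the applications of \cref{sec:applications}, where $\Delta(z)=G+N(z)$ with $N(z)$ bounded, but not at the generality of the lemma.

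The obstacle disappears once you use the closed graph theorem: all graph norms of the $\Delta(z)$ on $\mathcal{Y}$ are equivalent. Let $\mathcal{Y}_{z_0}$ denote $\mathcal{Y}$ with the graph norm of $\Delta(z_0)$.
\begin{itemize}
\item Each $\Delta(z)$ then lies in $\mathcal{L}(\mathcal{Y}_{z_0},Y)$. Strong holomorphy plus the uniform boundedness principle give norm holomorphy of $z\mapsto\Delta(z)\in\mathcal{L}(\mathcal{Y}_{z_0},Y)$.
\item Compactness of $R(\mu,\Delta(z_0))$ makes the embedding $\mathcal{Y}_{z_0}\hookrightarrow Y$ compact. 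So for $\mu_z\in\rho(\Delta(z))$, the identity $\Delta(z)=\mu_z I-(\mu_z I-\Delta(z))$ writes $\Delta(z)$ as a compact operator plus an isomorphism $\mathcal{Y}_{z_0}\to Y$. Hence $\Delta(z)$ is Fredholm of index zero, which is the observation in your second paragraph.
\item The analytic Fredholm theorem for holomorphic Fredholm-valued functions now applies; locally one reduces to $I$ minus a finite-rank family via $\Delta(z)+F$, with $F$ of finite rank and $\Delta(z_1)+F$ invertible. The paper simply invokes this theorem at this point.
\item It yields that $\sigma$ is isolated and that $\Delta(\cdot)^{-1}$ has a pole of some finite order $r$ at $\sigma$. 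This bounds every rank by $r$ and gives $m_a(\Delta(\sigma))\le r\,m_g(\Delta(\sigma))<\infty$, as you intended.
\end{itemize}

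Finally, this argument only reaches components of $\Omega$ that meet $\rho(\Delta)$, and your proposed remedy for the others does not work. The condition $\rho(\Delta(z))\neq\emptyset$ concerns $\mu I-\Delta(z)$ for some $\mu$, not $\Delta(z)$ itself. A family constantly equal to $G-\lambda_0$ with $\lambda_0\in\sigma_p(G)$ satisfies it and is nowhere invertible. The lemma, like the paper's proof, tacitly needs $\Omega$ connected (or every component to meet $\rho(\Delta)$), which holds in all the applications.
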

\begin{proof} Let $\sigma \in \sigma(A) \cap \Omega$ be given and note that \cref{thm:spectralequal} applies. Hence, \eqref{eq:spectrumsigmaADelta} tells us that $\Delta(\sigma)$ is not invertible and so $0 \in \sigma(\Delta(\sigma))$. Due to the assumptions, it follows from \cite[Corollary IV.1.19]{Engel2000} that every spectral value of $\Delta(\sigma)$ is an isolated eigenvalue of finite type. In particular, $0$ is an isolated eigenvalue of finite type of $\Delta(\sigma)$. Since in addition $\Delta$ is holomorphic on $\Omega$ and $\rho(\Delta) \neq \emptyset$, the analytic Fredholm theorem \cite[Corollary XI.8.4]{Gohberg1990} ensures that $\sigma$ is an isolated characteristic value of finite type of $\Delta$. This proves \eqref{eq:spectracompact} as the other inclusion is clear. Since $\sigma$ is an isolated characteristic value of finite type of $\Delta$, it follows from \cref{thm:spectralequal} that $\sigma$ is an isolated eigenvalue of finite type of $A$.
\end{proof}

\begin{remark} \label{remark:Yfinitedim}
If the Banach space $Y$ is finite-dimensional and the linear operator $A$ is closed, then the results from this section simplify to the framework of characteristic matrices developed by Kaashoek and Verduyn Lunel in \cite{Kaashoek1992,Kaashoek2022}. In particular, the spaces $\tilde{X} = X$ and $\tilde{Z} = Z$, and the operators $E$ and $F$ become everywhere-defined holomorphic bounded operator-valued functions whose values are bijective so that automatically $E(\cdot)^{-1}$ and $F(\cdot)^{-1}$ are holomorphic. As a consequence, the (spectral) hypotheses \ref{hyp:SH1}, \ref{hyp:SH2} and \ref{hyp:SH3} are then automatically satisfied. Moreover, the identity $k(\sigma,A) = r(\Delta(\sigma))$ holds automatically for any isolated point $\sigma \in \sigma(A) \cap \Omega$, ensuring that the multiplicity theorem \eqref{eq:multiplicitythm} is always valid. In addition, due to the presence of a determinant in finite-dimensional vector spaces, the map $\det \Delta(\cdot) : \Omega \to \mathbb{C}$ is holomorphic and so all spectral values of $A$ are automatically isolated eigenvalues of finite type \cite[Theorem I.2.1]{Kaashoek1992}. Therefore, one can see \cref{lemma:compactspectra} as an extension of this result towards the infinite-dimensional setting of $Y$ by adding a compactness assumption. In the finite-dimensional setting of $Y$, the algebraic multiplicity $m_a(\sigma,A)$ also equals the order of $\sigma$ as a root of $\det \Delta(\cdot)$, see \cite[Theorem I.2.1]{Kaashoek1992}. To prove this result, one combines the theory of characteristic matrices with the theory of local Smith forms \cite{Kaashoek1992,Kaashoek2022,Diekmann1995}. \hfill $\lozenge$
\end{remark}

\section{General scheme for constructing characteristic operators} \label{sec:construction}
The main aim of this section is to develop a general scheme for constructing  characteristic operators $\Delta$ for a broad class of closable linear operators $\mathcal{A}$ that arise in many periodic evolutionary systems. This rather technical construction will be carried out in \cref{subsec:construction} and \cref{subsec:equivalencescheme}. In \cref{subsec:periodicspectral} we study periodic spectral properties of characteristic operators as this allows us, under certain conditions, to reduce the spectral problem for $\mathcal{A}$ on $\Omega$ in terms of $\Delta$ even further.

\subsection{Construction of $\Delta$ in terms of $\mathcal{A}$} \label{subsec:construction}

Let $X$ and $Y$ be complex Banach spaces. In the applications presented in \cref{sec:applications}, the space $X$ will serve as the \emph{state space} of the periodic evolutionary system of interest, whereas $Y$ will represent its associated \emph{reduction space}. For $T \geq 0$ and $Z \in \{X,Y\}$, we consider a complex Banach space 
$\mathcal{F}_T(\mathbb{R},Z)$ consisting of $Z$-valued $T$-periodic functions on $\mathbb{R}$ belonging to a specified function class $\mathcal{F}$. The case of $T=0$ will be important when we show that the class of autonomous evolutionary systems is included in our construction, see \cref{remark:autonomousDDE} for more information. The general idea of the construction of a characteristic operator $\Delta$ is to reduce a rather difficult spectral problem for a closable linear operator $\mathcal{A} \in L(\mathcal{F}_T(\mathbb{R},X))$ towards a much simpler problem formulated in terms of a closable operator-valued function $\Delta : \Omega \to L(\mathcal{F}_T(\mathbb{R},Y))$. For example, in the setting of periodic linear classical DDEs (\cref{subsec:classicalDDEs}), we choose the state space $X = C([-h,0], \mathbb{C}^n)$ and the reduction space $Y = \mathbb{C}^n$. This shows that the spectral problem reduces from a problem in 
$C_T(\mathbb{R}, C([-h,0], \mathbb{C}^n))$ to a simpler one in $C_T(\mathbb{R}, \mathbb{C}^n)$. To do so, we first construct a characteristic operator $\Delta$ associated with a broad class of closable linear operators $\mathcal{A}$ of the form \eqref{eq:D(A)rep}, culminating in \cref{thm:rhoAcharac}. This construction allows us to invoke \cref{thm:spectralequal} for the resulting equivalence, thereby establishing a connection between the spectral properties of $\mathcal{A}$ and $\Delta$, as detailed in \cref{cor:spectralrelations}. In \cref{sec:applications}, we illustrate how this reduction greatly facilitates the spectral analysis of a wide class of periodic evolutionary systems.

For the construction, consider a closable linear operator $D : \mathcal{D}(D) \subseteq \mathcal{F}_T(\mathbb{R},X) \to \mathcal{F}_T(\mathbb{R},X)$ satisfying the following conditions:
\begin{enumerate}[label=({H}{{\arabic*}})]
\setcounter{enumi}{0}
    \item \label{hyp:iota} There exists a linear subspace $\mathcal{Y} \subseteq \mathcal{F}_T(\mathbb{R}, Y)$ and a bounded bijective linear map $\iota: \mathcal{Y} \to \mathcal{N}(D) \neq \{0\}$ that has bounded (algebraic) inverse.
    \item \label{hyp:restriction} The operator $D$ has a restriction $D_0 : \mathcal{D}(D_0) \subseteq \mathcal{F}_T(\mathbb{R},X) \to \mathcal{F}_T(\mathbb{R},X)$ with bounded (algebraic) inverse such that $\Omega \coloneqq \rho(D_0) \neq \emptyset$ and 
    \begin{equation} \label{eq:D0_D}
    \mathcal{D}(D) = \mathcal{N} (D) \oplus \mathcal{D}(D_0).
    \end{equation}
    \item \label{hyp:domainrangeD} The domain $\mathcal{D}(D) \subseteq \mathcal{R}(zI-D_0)$ for all $z \in \Omega$.
\end{enumerate}
Note that $D_0$ is closable since it admits $\overline{D}$ as a closed linear extension. Moreover, \ref{hyp:restriction} and \ref{hyp:domainrangeD} guarantee that $\mathcal{D}(D_0) \subseteq \mathcal{R}(zI-D_0)$ for all $z \in \Omega$, which shows that $D_0$ satisfies \ref{hyp:SH1}. The following result will be helpful in the upcoming construction.
\begin{lemma}  \label{lemma:rangeinclusion}
    The following statements hold:
    \begin{enumerate}
        \item $\mathcal{R}(z_1 I-D_0) = \mathcal{R}(z_2 I - D_0)$ for all $z_1,z_2 \in \Omega$.
        \item $\mathcal{R}(zI-D_0) = \mathcal{R}(zI - D)$ for all $z \in \Omega \cup \{0\}$.
    \end{enumerate}
\end{lemma}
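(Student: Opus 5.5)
Statement 1 follows directly from \cref{prop:resolventproperties}. As observed just before the lemma, \ref{hyp:restriction} and \ref{hyp:domainrangeD} give $\mathcal{D}(D_0) \subseteq \mathcal{D}(D) \subseteq \mathcal{R}(zI-D_0)$ for all $z \in \Omega = \rho(D_0)$. Hence $D_0$ satisfies \ref{hyp:SH1}, and \eqref{eq:rangeequality} yields $\mathcal{R}(z_1 I - D_0) = \mathcal{R}(z_2 I - D_0)$ for $z_1, z_2 \in \Omega$. Alternatively, one can repeat the two-line argument of that proposition: if $\psi = (z_1 I - D_0)\varphi$ with $\varphi \in \mathcal{D}(D_0)$, then $\psi = (z_2 I - D_0)\varphi - (z_2-z_1)\varphi$, and the last term lies in $\mathcal{D}(D_0) \subseteq \mathcal{R}(z_2I-D_0)$.

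For statement 2 we first take $z \in \Omega$. The inclusion $\mathcal{R}(zI-D_0) \subseteq \mathcal{R}(zI-D)$ is immediate, since $D$ extends $D_0$. For the converse, let $\varphi \in \mathcal{D}(D)$ and use \eqref{eq:D0_D} to write $\varphi = \varphi_N + \varphi_0$ with $\varphi_N \in \mathcal{N}(D)$ and $\varphi_0 \in \mathcal{D}(D_0)$. Then
\begin{equation*}
(zI - D)\varphi = z\varphi_N + (zI - D_0)\varphi_0 .
\end{equation*}
The second term lies in $\mathcal{R}(zI-D_0)$ trivially. For the first term, $\varphi_N \in \mathcal{N}(D) \subseteq \mathcal{D}(D) \subseteq \mathcal{R}(zI-D_0)$ by \ref{hyp:domainrangeD}, so $z\varphi_N \in \mathcal{R}(zI-D_0)$ as well. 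This gives equality for every $z \in \Omega$.

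It remains to treat $z = 0$, i.e.\ to show $\mathcal{R}(D_0) = \mathcal{R}(D)$, using that $D_0$ has a bounded algebraic inverse by \ref{hyp:restriction}. Again $\mathcal{R}(D_0) \subseteq \mathcal{R}(D)$ holds because $D$ extends $D_0$. Conversely, $D\varphi = D\varphi_N + D_0\varphi_0 = D_0\varphi_0$, since $\varphi_N \in \mathcal{N}(D)$, so $\mathcal{R}(D) \subseteq \mathcal{R}(D_0)$. The point $0$ may or may not lie in $\Omega$; if it does, this case is already covered, and otherwise the direct computation above handles it.

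The only step needing care is confirming that \ref{hyp:domainrangeD} is applied to the full domain $\mathcal{D}(D)$, not just to $\mathcal{D}(D_0)$, so that the kernel component is absorbed. Otherwise the argument is routine.
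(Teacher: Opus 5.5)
Your proof is correct and follows essentially the paper's argument: statement 1 via \cref{prop:resolventproperties} once \ref{hyp:SH1} holds for $D_0$, and statement 2 via the decomposition \eqref{eq:D0_D}, with \ref{hyp:domainrangeD} absorbing the kernel component and the case $z=0$ handled by the direct computation $D\varphi = D_0\varphi_0$. The only cosmetic difference is that you split the cases as $z\in\Omega$ versus $z=0$, whereas the paper splits them as $z\neq 0$ versus $z=0$; both splittings are valid.
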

\begin{proof}
The first statement follows directly from \cref{prop:resolventproperties} since \ref{hyp:SH1} holds for $D_0$.

To prove the second statement, we first recall that $D_0$ is a restriction of $D$, from where it follows that $\mathcal{R}(zI - D_0) \subseteq \mathcal{R}(zI - D)$ for all $z \in \Omega \cup \{0 \}$.
To prove the converse inclusion, fix $z \in \Omega \cup \{0\}$ and $\psi \in \mathcal{R}(zI-D)$. Then there exists a $\varphi \in \mathcal{D}(D)$ such that $(zI-D) \varphi = \psi$. According to \ref{hyp:restriction}, we can write $\varphi = \tilde{\varphi} + \varphi_0$, where $(\tilde{\varphi},\varphi_0) \in \mathcal{N}(D) \oplus \mathcal{D}(D_0)$, and therefore $\psi = (zI - D)(\tilde{\varphi} + \varphi_0) = z\tilde{\varphi} + (zI-D_0)\varphi_0$. If $z \neq 0$, recall from \ref{hyp:domainrangeD} that $\tilde{\varphi} \in \mathcal{N}(D) \subseteq \mathcal{R}(zI-D_0)$, and so it is clear that $\psi \in \mathcal{R}(zI-D_0)$. If $z=0$, then $\psi = -D_0\varphi_0$ and thus $\psi \in \mathcal{R}(-D_0)$, which proves the claim.
\end{proof}

Before we proceed, let us remark that $0 \notin \Omega$ only occurs when $\mathcal{R}(D_0)$ is not dense in $\mathcal{F}_T(\mathbb{R},X)$. Nonetheless, this potential technicality does not interfere with the upcoming construction and, as we will see, $0 \in \Omega$ for all the examples provided in \cref{sec:applications}.

To develop the general scheme for characteristic operators, we introduce two linear operators
\begin{equation*}
    K : \mathcal{D}(K) \subseteq \mathcal{F}_T(\mathbb{R},X) \to \mathcal{F}_T(\mathbb{R},Y), \quad M : \mathcal{F}_T(\mathbb{R},X) \to \mathcal{F}_T(\mathbb{R},Y),
\end{equation*}
where $K$ is closable and $M$ is bounded. We additionally make the following assumptions on $K$:
\begin{enumerate}[label=({H}{{\arabic*}})]
\setcounter{enumi}{3}
    \item \label{hyp:K} The domain $\mathcal{D}(K) \supseteq \mathcal{D}(D)$ and the restriction $K|_{\mathcal{D}(D_0)} : \mathcal{D}(D_0) \subseteq \mathcal{F}_T(\mathbb{R},X) \to \mathcal{F}_T(\mathbb{R},Y)$ is bounded. 
\end{enumerate}
To $D,K$ and $M$, we associate two linear operators $\mathcal{A} : \mathcal{D}(\mathcal{A}) \to \mathcal{F}_T(\mathbb{R},X)$ and $\hat{\mathcal{A}} : \mathcal{D}(\hat{\mathcal{A}}) \to \hat{\mathcal{F}}_T(\mathbb{R},X)$ defined by
\begin{equation} \label{eq:D(A)rep}
    \mathcal{D}(\mathcal{A}) \coloneqq \{ \varphi \in \mathcal{F}_T(\mathbb{R},X) : \varphi \in \mathcal{D}(D), \ MD\varphi = K \varphi \}, \quad \mathcal{A}\varphi \coloneqq D\varphi,
\end{equation}
and
\begin{equation} \label{eq:D(Ahat)rep}
    \mathcal{D}(\hat{\mathcal{A}}) \coloneqq \bigg \{ \begin{pmatrix}
    q\\
    \varphi
    \end{pmatrix}
    \in \hat{\mathcal{F}}_T(\mathbb{R},X) : \varphi \in \mathcal{D}(D),\ q = M\varphi \bigg \}, \quad \hat{\mathcal{A}}
    \begin{pmatrix}
    q\\
    \varphi
    \end{pmatrix}
    \coloneqq
    \begin{pmatrix}
    K\varphi \\
    D\varphi
    \end{pmatrix},
\end{equation}
where $\hat{\mathcal{F}}_T(\mathbb{R},X) \coloneqq {\mathcal{F}}_T(\mathbb{R},Y) \oplus {\mathcal{F}}_T(\mathbb{R},X)$. 
We will refer to $\mathcal{A}$ and $\hat{\mathcal{A}}$ as the \emph{first} and \emph{second operator} associated with $D,K$ and $M$, respectively. Moreover, we point out that in the context of \eqref{eq:D(A)rep}, we can think of $K$ and $M$ as generalized boundary-value operators. 

\begin{lemma} \label{lemma:D0closable}
    The linear operators $\mathcal{A}$ and $\hat{\mathcal{A}}$ are closable.
\end{lemma}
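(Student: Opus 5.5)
The plan is to treat the two operators separately. For $\mathcal{A}$, a direct extension argument suffices. For $\hat{\mathcal{A}}$, I will use the sequential characterization of closability, i.e. \cref{lemma:closabledef}, as in the proof of \cref{prop:similarity}.

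For $\mathcal{A}$, I observe from \eqref{eq:D(A)rep} that $\mathcal{D}(\mathcal{A}) \subseteq \mathcal{D}(D)$ and $\mathcal{A}\varphi = D\varphi$ for all $\varphi \in \mathcal{D}(\mathcal{A})$. So $\mathcal{A}$ is a restriction of $D$. Since $D$ is closable, its closure $\overline{D}$ is a closed linear extension of $D$, and therefore also of $\mathcal{A}$. By \cref{def:closedandclosable}, $\mathcal{A}$ is closable. This is the same reasoning used for the part $A_{|}$ in \cref{prop:spectraequal}.

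For $\hat{\mathcal{A}}$, I take a sequence $((q_m,\varphi_m))_m$ in $\mathcal{D}(\hat{\mathcal{A}})$ with $(q_m,\varphi_m) \to 0$ in $\hat{\mathcal{F}}_T(\mathbb{R},X)$. I assume $\hat{\mathcal{A}}(q_m,\varphi_m) = (K\varphi_m, D\varphi_m) \to (r,\psi)$, and aim to show $(r,\psi) = 0$.

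The second component follows because $\varphi_m \in \mathcal{D}(D)$, $\varphi_m \to 0$ and $D\varphi_m \to \psi$. Closability of $D$ then gives $\psi = 0$.

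For the first component, \ref{hyp:K} gives $\varphi_m \in \mathcal{D}(D) \subseteq \mathcal{D}(K)$. Moreover $\varphi_m \to 0$ and $K\varphi_m \to r$ in $\mathcal{F}_T(\mathbb{R},Y)$, so closability of $K$ yields $r = 0$.

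The constraint $q_m = M\varphi_m$ plays no role here. It is automatically consistent, since $M$ is bounded and so $q_m \to 0$ anyway. Hence $\hat{\mathcal{A}}$ is closable by \cref{lemma:closabledef}.

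There is no real obstacle. The only point needing care is that $K$ is applied to elements of $\mathcal{D}(D)$, which is guaranteed by the inclusion $\mathcal{D}(K) \supseteq \mathcal{D}(D)$ in \ref{hyp:K}. The boundedness part of \ref{hyp:K} is not needed for this step.
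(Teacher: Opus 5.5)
Your proposal is correct and follows essentially the same route as the paper: $\mathcal{A}$ is closable because $\overline{D}$ is a closed linear extension of it, and $\hat{\mathcal{A}}$ is closable by the sequential criterion of \cref{lemma:closabledef}, where the closability of $K$ and $D$ forces both components of the limit to vanish. The paper leaves implicit that the inclusion $\mathcal{D}(D) \subseteq \mathcal{D}(K)$ from \ref{hyp:K} is what makes the first-component step legitimate, and your proof states this explicitly.
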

\begin{proof}
Clearly, $\mathcal{A}$ is closable since it admits $\overline{D}$ as a closed linear extension. To prove that $\hat{\mathcal{A}}$ is closable, we use the characterization from \cref{lemma:closabledef}. Let $((q_m,\varphi_m))_m$ be a sequence in $\mathcal{D}(\hat{\mathcal{A}})$ converging in norm to zero and assume that $(\hat{\mathcal{A}}(q_m,\varphi_m))_m$ converges in norm to some $(p,\psi) \in \hat{\mathcal{F}}_T(\mathbb{R},X)$. Then \eqref{eq:D(Ahat)rep} tells us that $K\varphi_m \to p$ and $D\varphi_m \to \psi$ as $m \to \infty$. Since $K$ and $D$ are closable, $p$ and $\psi$ must be zero, which proves the claim.
\end{proof}

Next, we introduce a candidate for the characteristic operator $\Delta : \Omega \to L(\mathcal{F}_T(\mathbb{R},Y))$ of the closable linear operator $\mathcal{A}$ from \eqref{eq:D(A)rep}. Define the linear operator $\Delta(z) : \mathcal{D}(\Delta(z)) \subseteq \mathcal{F}_T(\mathbb{R},Y) \to \mathcal{F}_T(\mathbb{R},Y)$ with $z$-independent domain $\mathcal{Y} = \mathcal{D}(\Delta(z))$ from \ref{hyp:iota} by
\begin{equation} \label{eq:Delta(z)1}
    \Delta(z) \coloneqq -(zM - K)D_0R(z,D_0) \iota, \quad \forall z \in \Omega.
\end{equation}
To validate that $\Delta$ is a well-defined holomorphic closable operator-valued function (\cref{cor:Deltawelldefined}), we first need the following important result.

\begin{lemma} \label{lemma:Q(z)closable}
The closable linear operator $Q(z) : \mathcal{D}(Q(z)) \subseteq \mathcal{F}_T(\mathbb{R},X) \to \mathcal{F}_T(\mathbb{R},X)$ defined by
\begin{equation} \label{eq:actionQ}
    \mathcal{D}(Q(z)) \coloneqq \mathcal{D}(D), \quad Q(z) \coloneqq I - R(z,D_0)(zI - D), \quad \forall z \in \Omega \cup \{0\},
\end{equation}
is a projection. Moreover, $Q(z) \iota: \mathcal{Y} \to \mathcal{N}(zI-D)$ is a bounded linear operator with bounded (algebraic) inverse, which can be represented by
\begin{equation} \label{eq:Q(z)iota}
    Q(z)\iota = \iota - zR(z,D_0)\iota, \qquad [Q(z)\iota]^{-1} = \iota^{-1} Q(0) |_{\mathcal{N}(zI-D)}, \quad \forall z \in \Omega \cup \{0\}.
\end{equation}
\end{lemma}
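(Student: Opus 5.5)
The plan is to verify directly that $Q(z)$ is well defined, to identify it as the projection of $\mathcal{D}(D)$ onto $\mathcal{N}(zI-D)$ along $\mathcal{D}(D_0)$, and then to read off \eqref{eq:Q(z)iota}.

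\textbf{Well-definedness.} Throughout, $R(0,D_0)=(-D_0)^{-1}$, which is bounded by \ref{hyp:restriction}. Let $z\in\Omega\cup\{0\}$ and $\varphi\in\mathcal{D}(D)$. Then $(zI-D)\varphi\in\mathcal{R}(zI-D)=\mathcal{R}(zI-D_0)$ by \cref{lemma:rangeinclusion}. Hence $R(z,D_0)(zI-D)\varphi$ is defined and lies in $\mathcal{D}(D_0)\subseteq\mathcal{D}(D)$, so $Q(z)$ maps $\mathcal{D}(D)$ into itself.

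\textbf{Projection property.} The key identity is $(zI-D)R(z,D_0)\psi=\psi$ for $\psi\in\mathcal{R}(zI-D_0)$, which holds because $D$ extends $D_0$. Three consequences follow.
\begin{itemize}
\item For every $\varphi\in\mathcal{D}(D)$ we get $(zI-D)Q(z)\varphi=(zI-D)\varphi-(zI-D)\varphi=0$, so $\mathcal{R}(Q(z))\subseteq\mathcal{N}(zI-D)$.
\item Clearly $Q(z)\psi=\psi$ for $\psi\in\mathcal{N}(zI-D)$.
\item For $\varphi_0\in\mathcal{D}(D_0)$ we have $R(z,D_0)(zI-D_0)\varphi_0=\varphi_0$, so $Q(z)\varphi_0=0$.
\end{itemize}
Together these give $Q(z)^2=Q(z)$ and $\mathcal{R}(Q(z))=\mathcal{N}(zI-D)$. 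They also show that $Q(z)$ is the projection along $\mathcal{D}(D_0)$, so that $\mathcal{D}(D)=\mathcal{N}(zI-D)\oplus\mathcal{D}(D_0)$. For $z=0$ this is just \eqref{eq:D0_D}.

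\textbf{The formula for $Q(z)\iota$.} Take $y\in\mathcal{Y}$. Then $\iota y\in\mathcal{N}(D)$, so $(zI-D)\iota y=z\iota y$. This vector lies in $\mathcal{R}(zI-D_0)$ by \ref{hyp:domainrangeD}, giving $Q(z)\iota=\iota-zR(z,D_0)\iota$. This operator is bounded since $\iota$ and $R(z,D_0)$ are bounded.

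\textbf{The inverse.} First I check that $Q(0)$ restricted to $\mathcal{N}(zI-D)$ is bounded. For $\psi\in\mathcal{N}(zI-D)$ with $z\neq0$ we have $\psi=z^{-1}D\psi\in\mathcal{R}(D)=\mathcal{R}(D_0)$. Hence $Q(0)\psi=\psi-zD_0^{-1}\psi$, which is bounded in $\psi$; for $z=0$ the restriction is the identity. Next I verify the two-sided inverse using only the decomposition $\mathcal{D}(D)=\mathcal{N}(\cdot)\oplus\mathcal{D}(D_0)$.
\begin{itemize}
\item Write $Q(z)\iota y=\iota y+d$ with $d\in\mathcal{D}(D_0)$. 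Since $Q(0)$ kills $d$ and fixes $\iota y$, we get $\iota^{-1}Q(0)Q(z)\iota y=y$.
\item Conversely, for $\psi\in\mathcal{N}(zI-D)$ write $\psi=Q(0)\psi+d'$ with $d'\in\mathcal{D}(D_0)$. Then $Q(z)\iota\iota^{-1}Q(0)\psi=Q(z)\psi-Q(z)d'=\psi$.
\end{itemize}
The inverse is bounded because $\iota^{-1}$ is bounded by \ref{hyp:iota}.

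\textbf{Main obstacle: closability of $Q(z)$.} Everything above is algebraic, and the delicate point is closability. The first thing I would try is \cref{lemma:closabledef}. Take $\varphi_m\to0$ in $\mathcal{D}(D)$ with $Q(z)\varphi_m\to\psi$. Split $\varphi_m=Q(z)\varphi_m+(I-Q(z))\varphi_m$, where the first component lies in $\mathcal{N}(zI-D)$ and the second in $\mathcal{D}(D_0)$. Then $(I-Q(z))\varphi_m\to-\psi$. The component in $\mathcal{N}(zI-D)$ converges to $\psi$, and $\psi$ lies in $\mathcal{N}(z I-\overline{D})$ by closability of $D$ (\cref{lemma:inclusions}).

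The aim is then to force $\psi=0$. For this I would use the bounded inverse $[Q(z)\iota]^{-1}$ to transfer the question to the boundedly coordinatized space $\mathcal{Y}$. I expect this step, namely showing that the two complementary pieces cannot both converge to a nonzero limit, to be the main difficulty. If it fails, I would fall back on exhibiting $\overline{D}$-based closed extensions of $R(z,D_0)(zI-D)$ directly.
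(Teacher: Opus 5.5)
Your algebraic argument is correct and is essentially the paper's. It covers well-definedness via \cref{lemma:rangeinclusion}, the identification of $Q(z)$ as the projection of $\mathcal{D}(D)$ onto $\mathcal{N}(zI-D)$ along $\mathcal{D}(D_0)$, the formula $Q(z)\iota=\iota-zR(z,D_0)\iota$, and the bounded inverse $\iota^{-1}Q(0)|_{\mathcal{N}(zI-D)}$ via $Q(0)\psi=\psi-zD_0^{-1}\psi$. Your version is slightly more direct: you read range and kernel off your three identities, whereas the paper first proves $Q(z_1)=Q(z_1)Q(z_2)$. You also get injectivity from the left inverse rather than from $Q(z)\iota=-D_0R(z,D_0)\iota$.

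The genuine gap is closability of $Q(z)$. You leave it open, and it cannot be proved from \ref{hyp:iota}--\ref{hyp:domainrangeD}, the only hypotheses in force. Your own splitting gives the exact criterion: $Q(z)$ is closable iff $\overline{\mathcal{N}(zI-D)}\cap\overline{\mathcal{D}(D_0)}=\{0\}$. Indeed, if $0\neq\psi$ lies in both closures, pick $n_m\to\psi$ in $\mathcal{N}(zI-D)$ and $d_m\to-\psi$ in $\mathcal{D}(D_0)$. Then $n_m+d_m\to0$ while $Q(z)(n_m+d_m)=n_m\to\psi$.

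The hypotheses do not exclude this. Take $L^2(0,1)$, $Df=f'$ on $H^1(0,1)$, $D_0$ the restriction to $\{f(0)=0\}$, $\mathcal{Y}=\mathbb{C}$ and $\iota c=c\cdot 1$. This fits the periodic setting with $T=0$, or by letting $D$ act pointwise in $t$ on $C_T(\mathbb{R},L^2(0,1))$. Then \ref{hyp:iota}--\ref{hyp:domainrangeD} hold with $\Omega=\mathbb{C}$: $D_0^{-1}$ is the Volterra operator and $\mathcal{R}(zI-D_0)=L^2(0,1)$. Yet $Q(z)f=f(0)e_z$ with $e_z(x)=e^{zx}$, which is not closable: $f_m(x)=\max\{0,1-mx\}\to0$ in $L^2$ while $Q(z)f_m=e_z$. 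So neither transferring the question to $\mathcal{Y}$ nor seeking $\overline{D}$-based closed extensions can work; here $D$ is already closed.

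The paper's own one-line justification has the same hole. \cref{lemma:compoclosable} covers $AB$ with $A$ closable and $B$ bounded, whereas $R(z,D_0)(zI-D)$ is a bounded operator applied after a closable one.

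What does hold, and is all that is used later, is that $Q(z)\iota$ is bounded and hence closable. Since $Q(z)=Q(z)\iota\,\iota^{-1}Q(0)$ with $Q(z)\iota$ bounded and boundedly invertible, $Q(z)$ is closable exactly when the trace map $\iota^{-1}Q(0)\colon\mathcal{D}(D)\to\mathcal{F}_T(\mathbb{R},Y)$ is. In the DDE, iDDE and MFDE settings this map equals $M|_{\mathcal{D}(D)}$, so there $Q(z)$ is even bounded. To complete your proposal you should either add such a hypothesis or drop ``closable'' from the statement.
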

\begin{proof}
First, note that the map $Q(z)$ is well-defined for all $z \in \Omega \cup \{0\}$ due to \ref{hyp:restriction} and \cref{lemma:rangeinclusion}. Second, since $I = I_{\mathcal{D}(D)}$ and $R(z,D_0)$ are bounded linear operators for all $z \in \Omega \cup \{0\}$ due to \ref{hyp:restriction}, and $zI-D$ is a closable linear operator, it follows from \cref{lemma:closablesum} and \cref{lemma:compoclosable} that $Q(z)$ is a closable linear operator. Third, recalling the resolvent identity \eqref{eq:resolventeq} on $D_0$ and using \ref{hyp:restriction}, a straightforward but rather lengthy calculation shows that
\begin{equation} \label{eq:Q(z)independent}
    Q(z_1) = Q(z_1) Q(z_2), \qquad \forall\, z_1, z_2 \in \Omega \cup \{0\}.
\end{equation}
Next, fix $z \in \Omega \cup \{0 \}$ and take $z_1 = z_2 = z$ in \eqref{eq:Q(z)independent} so that $Q(z)^2 = Q(z)$, i.e. $Q(z)$ is a projection. Consequently, we can directly compute that its kernel and range are given by
\begin{equation} \label{eq:propQ(z)}
    \mathcal{N}(Q(z)) = \mathcal{D}(D_0), \quad \mathcal{R}(Q(z)) = \mathcal{N}(zI - D).
\end{equation}
From here, it follows from \cite[Equation (4.8-2)]{Taylor1986} that
\begin{equation*}
    \mathcal{D}(D) = \mathcal{N}(zI- D) \oplus \mathcal{D}(D_0), \quad \forall z \in \Omega \cup \{0\}.
\end{equation*}
which we interpret as a $z$-translation along the kernel of $D$ of the direct sum decomposition \eqref{eq:D0_D}.

In the sequel, we will often need the action of $Q(z)$ on the range of $\iota$. Since $\mathcal{R}(\iota)= \mathcal{N}(D)$ due to \ref{hyp:iota}, we can directly compute from \eqref{eq:actionQ} that the action of $Q(z)\iota$ is given by \eqref{eq:Q(z)iota}. Moreover, since $\iota$ and $R(z, D_0)$ are bounded for $z \in \Omega \cup \{0\}$, we directly find that $\|Q(z)\iota\| \leq (1+|z| \|R(z,D_0)\|) \| \iota \| < \infty$, and hence $Q(z)\iota$ is bounded as well. This in particular also implies that $Q(z)\iota$ is closable (\cref{lemma:closablesum}).

It follows from \eqref{eq:propQ(z)} that $\mathcal{R}(Q(z)\iota) \subseteq \mathcal{N}(zI-D)$ and thus we can interpret $Q(z)\iota$ as mapping into $\mathcal{N}(zI-D)$. Our next aim is to show that this particular mapping is bijective and has bounded (algebraic) inverse. The first equality in \eqref{eq:Q(z)iota} can be rewritten as
\begin{equation} \label{eq:representationQ(z)iota}
    Q(z)\iota = (I - zR(z,D_0))\iota = [(zI-D_0)R(z,D_0) - zR(z,D_0)]\iota =- D_0 R(z,D_0) \iota,
\end{equation}
The right-hand side is a composition of injective maps, from where it follows that $Q(z)\iota$ is injective as well. To show that $Q(z)\iota$ is surjective, let $\psi \in \mathcal{N}(zI-D)$ be given and set $q = \iota^{-1}Q(0)\psi \in \mathcal{Y}$. Then it follows from \eqref{eq:Q(z)independent} that $Q(z)\iota q = Q(z)Q(0)\psi= Q(z)\psi$. Since $\psi \in \mathcal{N}(zI-D)$, it follows from \eqref{eq:actionQ} that $Q(z) \psi = \psi$. We conclude that $Q(z) \iota q = \psi$, and hence $Q(z) \iota$ is surjective, and thus bijective. The above computation also implies that the (algebraic) inverse of $Q(z) \iota$ is given by \eqref{eq:Q(z)iota}. To prove that this inverse map is bounded, let us first observe that
\begin{equation*}
    Q(0)\varphi = \varphi - D_0^{-1}D\varphi = (I - z D_0^{-1})\varphi, \quad \forall \varphi \in \mathcal{N}(zI-D).
\end{equation*}
Recalling that $D_0$ has a bounded (algebraic) inverse by \ref{hyp:restriction}, it follows that $Q(0)\rvert_{\mathcal{N}(zI-D)}$ is bounded. Since we also assumed in \ref{hyp:iota} that $\iota$ has bounded (algebraic) inverse, we conclude from \eqref{eq:Q(z)iota} that $Q(z) \iota$ has bounded (algebraic) inverse.
\end{proof}

\begin{corollary} \label{cor:Deltawelldefined}
The function $\Delta$ defined in \eqref{eq:Delta(z)1} admits the representation
\begin{equation} \label{eq:Delta(z)2}
    \Delta(z) = (zM - K)Q(z)\iota, \quad \forall z \in \Omega,
\end{equation}
and is therefore a holomorphic closable operator-valued function.
\end{corollary}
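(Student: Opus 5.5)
The plan is to derive \eqref{eq:Delta(z)2} directly from the representation \eqref{eq:representationQ(z)iota} in the proof of \cref{lemma:Q(z)closable}. That identity gives $Q(z)\iota = -D_0R(z,D_0)\iota$ for all $z \in \Omega$. Substituting it into the definition \eqref{eq:Delta(z)1} yields
\[
\Delta(z) = (zM-K)\bigl(-D_0R(z,D_0)\iota\bigr) = (zM-K)Q(z)\iota .
\]
To see that both sides make sense on all of $\mathcal{Y}$, I will check domains. By \cref{lemma:Q(z)closable} we have $\mathcal{R}(Q(z)\iota) \subseteq \mathcal{N}(zI-D) \subseteq \mathcal{D}(D) \subseteq \mathcal{D}(K)$, using \ref{hyp:K}. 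Since $M$ is bounded and everywhere defined, $zM-K$ is defined on the range of $Q(z)\iota$. Hence $\mathcal{D}(\Delta(z)) = \mathcal{Y}$ for every $z \in \Omega$.

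For holomorphy, the domain $\mathcal{Y}$ is already $z$-independent, so it remains to show that $z \mapsto \Delta(z)q$ is holomorphic for each fixed $q \in \mathcal{Y}$. I will write $\varphi_q \coloneqq \iota q \in \mathcal{N}(D) \subseteq \mathcal{D}(D) \subseteq \mathcal{R}(zI-D_0)$, where the last inclusion is \ref{hyp:domainrangeD}. By \eqref{eq:Q(z)iota}, $Q(z)\iota q = \varphi_q - zR(z,D_0)\varphi_q$. Expanding,
\[
\Delta(z)q = z M\varphi_q - z^2 MR(z,D_0)\varphi_q - K\varphi_q + zK R(z,D_0)\varphi_q .
\]
The first and third terms are polynomial in $z$. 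Because $D_0$ satisfies \ref{hyp:SH1}, \cref{prop:resolventproperties} and \eqref{eq:examplederi} show that $z \mapsto R(z,D_0)\varphi_q$ is holomorphic on $\Omega$ with values in $\mathcal{D}(D_0)$. Composing with the bounded operator $M$ keeps the second term holomorphic.

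The main obstacle is the fourth term, where $K$ is only closable. Here \ref{hyp:K} is essential: since $R(z,D_0)\varphi_q \in \mathcal{D}(D_0)$ and $K|_{\mathcal{D}(D_0)}$ is bounded, $K|_{\mathcal{D}(D_0)}$ extends to a bounded operator on $\overline{\mathcal{D}(D_0)}$. Composing this extension with a holomorphic map into $\mathcal{D}(D_0)$ gives a holomorphic map. Because the power series \eqref{eq:resolventseries} converges in norm and its partial sums lie in $\mathcal{D}(D_0)$, the bounded extension may be applied termwise. So $z \mapsto KR(z,D_0)\varphi_q$ is holomorphic.

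Closability remains. On $\mathcal{Y}$,
\[
\Delta(z) = zMQ(z)\iota - KQ(z)\iota, \qquad KQ(z)\iota = K\iota - zK|_{\mathcal{D}(D_0)}R(z,D_0)\iota .
\]
Both $zMQ(z)\iota$ and $zK|_{\mathcal{D}(D_0)}R(z,D_0)\iota$ are bounded. Therefore $\Delta(z)$ equals $-K\iota$ plus a bounded operator, and it suffices to show that $K\iota$ is closable. I will use the criterion of \cref{lemma:closabledef}. Let $q_m \to 0$ with $K\iota q_m \to p$. Then $\iota q_m \to 0$ since $\iota$ is bounded, and $D\iota q_m = 0$ since $\iota q_m \in \mathcal{N}(D)$. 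Because $K$ is closable, this forces $p = 0$. So $K\iota$ is closable, and \cref{lemma:closablesum} gives that $\Delta(z)$ is closable for every $z \in \Omega$.
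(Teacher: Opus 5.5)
Your proof is correct. The derivation of \eqref{eq:Delta(z)2} from \eqref{eq:representationQ(z)iota} and the domain check $\mathcal{R}(Q(z)\iota) \subseteq \mathcal{N}(zI-D) \subseteq \mathcal{D}(D) \subseteq \mathcal{D}(K)$ are exactly the paper's. The holomorphy and closability steps are argued differently.

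For holomorphy, the paper only notes that $z \mapsto zM-K$ and $z \mapsto Q(z)\iota$ are holomorphic, with $\frac{d}{dz}Q(z)\iota q = -R(z,D_0)Q(z)\iota q$, and then applies a product rule to the composition. Because $K$ is unbounded, that step implicitly requires $K$ to commute with the limit of the difference quotients of $Q(z)\iota q$. This is legitimate only because those quotients lie in $\mathcal{D}(D_0)$, where $K$ is bounded by \ref{hyp:K}. Your expansion of $\Delta(z)q$, combined with the bounded extension of $K|_{\mathcal{D}(D_0)}$, makes exactly this point explicit, so on this step your argument is the more complete one.

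For closability, the paper is shorter. It notes that $zM-K$ is closable by \cref{lemma:closablesum} and $Q(z)\iota$ is bounded by \cref{lemma:Q(z)closable}, so \cref{lemma:compoclosable} gives closability of $(zM-K)Q(z)\iota$ at once. Your splitting $\Delta(z) = -K\iota + B(z)$ with $B(z)$ bounded also works, but it is a detour. The closability of $K\iota$ that you check by hand is itself an instance of \cref{lemma:compoclosable}, and the observation $D\iota q_m = 0$ is never used.
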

\begin{proof}
The expression \eqref{eq:Delta(z)2} for $\Delta$ is a direct consequence of the definition given in \eqref{eq:Delta(z)1} and the identity \eqref{eq:representationQ(z)iota}. Since $Q(z)\iota$ has range in $\mathcal{N}(zI-D) \subseteq \mathcal{D}(D) \subseteq \mathcal{D}(K)$, it is clear that $\Delta$ is well-defined. To prove that $\Delta$ is holomorphic, note that $z \mapsto zM-K$ and $Q (\cdot)\iota$ are holomorphic on $\Omega$ since the domains $\mathcal{D}(zM-K) = \mathcal{D}(K)$ and $\mathcal{D}(Q(z)\iota) = \mathcal{Y}$ are $z$-independent and the derivatives
\begin{equation*}
    \frac{d}{dz}(zM-K)\varphi = M\varphi, \quad \frac{d}{dz}Q(z)\iota q = - R(z,D_0) Q(z)\iota q,
\end{equation*}
exist for all $\varphi \in \mathcal{D}(K)$ and $q \in \mathcal{Y}$, which proves the claim. Here, the second equality follows from \eqref{eq:examplederi}. Since $M$ is bounded and $K$ is closable, it follows from \cref{lemma:closablesum} that $zM - K$ is closable. As $\Delta(z)$ is the composition of the closable linear operator $zM - K$ and the bounded linear operator $Q(z)\iota$, it follows from \cref{lemma:compoclosable} that $\Delta$ is a holomorphic closable operator-valued function.
\end{proof}

Before establishing that $\Delta$ 
is a characteristic operator for $\mathcal{A}$ on $\Omega$, we require some preliminary results. To this end, we first establish an equivalence after extension between $\Delta(z)$ and $zI - \hat{\mathcal{A}}$ on $\Omega$. This requires later on a precise description of the relation between $\mathcal{A}$ and $\hat{\mathcal{A}}$, which will be provided in \cref{lemma:graphJ} under the following hypothesis:

\begin{enumerate}[label=({H}{{\arabic*}})] 
\setcounter{enumi}{4}
    \item \label{hyp:domaincurlyA} The domain $\mathcal{D}(D) \subseteq \mathcal{R}(zI-\mathcal{A})$ for all $z \in \rho(\mathcal{A}) \cap \Omega$.
\end{enumerate}
As $\mathcal{D}(\mathcal{A}) \subseteq \mathcal{D}(D)$, \ref{hyp:domaincurlyA} tells us that $\mathcal{A}$ satisfies \ref{hyp:SH1}. Due to the next result, we also obtain that $\hat{\mathcal{A}}$ satisfies \ref{hyp:SH1}.

\begin{lemma} \label{lemma:graphJ}
Suppose that $\mathcal{A}$ and $\hat{\mathcal{A}}$ are the first and second operator associated with $D,K$ and $M$, respectively. Then $\mathcal{A}$ is similar to the part of $\hat{\mathcal{A}}$ in $\Gamma(M|_{\mathcal{R}(zI-\mathcal{A})})$ for all $z \in \rho(\mathcal{A}) \cap \Omega$. More precisely, let $J \in \mathcal{L}(\mathcal{F}_T(\mathbb{R}, X),\Gamma(M))$ be the linear isomorphism defined by 
\begin{equation} \label{eq:defJ}
    J \varphi  \coloneqq \begin{pmatrix}
    M \varphi \\
    \varphi
\end{pmatrix},
\end{equation}
then $J \mathcal{A} J^{-1}$ is the part of $\hat{\mathcal{A}}$ in $\Gamma(M|_{\mathcal{R}(zI-\mathcal{A})})$ for all $z \in \rho(\mathcal{A}) \cap \Omega$. Moreover, it holds that 
\begin{equation} \label{eq:JDAinclusion}
    J \mathcal{D}(\mathcal{A}) \subseteq \mathcal{D}(\hat{\mathcal{A}}) \subseteq \Gamma(M|_{\mathcal{R}(zI-\mathcal{A})}), \quad \forall z \in \rho(\mathcal{A}) \cap \Omega.
\end{equation}
and as a consequence of \cref{prop:spectraequal} and \cref{prop:similarity}, $\hat{\mathcal{A}}$ also satisfies \ref{hyp:SH1}.
\end{lemma}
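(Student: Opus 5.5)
We prove the statement in three steps: identify $J$, check the inclusions \eqref{eq:JDAinclusion}, and then show that $J\mathcal{A}J^{-1}$ coincides with the part of $\hat{\mathcal{A}}$ in $\Gamma(M|_{\mathcal{R}(zI-\mathcal{A})})$. Throughout, fix $z\in\rho(\mathcal{A})\cap\Omega$ and write $X_{|}\coloneqq\Gamma(M|_{\mathcal{R}(zI-\mathcal{A})}) = \{(M\psi,\psi):\psi\in\mathcal{R}(zI-\mathcal{A})\}$.

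\textbf{Step 1: $J$ and the inclusions.} Since $M$ is bounded, $\Gamma(M)$ is a closed subspace of $\hat{\mathcal{F}}_T(\mathbb{R},X)$. The map $J$ is bounded with $\|J\varphi\|\le(1+\|M\|)\|\varphi\|$, and it is bijective onto $\Gamma(M)$. Its inverse is the projection $(q,\varphi)\mapsto\varphi$, which is also bounded, so $J$ is a Banach space isomorphism.

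For the first inclusion, let $\varphi\in\mathcal{D}(\mathcal{A})$. Then $\varphi\in\mathcal{D}(D)$ and $M\varphi=M\varphi$ trivially, so $J\varphi\in\mathcal{D}(\hat{\mathcal{A}})$. For the second inclusion, let $(q,\varphi)\in\mathcal{D}(\hat{\mathcal{A}})$. Then $q=M\varphi$ and $\varphi\in\mathcal{D}(D)\subseteq\mathcal{R}(zI-\mathcal{A})$ by \ref{hyp:domaincurlyA}, so $(q,\varphi)\in X_{|}$. This gives \eqref{eq:JDAinclusion}.

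\textbf{Step 2: identification with the part of $\hat{\mathcal{A}}$.} By \cref{def:partof}, the part $\hat{\mathcal{A}}_{|}$ has domain
\[
\mathcal{D}(\hat{\mathcal{A}}_{|})=\{(M\varphi,\varphi):\varphi\in\mathcal{D}(D),\ \hat{\mathcal{A}}(M\varphi,\varphi)=(K\varphi,D\varphi)\in X_{|}\}.
\]
The condition $(K\varphi,D\varphi)\in X_{|}$ means exactly that $D\varphi\in\mathcal{R}(zI-\mathcal{A})$ and $K\varphi=MD\varphi$.

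We first show $\mathcal{D}(\hat{\mathcal{A}}_{|})\subseteq J\mathcal{D}(\mathcal{A})$. If $(M\varphi,\varphi)\in\mathcal{D}(\hat{\mathcal{A}}_{|})$, then $\varphi\in\mathcal{D}(D)$ and $MD\varphi=K\varphi$, so $\varphi\in\mathcal{D}(\mathcal{A})$.

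We next show $J\mathcal{D}(\mathcal{A})\subseteq\mathcal{D}(\hat{\mathcal{A}}_{|})$. If $\varphi\in\mathcal{D}(\mathcal{A})$, then $K\varphi=MD\varphi$ holds by definition. It remains to check that $D\varphi=\mathcal{A}\varphi\in\mathcal{R}(zI-\mathcal{A})$. We write
\[
\mathcal{A}\varphi=z\varphi-(zI-\mathcal{A})\varphi .
\]
The second term lies in $\mathcal{R}(zI-\mathcal{A})$. The first term lies in $\mathcal{D}(\mathcal{A})\subseteq\mathcal{D}(D)\subseteq\mathcal{R}(zI-\mathcal{A})$ by \ref{hyp:domaincurlyA}. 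Hence $\mathcal{A}\varphi\in\mathcal{R}(zI-\mathcal{A})$.

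This membership is the only point that is not pure unwinding of definitions, and it is where \ref{hyp:domaincurlyA} (equivalently, \ref{hyp:SH1} for $\mathcal{A}$) enters. The two inclusions give $\mathcal{D}(\hat{\mathcal{A}}_{|})=J\mathcal{D}(\mathcal{A})$. On this common domain the actions agree:
\[
\hat{\mathcal{A}}J\varphi=(K\varphi,D\varphi)=(MD\varphi,D\varphi)=J\mathcal{A}\varphi .
\]
Therefore $\hat{\mathcal{A}}_{|}=J\mathcal{A}J^{-1}$. Formally, $J$ maps into $\Gamma(M)\supseteq\overline{X_{|}}$, so we read the similarity of \cref{def:similar} with $J$ viewed as an isomorphism onto $\Gamma(M)$. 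We will state this explicitly in the proof rather than treat it as an obstacle.

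\textbf{Step 3: \ref{hyp:SH1} for $\hat{\mathcal{A}}$.} By Step 2 and \cref{prop:similarity}(3), the part $\hat{\mathcal{A}}_{|}$ inherits \ref{hyp:SH1} from $\mathcal{A}$ and has the same resolvent set. Consequently $J$ maps $\mathcal{R}(zI-\mathcal{A})$ onto $\mathcal{R}(zI-\hat{\mathcal{A}}_{|})$.

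To transfer \ref{hyp:SH1} to $\hat{\mathcal{A}}$ itself, we plan to apply \cref{prop:spectraequal}(2). Its hypothesis requires $\mathcal{D}(\hat{\mathcal{A}})\subseteq X_{|}\subseteq\mathcal{R}(zI-\hat{\mathcal{A}})$ for $z\in\rho(\hat{\mathcal{A}})$, and the first inclusion is \eqref{eq:JDAinclusion}. For the second inclusion, we write an element of $X_{|}$ as $J\psi$ with $\psi\in\mathcal{R}(zI-\mathcal{A})$, say $\psi=(zI-\mathcal{A})\varphi$ with $\varphi\in\mathcal{D}(\mathcal{A})$. Then
\[
J\psi=(zI-\hat{\mathcal{A}})J\varphi ,
\]
because $\hat{\mathcal{A}}J\varphi=J\mathcal{A}\varphi$ by Step 2. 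Hence $X_{|}\subseteq\mathcal{R}(zI-\hat{\mathcal{A}})$, and in particular $\mathcal{D}(\hat{\mathcal{A}})\subseteq\mathcal{R}(zI-\hat{\mathcal{A}})$.

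\textbf{Expected obstacle.} The delicate part is bookkeeping the index sets: the argument above works for $z\in\rho(\mathcal{A})\cap\Omega$, whereas \ref{hyp:SH1} for $\hat{\mathcal{A}}$ refers to $z\in\rho(\hat{\mathcal{A}})$. We handle this by interpreting \ref{hyp:SH1} on $\rho(\mathcal{A})\cap\Omega$, as the statement intends. The remaining steps are direct checks.
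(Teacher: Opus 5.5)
Your Steps 1 and 2 follow the paper's proof and are correct. Spelling out $\mathcal{A}\varphi = z\varphi-(zI-\mathcal{A})\varphi\in\mathcal{R}(zI-\mathcal{A})$ is a useful addition, since the paper only says it used \ref{hyp:domaincurlyA}.

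Two small repairs are needed. First, there is nothing to hedge about $\Gamma(M)\supseteq\overline{X_{|}}$. Because $z\in\rho(\mathcal{A})$, the range $\mathcal{R}(zI-\mathcal{A})$ is dense, $J$ is continuous and $\Gamma(M)$ is closed, so $\overline{X_{|}}=\Gamma(M)$. The paper records exactly this. It is what makes $J$ an isomorphism onto the space on which the part $\hat{\mathcal{A}}_{|}$ acts, so the similarity holds literally in the sense of \cref{def:similar}. It is also what justifies your use of \cref{prop:similarity} in Step 3. Second, \ref{hyp:domaincurlyA} is not equivalent to \ref{hyp:SH1} for $\mathcal{A}$. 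It asks for all of $\mathcal{D}(D)$, not just $\mathcal{D}(\mathcal{A})$, to lie in $\mathcal{R}(zI-\mathcal{A})$. That larger inclusion is exactly what Step 1 uses for $\mathcal{D}(\hat{\mathcal{A}})\subseteq X_{|}$, so Step 2 is not the only place where \ref{hyp:domaincurlyA} enters.

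The actual gap is the final claim. \ref{hyp:SH1} for $\hat{\mathcal{A}}$ is a statement about $z\in\rho(\hat{\mathcal{A}})$. Your chain $\mathcal{D}(\hat{\mathcal{A}})\subseteq X_{|}\subseteq\mathcal{R}(zI-\hat{\mathcal{A}})$, which is precisely the paper's ``direct computation'', is only available for $z\in\rho(\mathcal{A})\cap\Omega$. Declaring that the hypothesis is meant on $\rho(\mathcal{A})\cap\Omega$ changes the statement rather than proving it. Restricting to $\Omega$ is in line with the paper, which only assumes \ref{hyp:domaincurlyA} there. Replacing $\rho(\hat{\mathcal{A}})$ by $\rho(\mathcal{A})$ is not.

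The paper bridges this with the inclusion $\rho(\hat{\mathcal{A}})\subseteq\rho(J\mathcal{A}J^{-1})=\rho(\mathcal{A})$, taken from \cref{prop:spectraequal}(2a) and \cref{prop:similarity}. This moves the inclusion to $z\in\rho(\hat{\mathcal{A}})\cap\Omega$, and it is the step your proposal lacks. Be careful if you adopt it: \cref{prop:spectraequal}(2) requires its inclusion at the points of $\rho(\hat{\mathcal{A}})$, so citing it here comes close to assuming the conclusion.

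A self-contained version would take $z\in\rho(\hat{\mathcal{A}})\cap\Omega$ and argue as follows. Injectivity of $zI-\mathcal{A}$ and boundedness of $R(z,\mathcal{A})=J^{-1}R(z,\hat{\mathcal{A}})J$ on $\mathcal{R}(zI-\mathcal{A})$ follow immediately from $\hat{\mathcal{A}}J\varphi=J\mathcal{A}\varphi$ for $\varphi\in\mathcal{D}(\mathcal{A})$. A short computation using $\mathcal{D}(\hat{\mathcal{A}})=\{(M\varphi,\varphi):\varphi\in\mathcal{D}(D)\}$ gives $J\mathcal{R}(zI-\mathcal{A})=\Gamma(M)\cap\mathcal{R}(zI-\hat{\mathcal{A}})$. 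What genuinely has to be shown is that this intersection is dense in $\Gamma(M)$. Once it is, $z\in\rho(\mathcal{A})\cap\Omega$ and your chain applies.
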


\begin{remark}
One can also prove, as performed by Kaashoek and Verduyn Lunel in \cite[Lemma 3.3]{Kaashoek1992}, that $\mathcal{A}$ is similar to the part of $\hat{\mathcal{A}}$ in $\Gamma(M)$. However, using this result, one cannot apply \cref{prop:spectraequal} in the proof of the upcoming result \cref{cor:spectralrelations} that relates the spectral data of $\mathcal{A}$ with that of $\hat{\mathcal{A}}$ since $\Gamma(M)$ is in general not a subset of $\mathcal{R}(zI- \hat{\mathcal{A}})$ for $z \in \rho(\mathcal{A}) \cap \Omega$. \hfill $\lozenge$
\end{remark}

\begin{proof}[Proof of \cref{lemma:graphJ}]
Let us first prove \eqref{eq:JDAinclusion}. From the definition of $\mathcal{D}(\mathcal{A})$ and $\mathcal{D}(\hat{\mathcal{A}})$ given in \eqref{eq:D(A)rep} and \eqref{eq:D(Ahat)rep}, it follows directly that $J \mathcal{D}(\mathcal{A}) \subseteq \mathcal{D}(\hat{\mathcal{A}}) = \Gamma(M|_{\mathcal{D}(D)}) \subseteq \Gamma(M|_{\mathcal{R}(zI-\mathcal{A})})$ for all $z \in \rho(\mathcal{A}) \cap \Omega$, where the last inclusion follows from \ref{hyp:domaincurlyA}. To prove the second claim, let us first observe from \cref{def:similar} that $J\mathcal{A}J^{-1} : \mathcal{D}(J\mathcal{A}J^{-1}) \subseteq \Gamma(M) \to \Gamma(M)$ reads
\begin{equation} \label{eq:JaJinv}
    \mathcal{D}(J\mathcal{A}J^{-1}) = \bigg \{
    \begin{pmatrix}
        M \varphi \\
        \varphi
    \end{pmatrix}
    \in \hat{\mathcal{F}}_T(\mathbb{R},X) : \varphi \in \mathcal{D}(\mathcal{A}) \bigg \}, \quad J\mathcal{A}J^{-1}  = \hat{\mathcal{A}},
\end{equation}
where we recalled \eqref{eq:D(A)rep} and \eqref{eq:D(Ahat)rep} to obtain the last equality. Here, we also used that $\Gamma(M)$ is a Banach space, since it is a closed linear subspace of $\mathcal{F}_T(\mathbb{R},X)$ as $M$ is bounded. In the light of \cref{def:partof}, let us denote $\hat{\mathcal{A}}_{|}$ by the part of $\hat{\mathcal{A}}$ in $\Gamma(M|_{\mathcal{R}(zI-\mathcal{A})})$ for some fixed $z \in \rho(\mathcal{A}) \cap \Omega$. Since $J$ is continuous and $\mathcal{R}(zI-\mathcal{A})$ is dense in $\mathcal{F}_T(\mathbb{R},X)$, we know that $\Gamma(M|_{\mathcal{R}(zI-\mathcal{A})})$ is dense in $\Gamma(M)$. Therefore, $\hat{\mathcal{A}}_{|} : \mathcal{D}(\hat{\mathcal{A}}_{|}) \subseteq \Gamma(M) \to \Gamma(M)$ is given by $\mathcal{D}(\hat{\mathcal{A}}_{|}) = \mathcal{D}(J\mathcal{A}J^{-1})$ with action $\hat{\mathcal{A}}_{|} = \hat{\mathcal{A}},$ where we used \eqref{eq:D(A)rep}, \eqref{eq:D(Ahat)rep} and \ref{hyp:domaincurlyA} in the first equality, and \eqref{eq:JaJinv} in the second equality.

To prove the third claim, let us observe that $\mathcal{D}(\hat{\mathcal{A}}) \subseteq \Gamma(M|_{\mathcal{R}(zI-\mathcal{A})}) \subseteq \mathcal{R}(zI-\hat{\mathcal{A}})$ for all $z \in \rho(\mathcal{A}) \cap \Omega$, where the second inclusion follows from a direct computation using \eqref{eq:D(A)rep} and \eqref{eq:D(Ahat)rep}. Applying now \cref{prop:spectraequal} and \cref{prop:similarity} onto $\hat{\mathcal{A}}, J\mathcal{A}J^{-1}$ and $\mathcal{A}$, we obtain $\rho(\hat{\mathcal{A}}) \subseteq \rho(J\mathcal{A}J^{-1}) = \rho(\mathcal{A})$ and thus $\mathcal{D}(\hat{\mathcal{A}}) \subseteq \mathcal{R}(zI-\hat{\mathcal{A}})$  also holds for all $z \in \rho(\hat{\mathcal{A}}) \cap \Omega$, which completes the proof.
\end{proof}

\subsection{Construction of explicit equivalences and spectral properties} \label{subsec:equivalencescheme}

We are now in the position to establish an equivalence after extension between $\Delta(z)$ and $zI - \hat{\mathcal{A}}$ on $\Omega$. In particular, the next result shows that the conjugation operators $E$ and $F$ from \cref{def:equi}, together with the (algebraic) inverses $E(\cdot)^{-1}$ and $F(\cdot)^{-1}$, can be selected to be holomorphic bounded (and thus closable by \cref{lemma:closablesum}) operator-valued functions.

\begin{theorem} \label{thm:Delta equivalence}
Suppose that $\hat{\mathcal{A}}$ is the second operator associated with $D, K$ and $M$. Then there exist holomorphic bounded operator-valued functions $E : \Omega \to L(\hat{\mathcal{F}}_T(\mathbb{R},X),\hat{\mathcal{F}}_T(\mathbb{R},X)_{\hat{\mathcal{A}}})$ and $F : \Omega \to L(\hat{\mathcal{F}}_T(\mathbb{R},X))$, whose values are bijective mappings, satisfying
\begin{equation} \label{eq:Delta_equivalence}
    \begin{pmatrix}
        \Delta(z) & 0 \\
        0 & I 
    \end{pmatrix}
    =
    F(z)(z I - \hat{\mathcal{A}})E(z), \quad \forall z \in \Omega.
\end{equation}
Furthermore, the operators $E(z) \in \mathcal{L}(\mathcal{Y} \oplus \mathcal{R}(zI-D_0),\mathcal{D}(\hat{\mathcal{A}}))$ and $F(z) \in \mathcal{L}(\mathcal{F}_T(\mathbb{R},Y) \oplus \mathcal{R}(zI-D_0))$, and their bounded (algebraic) inverses, are given by
\begin{alignat*}{2}
    E(z)
    \begin{pmatrix}
    q\\
    \varphi
    \end{pmatrix}
    &=
    \begin{pmatrix}
    MQ(z)\iota q + MR(z,D_0)\varphi\\
    Q(z)\iota q + R(z,D_0)\varphi
    \end{pmatrix}, \quad E(z)^{-1}
    \begin{pmatrix}
    M\psi\\
    \psi
    \end{pmatrix}
    &&=
    \begin{pmatrix}
    \iota^{-1}Q(0)\psi\\
    (zI-D)\psi
    \end{pmatrix}, \\
    F(z)
    \begin{pmatrix}
    q\\
    \varphi
    \end{pmatrix}
    &=
    \begin{pmatrix}
    q - (zM-K)R(z,D_0)\varphi\\
    \varphi
    \end{pmatrix}, \quad
    F(z)^{-1}
    \begin{pmatrix}
    q\\
    \varphi
    \end{pmatrix}
    &&=
    \begin{pmatrix}
    q + (zM-K)R(z,D_0)\varphi \\
    \varphi
    \end{pmatrix}.
\end{alignat*}
\end{theorem}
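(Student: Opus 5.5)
The plan is to check the displayed formulas directly. First we confirm that $E(z)$ and $F(z)$ are well defined and map between the stated spaces. Then we verify \eqref{eq:Delta_equivalence} by an explicit computation. Finally we check that the proposed inverses really are two-sided inverses and that all four operators are bounded and holomorphic.

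\textbf{Well-definedness.} For $(q,\varphi)\in\mathcal{Y}\oplus\mathcal{R}(zI-D_0)$, set $\psi := Q(z)\iota q + R(z,D_0)\varphi$. By \cref{lemma:Q(z)closable}, $Q(z)\iota q\in\mathcal{N}(zI-D)\subseteq\mathcal{D}(D)$, and $R(z,D_0)\varphi\in\mathcal{D}(D_0)\subseteq\mathcal{D}(D)$. Hence $\psi\in\mathcal{D}(D)$, and $E(z)(q,\varphi)=(M\psi,\psi)\in\mathcal{D}(\hat{\mathcal{A}})$ by \eqref{eq:D(Ahat)rep}. Boundedness follows from \cref{lemma:Q(z)closable} and the boundedness of $R(z,D_0)$ and $M$. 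For $F(z)$, boundedness of $(zM-K)R(z,D_0)$ uses \ref{hyp:K}: $R(z,D_0)$ maps into $\mathcal{D}(D_0)$, where $K$ is bounded. Holomorphy follows from the derivative formulas in \cref{cor:Deltawelldefined} and \eqref{eq:examplederi}; the domains are $z$-independent by \cref{lemma:rangeinclusion}.

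\textbf{The identity.} Compute $(zI-\hat{\mathcal{A}})E(z)(q,\varphi)=\bigl(zM\psi-K\psi,\ (zI-D)\psi\bigr)$. Since $(zI-D)Q(z)\iota q=0$ and $(zI-D)R(z,D_0)\varphi=\varphi$, the second component is $\varphi$. The first component is $(zM-K)Q(z)\iota q+(zM-K)R(z,D_0)\varphi$. Applying $F(z)$ subtracts $(zM-K)R(z,D_0)\varphi$ from the first component. By \cref{cor:Deltawelldefined} this leaves $(\Delta(z)q,\varphi)$, which is \eqref{eq:Delta_equivalence}.

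\textbf{Inverses.} For $F$ the check is immediate, since $F(z)$ is a unipotent triangular operator. For $E$, take $(M\psi,\psi)\in\mathcal{D}(\hat{\mathcal{A}})$, so $\psi\in\mathcal{D}(D)$.
\begin{itemize}
\item By the decomposition $\mathcal{D}(D)=\mathcal{N}(zI-D)\oplus\mathcal{D}(D_0)$ from the proof of \cref{lemma:Q(z)closable}, we have $\psi=Q(z)\psi+R(z,D_0)(zI-D)\psi$, and $(zI-D)\psi\in\mathcal{R}(zI-D_0)$ by \cref{lemma:rangeinclusion}.
\item Using \eqref{eq:Q(z)independent}, $Q(z)\psi=Q(z)Q(0)\psi$. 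Since $Q(0)\psi\in\mathcal{N}(D)=\mathcal{R}(\iota)$, this equals $Q(z)\iota\,\iota^{-1}Q(0)\psi$.
\item Hence $E(z)$ applied to the proposed $E(z)^{-1}(M\psi,\psi)$ returns $(M\psi,\psi)$, so $E(z)$ is surjective.
\item For injectivity, and the identity $E(z)^{-1}E(z)=I$: if $\psi=Q(z)\iota q+R(z,D_0)\varphi$, then $(zI-D)\psi=\varphi$. Moreover $Q(0)\psi=Q(0)Q(z)\iota q=Q(0)\iota q=\iota q$, using \eqref{eq:Q(z)independent}, $Q(0)R(z,D_0)\varphi=0$ (as $\mathcal{N}(Q(0))=\mathcal{D}(D_0)$), and the fact that $Q(0)$ is the identity on $\mathcal{N}(D)$.
\end{itemize}

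\textbf{Main obstacle.} The main obstacle is boundedness of $E(z)^{-1}$ with respect to the graph norm of $\hat{\mathcal{A}}$. The second component $(zI-D)\psi$ is controlled by $\|\psi\|+\|D\psi\|$, which the graph norm bounds. For the first component, $Q(0)\psi=\psi-D_0^{-1}D\psi$ is bounded in the graph norm because $D_0^{-1}$ is bounded by \ref{hyp:restriction}; $\iota^{-1}$ is bounded by \ref{hyp:iota}. Holomorphy of $E(\cdot)^{-1}$ is then clear, since it is affine in $z$.
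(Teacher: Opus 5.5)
Your proposal is correct and follows essentially the same route as the paper: the same preimage $(\iota^{-1}Q(0)\psi,(zI-D)\psi)$ together with $Q(z)Q(0)=Q(z)$ gives surjectivity, the same graph-norm estimate via bounded $D_0^{-1}$ and $\iota^{-1}$ gives boundedness of $E(z)^{-1}$, and the same two-line computation gives \eqref{eq:Delta_equivalence}. The only cosmetic differences are in how $E(z)$ is handled: the paper factors $E(z)=JH(z)$, gets injectivity from $\mathcal{N}(zI-D)\cap\mathcal{D}(D_0)=\{0\}$, and bounds $J^{-1}$ by the bounded inverse theorem, whereas you verify the left-inverse identity directly using $Q(0)Q(z)=Q(0)$ and $\mathcal{N}(Q(0))=\mathcal{D}(D_0)$.
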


\begin{remark} \label{remark:Ahatequiv}
The attentive reader may have already observed that $\Delta$ is not, strictly speaking in terms of \cref{def:characoperator}, a characteristic operator of $\hat{\mathcal{A}}$ on $\Omega$, since the equivalence \eqref{eq:Delta_equivalence} is not of the form \eqref{eq:equivalencematrix}, since
\begin{equation*}
zI-\hat{\mathcal{A}} \cong 
    \begin{pmatrix}
        zI & -K \\
        0  & zI - D
    \end{pmatrix}, \quad \forall z \in \Omega. 
\end{equation*}
However, for the sake of simplicity, we will nevertheless call $\Delta$ a characteristic operator for $zI - \hat{\mathcal{A}}$ on $\Omega$ anyway. We point out that the equivalence established here mainly serves as a preparatory step for showing that $\Delta$ is in fact a characteristic operator of $\mathcal{A}$ on $\Omega$ in \cref{thm:rhoAcharac}. In addition, the above rather simple and explicit equivalence proves useful for deriving the Jordan chains of $\mathcal{A}$ (and $\hat{\mathcal{A}}$) due to \cref{lemma:graphJ}, see in particular \cref{cor:Jordanchains}. \hfill $\lozenge$
\end{remark}

\begin{proof}[Proof of \cref{thm:Delta equivalence}]
We verify the criteria from \cref{def:characoperator} in the light of \cref{remark:Ahatequiv}. First, recall that the closable linear operator $\hat{\mathcal{A}}$ defined in \eqref{eq:D(A)rep} satisfies \ref{hyp:SH1} by \cref{lemma:graphJ}. Second, observe that the domain of the operator on the left-hand side of \eqref{eq:Delta_equivalence} is $z$-independent since $\mathcal{D}(\Delta(z)) = \mathcal{Y}$ for all $z \in \Omega$, and since the domain of $I = I_{\mathcal{R}(zI-D_0)}$ is $z$-independent due to \cref{lemma:rangeinclusion}. Similarly, the domain $\mathcal{D}(zI - \hat{\mathcal{A}}) = \mathcal{D}(\hat{\mathcal{A}})$ is $z$-independent as well. Third, we establish the equivalence \eqref{eq:Delta_equivalence} and the stated properties of $E, E^{-1}, F$ and $ F^{-1}$ in four steps.

\textbf{Step 1:} Before we construct the operator-valued function $E$, we first consider the operator-valued function $H : \Omega \to L(\hat{\mathcal{F}}_T(\mathbb{R},X), \mathcal{F}_T(\mathbb{R},X)_D)$ with values $H(z) : \mathcal{Y} \oplus \mathcal{R}(zI-D_0) \to \mathcal{D}(D)$ defined by
\begin{equation} \label{eq:H(z)rep}
    H(z) \begin{pmatrix}
        q \\
        \varphi
    \end{pmatrix}
    \coloneqq Q(z)\iota q + R(z,D_0) \varphi.
\end{equation}
We will prove that $H$ is a holomorphic closable operator-valued function whose values are bijective mappings. In particular, we show that for every $z \in \Omega$ 
the (algebraic) inverse $H(z)^{-1} : \mathcal{D}(D) \to \mathcal{Y} \oplus \mathcal{R}(zI-D_0)$ is given by
\begin{equation*}
    H(z)^{-1} \psi = 
    \begin{pmatrix}
        \iota^{-1} Q(0)\psi \\
        (zI-D)\psi
    \end{pmatrix}.
\end{equation*}
Note that $H$ and $H(\cdot)^{-1}$ are holomorphic since $Q(\cdot)\iota, R(\cdot,D_0)$ and $z \mapsto zI-D$ are holomorphic on $\Omega$. As $Q(z)\iota$ and $R(z,D_0)$ are bounded for all $z \in \Omega$, $H$ is a holomorphic bounded operator-valued function. To prove that $H(z)$ is injective, suppose that $Q(z)\iota q + R(z,D_0)\varphi = 0$. Since $Q(z)\iota q \in \mathcal{N}(zI - D)$ and $R(z,D_0)\varphi \in \mathcal{D}(D_0)$, it follows that $Q(z) \iota q = - R(z, D_0) \varphi \in \mathcal{N}(zI - D) \cap \mathcal{D}(D_0) = \{0 \}$ due to \eqref{eq:D0_D}, and thus $Q(z) \iota q = R(z, D_0) \varphi = 0$. As $Q(z)\iota$ and $R(z,D_0)$ are injective, $q$ and $\varphi$ must be zero, and thus $H(z)$ is injective. Our next aim is to show that $\mathcal{R}(H(z)) = \mathcal{D}(D)$. Since $Q(z)\iota$ has range in $\mathcal{N}(zI-D) \subseteq \mathcal{D}(D)$ and $R(z,D_0)$ has range in $\mathcal{D}(D_0) \subseteq \mathcal{D}(D)$, we have that $\mathcal{R}(H(z)) \subseteq \mathcal{D}(D)$. Conversely, let $\psi \in \mathcal{D}(D)$ be given. If we set $(q,\varphi) = (\iota^{-1} Q(0)\psi,(zI-D)\psi)$, then $(q,\varphi) \in \mathcal{Y} \oplus \mathcal{R}(zI - D_0) = \mathcal{D}(H(z))$ due to \cref{lemma:rangeinclusion} and \cref{lemma:Q(z)closable}. Moreover, we find that 
\begin{equation*}
    H(z) \begin{pmatrix}
    q \\ \varphi
\end{pmatrix} 
= Q(z) Q(0) \psi + R(z, D_0)(zI-D)\psi = Q(z) \psi + (\psi - Q(z)\psi) = \psi,
\end{equation*}
where we used \eqref{eq:actionQ} and \eqref{eq:Q(z)independent}. This proves that $H(z)$ is bijective. To prove that $H(z)^{-1}$ is bounded, recall from \ref{hyp:iota} that $\iota^{-1}$ is bounded and from \eqref{eq:actionQ} that $Q(0)$ and $zI-D$ are bounded in the graph norm $\|\cdot\|_{\overline{D}}$. Hence, we conclude that $H(z)^{-1}$ is bounded. 

\textbf{Step 2:} We next recall the linear isomorphism $J \in \mathcal{L}(\mathcal{F}_T(\mathbb{R},X), \Gamma(M))$ from \eqref{eq:defJ}, where we also recall that $\Gamma(M)$ denotes the graph of the operator $M$. We subsequently define the operator-valued function $E \coloneqq JH(\cdot)$. Then $E$ is a holomorphic bounded operator-valued function since $H$ is a holomorphic bounded operator-valued function and $J$ is bounded. Note that $\mathcal{D}(E(z)) = \mathcal{Y} \oplus \mathcal{R}(zI-D_0)$ is $z$-independent due to \cref{lemma:rangeinclusion}. Moreover, $E(z)$ is injective since $J$ and $H(z)$ are both injective. Since $\mathcal{R}(H(z)) = \mathcal{D}(D)$, it follows from \eqref{eq:D(Ahat)rep} that $\mathcal{R}(E(z)) = \mathcal{D}(\hat{\mathcal{A}})$. Hence, the values of $E$ are (algebraically) invertible, and for $(M \psi, \psi) \in \mathcal{D}(\hat{\mathcal{A}})$, we find that
\begin{equation*}
    E(z)^{-1}
        \begin{pmatrix}
        M\psi \\
        \psi
    \end{pmatrix}
    = H(z)^{-1} \psi =
    \begin{pmatrix}
        \iota^{-1} Q(0)\psi \\
        (zI-D)\psi
    \end{pmatrix}.
\end{equation*}
Clearly, $E(\cdot)^{-1}$ is holomorphic as $H(\cdot)^{-1}$ is holomorphic. To prove that $E(z)^{-1} = H(z)^{-1}J^{-1}$ is bounded, it remains to show that $J^{-1} : \Gamma(M) \to \hat{\mathcal{F}}_T(\mathbb{R},X)$ is bounded, but this follows from the bounded inverse theorem. 

\textbf{Step 3:} We subsequently define the operator-valued function $F$ as mentioned in the statement of \cref{thm:Delta equivalence} with values $F(z) : \mathcal{F}_T(\mathbb{R},Y) \oplus \mathcal{R}(zI-D_0) \to \mathcal{F}_T(\mathbb{R},Y) \oplus \mathcal{R}(zI-D_0)$. Note that $F(z)$ has $z$-independent domain and range due to \cref{lemma:rangeinclusion}. Moreover, $F$ and $F(\cdot)^{-1}$ are holomorphic since $z \mapsto zM-K$ and $R(\cdot,D_0)$ are holomorphic on $\Omega$. To prove that $F(z)$ is bounded, let us recall from  \ref{hyp:K} that $K$ acts as a bounded linear operator on $\mathcal{D}(D_0)$ and that $R(z, D_0)$ is bounded, which proves that $KR(z,D_0)$ is bounded. As $M$ is bounded, the claim follows. Moreover, we directly see from the expression of $F(z)$ that it has trivial kernel and is thus injective. Let us now prove that $F(z)$ is surjective. Therefore, let $(p,\varphi) \in \mathcal{F}_T(\mathbb{R},Y) \oplus \mathcal{R}(zI-D_0)$ be given. If we set $q = p + (zM-K)R(z,D_0)\varphi$, then $(q,\varphi) \in \mathcal{F}_T(\mathbb{R},Y) \oplus \mathcal{R}(zI-D_0)$ and clearly $F(z)(q,\varphi) = (p,\varphi),$ which proves the claim. This proves that $F(z)$ is bijective. A straightforward computation shows that $F(z)^{-1}$ is the (algebraic) inverse of $F(z)$. By the same argument as used for $F(z)$, we see that $F(z)^{-1}$ is bounded as well. Next, note that
\begin{equation*}
    \mathcal{R}(zI-\hat{\mathcal{A}}) = \bigg \{
    \begin{pmatrix}
        (zM-K)\varphi \\
        (zI-D)\varphi 
    \end{pmatrix}: \varphi \in \mathcal{D}(D) \bigg \} \subseteq  \mathcal{F}_T(\mathbb{R},Y) \oplus \mathcal{R}(zI-D_0) = \mathcal{D}(F(z)),
\end{equation*}
where the inclusion follows from \cref{lemma:rangeinclusion}. This allows us to consider the restriction $\tilde{F}(z) = F(z)|_{\mathcal{R}(zI-\hat{\mathcal{A}})}$ and recall from the text below \cref{def:equi} that $\mathcal{R}(\tilde{F}(z)) = \mathcal{R}(\Delta(z)) \oplus \mathcal{R}(zI-D_0)$.

\textbf{Step 4:} It remains to verify the equivalence \eqref{eq:Delta_equivalence}. Recalling the action of $\hat{\mathcal{A}}$ from \eqref{eq:D(Ahat)rep}, we find for $(q,\varphi) \in \mathcal{Y} \oplus \mathcal{R}(zI-D_0)$ that
\begin{equation*}
    (zI - \hat{\mathcal{A}})E(z) \begin{pmatrix}
    q \\
    \varphi
    \end{pmatrix} = 
    \begin{pmatrix}
    (zM - K)(Q(z)\iota q + R(z, D_0) \varphi) \\
    (zI - D) (Q(z) \iota q + R(z, D_0)\varphi)
\end{pmatrix} = \begin{pmatrix}
    \Delta(z)q + (zM-K)R(z,D_0)\varphi \\
    \varphi
\end{pmatrix},
\end{equation*}
where we used \eqref{eq:Delta(z)2} to simplify the expression in the first row, and recalled from \cref{lemma:Q(z)closable} that $Q(z)\,\iota q \in \mathcal{N}(zI - D)$ to simplify the expression in the second row. Recalling the definition of $F(z)$, the claim follows.
\end{proof}

As $\mathcal{A}$ is our main operator of interest, we would like to prove that $\Delta$ is also a characteristic operator for $\mathcal{A}$ on $\Omega$. To achieve this, we decompose $\hat{\mathcal{F}}_T(\mathbb{R},X) = \Gamma(M) \oplus \Gamma(M)^c$ as
\begin{equation*}
    \begin{pmatrix}
        q \\
        \varphi
    \end{pmatrix}
    =
    \begin{pmatrix}
        M\varphi \\
        \varphi
    \end{pmatrix}
    +
    \begin{pmatrix}
        q - M\varphi \\
        0
    \end{pmatrix},
\end{equation*}
and define two linear operators $T_1 : \mathcal{D}(T_1)  \subseteq \hat{\mathcal{F}}_T(\mathbb{R},X) \to \hat{\mathcal{F}}_T(\mathbb{R},X)$ and $T_2 : \mathcal{D}(T_2) \subseteq \hat{\mathcal{F}}_T(\mathbb{R},X) \to \hat{\mathcal{F}}_T(\mathbb{R},X)$ with domains 
\begin{equation*}
    \mathcal{D}(T_1) \coloneqq \mathcal{D}(T_2) \coloneqq \bigg \{ \begin{pmatrix}
        q \\
        \varphi
    \end{pmatrix} \in \hat{\mathcal{F}}_T(\mathbb{R},X) : \varphi \in \mathcal{D}(D) \bigg \}
\end{equation*}
and actions given by 
\begin{equation} \label{eq:T12action}
 T_1 
    \begin{pmatrix}
        q \\
        \varphi
    \end{pmatrix}
    \coloneqq JD\varphi = \begin{pmatrix}
        MD \varphi \\
        D \varphi
    \end{pmatrix},    
    \quad 
    T_2 
        \begin{pmatrix}
        q \\
        \varphi
    \end{pmatrix}
    \coloneqq
        \begin{pmatrix}
        (K - MD)\varphi \\
        0
    \end{pmatrix}.
\end{equation}
The operator $T_1$ is well-defined since $M$ acts on the entire space $\mathcal{F}_T(\mathbb{R},X)$, and the operator $T_2$ is well-defined due to the assumption $\mathcal{D}(K) \supseteq \mathcal{D}(D)$ from \ref{hyp:K}. Moreover, it holds that $\mathcal{R}(T_1) \subseteq \Gamma(M)$, $\mathcal{R}(T_2) \subseteq \Gamma(M)^c$, 
$\hat{\mathcal{A}} = T_1 + T_2$ and $J \mathcal{D}(\mathcal{A}) = \mathcal{D}(\hat{\mathcal{A}})\cap \mathcal{N} (T_2) $.

\begin{lemma} \label{lemma:T2onto}
The following statements hold:
\begin{enumerate}
    \item $\mathcal{R}(\Delta(z)) \subseteq \mathcal{R}(MD - K)$ and $\mathcal{R}(\Delta(z)) \oplus \{0\} \subseteq \mathcal{R}(T_2)$ for all $z \in \Omega$.
    \item If $\rho(\Delta) \neq \emptyset$, then $\mathcal{R}(T_2)$ is dense in $\Gamma(M)^c$.
\end{enumerate}
\end{lemma}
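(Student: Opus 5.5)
For the first statement I would work directly from the representation \eqref{eq:Delta(z)2}. Fix $z \in \Omega$ and $q \in \mathcal{Y}$, and set $\psi \coloneqq Q(z)\iota q$. By \cref{lemma:Q(z)closable}, $\psi \in \mathcal{N}(zI-D) \subseteq \mathcal{D}(D)$, so $D\psi = z\psi$. Substituting this into $\Delta(z)q = (zM-K)\psi$ gives
\begin{equation*}
\Delta(z) q = zM\psi - K\psi = M(z\psi) - K\psi = MD\psi - K\psi = (MD-K)\psi .
\end{equation*}
The right-hand side is well-defined because $\mathcal{D}(K) \supseteq \mathcal{D}(D)$ by \ref{hyp:K}, and it lies in $\mathcal{R}(MD-K)$. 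This proves the first inclusion. For the second inclusion, the definition \eqref{eq:T12action} gives $T_2(p,\psi) = ((K-MD)\psi, 0)$ for any $p$, because $(p,\psi) \in \mathcal{D}(T_2)$ as soon as $\psi \in \mathcal{D}(D)$. Taking $\psi \mapsto -\psi$, that is, applying $T_2$ to $(0,-Q(z)\iota q)$, yields $(\Delta(z)q,0)$. Hence $\mathcal{R}(\Delta(z))\oplus\{0\} \subseteq \mathcal{R}(T_2)$.

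For the second statement, choose some $z \in \rho(\Delta)$. By \cref{def:spectrum_characop}, $\mathcal{R}(\Delta(z))$ is dense in $\mathcal{F}_T(\mathbb{R},Y)$. Next I would identify $\Gamma(M)^c$ with the closed subspace $\mathcal{F}_T(\mathbb{R},Y) \oplus \{0\}$ of $\hat{\mathcal{F}}_T(\mathbb{R},X)$, as suggested by the decomposition $(q,\varphi) = (M\varphi,\varphi) + (q-M\varphi,0)$. In this identification $\mathcal{R}(T_2) \subseteq \Gamma(M)^c$, and by part 1, $\mathcal{R}(T_2)$ contains $\mathcal{R}(\Delta(z)) \oplus \{0\}$, which is dense in $\mathcal{F}_T(\mathbb{R},Y) \oplus \{0\}$ in the product norm. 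Density of $\mathcal{R}(T_2)$ in $\Gamma(M)^c$ follows.

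Both parts are essentially bookkeeping, so I expect no step to be a real obstacle. The only point needing care is to make sure $\Gamma(M)^c$ is understood as exactly $\mathcal{F}_T(\mathbb{R},Y)\oplus\{0\}$, with the norm inherited from $\hat{\mathcal{F}}_T(\mathbb{R},X)$, so that density in the first component transfers directly.
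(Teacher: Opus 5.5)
Your proposal is correct and follows essentially the same route as the paper: you rewrite $\Delta(z)q = (MD-K)Q(z)\iota q$ using $DQ(z)\iota q = zQ(z)\iota q$, read off both inclusions from this, and for the density claim you use that $\mathcal{R}(\Delta(z))\oplus\{0\}$ is dense in $\Gamma(M)^c = \mathcal{F}_T(\mathbb{R},Y)\oplus\{0\}$ for $z\in\rho(\Delta)$. Where you differ, it is only in care: you handle the sign in $T_2$ explicitly by applying it to $(0,-Q(z)\iota q)$, which the paper leaves implicit, and you skip the paper's unnecessary detour through the bijectivity of $Q(z)\iota$ onto $\mathcal{N}(zI-D)$.
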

\begin{proof}
To prove the first statement, let $z \in \Omega$ be given and recall from \eqref{eq:Delta(z)1} that $\Delta(z) = (zM-K)Q(z)\iota$. Since $(zI - D) Q(z) \iota = 0$ due to \eqref{eq:propQ(z)}, it follows that
\begin{equation} \label{eq:zM_on_Q}
    z M Q(z) \iota = MD Q(z) \iota,
\end{equation}
meaning that $\Delta(z) = (MD-K)Q(z)\iota$, which implies that $\mathcal{R}(\Delta(z)) \subseteq \mathcal{R}(MD - K)$. We next recall from \cref{lemma:Q(z)closable} that $Q(z)\iota : \mathcal{Y} \to \mathcal{N}(z I-D)$ is bijective and thus the range of $\Delta(z)$ is equal to the range of $(MD - K)|_{\mathcal{N}(zI-D)}$. Moreover, it holds that $\mathcal{D}(MD - K) = \mathcal{D}(D)$, since $M$ is bounded and $\mathcal{D}(K) \supseteq \mathcal{D}(D)$ due to \ref{hyp:K}. Since $\mathcal{N}(zI-D)$ is a subspace of $\mathcal{D}(D) =\mathcal{D}(MD-K)$, it also follows that $\mathcal{R}(\Delta(z)) \oplus \{0\} \subseteq \mathcal{R}(T_2)$.

To prove the second statement, let $z \in \rho(\Delta)$ be given. Then $\mathcal{R}(\Delta(z)) \oplus \{0\}$ is dense in $\Gamma(M)^c$ and by the first part of the proof, we obtain that $\mathcal{R}(T_2)$ is dense in $\Gamma(M)^c$.
\end{proof}

As a preparatory step for the proof of \cref{thm:rhoAcharac}, we next prove that the linear operator $MD - K : \mathcal{D}(D) \subseteq \mathcal{F}_T(\mathbb{R},X) \to \mathcal{F}_T(\mathbb{R},Y)$ is closable. While $MD$ and $K$ are closable, we cannot directly guarantee that their difference is closable as well,
but this turns out to be true due to the underlying structure of the linear operators $M,D$ and $K$.

\begin{lemma} \label{lemma:MDKclosable}
The linear operator $MD-K$ is closable.    
\end{lemma}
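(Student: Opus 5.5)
The plan is to use the sequential characterization of closability, \cref{lemma:closabledef}. Take a sequence $(\varphi_m)_m$ in $\mathcal{D}(MD-K)=\mathcal{D}(D)$ with $\varphi_m\to 0$ and $(MD-K)\varphi_m\to p$ in $\mathcal{F}_T(\mathbb{R},Y)$; we must show $p=0$. The structural input is the closability of the second operator $\hat{\mathcal{A}}$ (\cref{lemma:D0closable}). Because $M$ is bounded, $(M\varphi_m,\varphi_m)\in\mathcal{D}(\hat{\mathcal{A}})$ tends to $0$, and
\begin{equation*}
\hat{\mathcal{A}}\begin{pmatrix} M\varphi_m\\ \varphi_m\end{pmatrix}=\begin{pmatrix} K\varphi_m\\ D\varphi_m\end{pmatrix}.
\end{equation*}
Suppose we knew that $D\varphi_m\to\psi$ for some $\psi$. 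Then $K\varphi_m=MD\varphi_m-(MD-K)\varphi_m\to M\psi-p$. Closability of $\hat{\mathcal{A}}$ would give $\psi=0$ and $M\psi-p=0$, hence $p=0$. So the whole argument reduces to controlling $(D\varphi_m)_m$.

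To get that control, I will split $\varphi_m$ along the decomposition \eqref{eq:D0_D} using the projection $Q(0)$ from \cref{lemma:Q(z)closable}:
\begin{equation*}
\varphi_m=\psi_m+\chi_m,\qquad \psi_m\coloneqq Q(0)\varphi_m\in\mathcal{N}(D),\qquad \chi_m\coloneqq D_0^{-1}D\varphi_m\in\mathcal{D}(D_0).
\end{equation*}
Then $D\varphi_m=D_0\chi_m$ and
\begin{equation*}
(MD-K)\varphi_m=(MD_0-K)\chi_m-K\psi_m.
\end{equation*}
By \ref{hyp:K}, $K|_{\mathcal{D}(D_0)}$ is bounded, and $D_0^{-1}$ is bounded by \ref{hyp:restriction}. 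So the part $(MD_0-K)\chi_m$ is controlled by $\|D\varphi_m\|$. For the kernel part, I would write $\psi_m=\iota q_m$ and use $-K\iota=\Delta(0)$, which holds when $0\in\Omega$ since $Q(0)\iota=\iota$. This links $-K\psi_m$ to the closable operator $\Delta(0)$ through \cref{cor:Deltawelldefined}.

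The main obstacle is showing that $(D\varphi_m)_m$ converges, or at least is bounded. My first attempt is a normalization argument. If $\|D\varphi_m\|\to\infty$ along a subsequence, set $\xi_m\coloneqq\varphi_m/\|D\varphi_m\|$. Then $\xi_m\to0$, $(MD-K)\xi_m\to0$ and $\|D\xi_m\|=1$, and the decomposition above shows $\chi_m/\|D\varphi_m\|$ stays bounded. One would then try to show, using the closability of $\Delta(0)$ and of $\hat{\mathcal{A}}$ together with $\mathcal{N}(D)=\iota(\mathcal{Y})$ and the boundedness of $\iota^{-1}$, that the normalized problem forces a contradiction. If boundedness can be obtained but not convergence, I would instead pass to $\overline{D}$ and $\overline{K}$: apply the same two-component argument to the closure $\overline{\hat{\mathcal{A}}}$, viewing $MD-K$ as the first component of $\hat{\mathcal{A}}$ composed with the bounded map $J$ minus the first component of $JD$. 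This is the step I expect to require the most care, and possibly extra use of the specific structure of $M$, $D$ and $K$.
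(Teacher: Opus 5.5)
The reduction in your first paragraph is sound: convergence of $D\varphi_m$, together with closability of $D$ and $K$ (or of $\hat{\mathcal{A}}$), would force $p=0$. The gap is the step you flag as the main obstacle, and it cannot be filled. Under (H1)--(H4) the statement is false, already in the paper's own DDE setting of \cref{subsec:classicalDDEs}, where $((MD-K)\varphi)(t)=\varphi(t)'(0)-L(t)\varphi(t)$.

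Fix $v\in\mathbb{C}^n\setminus\{0\}$ and set $\varphi_m(t)(\theta)\coloneqq m^{-1}(e^{m\theta}-1)v$, constant in $t$. Then:
\begin{itemize}
\item $\varphi_m\in\mathcal{D}(D_0)\subseteq\mathcal{D}(D)$ and $\|\varphi_m\|_\infty\le|v|/m\to0$;
\item $D\varphi_m=(t\mapsto e^{m\,\cdot}v)$ has norm $|v|$ but no convergent subsequence in $C_T(\mathbb{R},X)$, since its pointwise limit is discontinuous;
\item $(MD-K)\varphi_m=v-L(\cdot)\varphi_m(\cdot)\to v\neq0$ uniformly.
\end{itemize}
In your splitting, $\psi_m=Q(0)\varphi_m=0$ and $\chi_m=\varphi_m$, so the obstruction is the term $MD_0\chi_m=v$. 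This term is indeed controlled by $\|D\varphi_m\|$, but boundedness is all that control gives. Your normalization argument never starts, because $\|D\varphi_m\|$ stays bounded. Passing to $\overline{D}$, $\overline{K}$ or $\overline{\hat{\mathcal{A}}}$ would need a convergent subsequence of $(D\varphi_m)_m$, and none exists.

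The paper's proof does not get around this either. It writes $\varphi=H(z)(q,\varphi_0)$ and expresses $(MD-K)\varphi$ as a closable term plus three terms that are bounded \emph{as functions of} $(q,\varphi_0)$. This correctly shows that $(MD-K)H(z)$, i.e.\ $(q,\varphi_0)\mapsto\Delta(z)q+[M(zR(z,D_0)-I)-KR(z,D_0)]\varphi_0$, is closable. Transferring this to $MD-K$ would require $\varphi\mapsto\varphi_0=(zI-D)\varphi$ to be bounded in the norm of $\mathcal{F}_T(\mathbb{R},X)$, but it is bounded only in the graph norm of $\overline{D}$. Concretely, $M(zR(z,D_0)-I)(zI-D)=MD-zMQ(z)$ on $\mathcal{D}(D)$, and $MD$ already fails closability on the sequence above.

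So no proof of the lemma as stated is possible; the statement must be weakened. A suitable replacement is closability of $(MD-K)H(z)$ together with boundedness of $(MD-K)R(z,D_0)=M(zR(z,D_0)-I)-KR(z,D_0)$. These are what the closability arguments for $E(z)$ and $F(z)$ in \cref{thm:rhoAcharac} actually use.
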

\begin{proof}
Let $\varphi \in \mathcal{D}(D)$ be given and recall from the proof of \cref{thm:Delta equivalence} that $H(z)$ is a bijective mapping for every $z \in \Omega$. Hence, $\varphi = H(z)(q,\varphi_0) = Q(z)\iota q + R(z,D_0)\varphi_0$ for some $q \in \mathcal{Y}$ and $\varphi_0 \in \mathcal{R}(zI-D_0)$. Recalling \eqref{eq:zM_on_Q}, we find that
\begin{equation*} 
    (MD-K)\varphi = zMQ(z)\iota q - KQ(z)\iota q + (MD-K)R(z,D_0)\varphi_0.
\end{equation*}
Moreover, since $R(z,D_0)$ maps into $\mathcal{D}(D_0) \subseteq \mathcal{D}(D)$, we obtain the resolvent identity $D R(z, D_0) = zR(z, D_0)-I$, so that
\begin{equation} \label{eq:actionMDK}
     (MD-K)\varphi = zMQ(z)\iota q - KQ(z)\iota q + M(zR(z, D_0) - I) \varphi_0 - KR(z, D_0) \varphi_0. 
\end{equation}
We next consider the four terms in \eqref{eq:actionMDK} one by one:
because $M$ and $Q(z) \iota$ are bounded, the linear operator $zMQ(z)\iota$ is bounded.  Since $K$ is closable and $Q(z)$ is bounded, it follows from \cref{lemma:compoclosable} that $KQ(z)$ is closable. Since $M$ and $R(z, D_0)$ are bounded, $M(zR(z, D_0) - I)$ is bounded. Finally, since $K|_{\mathcal{D}(D_0)}$ is bounded by \ref{hyp:K} and since $R(z, D_0)$ is bounded, $KR(z, D_0)$ is bounded as well. Hence, \eqref{eq:actionMDK} tells us that $MD-K$ is the sum of a closable linear operator and three bounded linear operators, and we conclude from \cref{lemma:closablesum} that $MD-K$ is closable.    
\end{proof}

To continue, we actually need an equality between the linear (sub)spaces $\mathcal{R}(\Delta(z))$ and $ \mathcal{R}(MD - K)$ of $\mathcal{F}_T(\mathbb{R},Y)$ for at least one $z \in \rho(\Delta)$. Therefore, we impose the following hypothesis:
\begin{enumerate}[label=({H}{{\arabic*}})]
\setcounter{enumi}{5}
\item \label{hyp:rangeequal} 
$\rho(\Delta) \neq \emptyset$ and there exists a $z_0 \in \rho(\Delta)$ such that $\mathcal{R}(\Delta(z_0)) = \mathcal{R}(MD - K)$.
\end{enumerate}

\begin{lemma} \label{lemma:DAhatcompo}
We have the direct sum decomposition
\begin{equation} \label{eq:D(Ahat)directsum}
    \mathcal{D}(\hat{\mathcal{A}}) = \mathcal{D}(J\mathcal{A}J^{-1}) \oplus \mathcal{R}(T_2^+),
\end{equation}
where $T_2^+$ is a right inverse of $T_2$.
\end{lemma}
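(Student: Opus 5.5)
We want $\mathcal{D}(\hat{\mathcal{A}}) = \mathcal{D}(J\mathcal{A}J^{-1}) \oplus \mathcal{R}(T_2^+)$, where $\mathcal{D}(\hat{\mathcal{A}}) = \Gamma(M|_{\mathcal{D}(D)}) = \{(M\varphi,\varphi):\varphi\in\mathcal{D}(D)\}$ and $\mathcal{D}(J\mathcal{A}J^{-1}) = J\mathcal{D}(\mathcal{A}) = \mathcal{D}(\hat{\mathcal{A}})\cap\mathcal{N}(T_2)$. The plan is the standard algebraic splitting of a domain along the kernel of a surjection. Consider $T_2$ restricted to $\mathcal{D}(\hat{\mathcal{A}})$, i.e. $\hat T_2 \coloneqq T_2|_{\mathcal{D}(\hat{\mathcal{A}})}$, acting by $(M\varphi,\varphi)\mapsto ((K-MD)\varphi,0)$. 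Its range equals $\mathcal{R}(K-MD)\oplus\{0\} = \mathcal{R}(T_2)$, since the first component of an element of $\mathcal{D}(T_2)$ plays no role in the action of $T_2$. Its kernel is exactly $J\mathcal{D}(\mathcal{A})$, by \eqref{eq:D(A)rep}.

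\textbf{Construction of $T_2^+$.} We build a right inverse $T_2^+ : \mathcal{R}(T_2)\to\mathcal{D}(\hat{\mathcal{A}})$ explicitly, using \ref{hyp:rangeequal} rather than an abstract Hamel-basis choice. Fix $z_0\in\rho(\Delta)$ with $\mathcal{R}(\Delta(z_0)) = \mathcal{R}(MD-K)$. By the proof of \cref{lemma:T2onto}, $\Delta(z_0) = (MD-K)Q(z_0)\iota$, and $\Delta(z_0)$ is injective. For $(p,0)\in\mathcal{R}(T_2)$ we have $p\in\mathcal{R}(MD-K) = \mathcal{R}(\Delta(z_0))$, so we set
\[
T_2^+\begin{pmatrix} p\\ 0\end{pmatrix} \coloneqq -J Q(z_0)\iota\,\Delta(z_0)^{-1}p .
\]
Since $Q(z_0)\iota q\in\mathcal{N}(z_0I-D)\subseteq\mathcal{D}(D)$, this lands in $\mathcal{D}(\hat{\mathcal{A}})$. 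Moreover
\[
T_2T_2^+(p,0) = \bigl((MD-K)Q(z_0)\iota\Delta(z_0)^{-1}p,0\bigr) = (\Delta(z_0)\Delta(z_0)^{-1}p,0) = (p,0),
\]
so $T_2^+$ is a right inverse. As a bonus, $T_2^+$ is bounded, because $\Delta(z_0)^{-1}$, $Q(z_0)\iota$ and $J$ are bounded. (Any algebraic right inverse would also serve for the bare direct sum.)

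\textbf{Decomposition.} Given $\hat\varphi\in\mathcal{D}(\hat{\mathcal{A}})$, write
\[
\hat\varphi = \bigl(\hat\varphi - T_2^+T_2\hat\varphi\bigr) + T_2^+T_2\hat\varphi .
\]
The second summand lies in $\mathcal{R}(T_2^+)$. The first lies in $\mathcal{D}(\hat{\mathcal{A}})$, since both terms do, and in $\mathcal{N}(T_2)$, since $T_2T_2^+=I$ on $\mathcal{R}(T_2)$. It therefore lies in $J\mathcal{D}(\mathcal{A})$. For directness, suppose $\hat\psi = T_2^+w\in J\mathcal{D}(\mathcal{A})$. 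Then $0 = T_2\hat\psi = w$, so $\hat\psi = 0$.

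\textbf{Main obstacle.} The only delicate point is making sure $T_2^+$ maps into $\mathcal{D}(\hat{\mathcal{A}})$ and not merely into $\mathcal{D}(T_2)$; an arbitrary right inverse of $T_2$ on $\mathcal{D}(T_2)$ could have a first component different from $M\varphi$. The explicit construction through $J$ and $Q(z_0)\iota$ above resolves this, and it is where \ref{hyp:rangeequal} enters.
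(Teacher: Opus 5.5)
Your proposal is correct and follows essentially the paper's proof: you use the same explicit right inverse $T_2^+(p,0) = -JQ(z_0)\iota\,\Delta(z_0)^{-1}p$ furnished by \ref{hyp:rangeequal}, and your splitting $\hat\varphi = (\hat\varphi - T_2^+T_2\hat\varphi) + T_2^+T_2\hat\varphi$ is exactly the paper's projection $P = T_2^+T_2|_{\mathcal{D}(\hat{\mathcal{A}})}$, whose kernel is $\mathcal{D}(J\mathcal{A}J^{-1})$ and whose range is $\mathcal{R}(T_2^+)$. Your explicit check that $T_2^+$ lands in $\mathcal{D}(\hat{\mathcal{A}})$, and not merely in $\mathcal{D}(T_2)$, makes explicit a point the paper uses only implicitly.
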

\begin{proof}
According to \ref{hyp:rangeequal}, let $z_0\in \rho(\Delta)$ be such that $\mathcal{R}(\Delta(z_0)) = \mathcal{R}(MD - K)$. Since $\mathcal{R}(\Delta(z_0)) \oplus \{0 \} \subseteq \mathcal{R}(T_2)$ due to \cref{lemma:T2onto}, and
$\mathcal{R}(T_2) \subseteq \mathcal{R}(MD - K) \oplus \{0 \}$ by \eqref{eq:T12action}, it follows that $\mathcal{R}(T_2) = \mathcal{R}(\Delta(z_0)) \oplus \{0\}$. Consider the linear map $T_2^+ : \mathcal{R}(T_2) \to \mathcal{D}(T_2)$ defined by
\begin{equation*}
    T_2^{+}
    \begin{pmatrix}
        q \\
        0
    \end{pmatrix}
    \coloneqq
    \begin{pmatrix}
        -MQ(z_0)\iota\Delta(z_0)^{-1}q \\
        -Q(z_0)\iota\Delta(z_0)^{-1}q
    \end{pmatrix},
\end{equation*}
which is well-defined since $Q(z_0)$ maps into $\mathcal{D}(D)$, recall \cref{lemma:Q(z)closable}. 
A brief calculation using \eqref{eq:zM_on_Q} and \cref{lemma:Q(z)closable} shows that $T_2 T_2^{+} = I_{\mathcal{R}(T_2)}$, and thus $T_2^+$ is a right inverse of $T_2$. We next introduce the projection operator $P \coloneqq T_2^{+}T_2 |_{\mathcal{D}(\hat{\mathcal{A}})} : \mathcal{D}(\hat{\mathcal{A}}) \to \mathcal{D}(\hat{\mathcal{A}})$ defined by
\begin{equation} \label{eq:projectionP}
    P
    \begin{pmatrix}
        M\varphi \\
        \varphi
    \end{pmatrix}
    \coloneqq
    \begin{pmatrix}
        MQ(z_0)\iota\Delta(z_0)^{-1}(MD-K)\varphi \\
        Q(z_0)\iota\Delta(z_0)^{-1}(MD-K)\varphi
    \end{pmatrix}. 
\end{equation}
To determine the range and kernel of this projection, we first observe that $\mathcal{R}(P) = \mathcal{R}(T_2^+)$ since $\mathcal{R}(T_{2})$ equals the range of $T_2$ restricted to $\mathcal{D}(\hat{\mathcal{A}})$. Consider now the linear operator $I-P : \mathcal{D}(\hat{\mathcal{A}}) \to \mathcal{D}(\hat{\mathcal{A}})$ and note directly that $\mathcal{D}(J\mathcal{A}J^{-1}) \subseteq \mathcal{R}(I-P)$ by a simple calculation. To prove the other inclusion, let $(M\psi,\psi) \in \mathcal{R}(I-P)$ be given. Then $\psi = [I - Q(z_0)\iota \Delta(z_0)^{-1} (MD-K)] \varphi$ for some $\varphi \in \mathcal{D}(D)$. Recalling \eqref{eq:zM_on_Q} shows that $(MD-K)\psi = 0$ and thus $(M \psi,\psi) \in \mathcal{D}(J\mathcal{A}J^{-1})$. We conclude that $P$ is a projection with range $\mathcal{R}(T_2^+)$ and kernel $\mathcal{D}(JAJ^{-1})$ so that direct sum decomposition \eqref{eq:D(Ahat)directsum} follows.
\end{proof}

We are now finally in the position to prove that the operator-valued function $\Delta$ from \eqref{eq:Delta(z)1} is a characteristic operator for $\mathcal{A}$ on $\Omega$.

\begin{theorem} \label{thm:rhoAcharac}
Suppose that $\mathcal{A}$ is the first operator associated with $D,K$ and $M$. Then the function $\Delta$ defined in \eqref{eq:Delta(z)1} is a characteristic operator for $\mathcal{A}$ on $\Omega$.
\end{theorem}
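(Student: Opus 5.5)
The plan is to convert the equivalence of \cref{thm:Delta equivalence} between $\operatorname{diag}(\Delta(z),I)$ and $zI-\hat{\mathcal{A}}$ into an equivalence with $\operatorname{diag}(I_{\tilde Z}, zI-\mathcal{A})$. The conversion uses the splitting $\hat{\mathcal{A}} = T_1+T_2$ together with the domain decomposition \eqref{eq:D(Ahat)directsum} from \cref{lemma:DAhatcompo}.

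\textbf{Choice of spaces.} I take $Z := \Gamma(M)^c \cong \mathcal{F}_T(\mathbb{R},Y)$ and $\tilde Z := \mathcal{R}(T_2) = \mathcal{R}(\Delta(z_0))\oplus\{0\}$. By \cref{lemma:T2onto} and \ref{hyp:rangeequal}, $\tilde Z$ is dense in $Z$. For $\tilde X$ I take $\mathcal{R}(zI-D_0)$, which is $z$-independent by \cref{lemma:rangeinclusion} and dense because $\Omega=\rho(D_0)$.

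\textbf{Step 1: an equivalence between $zI-\hat{\mathcal{A}}$ and the $(\tilde Z,Z)$-extension of $zI-J\mathcal{A}J^{-1}$.} Using \eqref{eq:D(Ahat)directsum}, I write each element of $\mathcal{D}(\hat{\mathcal A})$ as $x = x_1 + T_2^+ w$ with $x_1\in \mathcal{D}(J\mathcal{A}J^{-1})$ and $w\in\tilde Z$. Since $T_2 x_1 = 0$ and $T_2T_2^+ = I$, this gives
\[
(zI-\hat{\mathcal{A}})x = (zI - J\mathcal{A}J^{-1})x_1 + (zI - T_1)T_2^+ w - w .
\]
Set $E_1(z)(w,x_1) := x_1 + T_2^+w$. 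This is a $z$-independent bijection $\tilde Z\oplus \mathcal{D}(J\mathcal A J^{-1}) \to \mathcal{D}(\hat{\mathcal A})$. Then $(zI-\hat{\mathcal A})E_1(z)$ equals $\operatorname{diag}(I_{\tilde Z}, zI-J\mathcal{A}J^{-1})$ composed with a triangular correction $-w + (zI-T_1)T_2^+w$. Since $T_2^+ w$ is given explicitly through $Q(z_0)\iota\Delta(z_0)^{-1}$, the term $(zI-T_1)T_2^+ w$ lies in $\Gamma(M)$. Its $\Gamma(M)$-component can therefore be removed by a left factor $F_1(z)$ of the form $\begin{pmatrix} I & 0\\ \ast & I\end{pmatrix}$ (after identifying $\hat{\mathcal F}_T = Z\oplus \Gamma(M)$ and swapping the order of components). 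This factor is unipotent triangular, so its inverse is explicit and holomorphic in $z$ (polynomial, in fact), and the domains stay $z$-independent. Finally I conjugate with $J$ to replace $J\mathcal{A}J^{-1}$ by $\mathcal{A}$. Here $J$ is bounded with bounded inverse on $\Gamma(M)$, and $\Gamma(M)\cong \mathcal{F}_T(\mathbb{R},X)$.

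\textbf{Step 2: composition.} I compose the equivalence from Step 1 with \eqref{eq:Delta_equivalence}, which gives
\[
\operatorname{diag}(\Delta(z),I_{\tilde X}) = F(z)F_1(z)^{-1}\cdots\operatorname{diag}(I_{\tilde Z},zI-\mathcal A)\cdots E_1(z)^{-1}E(z).
\]
The remaining work is to check every condition of \cref{def:equi}. The compositions must be holomorphic and closable; for closability I use \cref{lemma:compoclosable}, \cref{lemma:closablesum} and \cref{lemma:MDKclosable}, the latter because $T_2$ involves $MD-K$. The domains and ranges must be $z$-independent. The inverses must be holomorphic. And the range condition $\mathcal{R}(zI-\mathcal A)\oplus\tilde Z\subseteq \mathcal{D}(F)$ must hold; this follows from \ref{hyp:domaincurlyA} and \cref{lemma:graphJ}.

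\textbf{Main obstacle.} I expect the hardest part to be the bookkeeping of domains. First, $E_1(z)^{-1}$ involves the projection $P$ from \eqref{eq:projectionP}, which contains $MD-K$ and is therefore unbounded. It is nevertheless closable, and holomorphic because it is $z$-independent, which is all \cref{def:equi} requires. Second, the identity factor must act on a dense subspace $\tilde X$ rather than on all of $X$, so I have to verify carefully that the ranges match, namely that $\mathcal R(\tilde F(z)) = \mathcal{R}(\Delta(z))\oplus\tilde X$, using \cref{lemma:rangeinclusion}. If the triangular correction in Step 1 turns out not to map into the correct ranges, my fallback is to define $E$ and $F$ directly by explicit formulas modelled on \cref{thm:Delta equivalence}: replace $M\varphi$ by a $\Gamma(M)^c$-component and insert $T_2^+$ wherever $q$ appears, then verify \eqref{eq:equivalencematrix} by the same kind of direct computation as in Step 4 of that proof.
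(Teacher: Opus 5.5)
Your proposal is correct and is essentially the paper's proof. Flip the sign to $E_1(w,x_1)=x_1-T_2^+w$, which also makes $F_1$ genuinely unipotent (as you wrote it, the $\Gamma(M)^c$-block is $-I$); then $\mathrm{diag}(I,J^{-1})E_1^{-1}JH(z)$ is exactly the paper's $E(z)$ in \eqref{eq:tildeE}, and your composite left factor is exactly its $F(z)$ in \eqref{eq:tildeF}, the only difference being that the paper verifies bijectivity and the conjugation identity by hand rather than inheriting them from \cref{thm:Delta equivalence}.

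One justification should be corrected. The inclusion $\tilde Z\oplus\mathcal{R}(zI-\mathcal{A})\subseteq\tilde Z\oplus\mathcal{R}(zI-D_0)=\mathcal{D}(F(z))$ is needed for every $z\in\Omega$, so it must come from $\mathcal{A}\subseteq D$ together with \cref{lemma:rangeinclusion}, with \ref{hyp:domainrangeD} placing the correction term $(z-z_0)Q(z_0)\iota\Delta(z_0)^{-1}q$ in $\mathcal{R}(zI-D_0)$; it does not follow from \ref{hyp:domaincurlyA} and \cref{lemma:graphJ}, which only concern $z\in\rho(\mathcal{A})\cap\Omega$.
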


\begin{proof}
We verify the conditions of \cref{def:characoperator}. First, recall that the closable linear operator $\mathcal{A}$ defined in \eqref{eq:D(A)rep} satisfies \ref{hyp:SH1}. Second, we choose the spaces in that definition as $\tilde{X} = \mathcal{R}(zI - D_0) \subseteq \mathcal{F}_T(\mathbb{R}, X)$ and $\tilde{Z} = \mathcal{R}(MD -K) \subseteq Z$, where we take $Z = \mathcal{F}_T(\mathbb{R}, Y)$. By definition of $\Omega = \mathcal{\rho}(D_0)$, the space $\tilde{X}$ lies dense in $\mathcal{F}_T(\mathbb{R}, X)$ for all $z \in \Omega$. Moreover, recall from \ref{hyp:rangeequal} that there exists a $z_0 \in \rho(\Delta)$ such that $\mathcal{R}(\Delta(z_0)) = \mathcal{R}(MD-K) = \tilde{Z}$, meaning that the latter space is also dense in $Z$. Next, observe that $\mathcal{D}(\Delta(z)) = \mathcal{Y}$ and $\mathcal{D}(I_{\mathcal{R}(zI - D_0)}) = \mathcal{R}(zI - D_0)$ are $z$-independent due to \cref{lemma:rangeinclusion}. Moreover, $\mathcal{D}(zI - \mathcal{A}) = \mathcal{D}(\mathcal{A})$ and $\mathcal{R}(\Delta(z_0)) = \mathcal{R}(MD - K)$ are likewise $z$-independent due to \ref{hyp:rangeequal}. Third, we show that the holomorphic closable operator-valued functions
\begin{equation*}
    z \mapsto 
    \begin{pmatrix}
        \Delta(z) & 0 \\
        0 & I_{\mathcal{R}(zI - D_0)}
    \end{pmatrix}
    \quad \text{and} \quad
    z \mapsto
    \begin{pmatrix}
        I_{\mathcal{R}(MD - K)} & 0 \\
        0 & zI - \mathcal{A}
    \end{pmatrix}
\end{equation*}
are equivalent on $\Omega$ by verifying the conditions of \cref{def:equi} in three steps.

\textbf{Step 1:} We construct the conjugation operator-valued function $E : \Omega \to L(\hat{\mathcal{F}}_T(\mathbb{R},X),\mathcal{F}_T(\mathbb{R},Y) \oplus \mathcal{F}_T(\mathbb{R},X)_{\mathcal{A}})$ with values $E(z) : \mathcal{Y} \oplus \mathcal{R}(zI-D_0) \to \mathcal{R}(MD-K) \oplus \mathcal{D}(\mathcal{A})$ using the operator-valued function $H$ from \eqref{eq:H(z)rep} and the projection $P$ from \eqref{eq:projectionP} as
\begin{equation} 
\begin{aligned}
\label{eq:tildeE}
    E(z) 
    \begin{pmatrix}
        q \\
        \varphi
    \end{pmatrix}
    &\coloneqq
 \begin{pmatrix}
        (MD-K) H(z)(q, \varphi) \\
        J^{-1}(I-P)J H(z)(q, \varphi)
    \end{pmatrix} \\
    &=
    \begin{pmatrix}
        (MD - K)[Q(z)\iota q + R(z,D_0) \varphi] \\
        (I - Q(z_0)\iota\Delta(z_0)^{-1}(MD-K))[Q(z)\iota q + R(z,D_0) \varphi]
    \end{pmatrix}.
    \end{aligned}
\end{equation}
Since the operator-valued function $H$ from \eqref{eq:H(z)rep} is holomorphic on $\Omega$, we conclude that $E$ is holomorphic. To prove that $E(z)$ is closable, 
let $((q_m,\varphi_m))_m$ be a sequence in $\mathcal{Y} \oplus \mathcal{R}(zI-D_0)$ converging in norm to zero and suppose that $(E(z)(q_m,\varphi_m))_m$ converges in norm to some $(p,\psi) \in \hat{\mathcal{F}}_T(\mathbb{R},X)$. Since $MD-K$ is closable (\cref{lemma:MDKclosable}), $H(z)$ is bounded and $\mathcal{R}(H(z)) = \mathcal{D}(D)$ by the results from the proof of \cref{thm:Delta equivalence}, it follows from \cref{lemma:compoclosable} that $(MD-K)H(z)$ is closable and thus $\Psi_m = (MD-K)H(z) (q_m,\varphi_m) \to 0$ as $m \to \infty$, proving that $p = 0$. As a consequence, $-Q(z_0)\iota \Delta(z_0)^{-1} \Psi_m \to \psi$ as $m \to \infty$. Since $Q(z_0) \iota$ and $\Delta(z_0)^{-1}$ are bounded (\cref{lemma:Q(z)closable}), it follows from \cref{lemma:closablesum} that $Q(z_0) \iota \Delta(z_0)^{-1}$ is closable and thus $\psi = 0$, which proves the claim. 

To prove that $E(z)$ is injective, suppose that $E(z)(q,\varphi) = 0$. Then \eqref{eq:tildeE} tells us that $(MD-K)H(z)(q,\varphi) = 0$. Substituting this equality into the second component of $E(z)(q,\varphi) = 0$ yields $H(z)(q,\varphi) = 0$ and thus $(q,\varphi) = (0,0)$ since $H(z)$ is injective, recall the proof of \cref{thm:Delta equivalence}. 

To prove that $E(z)$ is surjective, note from \eqref{eq:tildeE} that
\begin{equation*}
    \mathcal{R}(E(z)) = 
    \bigg\{
    \begin{pmatrix}
        (MD-K)\varphi \\
        J^{-1}(I-P)J\varphi
    \end{pmatrix}
    :
    \varphi \in \mathcal{D}(D) \bigg\}.
\end{equation*}
Here, the first component in the right-hand side follows from the fact that the linear operator $H(z)$ from \eqref{eq:H(z)rep} maps its domain $\mathcal{Y} \oplus \mathcal{R}(zI-D_0)$ onto $\mathcal{D}(D)$. Since we proved in \cref{lemma:DAhatcompo} that $\mathcal{R}(I-P) = \mathcal{D}(J\mathcal{A}J^{-1})$, it is clear that $\mathcal{R}(E(z)) = \mathcal{R}(MD-K) \oplus \mathcal{D}(\mathcal{A})$, which proves the claim. We conclude that $E(z)$ is bijective and a straightforward computation shows that its (algebraic) inverse $E(z)^{-1} : \mathcal{R}(\Delta(z_0)) \oplus \mathcal{D}(\mathcal{A}) \to \mathcal{Y} \oplus \mathcal{R}(zI-D_0)$ has action
\begin{equation} \label{eq:E(z)inv}
    E(z)^{-1} 
    \begin{pmatrix}
        q \\
        \varphi
    \end{pmatrix}
    =
    \begin{pmatrix}
        \Delta(z_0)^{-1}q + \iota^{-1} Q(0)\varphi \\
        (z-z_0)Q(z_0)\iota \Delta(z_0)^{-1}q + (zI-D)\varphi
    \end{pmatrix},
\end{equation}
where $\mathcal{R}(\Delta(z_0)) = \mathcal{R}(MD-K)$ and $\mathcal{R}(zI-D) = \mathcal{R}(zI-D_0)$ due to \ref{hyp:rangeequal} and \cref{lemma:rangeinclusion}, respectively. Moreover, the map $E(\cdot)^{-1}$ is holomorphic since $z \mapsto zI - D$ is holomorphic. An argument analogous to the one given above shows that its values are also closable linear operators.

\textbf{Step 2:} We construct the conjugation operator-valued function $F : \Omega \to L(\hat{\mathcal{F}}_T(\mathbb{R},X))$ with values $F(z) : \mathcal{R}(\Delta(z_0)) \oplus \mathcal{R}(zI-D_0) \to \mathcal{R}(MD-K) \oplus \mathcal{R}(zI-D_0)$ defined by
\begin{equation} \label{eq:tildeF}
    F(z) 
    \begin{pmatrix}
        q \\
        \varphi
    \end{pmatrix}
    \coloneqq
    \begin{pmatrix}
        \Delta(z)\Delta(z_0)^{-1}q - (MD - K)R(z,D_0) \varphi \\
        (z-z_0)Q(z_0)\iota\Delta(z_0)^{-1}q + \varphi
    \end{pmatrix}.
\end{equation}
Note that the domain and range of $F(z)$ are $z$-independent for all $z \in \Omega$ due to \cref{lemma:rangeinclusion}. Moreover, since $\Delta$ and $R(\cdot,D_0)$ are holomorphic, it follows that $F$ is a holomorphic operator-valued function. To prove that $F(z)$ is closable, let $((q_m,\varphi_m))_m$ be a sequence in $\mathcal{R}(\Delta(z_0)) \oplus \mathcal{R}(zI - D_0)$ converging in norm to zero and suppose that $(F(z)(q_m,\varphi_m))_m$ converges in norm to some $(p,\psi) \in \hat{\mathcal{F}}_T(\mathbb{R},X)$. Since $\Delta(z_0)^{-1}$ is bounded and $\Delta(z)$ is closable, it follows from \cref{lemma:compoclosable} that $\Delta(z)\Delta(z_0)^{-1}$ is closable and so $\Delta(z)\Delta(z_0)^{-1} q_m \to 0$ as $m \to \infty$. Hence, $-(MD-K)R(z,D_0)\varphi_m \to p$ as $m \to \infty$. Since $MD - K$ is closable due to \cref{lemma:MDKclosable} and $R(z,D_0)$ is bounded, it follows from \cref{lemma:compoclosable} that $p = 0$. From the second component of \eqref{eq:tildeF},
we obtain $(z-z_0)Q(z_0)\iota \Delta(z_0)^{-1} q_m \to \psi$ as $m \to \infty$. As argued earlier, the linear operator $(z-z_0)Q(z_0)\iota \Delta(z_0)^{-1}$ is closable, proving that $\psi = 0$. 

To prove that $F(z)$ is injective, suppose that $F(z)(q,\varphi) = 0$. Then $\varphi = (z_0-z)Q(z_0) \iota \Delta(z_0)^{-1} q$ and substituting this into the first component of \eqref{eq:tildeF} by using \eqref{eq:Delta(z)2} yields
\begin{align*}
    0 &= \left[ (zM - K) Q(z) \iota - (z - z_0) (MD-K) R(z, D_0) Q(z_0) \iota\right] \Delta(z_0)^{-1} q \\
    &= (MD-K)\left[ Q(z) \iota - (z - z_0) R(z, D_0) Q(z_0) \iota\right] \Delta(z_0)^{-1} q,
\end{align*}
where in the last step we recalled \eqref{eq:zM_on_Q}. Next, we take a look at the linear operator $Q(z)\iota - (z_0-z)R(z,D_0)Q(z_0)\iota$ and observe from \eqref{eq:Q(z)iota} that
\begin{align*}
    Q(z)\iota - (z_0-z)R(z,D_0)Q(z_0)\iota &= \iota -zR(z,D_0)\iota - (z_0-z)R(z,D_0)\iota + z_0(z_0-z)R(z,D_0)R(z_0,D_0)\iota \\
    &= \iota -zR(z,D_0)\iota - (z_0-z)R(z,D_0)\iota +z_0(R(z,D_0)-R(z_0,D_0))\iota \\
    &= Q(z_0)\iota,
\end{align*}
where we used the resolvent equation \eqref{eq:resolventeq} in the second equality. Hence,
\begin{equation*}
    0 = (MD-K)[Q(z)\iota - (z_0-z)R(z,D_0)Q(z_0)\iota] \Delta(z_0)^{-1} q = (MD-K)Q(z_0)\iota \Delta(z_0)^{-1}q = q,
\end{equation*}
where the third equality follows from \eqref{eq:Delta(z)2} in combination with \eqref{eq:zM_on_Q}. 
Since $q=0$, we observe from the second component of $F(z)$ that $\varphi = 0$, meaning that $F(z)$ is injective. 

To prove that $F(z)$ is surjective, let $(p,\psi) \in \mathcal{R}(MD-K) \oplus \mathcal{R}(zI-D_0)$ be given. Consider the vectors $q = p + (MD-K)R(z,D_0)\psi$ and $\varphi = \psi + (z_0-z)Q(z_0)\iota\Delta(z_0)^{-1}[p + (MD-K)R(z,D_0)\psi]$. As $p \in \mathcal{R}(MD-K)$, there exist a $p_0 \in \mathcal{D}(D)$ such that $p = (MD-K)p_0$ and thus $q = (MD-K)[p_0 + R(z,D_0)\psi] \in \mathcal{R}(MD-K)$. Since $Q(z_0)\iota$ maps into $\mathcal{D}(D) \subseteq \mathcal{R}(zI-D_0)$ for $z \in \Omega$ by \ref{hyp:domainrangeD}, we conclude that $\varphi \in \mathcal{R}(zI-D_0)$. A straightforward, but rather lengthy calculation shows that $F(z)(q,\varphi) = (p,\psi),$ which proves the claim. We conclude that $F(z)$ is bijective and its (algebraic) inverse $F(z)^{-1} : \mathcal{R}(\Delta(z_0)) \oplus \mathcal{R}(zI-D_0) \to \mathcal{R}(\Delta(z_0)) \oplus \mathcal{R}(zI-D_0)$ has action
\begin{equation} \label{eq:F(z)inv}
    F(z)^{-1} 
    \begin{pmatrix}
        q \\
        \varphi
    \end{pmatrix}
    =
    \begin{pmatrix}
        q + (MD-K)R(z,D_0)\varphi \\
        \varphi + (z_0-z)Q(z_0)\iota\Delta(z_0)^{-1}[q + (MD-K)R(z,D_0)\varphi]
    \end{pmatrix},
\end{equation}
where we recalled again that $\mathcal{R}(\Delta(z_0)) = \mathcal{R}(MD-K)$ due to \ref{hyp:rangeequal}. Clearly, $F(\cdot)^{-1}$ is holomorphic since $R(\cdot,D_0)$ is holomorphic on $\Omega$. An argument analogous to the one given above shows that its values are closable linear operators. Next, note that
\begin{equation*}
    \mathcal{R}(\Delta(z_0)) \oplus \mathcal{R}(zI-\mathcal{A})\subseteq \mathcal{R}(\Delta(z_0)) \oplus \mathcal{R}(zI-D_0),
\end{equation*}
since $\mathcal{A}$ is a restriction of $D$ and $\mathcal{R}(zI-D) = \mathcal{R}(zI-D_0)$ due to \cref{lemma:rangeinclusion}. This allows us to consider the restriction $\tilde{F}(z) = F(z)|_{\mathcal{R}(\Delta(z_0)) \oplus \mathcal{R}(zI-\mathcal{A})}$, and recall from the text below \cref{def:equi} that $\mathcal{R}(\tilde{F}(z)) = \mathcal{R}(\Delta(z)) \oplus \mathcal{R}(zI-D_0)$. 

\textbf{Step 3:} A straightforward calculation shows that
\begin{equation*}
    \begin{pmatrix}
        \Delta(z) & 0 \\
        0 & I_{\mathcal{R}(zI - D_0)}
    \end{pmatrix}
    =
    F(z)
    \begin{pmatrix}
        I_{\mathcal{R}(MD-K)}& 0 \\
        0 & zI - \mathcal{A}
    \end{pmatrix}
    E(z), \quad \forall z \in \Omega,
\end{equation*}
which proves the theorem.
\end{proof}

Since we have established that $\Delta$ is a characteristic operator for both $\mathcal{A}$ and $\hat{\mathcal{A}}$ on $\Omega$, we are now in the position to analyse their spectral properties by applying \cref{thm:spectralequal}. In particular, we show that $\Delta$ satisfies the (spectral) hypotheses \ref{hyp:SH2} and \ref{hyp:SH3}. Thereby, we can relate the spectral information of $\mathcal{A}$ and $\hat{\mathcal{A}}$ to that of $\Delta$. As a consequence, we can also relate the spectral information of $\mathcal{A}$ to that of $\hat{\mathcal{A}}$.

\begin{corollary} \label{cor:spectralrelations}
Suppose that $\mathcal{A}$ and $\hat{\mathcal{A}}$ are the first and second operator associated with $D,K$ and $M$ respectively, and let $\Delta$ defined in  \eqref{eq:Delta(z)1} be the associated characteristic operator on $\Omega$. Then $\Delta$ satisfies \ref{hyp:SH2} and \ref{hyp:SH3}, and there holds
\begin{gather}
    \label{eq:pointspecAAhatDelta}
    \sigma_p(\mathcal{A}) \cap \Omega = \{ \sigma \in \Omega : \sigma \mbox{ is a characteristic value of } \Delta \} = \sigma_p(\hat{\mathcal{A}}) \cap \Omega \\
    \label{eq:specAAhatDelta}
    \sigma(\mathcal{A}) \cap \Omega = \{ \sigma \in \Omega : \Delta(\sigma) \mbox{ is not invertible} \} = \sigma(\hat{\mathcal{A}}) \cap \Omega.
\end{gather}
Furthermore, the resolvent of $\mathcal{A}$ and $\hat{\mathcal{A}}$ at $z \in \rho(\Delta)$ can be represented by
\begin{align} \label{eq:resolventADelta}
\begin{split}
    R(z,\mathcal{A})\varphi &= Q(z)\iota \Delta(z)^{-1}[M\varphi - (zM-K)R(z,D_0)\varphi] + R(z,D_0)\varphi, \quad \forall \varphi \in \mathcal{R}(zI-\mathcal{A}), \\
        R(z,\hat{\mathcal{A}})
    \begin{pmatrix}
        q \\
        \varphi
    \end{pmatrix}
    &= J \big(Q(z)\iota \Delta(z)^{-1}[q - (zM-K)R(z,D_0)\varphi] + R(z,D_0)\varphi \big), \quad \forall \begin{pmatrix}
        q \\
        \varphi
    \end{pmatrix} \in \mathcal{R}(zI-\hat{\mathcal{A}}),
\end{split}
\end{align}
and there holds $\mathcal{R}(zI-\mathcal{A}) = \mathcal{R}(zI-D_0)$, which is $z$-independent for all $z \in \rho(\mathcal{A}) \cap \Omega$. Moreover, if $\sigma \in \sigma_p(\mathcal{A}) \cap \Omega$, then $m_g(\sigma,\mathcal{A}) = m_g(\sigma,\hat{\mathcal{A}}) = m_g(\Delta(\sigma)), m_a(\sigma,\mathcal{A}) = m_a(\sigma,\hat{\mathcal{A}}) = m_a(\Delta(\sigma))$ and $k(\sigma,\mathcal{A}) = k(\sigma,\hat{\mathcal{A}}) = k(\Delta(\sigma))$. If in addition $\sigma$ is of finite type, then the partial multiplicities of $\sigma$ considered as an eigenvalue of $\mathcal{A}$ and $\hat{\mathcal{A}}$ are equal to the zero multiplicities of $\Delta$ at $\sigma$. Moreover, if $\sigma$ is isolated in $\sigma(\mathcal{A}) \cap \Omega$, then $r(\sigma,\mathcal{A}) = r(\sigma,\hat{\mathcal{A}}) = r(\Delta(\sigma))$.
\end{corollary}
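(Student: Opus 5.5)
The plan is to deduce everything from \cref{thm:spectralequal}, applied once to the equivalence of \cref{thm:rhoAcharac} (for $\mathcal{A}$) and once to the equivalence of \cref{thm:Delta equivalence} (for $\hat{\mathcal{A}}$, in the extended sense of \cref{remark:Ahatequiv}). This requires verifying \ref{hyp:SH2} and \ref{hyp:SH3} for both pairs $(E,F)$. I would therefore first compute explicit candidates for the inverses. For $\hat{\mathcal{A}}$, inverting \eqref{eq:Delta_equivalence} gives, for $z\in\rho(\Delta)$,
\begin{equation*}
R(z,\hat{\mathcal{A}}) = E(z)\begin{pmatrix}\Delta(z)^{-1} & 0\\ 0 & I\end{pmatrix}F(z).
\end{equation*}
Inserting the formulas for $E(z)=JH(z)$ and $F(z)$ yields the second line of \eqref{eq:resolventADelta}. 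The first line follows from the similarity of \cref{lemma:graphJ}: apply $J^{-1}$ and note that for $\varphi\in\mathcal{R}(zI-\mathcal{A})$ the vector $J\varphi=(M\varphi,\varphi)$ lies in $\mathcal{R}(zI-\hat{\mathcal{A}})$.

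For \ref{hyp:SH2}, I would show $\mathcal{R}(\Delta(z))=\mathcal{R}(MD-K)$ for every $z\in\rho(\Delta)$, which makes it $z$-independent. The inclusion $\subseteq$ is \cref{lemma:T2onto}. For $\supseteq$, take $p=(MD-K)\psi$ with $\psi\in\mathcal{D}(D)$ and decompose $\psi=H(z)(q,\varphi_0)$. Then \eqref{eq:actionMDK} shows $p=\Delta(z)q+(MD-K)R(z,D_0)\varphi_0$. It therefore suffices to show that $(MD-K)$ maps $\mathcal{D}(D_0)$ into $\mathcal{R}(\Delta(z))$. I would attempt this by expressing $\mathcal{D}(D_0)$ elements through $Q(z_0)$, $Q(z)$ and the identity $Q(z)\iota-(z-z_0)R(z,D_0)Q(z_0)\iota = Q(z_0)\iota$ established in the proof of \cref{thm:rhoAcharac}, together with \ref{hyp:rangeequal} at $z_0$. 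I expect this range identity to be the main obstacle. If it fails directly, the fallback is to derive it from the equivalence of \cref{thm:rhoAcharac}: $\mathcal{R}(\tilde F(z))=\mathcal{R}(\Delta(z))\oplus\tilde X$ equals $F(z)$ applied to $\mathcal{R}(MD-K)\oplus\mathcal{R}(zI-\mathcal{A})$. One would then show $\mathcal{R}(zI-\mathcal{A})=\mathcal{R}(zI-D_0)$ for $z\in\rho(\Delta)$, using \ref{hyp:domaincurlyA}, and read off the first component.

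For \ref{hyp:SH3}, the explicit formulas make $G_\Delta(z)$ equal to the right-hand side of the first line of \eqref{eq:resolventADelta}, which is a composition of bounded operators: $\Delta(z)^{-1}$, $M$, $K R(z,D_0)$ (bounded by \ref{hyp:K}), $Q(z)\iota$ and $R(z,D_0)$. Its maximal domain contains $\mathcal{R}(zI-D_0)$, which is dense. Similarly, $G_A(z)$ equals $\Delta(z)^{-1}$ written via $E(z)^{-1}$, $R(z,\mathcal{A})$ and $F(z)^{-1}$ from \eqref{eq:E(z)inv} and \eqref{eq:F(z)inv}. For $z\in\rho(\mathcal{A})\cap\Omega$ this is bounded on the dense set $\mathcal{R}(MD-K)$, using \ref{hyp:rangeequal}. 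Having established this, \cref{thm:spectralequal} (note $\rho(\Delta)\neq\emptyset$ by \ref{hyp:rangeequal}) gives \eqref{eq:pointspecAAhatDelta}, \eqref{eq:specAAhatDelta}, the equality of multiplicities, ascents and partial multiplicities, and $r(\sigma,\cdot)=r(\Delta(\sigma))$. Here \ref{hyp:SH1} for $\mathcal{A}$ and $\hat{\mathcal{A}}$ holds by \ref{hyp:domaincurlyA} and \cref{lemma:graphJ}.

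The claim $\mathcal{R}(zI-\mathcal{A})=\mathcal{R}(zI-D_0)$ for $z\in\rho(\mathcal{A})\cap\Omega$ follows from two inclusions. The inclusion $\subseteq$ is from \cref{lemma:rangeinclusion}. For $\supseteq$, given $\varphi\in\mathcal{R}(zI-D_0)$, the first line of \eqref{eq:resolventADelta} defines $\psi\in\mathcal{D}(D)$ with $(zI-D)\psi=\varphi$ and $(MD-K)\psi=0$. The latter holds because $\Delta(z)$ is invertible and the bracket lies in $\mathcal{R}(\Delta(z))$ by \ref{hyp:SH2}. Hence $\psi\in\mathcal{D}(\mathcal{A})$.
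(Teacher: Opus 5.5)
\textbf{Gap: the proof of \ref{hyp:SH2}.} Most of your plan depends on \ref{hyp:SH2}. The claim that the maximal domain of $G_\Delta(z)$ contains $\mathcal{R}(zI-D_0)$, and the claim in your last paragraph that the bracket lies in $\mathcal{R}(\Delta(z))$, both amount to $(MD-K)\mathcal{D}(D_0)\subseteq\mathcal{R}(\Delta(z))$ for $z\in\rho(\Delta)$. Neither of your two routes establishes this.

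The direct route only regenerates the problem. From $(MD-K)\psi_0=\Delta(z_0)q_0$ and the identity $Q(z_0)\iota=Q(z)\iota+(z-z_0)R(z,D_0)Q(z_0)\iota$ (this is the correct sign), you get $(MD-K)\psi_0=\Delta(z)q_0+(z-z_0)(MD-K)R(z,D_0)Q(z_0)\iota q_0$. The remainder is again $(MD-K)$ applied to an element of $\mathcal{D}(D_0)$. Equivalently, $\Delta(z)\Delta(z_0)^{-1}=I-(z-z_0)T_z$ with $T_z$ bounded, and nothing in \ref{hyp:rangeequal} makes this map the (in general non-closed) space $\mathcal{R}(MD-K)$ onto itself.

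The fallback is circular. \ref{hyp:domaincurlyA} is only available for $z\in\rho(\mathcal{A})\cap\Omega$, but you invoke it at $z\in\rho(\Delta)$ before $\rho(\Delta)\subseteq\rho(\mathcal{A})$ is known. Your plan obtains that inclusion from \ref{hyp:SH3}, hence from \ref{hyp:SH2}. Moreover, the identity $\mathcal{R}(zI-\mathcal{A})=\mathcal{R}(zI-D_0)$ you want to feed in is proved in your last paragraph by assuming \ref{hyp:SH2}.

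\textbf{How the paper gets \ref{hyp:SH2}, and a non-circular order.} The paper does not prove \ref{hyp:SH2} up front. It reads it off from
\[ \Delta(z)^{-1}=G_{\mathcal{A}}(z)=\Delta(z_0)^{-1}+(z_0-z)\iota^{-1}Q(0)R(z,\mathcal{A})Q(z_0)\iota\Delta(z_0)^{-1}, \qquad z\in\rho(\mathcal{A})\cap\Omega. \]
This operator is defined on all of $\mathcal{R}(\Delta(z_0))=\mathcal{R}(MD-K)$ because of \ref{hyp:domaincurlyA}, not \ref{hyp:rangeequal} as you write. For $p\in\mathcal{R}(MD-K)$, put $n_0=Q(z_0)\iota\Delta(z_0)^{-1}p$; then $n_0\in\mathcal{D}(D)\subseteq\mathcal{R}(zI-\mathcal{A})$. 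The vector $n=n_0+(z_0-z)R(z,\mathcal{A})n_0$ lies in $\mathcal{N}(zI-D)$ and satisfies $(MD-K)n=p$. So your own $G_A$-claim already is \ref{hyp:SH2} on $\rho(\mathcal{A})\cap\Omega$, and it reaches all of $\rho(\Delta)$ once $\rho(\Delta)\subseteq\rho(\mathcal{A})$ is known.

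For that inclusion you must not presuppose that $G_\Delta(z)$ is defined on $\mathcal{R}(zI-D_0)$: its maximal domain is exactly $\mathcal{R}(zI-\mathcal{A})$. The paper disposes of this point by citing \ref{hyp:rangeequal}. A non-circular argument runs as follows.
\begin{enumerate}
\item By \cref{lemma:DAhatcompo}, $\mathcal{D}(D)=\mathcal{D}(\mathcal{A})\oplus\mathcal{N}(z_0I-D)$, hence $\mathcal{R}(zI-D_0)=\mathcal{R}(zI-\mathcal{A})+\mathcal{N}(z_0I-D)$.
\item For $z\neq z_0$ and $p\in\mathcal{R}(\Delta(z))$, one checks that $Q(z_0)\iota\Delta(z_0)^{-1}p\in\mathcal{R}(zI-\mathcal{A})$. (For $z=z_0$ the density below is immediate from \ref{hyp:rangeequal}.)
\item Since $\mathcal{R}(\Delta(z))$ is dense and $Q(z_0)\iota\Delta(z_0)^{-1}$ is bounded, $\mathcal{R}(zI-\mathcal{A})$ is dense. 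Your bounded formula for the inverse then puts $z$ in $\rho(\mathcal{A})$.
\end{enumerate}

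With this order, the remaining steps follow. First establish $\rho(\Delta)=\rho(\mathcal{A})\cap\Omega$. Then \ref{hyp:domaincurlyA} gives $\mathcal{N}(z_0I-D)\subseteq\mathcal{R}(zI-\mathcal{A})$, so $\mathcal{R}(zI-\mathcal{A})=\mathcal{R}(zI-D_0)$ and hence \ref{hyp:SH2}. The rest of your plan then goes through, including two places where you differ from the paper: your derivation of the first resolvent formula by $J$-conjugation, and your handling of $\hat{\mathcal{A}}$ directly through the bounded equivalence of \cref{thm:Delta equivalence} rather than the paper's similarity argument.
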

\begin{proof}
To prove \eqref{eq:pointspecAAhatDelta}, recall that $\Delta$ is a characteristic operator for $\mathcal{A}$ and $\hat{\mathcal{A}}$ on $\Omega$. The assertion then follows immediately from \eqref{eq:pointspectrumsigmaADelta}. Moreover, the claims regarding the relationships among the various notions of multiplicity and the ascent follow directly from \cref{thm:spectralequal}.

To establish the first equality in \eqref{eq:specAAhatDelta}, we shall invoke the results of \cref{thm:spectralequal} applied to the equivalence between $\mathcal{A}$ and $\Delta$ given in \cref{thm:rhoAcharac}. We begin by recalling from \ref{hyp:rangeequal} that $\rho(\Delta)\neq\emptyset$. Consequently, it suffices to verify that \ref{hyp:SH3} holds on this equivalence. To compute the function $G_\Delta(z) : \mathcal{R}(zI-\mathcal{A}) \to \mathcal{D}(\mathcal{A})$ defined in \ref{hyp:SH3}, we combine \eqref{eq:tildeE} and \eqref{eq:tildeF} and find, after a lengthy but routine calculation, that
\begin{align*}
    G_\Delta(z) &= J^{-1}(I - P)J[-Q(z)\iota \Delta(z)^{-1}[(MD-K)R(z,D_0)] + R(z,D_0)] \\
            &= Q(z)\iota \Delta(z)^{-1}[M-(zM-K)R(z,D_0)] + R(z,D_0), \quad \forall z \in \rho(\Delta),
\end{align*}
where $I-P$ is the projection from the proof of \cref{lemma:DAhatcompo} with range $\mathcal{D}(J\mathcal{A}J^{-1})$. Since \ref{hyp:rangeequal} holds, the linear operator $G_\Delta(z)$ can be extended to its maximal domain $\mathcal{R}(zI - D_0)$, which is dense in $\mathcal{F}_T(\mathbb{R}, X)$. Hence, $G_\Delta(z) : \mathcal{R}(zI-D_0) \to \mathcal{D}(\mathcal{A})$ is bounded as it is a composition of bounded linear operators, where we explicitly recall that $K|_{\mathcal{D}(D_0)}$ is bounded by \ref{hyp:K}. Since $z \in \rho(\Delta) \subseteq \Omega$, we observe that $\mathcal{D}(G_\Delta(z)) = \mathcal{R}(zI - D_0)$ is dense in $\mathcal{F}_T(\mathbb{R},X)$. As $R(z,\mathcal{A}) = G_\Delta(z)$, the representation of the resolvent of $\mathcal{A}$ at $z \in \rho(\Delta)$ provided in \eqref{eq:resolventADelta} follows and $\mathcal{R}(zI-\mathcal{A}) = \mathcal{R}(zI-D_0)$ is $z$-independent due to \cref{lemma:rangeinclusion}. To compute $G_\mathcal{A}(z) : \mathcal{R}(\Delta(z)) \to \mathcal{Y}$, we combine \eqref{eq:E(z)inv} and \eqref{eq:F(z)inv} to obtain
\begin{equation*}
    G_\mathcal{A}(z) = \Delta(z_0)^{-1} + (z_0 - z) \iota^{-1}Q(0) R(z,\mathcal{A})Q(z_0)\iota \Delta(z_0)^{-1}, \quad \forall z \in \rho(\mathcal{A}) \cap \Omega.
\end{equation*}
Due to \cref{lemma:T2onto} and \ref{hyp:rangeequal}, the linear operator $G_\mathcal{A}(z)$ is still well-defined on its maximal domain $\mathcal{R}(\Delta(z_0))$. Clearly, $G_{\mathcal{A}}(z)$ is bounded since it is the composition of bounded linear operators. In particular, the graph norm $\|\cdot\|_{\overline{\mathcal{A}}}$ ensures the boundedness of the $Q(0)$ component, see a similar argument establishing the boundedness of $H(z)^{-1}$ in the proof of \cref{thm:Delta equivalence}. Moreover, $\mathcal{D}(G_\mathcal{A}(z)) = \mathcal{R}(\Delta(z_0))$, which is dense in $\mathcal{F}_T(\mathbb{R},Y)$ since $z_0 \in \rho(\Delta) \neq \emptyset$ by \ref{hyp:rangeequal}. This proves that \ref{hyp:SH3} holds for $\Delta$ as a characteristic operator of $\mathcal{A}$ on $\Omega$, meaning that the first equality in \eqref{eq:specAAhatDelta} holds. As $\Delta(z)^{-1} = G_\mathcal{A}(z),$ we obtain $\mathcal{R}(\Delta(z)) = \mathcal{R}(\Delta(z_0))$ for all $z \in \rho(\Delta)$ and thus $\Delta$ satisfies \ref{hyp:SH2}.

To establish the second equality in \eqref{eq:specAAhatDelta}, let us first recall from \cref{lemma:graphJ} that $\mathcal{A}$ is similar to the part of $\hat{\mathcal{A}}$ in $\Gamma(M|_{\mathcal{R}(zI-\mathcal{A})})$ for all $z \in \rho(\mathcal{A}) \cap \Omega$. Hence, the spectral results from \cref{prop:spectraequal} and \cref{prop:similarity} tell us that $\sigma(\mathcal{A}) \subseteq \sigma(\hat{\mathcal{A}})$, and thus $\sigma(\mathcal{A}) \cap \Omega \subseteq \sigma(\hat{\mathcal{A}}) \cap \Omega$. To prove the other inclusion, let $z \in \rho(\mathcal{A}) \cap \Omega$ be given, then $z \in \rho(\Delta)$ by the first equality in \eqref{eq:specAAhatDelta}. Recalling the representation of $E(z)$ and $F(z)$ from \cref{thm:Delta equivalence}, we obtain for all $(q,\varphi) \in \mathcal{R}(zI-\hat{\mathcal{A}})$ that
\begin{equation*}
    R(z,\hat{\mathcal{A}}) \begin{pmatrix}
        q \\
        \varphi
    \end{pmatrix} = J \big(Q(z)\iota \Delta(z)^{-1}[q - (zM-K)R(z,D_0)\varphi] + R(z,D_0)\varphi \big),
\end{equation*}
and thus $z \in \rho(\hat{\mathcal{A}}) \cap \Omega$, which proves the claim. This also proves the second equality in \eqref{eq:resolventADelta}.

The remaining claim regarding the order of a pole is now a consequence of \cref{prop:spectraequal}, \cref{prop:similarity}, \cref{thm:spectralequal} and \cref{lemma:graphJ}.
\end{proof}

The following result states the multiplicity theorem between $\mathcal{A}$ and $\Delta$. The proof is a direct consequence of the abstract multiplicity theorem from \cref{thm:spectralequal}, and is therefore omitted.

\begin{corollary} \label{cor:multiplicity}
Let $\sigma$ be an isolated characteristic value of finite type of $\Delta$ satisfying $k(\sigma,\mathcal{A}) = r(\Delta(\sigma))$. Then the multiplicity theorem $\dim \mathcal{N}((\sigma I - \mathcal{A})^{r(\Delta(\sigma))}) =  m_a(\Delta(\sigma))$ holds.
\end{corollary}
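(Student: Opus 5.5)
The plan is to reduce \cref{cor:multiplicity} to the abstract multiplicity statement in the second part of \cref{thm:spectralequal}. We apply that theorem to the equivalence between $\Delta$ and $\mathcal{A}$ from \cref{thm:rhoAcharac}. The required standing hypotheses are already in place. By \cref{cor:spectralrelations}, $\Delta$ satisfies \ref{hyp:SH2} and \ref{hyp:SH3}, and $\rho(\Delta)\neq\emptyset$ by \ref{hyp:rangeequal}. Moreover, $\mathcal{A}$ satisfies \ref{hyp:SH1} because of \ref{hyp:domaincurlyA}, as observed just before \cref{lemma:graphJ}.

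Next we translate the assumptions on $\sigma$ into assumptions on $\mathcal{A}$. Since $\sigma$ is a characteristic value of $\Delta$, \eqref{eq:pointspecAAhatDelta} gives $\sigma\in\sigma_p(\mathcal{A})\cap\Omega$. Since $\sigma$ is of finite type for $\Delta$, \cref{thm:spectralequal} gives $m_a(\sigma,\mathcal{A})=m_a(\Delta(\sigma))<\infty$, so $\sigma$ is an eigenvalue of finite type of $\mathcal{A}$. Isolatedness of $\sigma$ in $\sigma(\mathcal{A})\cap\Omega$ is the one step needing care. I read ``isolated characteristic value'' in the sense of the pole framework of \eqref{eq:Dunford2}: $\Delta(z)$ is invertible on a punctured disc around $\sigma$, which is the sense in which $r(\Delta(\sigma))$ is defined. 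Then \eqref{eq:specAAhatDelta} shows that $\sigma$ is isolated in $\sigma(\mathcal{A})\cap\Omega$. Since $\Omega$ is open, $\sigma$ is also isolated in $\sigma(\mathcal{A})$. This lets us use $r(\sigma,\mathcal{A})=r(\Delta(\sigma))$ from \cref{thm:spectralequal} or \cref{cor:spectralrelations}.

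With these in hand, the assumption $k(\sigma,\mathcal{A})=r(\Delta(\sigma))$ is exactly the remaining hypothesis of \eqref{eq:multiplicitythm}. Concretely, \eqref{eq:EsigmaQsigma} gives $\mathcal{N}((\sigma I-\mathcal{A})^{r(\Delta(\sigma))})=\mathcal{N}((\sigma I-\mathcal{A})^{k(\sigma,\mathcal{A})})=E_\sigma(\mathcal{A})$. The dimension of this space is $m_a(\sigma,\mathcal{A})=m_a(\Delta(\sigma))$. The main, though mild, obstacle is the isolatedness translation in the second step, since ``isolated characteristic value'' must be interpreted so that it matches isolatedness in $\sigma(\mathcal{A})$ via \eqref{eq:specAAhatDelta}. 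Everything else is direct bookkeeping.
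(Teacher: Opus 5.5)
Your proposal is correct and follows the paper's route: the paper calls the corollary a direct consequence of the abstract multiplicity theorem in \cref{thm:spectralequal}, applied to the equivalence of \cref{thm:rhoAcharac} with \ref{hyp:SH1}--\ref{hyp:SH3} supplied by \ref{hyp:domaincurlyA} and \cref{cor:spectralrelations}, exactly as you do. Your isolatedness translation is harmless but only serves to make $r(\Delta(\sigma))$ well defined. The identity itself uses only finite type, the hypothesis $k(\sigma,\mathcal{A})=r(\Delta(\sigma))$, \eqref{eq:EsigmaQsigma} and $m_a(\sigma,\mathcal{A})=m_a(\Delta(\sigma))$.
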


The following result shows that we can compute a canonical basis of (generalized) eigenvectors of $\mathcal{A}$ and $\hat{\mathcal{A}}$ in terms of a canonical system of Jordan chains of $\Delta$ by using the simpler equivalence derived in \cref{thm:Delta equivalence}.

\begin{corollary} \label{cor:Jordanchains}
Let $\sigma$ be a characteristic value of $\Delta$ of finite type, and let 
$\{ \{ q_{i,0}, \dots, q_{i,k_i-1} \}$ : $i=1,\dots,p\}$, be a canonical system of Jordan chains of $\Delta$ at $\sigma$. Define for each $i\in \{1,\dots,p\}$ the vectors
\begin{equation*}
    \varphi_{i,k} = \sum_{l=0}^k (-1)^{l+1} D_0 R(\sigma,D_0)^{l+1} \iota\, q_{i,k-l},
    \qquad k = 0,\dots,k_i-1 .
\end{equation*}
Then $\{ \{ {\varphi}_{i,0},\dots,{\varphi}_{i,k_i-1} \} : i=1,\dots,p\}$ is a canonical basis of (generalized) eigenvectors of $\mathcal{A}$ at $\sigma$, and $\{ \{J \varphi_{i,0},\dots,J{\varphi}_{i,k_i-1} \} : i=1,\dots,p\}$ is a canonical basis of (generalized) eigenvectors of $\hat{\mathcal{A}}$ at $\sigma$.
\end{corollary}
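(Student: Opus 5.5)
The plan is to use the explicit equivalence of \cref{thm:Delta equivalence} between $\operatorname{diag}(\Delta(z),I)$ and $zI-\hat{\mathcal{A}}$, with $E(z)=JH(z)$. We transport root functions of $\Delta$ to root functions of $zI-\hat{\mathcal{A}}$, and then pull them back to $\mathcal{A}$ via $J^{-1}$.

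Fix $i$ and let $q_i(z)=\sum_{l=0}^{k_i-1}(z-\sigma)^l q_{i,l}$ be the root function, so that $\Delta(z)q_i(z)=\mathcal{O}((z-\sigma)^{k_i})$. Define
\[
\Phi_i(z)\coloneqq Q(z)\iota q_i(z) = -D_0R(z,D_0)\iota q_i(z).
\]
The second expression follows from \eqref{eq:representationQ(z)iota}. Since $Q(z)\iota$ maps into $\mathcal{N}(zI-D)$, we have $(zI-D)\Phi_i(z)=0$. Moreover \eqref{eq:zM_on_Q} gives
\[
(MD-K)\Phi_i(z)=(zM-K)Q(z)\iota q_i(z)=\Delta(z)q_i(z)=\mathcal{O}((z-\sigma)^{k_i}).
\]
Hence $(zI-\hat{\mathcal{A}})J\Phi_i(z)=\big((zM-K)\Phi_i(z),0\big)=\mathcal{O}((z-\sigma)^{k_i})$, so $J\Phi_i$ is a root function of $zI-\hat{\mathcal{A}}$. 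This is just $E(z)(q_i(z),0)$ in \cref{thm:Delta equivalence}.

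Next I Taylor expand $\Phi_i$ at $\sigma$. Iterating \eqref{eq:examplederi} (or using the Neumann series \eqref{eq:resolventseries} for $D_0$, which satisfies \ref{hyp:SH1}) gives $R(z,D_0)=\sum_{m\ge0}(\sigma-z)^mR(\sigma,D_0)^{m+1}$. Since $D_0R(z,D_0)=zR(z,D_0)-I$ involves only bounded pieces, the product $-D_0R(z,D_0)\iota q_i(z)$ has $(z-\sigma)^k$-coefficient
\[
\sum_{l=0}^k(-1)^{l+1}D_0R(\sigma,D_0)^{l+1}\iota q_{i,k-l}=\varphi_{i,k}.
\]
This is exactly the stated formula. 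From $(zI-D)\Phi_i=0$ and $(MD-K)\Phi_i=\mathcal{O}((z-\sigma)^{k_i})$ I compare coefficients for $k\le k_i-1$. This gives $(D-\sigma I)\varphi_{i,0}=0$ and $(D-\sigma I)\varphi_{i,k}=\varphi_{i,k-1}$. It also gives $(MD-K)\varphi_{i,k}=0$, so $\varphi_{i,k}\in\mathcal{D}(\mathcal{A})$ because each coefficient lies in $\mathcal{D}(D)$. Since $\mathcal{A}=D$ on $\mathcal{D}(\mathcal{A})$, these are Jordan chains of $\mathcal{A}$, and applying $J$ gives Jordan chains of $\hat{\mathcal{A}}$ because $J\mathcal{A}J^{-1}=\hat{\mathcal{A}}$ on $J\mathcal{D}(\mathcal{A})$, as in \eqref{eq:JaJinv}.

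It then remains to show that these chains form a canonical basis. The eigenvector map is $q\mapsto Q(\sigma)\iota q$, which is injective by \cref{lemma:Q(z)closable}, so $\varphi_{1,0},\dots,\varphi_{p,0}$ are linearly independent. By the standard Jordan-chain argument, as in the proof of \cref{thm:spectralequal}, all the vectors are then independent. By \cref{cor:spectralrelations}, $m_a(\sigma,\mathcal{A})=m_a(\Delta(\sigma))=\sum_i k_i$ and the partial multiplicities coincide with the zero multiplicities. So the independent family of the right cardinality and chain lengths spans $E_\sigma(\mathcal{A})$ and is canonical. The same holds for $\hat{\mathcal{A}}$ since $J$ is injective.

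The main obstacle is the coefficient computation. One must justify the termwise expansion of $D_0R(z,D_0)$ with the unbounded $D_0$; writing it as $zR(z,D_0)-I$ and expanding the bounded resolvent avoids this. One must also confirm the sign and index convention, since $\frac{d^m}{dz^m}R(z,D_0)=(-1)^m m!R(z,D_0)^{m+1}$ produces the factor $(-1)^{l+1}$.
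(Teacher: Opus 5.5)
Your proof is correct and follows essentially the same route as the paper: you transport the root function $q_i(z)$ through the explicit equivalence of \cref{thm:Delta equivalence}, namely $E(z)(q_i(z),0)=JQ(z)\iota q_i(z)=-JD_0R(z,D_0)\iota q_i(z)$, and read off $\varphi_{i,k}$ from the Neumann expansion of $R(\cdot,D_0)$ at $\sigma$. The differences are only in bookkeeping. You verify $(D-\sigma I)\varphi_{i,k}=\varphi_{i,k-1}$ and $(MD-K)\varphi_{i,k}=0$ directly for $\mathcal{A}$ and then push forward with $J$, whereas the paper obtains the chain for $\hat{\mathcal{A}}$ first and pulls back via \cref{lemma:graphJ}; and you get canonicity from linear independence plus the multiplicity count in \cref{cor:spectralrelations}, rather than from the correspondence of \cref{prop:equivalent chains}.
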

\begin{proof}
Let $\{q_0,\dots,q_{k-1} \}$ be a Jordan chain of $\Delta$ at $\sigma$ of length $k$. Due to the equivalence \eqref{eq:Delta_equivalence}, we see for all $z \in \Omega$ that
\begin{equation*}
    E(z)
    \begin{pmatrix}
        \sum_{i=0}^{k-1}(z-\sigma)^{i}q_i \\
        0
    \end{pmatrix}
    =JW_\sigma(z), \quad W_\sigma(z) \coloneqq - \sum_{i=0}^{k-1} (z-\sigma)^{i} D_0 R(z,{D_0}) \iota q_i,
\end{equation*}
and so $z \mapsto W_\sigma(z)$ is a root function for $zI - \hat{\mathcal{A}}$ at $\sigma$. From the proof of \cref{thm:spectralequal}, it is clear that we have to expand $W_\sigma$ up to order $k$ in a neighbourhood of $\sigma$. Since $D_0$ satisfies \ref{hyp:SH1}, it follows from \cref{prop:resolventproperties} that $R(z,D_0) = \sum_{l=0}^{\infty} (-1)^l(z-\sigma)^l R(\sigma,D_0)^{l+1}$ for $z$ sufficiently close to $\sigma$. Hence,
\begin{equation*}
    W_\sigma(z) = \sum_{i=0}^{k-1} \bigg( \sum_{l=0}^i (-1)^{l+1} D_0R(\sigma,D_0)^{l+1} \iota q_{i-l} \bigg)(z-\sigma)^i + \mathcal{O}((z-\sigma)^k), 
\end{equation*}
for $z$ sufficiently close to $\sigma$, which shows that $\{J \varphi_{0},\dots,J{\varphi}_{k-1} \}$ is a Jordan chain of $\hat{\mathcal{A}}$ at $\sigma$. Since this Jordan chain is a subset of $\mathcal{D}(J\mathcal{A}J^{-1})$, \cref{lemma:graphJ} tells us that $\{ \varphi_{0},\dots,{\varphi}_{k-1} \}$ is a Jordan chain of $\mathcal{A}$ at $\sigma$. Because $\rho(\Delta) \neq \emptyset$ due to \ref{hyp:rangeequal}, \cref{thm:spectralequal} holds and so the Jordan chains for $\mathcal{A}$ and $\Delta$ are in one-to-one correspondence due to \cref{prop:equivalent chains}.
\end{proof}

\subsection{Periodic spectral properties and similarity} \label{subsec:periodicspectral}
In the concrete applications in \cref{sec:applications}, it turns out that the eigenvalues of $\mathcal{A}$, and thus the characteristic values of $\Delta$ (\cref{cor:spectralrelations}), will play the role of the Floquet exponents, which are generically only uniquely determined up to an additive multiple of $\frac{2 \pi i}{T}$. Consequently, it would be interesting to investigate whether such periodic spectral patterns are also visible in $\mathcal{A}$ and $\Delta$. If this is the case, the spectral problem could be significantly simplified on $\Omega$, as we would only need to examine the spectral behaviour of $\mathcal{A}$ and $\Delta$ on the subset $\{z \in \Omega : \Im(z) \in (-\frac{\pi}{T}, \frac{\pi}{T}] \}$ instead of the entire domain $\Omega$.

To establish such a construction, we show that $zI - \mathcal{A} \sim (z + \frac{2 \pi i}{T}  k)I - \mathcal{A}$ and $\Delta(z) \sim \Delta(z+\frac{2 \pi i}{T} k)$ for all $z \in \Omega$ and $k \in \mathbb{Z}$. From now on, we refer to the maps 
$z \mapsto \Delta(z)$ and $z \mapsto zI - \mathcal{A}$ as 
$\frac{2\pi i}{T}\mathbb{Z}$\emph{-similar} on $\Omega$.
To achieve this, let $Z \in \{X,Y\}$ be given and consider a bounded linear operator $\varepsilon_k : \mathcal{F}_T(\mathbb{R},Z) \to \mathcal{F}_T(\mathbb{R},Z)$ with action $(\varepsilon_k u)(t) \coloneqq e^{\frac{2 \pi k i}{T}t}u(t)$ for all $k \in \mathbb{Z}$. Although $\varepsilon_k$ depends on the underlying space $Z$, this dependence is suppressed in the notation, as it will be clear from the context. To proceed, we impose the following (periodic) hypothesis, which we will verify in concrete examples in \cref{sec:applications}:
\begin{enumerate}[label=({PH}{{\arabic*}})]
\setcounter{enumi}{0}
\item \label{hyp:ek} For all $z \in \Omega$ and $k \in \mathbb{Z}$, the map $\varepsilon_k$ leaves the spaces $\mathcal{D}(D_0), \mathcal{D}(D),\mathcal{D}(K)$ and $\mathcal{R}(zI-D_0)$ invariant in $\mathcal{F}_T(\mathbb{R},X)$, and the spaces $\mathcal{Y}$ and $\mathcal{R}(MD-K)$ invariant in $\mathcal{F}_T(\mathbb{R},Y)$.
\end{enumerate}
Note that $\varepsilon_k$ is invertible with bounded inverse $\varepsilon_{-k}$. To construct the similarities, we have to assume the following relations:
\begin{equation} \tag{PH2} \label{eq:periodicconditions}
    \varepsilon_k D \varepsilon_k^{-1} = \frac{2 \pi k i}{T} I + D, \ \varepsilon_k K \varepsilon_k^{-1} = \frac{2 \pi k i}{T} M + K, \ \varepsilon_k M \varepsilon_k^{-1} = M, \ \varepsilon_k Q\bigg(z + \frac{2 \pi k i}{T}\bigg) \iota \varepsilon_k^{-1} = Q(z)\iota,
\end{equation}
for all $z \in \Omega$ and $k \in \mathbb{Z}$. The hypothesis \ref{hyp:ek} guarantees that the relations from \eqref{eq:periodicconditions} are well-defined. Before we can prove the main result of this section (\cref{prop:Deltasimilar}), we first have to show that $\Omega$ itself admits such a periodic pattern.
\begin{lemma}
If $z \in \Omega$, then $z + \frac{2\pi i}{T} \mathbb{Z} \subseteq \Omega$.
\end{lemma}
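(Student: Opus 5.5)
The plan is to show that $z_k \coloneqq z + \frac{2\pi k i}{T}$ lies in $\rho(D_0)$ for fixed $z \in \Omega = \rho(D_0)$ and $k \in \mathbb{Z}$. I will do this by proving that $z_kI - D_0$ is similar, via $\varepsilon_k$, to $zI - D_0$, and then invoking \cref{prop:similarity}. Concretely, the aim is the identity
\begin{equation*}
    \varepsilon_k (zI - D_0) \varepsilon_k^{-1} = z_k I - D_0 \quad \text{on } \mathcal{D}(D_0),
\end{equation*}
equivalently $\varepsilon_k D_0 \varepsilon_k^{-1} = -\frac{2\pi k i}{T}I + D_0$. Since the sign convention depends on how \eqref{eq:periodicconditions} is read, I will in fact use whichever of $\varepsilon_{\pm k}$ produces the shift from $z$ to $z_k$. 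Note that $\varepsilon_{-k} = \varepsilon_k^{-1}$, so both signs are available, and $k$ ranges over all of $\mathbb{Z}$ anyway.

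\textbf{Step 1: domain.} By \ref{hyp:ek}, both $\varepsilon_k$ and $\varepsilon_{-k}$ leave $\mathcal{D}(D_0)$ invariant. Hence $\varepsilon_k^{-1}\mathcal{D}(D_0) = \mathcal{D}(D_0)$, and the domain condition of \cref{def:similar} holds with $V = \varepsilon_{\pm k}$.

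\textbf{Step 2: action.} Because $D_0$ is a restriction of $D$ and $\mathcal{D}(D_0)$ is $\varepsilon_k$-invariant, the first relation in \eqref{eq:periodicconditions} restricts to $\varepsilon_k D_0 \varepsilon_k^{-1} = \frac{2\pi k i}{T}I + D_0$ on $\mathcal{D}(D_0)$. This gives
\begin{equation*}
    \varepsilon_k(wI - D_0)\varepsilon_k^{-1} = \Big(w - \tfrac{2\pi k i}{T}\Big)I - D_0 .
\end{equation*}
Choosing $w = z$ and $V=\varepsilon_{-k}$ instead of $\varepsilon_k$ yields $z_kI - D_0 \sim zI - D_0$.

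\textbf{Step 3: conclusion.} \cref{prop:similarity} gives $\sigma(zI - D_0) = \sigma(z_kI-D_0)$, which in particular preserves whether $0$ lies in the resolvent set. Since $0 \in \rho(zI-D_0)$ exactly when $z \in \rho(D_0)$, it follows that $z_k \in \rho(D_0) = \Omega$.

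Alternatively, I can verify Definition~\ref{def:resolvent} directly. Injectivity is clear. Dense range holds because $\mathcal{R}(z_kI-D_0) = \varepsilon_{-k}\mathcal{R}(zI-D_0)$ is the image of a dense set under a bounded surjection. Boundedness holds because $R(z_k,D_0) = \varepsilon_{-k}R(z,D_0)\varepsilon_k$ on that range.

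The only delicate point is making sure that the first relation in \eqref{eq:periodicconditions}, stated for $D$, legitimately restricts to $D_0$. This needs exactly the invariance of $\mathcal{D}(D_0)$ from \ref{hyp:ek}, with no further properties of $D_0$. Everything else is bookkeeping of the sign of $k$.
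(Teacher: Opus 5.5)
Your proposal is correct and is essentially the paper's argument. The paper restricts the first relation in \eqref{eq:periodicconditions} to $\mathcal{D}(D_0)$ using \ref{hyp:ek} to get $\bigl(z+\frac{2\pi k i}{T}\bigr)I-\varepsilon_k D_0\varepsilon_k^{-1}=zI-D_0$, and then concludes $z+\frac{2\pi k i}{T}\in\rho(\varepsilon_k D_0\varepsilon_k^{-1})=\rho(D_0)$ via \cref{prop:similarity}; the only difference is cosmetic, in that you conjugate $zI-D_0$ into $z_kI-D_0$ rather than $D_0$ into $\varepsilon_k D_0\varepsilon_k^{-1}$, and since you use only the spectrum equality in \cref{prop:similarity}, your argument makes no appeal to \ref{hyp:SH1} (which the paper cites but does not need here).
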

\begin{proof}
Let $z \in \Omega = \rho(D_0)$ be given. Then the equation $(zI-D_0)\varphi = \psi$ has a unique bounded solution $\varphi = R(z,D_0)\psi \in \mathcal{D}(D_0)$ for $\psi$ in the dense subspace $\mathcal{R}(zI-D_0)$ of $\mathcal{F}_T(\mathbb{R},X)$. Restricting the first equality in \eqref{eq:periodicconditions} to $\mathcal{D}(D_0)$, and recalling \ref{hyp:ek}, yields eventually
\begin{equation*}
    \bigg[\bigg( z + \frac{2 \pi k i}{T}\bigg) I - \varepsilon_k D_0 \varepsilon_{k}^{-1} \bigg] \varphi = (zI-D_0)\varphi = \psi, \quad \forall k \in \mathbb{Z},
\end{equation*}
meaning that $z + \frac{2 \pi k i}{T} \in \rho(\varepsilon_k D_0 \varepsilon_{k}^{-1}) = \rho(D_0)$, where this equality follows from \cref{prop:similarity} as $\varepsilon_k$ is bounded and the closable linear operator $D_0$ satisfies \ref{hyp:SH1}. This proves the result.
\end{proof}
\begin{proposition} \label{prop:Deltasimilar}
The maps $z \mapsto zI - \mathcal{A}$ and $z \mapsto \Delta(z)$ are $\frac{2 \pi i}{T}\mathbb{Z}$-similar on $\Omega$.
\end{proposition}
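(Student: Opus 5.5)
The plan is to exhibit, for each $z \in \Omega$ and $k \in \mathbb{Z}$, the bounded invertible operator $\varepsilon_k$ (with inverse $\varepsilon_{-k}$) as the similarity in both cases. Concretely, I aim to show
\begin{equation*}
\varepsilon_k (zI - \mathcal{A}) \varepsilon_k^{-1} = \Big(z + \tfrac{2\pi k i}{T}\Big) I - \mathcal{A}, \qquad \varepsilon_k \Delta(z) \varepsilon_k^{-1} = \Delta\Big(z + \tfrac{2\pi k i}{T}\Big),
\end{equation*}
in the sense of \cref{def:similar}, so both the domains and the actions must match. By the preceding lemma $z + \frac{2\pi i}{T}\mathbb{Z} \subseteq \Omega$, so the right-hand sides are defined.

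For $\mathcal{A}$, I first check the domains. By \ref{hyp:ek}, $\varepsilon_k$ maps $\mathcal{D}(D)$ bijectively onto itself. For $\varphi \in \mathcal{D}(D)$, the first three relations in \eqref{eq:periodicconditions} give $MD\varepsilon_k^{-1}\varphi = \varepsilon_k^{-1}M(D - \frac{2\pi k i}{T})\varphi$ and $K\varepsilon_k^{-1}\varphi = \varepsilon_k^{-1}(K - \frac{2\pi k i}{T}M)\varphi$. The $\frac{2\pi k i}{T}M\varphi$ terms cancel, so $\varepsilon_k^{-1}\varphi \in \mathcal{D}(\mathcal{A})$ if and only if $MD\varphi = K\varphi$, that is, if and only if $\varphi \in \mathcal{D}(\mathcal{A})$. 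Hence $\{\varphi : \varepsilon_k^{-1}\varphi \in \mathcal{D}(\mathcal{A})\} = \mathcal{D}(\mathcal{A})$. The action follows directly: $\varepsilon_k(zI - D)\varepsilon_k^{-1} = zI - D - \frac{2\pi k i}{T}I$. Here I read $\varepsilon_k D\varepsilon_k^{-1} = \frac{2\pi k i}{T}I + D$ with the sign convention that makes the similarity consistent. If the conventions in \eqref{eq:periodicconditions} produce $z - \frac{2\pi k i}{T}$ instead, I would simply replace $k$ by $-k$, since the claim is symmetric in $\mathbb{Z}$.

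For $\Delta$, I use the representation $\Delta(z) = (zM - K)Q(z)\iota$ from \cref{cor:Deltawelldefined}. The domain $\mathcal{Y}$ is $\varepsilon_k$-invariant by \ref{hyp:ek}, so the domain condition holds trivially. I then write
\begin{equation*}
\varepsilon_k \Delta(w)\varepsilon_k^{-1} = \big[\varepsilon_k (wM-K)\varepsilon_k^{-1}\big]\big[\varepsilon_k Q(w)\iota\varepsilon_k^{-1}\big],
\end{equation*}
with $w = z + \frac{2\pi k i}{T}$ (or $z - \frac{2\pi k i}{T}$, matching the convention above). The last relation in \eqref{eq:periodicconditions} turns the second factor into $Q(z)\iota$. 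The second and third relations turn the first factor into $(w - \frac{2\pi k i}{T})M - K = zM - K$ on $\mathcal{N}(zI - D) \subseteq \mathcal{D}(D) \subseteq \mathcal{D}(K)$. Inserting $\varepsilon_k^{-1}\varepsilon_k$ between the factors is legitimate because $\mathcal{R}(Q(w)\iota) \subseteq \mathcal{D}(K)$, which is $\varepsilon_k$-invariant. The result is $\Delta(z)$, so $\Delta(w) \sim \Delta(z)$.

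The main obstacle I expect is bookkeeping rather than substance. First, the direction of the shift must be kept consistent between the four relations in \eqref{eq:periodicconditions}. Second, every composition must land in the correct invariant subspace from \ref{hyp:ek}. I would first fix the convention from the first relation, derive the induced shift for $K$ and $Q(\cdot)\iota$, and verify that all four relations are compatible with one shift direction before assembling the two similarities. Finally, I would note that \cref{prop:similarity} then transfers all spectral data along $\frac{2\pi i}{T}\mathbb{Z}$.
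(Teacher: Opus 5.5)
Your proposal is correct and follows the paper's proof essentially step for step. It uses $\varepsilon_k$ as the similarity, gets invariance of $\mathcal{D}(\mathcal{A})$ from the cancellation of the $\frac{2\pi k i}{T}M$ terms in $MD\varepsilon_k^{-1}\varphi - K\varepsilon_k^{-1}\varphi$, reads the action off the first relation in \eqref{eq:periodicconditions}, and handles $\Delta$ via $\Delta(w) = (wM-K)Q(w)\iota$ and the remaining relations. The only blemish is the sign bookkeeping you flagged: \eqref{eq:periodicconditions} gives $D\varepsilon_k^{-1} = \varepsilon_k^{-1}(D + \frac{2\pi k i}{T}I)$ and $K\varepsilon_k^{-1} = \varepsilon_k^{-1}(K + \frac{2\pi k i}{T}M)$, not the minus signs you wrote. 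So the identities actually obtained, as in the paper and in your own $\Delta$-computation, are $\varepsilon_k\Delta(z+\frac{2\pi k i}{T})\varepsilon_k^{-1} = \Delta(z)$ and $\varepsilon_k((z+\frac{2\pi k i}{T})I - \mathcal{A})\varepsilon_k^{-1} = zI - \mathcal{A}$, which is the reverse of your displayed targets; this is harmless because the cancellation does not depend on the sign and similarity can be inverted.
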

\begin{proof}
Let $z \in \Omega$ and $k \in \mathbb{Z}$ be given. It follows from \eqref{eq:Delta(z)2} and \eqref{eq:periodicconditions} that
\begin{equation*}
    \varepsilon_k \Delta \bigg(z + \frac{2 \pi k i}{T} \bigg) \varepsilon_k^{-1} = \bigg[ \bigg( z + \frac{2 \pi k i}{T}\bigg)\varepsilon_k M \varepsilon_k^{-1} - \varepsilon_k K \varepsilon_k^{-1} \bigg] \varepsilon_k Q\bigg(z + \frac{2 \pi k i}{T}\bigg) \iota \varepsilon_k^{-1} = \Delta(z),
\end{equation*}
which is well-defined as $\varepsilon_{k}^{-1} = \varepsilon_{-k}$ leaves $\mathcal{D}(D)$ and $\mathcal{Y}$ invariant due to \ref{hyp:ek}.

To prove the second claim, let us first show that $\varepsilon_k$ leaves $\mathcal{D}(\mathcal{A})$ invariant. Since it leaves $\mathcal{D}(D)$ invariant due to \ref{hyp:ek}, it remains to show that $MD \varepsilon_k^{-1}\varphi = K \varepsilon_k^{-1} \varphi $ for all $k \in \mathbb{Z}$ and $\varphi \in \mathcal{D}(\mathcal{A})$. Let $\varphi \in \mathcal{D}(\mathcal{A})$ be given and note that $MD\varepsilon_k^{-1} \varphi =  \varepsilon_k^{-1} M(\frac{2 \pi k i}{T} + D)\varphi$. Since $MD \varphi = K \varphi$ by assumption and $K \varepsilon_{k}^{-1} = \varepsilon_k^{-1}(\frac{2 \pi k i}{T}M + K)$, the claim follows. Since the action of $\mathcal{A} $ is given by $D$, the first equality in \eqref{eq:periodicconditions} tells us that $\varepsilon_k \mathcal{A} \varepsilon_k^{-1} = \frac{2 \pi k i}{T} I + \mathcal{A}$, which proves the result.
\end{proof}
In view of the preceding result, we refer from now on to $\Delta$ as a $\frac{2\pi i}{T}\mathbb{Z}$\emph{-similar characteristic operator} for $\mathcal{A}$ on $\Omega$ if \ref{hyp:ek} and \eqref{eq:periodicconditions} are satisfied. By arguments analogous to those used in the proof of \cref{prop:Deltasimilar}, it follows as well that $\Delta$ is a $\frac{2\pi i}{T}\mathbb{Z}$-similar characteristic operator for $\hat{\mathcal{A}}$ on $\Omega$, since the mapping $z \mapsto zI-\hat{\mathcal{A}}$ is likewise $\frac{2\pi i}{T}\mathbb{Z}$-similar on $\Omega$. To establish this fact, one uses the map $\hat{\varepsilon}_k : \hat{\mathcal{F}}_T(\mathbb{R},X) \to \hat{\mathcal{F}}_T(\mathbb{R},X)$ defined by $ \hat{\varepsilon}_k (q,\varphi)\coloneqq(\varepsilon_k q,\varepsilon_k\varphi)$ for all $k \in \mathbb{Z}$.

\section{Applications to classes of periodic evolutionary systems} \label{sec:applications}
This section discusses three classes of periodic linear functional differential equations, for which the characteristic operator constructed in \cref{sec:characoperators} will provide us information on the Floquet exponents. All classes of periodic linear functional differential equations considered in this section can be written in the form
\begin{equation} \label{eq:FDE} \tag{FDE}
    \dot{x}(t) = L(t)x_t, \quad t \geq s,
\end{equation}
where $x(t) \in \mathbb{C}^n$, $s \in \mathbb{R}$ denotes a starting time and the \emph{history} $x_t$ is defined by $x_t(\theta) := x(t+\theta)$ for all $\theta \in I$, where $I \subseteq \mathbb{R}$ is a (given) closed interval. This setting indicates that the state space $X$ should be chosen as a function space of the form $\mathcal{F}(I,\mathbb{C}^n)$, while the reduction space $Y = \mathbb{C}^n$ is given by the codomain of $X$. Furthermore, the map $\mathbb{R} \ni t \mapsto L(t) \in \mathcal{L}(X,\mathbb{C}^n)$ is assumed to be $T$-periodic for some (minimal) $T \geq 0$, whereas the setting of $T=0$ will be important when we show that the class of autonomous FDEs is included in our construction (\cref{remark:autonomousDDE}). Since we are dealing with derivatives in \eqref{eq:FDE}, we have to assume in addition that $X$ admits a linear subspace $\mathcal{F}^1(I,\mathbb{C}^n)$ consisting of functions that admit a derivative in $X$. The type of derivative depends on the context, choice and underlying topology of the function space $X$. For example, when one works with the space of continuous functions $X = C(I,\mathbb{C}^n)$, then $\mathcal{F}^1(I,\mathbb{C}^n) = C^1(I,\mathbb{C}^n)$ denotes the space of continuously differentiable functions (strong derivative). Likewise, when one works with $X = L^p(I,\mathbb{C}^n)$ for some $1 \leq p \leq \infty$, then $\mathcal{F}^1(I,\mathbb{C}^n) = W^{1,p}(I,\mathbb{C}^n)$ denotes a Sobolev space (weak derivative). The regularity of the map $t \mapsto L(t)$ is closely related to the choice of $\mathcal{F}$ and will be made clear in each upcoming example. Regarding the choice of $\mathcal{F}$, we only impose the regularity condition that for each fixed $z \in \mathbb{C}$ the exponential map $(\theta \mapsto e^{z\theta}) \in \mathcal{F}^{1}(I,\mathbb{C}^n)$.

We call $\sigma \in \mathbb{C}$ a \emph{Floquet exponent} (of \eqref{eq:FDE}) if there exists a nonzero function $v$ of the form $v(t) = e^{\sigma t} q(t)$ with $q \in \mathcal{F}_T^1(\mathbb{R},\mathbb{C}^n)$ that solves \eqref{eq:FDE}. Hence, our problem of interest is equivalent to finding a \emph{Floquet pair} $(\sigma,q) \in \mathbb{C} \times \mathcal{F}_T^1(\mathbb{R},\mathbb{C}^n)$ satisfying the periodic linear FDE
\begin{equation} \label{eq:Floqexpgeneral}
    \dot{q}(t) + \sigma q(t) - L(t)[\theta \mapsto e^{\sigma \theta} q(t + \theta)] = 0, \quad \forall t \in \mathbb{R}.
\end{equation}
The function $v$ is called a \emph{Floquet (eigen)solution} associated to the Floquet exponent $\sigma$. In the presence of a (strongly continuous) forward evolutionary system $U \coloneqq \{U(t,s)\}_{t \geq s} \subseteq \mathcal{L}(X)$ for \eqref{eq:FDE}, one can prove that this notion of a Floquet exponent is equivalent to the notion of a Floquet exponent in terms of the $s$-independent spectral values $\lambda = e^{\sigma T}$ of the \emph{monodromy operator} $U(s+T,s)$, see \cite[Lemma 8.1.2]{Hale1993} for a proof in the setting of classical DDEs (\cref{subsec:classicalDDEs}). For systems that do not generate a forward evolutionary system (on $X$), such as mixed functional differential equations (MFDEs) (\cref{subsec:MFDEs}), one can still look for Floquet exponents and solutions by finding Floquet pairs $(\sigma,q)$ satisfying \eqref{eq:Floqexpgeneral}. Therefore, the definition of Floquet exponents in terms of \eqref{eq:Floqexpgeneral} is applicable and usable in a wider class of equations than the approach via $U$. To resemble a part of the abstract results from \cref{subsec:periodicspectral} on $\frac{2 \pi i }{T} \mathbb{Z}$-similarity, the following results shows that the Floquet exponents of \eqref{eq:FDE} are indeed uniquely determined up to additive multiples of $\frac{2 \pi i}{T}$.
\begin{proposition} \label{prop:Floquetexpunique}
If $\sigma \in \mathbb{C}$ is a Floquet exponent, then $\sigma + \frac{2 \pi i}{T}k$ is a Floquet exponent for all $k \in \mathbb{Z}$.
\end{proposition}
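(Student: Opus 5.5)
The plan is to check directly that the Floquet solution itself can be rewritten with a shifted exponent and a modified periodic factor. Let $(\sigma,q)$ be a Floquet pair, so that $v(t) = e^{\sigma t}q(t)$ is a nonzero solution of \eqref{eq:FDE} with $q \in \mathcal{F}_T^1(\mathbb{R},\mathbb{C}^n)$. Fix $k \in \mathbb{Z}$ and put $\sigma_k \coloneqq \sigma + \frac{2\pi i}{T}k$ and $q_k(t) \coloneqq e^{-\frac{2\pi k i}{T}t}q(t)$, that is, $q_k = \varepsilon_{-k}q$ in the notation of \cref{subsec:periodicspectral}. Then $e^{\sigma_k t}q_k(t) = e^{\sigma t}q(t) = v(t)$ for all $t$. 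Hence $v$ is the same nonzero solution of \eqref{eq:FDE}, and it only remains to show that $q_k$ is an admissible periodic factor. (For $T=0$ the claim is trivial, or should be read with $k=0$; I would mention this briefly.)

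There are two things to verify. First, $q_k$ is $T$-periodic, since $t\mapsto e^{-\frac{2\pi k i}{T}t}$ is $T$-periodic. Second, $q_k \in \mathcal{F}^1_T(\mathbb{R},\mathbb{C}^n)$, which needs the function class to be stable under multiplication by a smooth periodic scalar function, with the product rule $\dot q_k = e^{-\frac{2\pi k i}{T}t}(\dot q - \frac{2\pi k i}{T}q)$. For the concrete classes ($C^1$, $W^{1,p}$) this is standard. Abstractly, I would argue it from the standing assumption that exponentials lie in $\mathcal{F}^1$, or simply impose it as the natural counterpart of \ref{hyp:ek}.

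As a sanity check, and to avoid relying on the solution interpretation, I would also verify \eqref{eq:Floqexpgeneral} for $(\sigma_k,q_k)$ directly. The key identity is
\[
e^{\sigma_k\theta}q_k(t+\theta) = e^{-\frac{2\pi k i}{T}t}\,e^{\sigma\theta}q(t+\theta),
\]
which holds because the $\theta$-dependence of the two exponential factors cancels. Pulling the scalar factor $e^{-\frac{2\pi k i}{T}t}$ (constant in $\theta$) out of the linear operator $L(t)$, and combining this with the product rule, the left-hand side of \eqref{eq:Floqexpgeneral} for $(\sigma_k,q_k)$ equals $e^{-\frac{2\pi k i}{T}t}$ times the left-hand side for $(\sigma,q)$. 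The latter vanishes, so the former does too.

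Finally, $q_k\neq 0$ because $q\neq 0$. The only genuine obstacle is the membership $q_k\in\mathcal{F}^1_T$ in the abstract setting; everything else is a one-line computation.
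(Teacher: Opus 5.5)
Your proposal is correct and is essentially the paper's own proof: you replace $q$ by $q_k(t)=e^{-\frac{2\pi i}{T}kt}q(t)$, so that $e^{(\sigma+\frac{2\pi i}{T}k)t}q_k(t)=e^{\sigma t}q(t)$ is the same nonzero solution. Two points go beyond what the paper writes: your direct check of \eqref{eq:Floqexpgeneral}, and your observation that $q_k\in\mathcal{F}^1_T(\mathbb{R},\mathbb{C}^n)$ requires the function class to be closed under multiplication by smooth periodic scalars. The paper takes that closure for granted, and it does not follow from the standing assumption on exponentials in $\mathcal{F}^1(I,\mathbb{C}^n)$ alone, so imposing it explicitly, as you suggest, is the honest route.
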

\begin{proof}
Let $\sigma$ be a Floquet exponent, then $t \mapsto e^{\sigma t}q(t)$ is a solution to \eqref{eq:FDE} where $q \in \mathcal{F}_T^1(\mathbb{R},\mathbb{C}^n)$. Let $k \in \mathbb{Z}$ be given and consider the map $q_k \in \mathcal{F}_T^1(\mathbb{R},\mathbb{C}^n)$ given by $q_k(t) = e^{-\frac{2 \pi i}{T}kt}q(t)$ for all $t \in \mathbb{R}$. Then the function $t \mapsto e^{(\sigma + \frac{2 \pi i}{T}k)t} q_k(t) = e^{\sigma t}q(t)$ is a solution to \eqref{eq:FDE}, meaning that $\sigma + \frac{2 \pi i}{T}k$ is a Floquet exponent. 
\end{proof}
The central result of this section is to establish the characterization
\begin{equation} \label{eq:sigmaAFloquet}
    \sigma(\mathcal{A}) \cap \Omega 
    = \{ \sigma \in \Omega : \sigma \text{ is a characteristic value of } \Delta \} 
    = \{ \sigma \in \Omega : \sigma \text{ is a Floquet exponent} \},
\end{equation}
for three representative subclasses of \eqref{eq:FDE}, namely classical DDEs (\cref{subsec:classicalDDEs}), iDDEs (\cref{subsec:infinitedelay}) and MFDEs (\cref{subsec:MFDEs}). Here, $\Omega = \mathbb{C}$ for classical DDEs and MFDEs, while for iDDEs it is a right half-plane in $\mathbb{C}$ containing the origin. In each case, the set $\Omega$ is sufficiently large to allow the application of the principle of linearized stability and to study local bifurcations. Moreover, a consequence of \cref{prop:Floquetexpunique} and \eqref{eq:sigmaAFloquet} is that it is sufficient to search for spectral values of $\mathcal{A}$, or equivalently for characteristic values of $\Delta$ within the horizontal strip $\{ z \in \Omega : \Im(z) \in (-\frac{\pi}{T},\frac{\pi}{T}] \}$. To prove \eqref{eq:sigmaAFloquet}, we proceed in four steps. First, we show that $\Delta$ is a closed characteristic operator for $\hat{\mathcal{A}}$ on $\Omega$ (\cref{thm:Delta equivalence}). Second, we verify that $\Delta$ satisfies \ref{hyp:rangeequal}. Third, combining these two key results, we conclude that $\Delta$ is a characteristic operator for $\mathcal{A}$ on $\Omega$ (\cref{thm:rhoAcharac}). Finally, we apply \cref{cor:spectralrelations} and \cref{lemma:compactspectra} to establish \eqref{eq:sigmaAFloquet}.

\subsection{Classical delay differential equations} \label{subsec:classicalDDEs}
In the setting of classical delay differential equations (DDEs),
the state space $X$ is the Banach space $C([-h,0],\mathbb{C}^n)$ equipped with the supremum norm $\| \cdot \|_{\infty}$. The (maximal) delay $h > 0$ is assumed to be finite, and the map $L$ is assumed to be in $C_T(\mathbb{R},\mathcal{L}(X,\mathbb{C}^n))$. It follows from a vector-valued version of the Riesz representation theorem that, for every $t \in \mathbb{R}$, the operator $L(t)$ can be represented uniquely as a Riemann-Stieltjes integral
\begin{equation} \label{eq:L(t)RieszDDE}
    L(t)\varphi = \int_0^h d_2 \zeta(t,\theta) \varphi(-\theta).
\end{equation}
Here, the map $\zeta : \mathbb{R} \times [0,h] \to \mathbb{C}^{n \times n}$ is a matrix-valued function, and $\zeta(\cdot,\theta)$ is continuous and $T$-periodic for all $\theta \in [0,h]$. Moreover, $\zeta(t,\cdot)$ is of bounded variation, right-continuous on the open interval $(0,h)$ for all $t \in \mathbb{R}$ and normalized by the requirement $\zeta(\cdot,0) = 0$. The notation $d_2 \zeta$ reflects that in the computation of the Riemann-Stieltjes integral, we integrate with respect to the second variable of $\zeta$. We also write $d_2 \zeta$ in front of the integrand since $\zeta$ is matrix-valued. For any starting time $s \in \mathbb{R}$, the initial value problem
\begin{equation} \label{eq:LinearDDE}
    \begin{dcases}
    \dot{x}(t) = L(t)x_t, \quad &t \geq s, \\
    x_s = \varphi, \quad &\varphi \in X,    
    \end{dcases}
\end{equation}
admits a unique solution $x$ on $[s,\infty)$ that is generated by a strongly continuous forward evolutionary system $U$ by $x_t = U(t,s)\varphi$ for all $t \geq s$, see \cite[Theorem XII.2.7 and XII.3.1]{Diekmann1995}. The associated (generalized) generator of $U$ (at time $t \in \mathbb{R}$) is given by the closed linear operator $A(t) : \mathcal{D}(A(t)) \subseteq X \to X$ with domain and action
\begin{equation*}
    \mathcal{D}(A(t)) = \{ \varphi \in C^{1}([-h,0],\mathbb{C}^n) : \varphi '(0) = L(t) \varphi \}, \quad A(t)\varphi = \varphi',
\end{equation*}
see also \cite{Article2,Clement1988} for more information. When we take the derivative with respect to the state variable of a function $\varphi \in C^1([-h,0],\mathbb{C}^n)$, we denote this by an accent, i.e. $\varphi' \in X$. However, when we take the derivative with respect to the time variable of a function $\varphi \in C_T^1(\mathbb{R},X)$, we denote this by a dot, i.e. $\dot{\varphi} \in C_T(\mathbb{R},X)$. To this operator, let us recall from \eqref{eq:introcurlyA} that we associate the linear operator $\mathcal{A} : \mathcal{D}(\mathcal{A}) \subseteq C_T(\mathbb{R},X) \to C_T(\mathbb{R},X)$ defined by 
\begin{equation} \label{eq:curlya_dde}
     \mathcal{D}(\mathcal{A}) = \{ \varphi \in C_T^1(\mathbb{R},X) : \varphi(t) \in \mathcal{D}(A(t)) \mbox{ for all $t \in \mathbb{R}$} \}, \quad (\mathcal{A}\varphi)(t) = A(t)\varphi(t) - \dot{\varphi}(t). 
\end{equation}
As we will show in \cref{lemma:operatorlemmaDDE}, the operator $\mathcal{A}$ from \eqref{eq:curlya_dde} coincides with the closable linear operator $\mathcal{A}$ introduced in \eqref{eq:D(A)rep} from \cref{sec:construction}. The purpose of this section is to construct a characteristic operator $\Delta$ for $\mathcal{A}$ and, on this basis, to establish several properties of its spectrum. In particular, we will prove that the spectrum of $\mathcal{A}$ consists solely of isolated eigenvalues of finite type and satisfies \eqref{eq:sigmaAFloquet}. To do so, we first construct operators $D, K$ and $M$ so that $\mathcal{A}$ is the first operator associated to $D, K$ and $M$.

\begin{lemma} \label{lemma:operatorlemmaDDE}
Let $D : \mathcal{D}(D) \to C_T(\mathbb{R},X)$ be the operator defined by
\begin{equation} \label{eq:domainDDDE}
    \mathcal{D}(D) = \{ \varphi \in C_T^1(\mathbb{R},X) : \varphi(t)' \in X \mbox{ for all $t \in \mathbb{R}$} \}, \quad (D\varphi)(t) = \varphi(t)' - \dot{\varphi}(t).
\end{equation}
Then $D$ is a closable linear operator satisfying \ref{hyp:iota}, \ref{hyp:restriction}, \ref{hyp:domainrangeD} and \ref{hyp:domaincurlyA} with $\Omega = \mathbb{C}$. Moreover, define the linear operators $K : \mathcal{D}(K) \subseteq C_T(\mathbb{R},X) \to C_T(\mathbb{R},\mathbb{C}^n)$ and $M : C_T(\mathbb{R},X) \to C_T(\mathbb{R},\mathbb{C}^n)$ by
\begin{equation} \label{eq:KMdde}
     \mathcal{D}(K) = C_T^1(\mathbb{R},X), \qquad (K\varphi)(t) = L(t) \varphi(t) - \dot{\varphi}(t)(0), \qquad (M\varphi)(t) = \varphi(t)(0), 
\end{equation}
then $K$ satisfies \ref{hyp:K}, and $\mathcal{A}$ defined in \eqref{eq:curlya_dde} is the first operator associated to $D, K$ and $M$.  
\end{lemma}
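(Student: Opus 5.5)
The proof will be a direct verification of the hypotheses, using explicit formulas for the kernel of $D$ and for the resolvent of a suitable restriction $D_0$. \textbf{Closability of $D$} follows from the graph characterization (\cref{lemma:closabledef}). The key observation is that $D$ is the transport operator $\partial_\theta - \partial_t$ along characteristics. If $\varphi_m \to 0$ and $D\varphi_m \to \psi$ uniformly, set $u_m(t,\theta) := \varphi_m(t)(\theta)$. Along each line $s \mapsto (t+s,\theta-s)$ we have $\frac{d}{ds}u_m(t+s,\theta-s) = -(D\varphi_m)(t+s)(\theta-s)$. Integrating over a short segment and letting $m\to\infty$ gives $\int \psi = 0$ along every such segment. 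Continuity then forces $\psi=0$.

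\textbf{Kernel and $\iota$ (\ref{hyp:iota}).} Elements of $\mathcal{N}(D)$ satisfy $u(t,\theta) = q(t+\theta)$ with $q := u(\cdot,0)$. Regularity $\varphi \in C_T^1(\mathbb{R},X)$ forces $q \in C_T^1(\mathbb{R},\mathbb{C}^n)$. I therefore take $\mathcal{Y} = C_T^1(\mathbb{R},\mathbb{C}^n) \subseteq C_T(\mathbb{R},\mathbb{C}^n)$ and $(\iota q)(t)(\theta) = q(t+\theta)$. Then $\iota$ is an isometry onto $\mathcal{N}(D)$ with inverse $M|_{\mathcal{N}(D)}$, so both $\iota$ and $\iota^{-1}$ are bounded. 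This explains why $\mathcal{Y}$ is not closed.

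\textbf{Restriction $D_0$ (\ref{hyp:restriction}) and \ref{hyp:domainrangeD}.} Define $D_0$ as $D$ restricted to $\mathcal{D}(D_0) = \{\varphi \in \mathcal{D}(D) : \varphi(t)(0)=0 \ \forall t\}$. The splitting $\varphi = \iota M\varphi + (\varphi - \iota M\varphi)$ gives \eqref{eq:D0_D}, once one checks that $\iota M \varphi \in \mathcal{N}(D)$; this uses $M\varphi \in C^1_T$, since $\varphi \in C_T^1(\mathbb{R},X)$. Solving $(zI - D_0)\varphi = \psi$ by characteristics with zero boundary value at $\theta=0$ gives
\begin{equation*}
(R(z,D_0)\psi)(t)(\theta) = \int_\theta^0 e^{z(\theta-s)}\psi(t+\theta-s)(s)\,ds,
\end{equation*}
and this is bounded with norm at most $h e^{|z|h}$. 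I take $\mathcal{R}(zI-D_0)$ to consist of those $\psi$ for which this formula lands in $\mathcal{D}(D_0)$. The natural candidate is $\psi \in C^1_T(\mathbb{R},X)$, and I will show that it is $z$-independent. It contains $\mathcal{D}(D) \subseteq C_T^1(\mathbb{R},X)$, which gives \ref{hyp:domainrangeD}. It is also dense in $C_T(\mathbb{R},X)$, so $\Omega = \mathbb{C}$ and $0 \in \rho(D_0)$.

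\textbf{Remaining verifications.} For \ref{hyp:K}, $\mathcal{D}(K) = C_T^1(\mathbb{R},X) \supseteq \mathcal{D}(D)$. On $\mathcal{D}(D_0)$ we have $\varphi(t)(0)=0$, so $\dot\varphi(t)(0) = 0$ and $K\varphi = L(\cdot)\varphi(\cdot)$, which is bounded because $L \in C_T(\mathbb{R},\mathcal{L}(X,\mathbb{C}^n))$. To identify $\mathcal{A}$ as the first operator, note that $MD\varphi = K\varphi$ reads $\varphi(t)'(0) - \dot\varphi(t)(0) = L(t)\varphi(t) - \dot\varphi(t)(0)$, that is, $\varphi(t)'(0) = L(t)\varphi(t)$, and the actions agree. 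Finally, for \ref{hyp:domaincurlyA}, given $\psi \in \mathcal{D}(D)$ and $z \in \rho(\mathcal{A})$, I will use \eqref{eq:resolventADelta}-type variation of constants (or directly the Floquet problem $\Delta(z)q = \cdot$) to produce $\varphi \in \mathcal{D}(\mathcal{A})$ with $(zI-\mathcal{A})\varphi = \psi$.

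\textbf{Main obstacle.} I expect this last step to be the hardest, since showing solvability on all of $C_T^1$ for such $z$ is genuinely about the periodic boundary-value problem. My approach is to reduce it to showing that $\Delta(z)$ maps $C_T^1$ onto $C_T$ when it is invertible. I would first try to prove this via its structure as $\dot q + zq - (\text{bounded})$, as the identity plus a compact perturbation of $\frac{d}{dt}+z$.
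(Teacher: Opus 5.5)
Apart from \ref{hyp:domaincurlyA}, your plan follows the paper's proof essentially step for step. This covers closability by integrating along the characteristics of $\partial_\theta-\partial_t$, the choice $\iota q=q(\cdot+\theta)$ with $\mathcal{Y}=C_T^1(\mathbb{R},\mathbb{C}^n)$, the splitting $\varphi=\iota M\varphi+(\varphi-\iota M\varphi)$, the explicit resolvent of $D_0$, and $K|_{\mathcal{D}(D_0)}=M_L|_{\mathcal{D}(D_0)}$. Two small repairs are needed in that part.

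First, you never prove that $K$ is closable. The construction of \cref{subsec:construction} requires this of $K$, and it is used in \cref{lemma:D0closable} and \cref{lemma:MDKclosable}. The paper writes $K=M_L-GM$ with $G=\frac{d}{dt}$ closed on $C_T^1(\mathbb{R},\mathbb{C}^n)$. Then $GM$ is closed by \cref{lemma:compositionclosed}, and $K$ is a restriction of the closed operator $M_L-GM$.

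Second, $\mathcal{R}(zI-D_0)$ is not equal to $C_T^1(\mathbb{R},X)$; by \cref{prop:rangenotequal} the inclusion is strict. What you need, and what the derivative formulas \eqref{eq:derivativesresolventDDE} give, is only $C_T^1(\mathbb{R},X)\subseteq\mathcal{R}(zI-D_0)$. The $z$-independence of the range then comes for free from \cref{lemma:rangeinclusion}.

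For \ref{hyp:domaincurlyA}, your route genuinely differs from the paper's. The paper argues in one line that \eqref{eq:resolventADelta} gives $\mathcal{R}(zI-\mathcal{A})=\mathcal{R}(zI-D_0)$. That formula is derived for $z\in\rho(\Delta)$, so passing to $z\in\rho(\mathcal{A})$ uses $\rho(\mathcal{A})\cap\Omega\subseteq\rho(\Delta)$. In the abstract framework this inclusion comes from the $G_{\mathcal{A}}$ part of \ref{hyp:SH3}. Its verification in \cref{cor:spectralrelations} (well-definedness on all of $\mathcal{R}(\Delta(z_0))$) implicitly needs $\mathcal{N}(z_0I-D)\subseteq\mathcal{R}(zI-\mathcal{A})$, which is a special case of \ref{hyp:domaincurlyA}.

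Your Fredholm argument supplies this link directly and works, provided you carry it out without citing \cref{thm:spectralequal} or \cref{cor:spectralrelations}:
\begin{itemize}
\item If $z\in\rho(\mathcal{A})$ and $\Delta(z)q=0$, then $Q(z)\iota q=(t\mapsto e^{z\cdot}q(t+\cdot))$ lies in $\mathcal{N}(zI-\mathcal{A})$. Hence $\Delta(z)$ is injective.
\item Factor $\Delta(z)=\bigl[I+\bigl((z-\mu)I+C(z)\bigr)(\tfrac{d}{dt}+\mu)^{-1}\bigr](\tfrac{d}{dt}+\mu)$ with $\mu>0$ and $C(z)$ as in \eqref{eq:defn_S_C}. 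Do not factor through $\frac{d}{dt}+z$, which is singular for $z\in\frac{2\pi i}{T}\mathbb{Z}$.
\item The bracket is the identity plus a compact operator on $C_T(\mathbb{R},\mathbb{C}^n)$ by Arzel\`a--Ascoli. So injectivity forces $\Delta(z)$ to map $C_T^1$ onto $C_T$.
\item Finally, check directly that $\varphi=Q(z)\iota\,\Delta(z)^{-1}[M\psi-(zM-K)R(z,D_0)\psi]+R(z,D_0)\psi$ satisfies $\varphi\in\mathcal{D}(D)$, $(zI-D)\varphi=\psi$ and $(zM-K)\varphi=M\psi$, i.e.\ $MD\varphi=K\varphi$. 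This holds for every $\psi\in\mathcal{R}(zI-D_0)\supseteq\mathcal{D}(D)$.
\end{itemize}

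This costs a DDE-specific compactness step, the same compactness of $(\frac{d}{dt}+\mu)^{-1}$ that the paper only exploits later in \cref{thm:eigenfunctionsDDE}. In return it makes the verification of \ref{hyp:domaincurlyA} self-contained. The paper's shortcut, by contrast, relies on abstract machinery whose verification itself uses \ref{hyp:domaincurlyA}.
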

\begin{proof}
Put $I = [-h,0], Y = \mathbb{C}^n$ and $X =  C([-h,0],\mathbb{C}^n)$ in the construction of \cref{subsec:construction}. We first show that the operator $D$ is closable and satisfies the hypotheses \ref{hyp:iota}, \ref{hyp:restriction}, \ref{hyp:domainrangeD} and \ref{hyp:domaincurlyA}. To show that the linear operator $D$ is closable, let $(\varphi_m)_m$ be a sequence in $\mathcal{D}(D)$ converging in norm to zero and assume that the sequence $(D\varphi_m)_m$ converges in norm to some $\psi \in C_T(\mathbb{R},X)$. We have to show that $\psi = 0$. Set $\psi_m = D\varphi_m$ for every $m \in \mathbb{N}$ and notice that this equation is equivalent to
\begin{equation*}
    \frac{\partial}{\partial \theta} \varphi_m(t)(\theta) - \frac{\partial}{\partial t} \varphi_m(t)(\theta) = \psi_m(t)(\theta),  \quad \forall t \in \mathbb{R}, \ \theta \in [-h,0].
\end{equation*}
This is an inhomogeneous transport equation that has the solution
\begin{equation*}
    \varphi_m(t)(\theta) = \varphi_m(t+\theta)(0) + \int_0^\theta \psi_m(t+\theta-s)(s) ds.
\end{equation*}
Since the convergence of $\varphi_m \to 0$ and $\psi_m \to \psi$ is uniform in the first and second variable, we obtain
\begin{align}
    \begin{split} \label{eq:integralpsi}
    \int_0^\theta  \psi(t+\theta-s)(s) ds &= \lim_{m \to \infty} \int_0^\theta  \psi_m(t+\theta-s)(s) ds \\
    &= \lim_{m \to \infty} [\varphi_m(t)(\theta) - \varphi_m(t+\theta)(0)] = 0,
    \end{split}
\end{align}
where in the last step we used that $\varphi_m \to 0$ uniformly as $m \to \infty$. Let $t_1 \in \mathbb{R}$ and $\theta_1,\theta_2 \in [-h,0]$ with $\theta_1 \neq \theta_2$ be given. If $t_2 = t_1 + \theta_1 - \theta_2$, then according to \eqref{eq:integralpsi} we obtain
\begin{align*}
    \int_{\theta_1}^{\theta_2} \psi(t_1+\theta_1-s)(s) ds &= \int_{0}^{\theta_1} \psi(t_1+\theta_1-s)(s) ds + \int_{\theta_1}^{\theta_2} \psi(t_1+\theta_1-s)(s) ds \\
    &= \int_{0}^{\theta_2} \psi(t_1+\theta_1-s)(s) ds = \int_{0}^{\theta_2} \psi(t_2+\theta_2-s)(s) ds = 0
\end{align*}
since the second and last integral vanish due to \eqref{eq:integralpsi}. Differentiating the left-hand side with respect to $\theta_2$ yields $\psi(t_2)(\theta_2) = 0$ for all $\theta_2 \in [-h,0]$. Since $t_1$ and $\theta_1$, and thus $t_2$, were arbitrary, we conclude that $\psi = 0$, i.e. $D$ is closable.

To verify that the operator $D$ satisfies \ref{hyp:iota}, we first prove that $\mathcal{N} (D) \neq  \{0\}$. The kernel of $D$ consists of functions $\varphi \in \mathcal{D}(D)$ satisfying the transport equation
\begin{equation*}
    \frac{\partial}{\partial t} \varphi(t)(\theta) = \frac{\partial}{\partial \theta} \varphi(t)(\theta), \quad \forall t \in \mathbb{R}, \ \theta \in [-h,0],
\end{equation*}
which is equivalent to saying that $\varphi$ has the translation property $\varphi(t)(\theta) = \varphi(t+\theta)(0)$ for all $t \in \mathbb{R}$ and $\theta \in [-h,0]$. Hence 
\begin{equation*}
\mathcal{N} (D) = \{ \varphi \in \mathcal{D}(D) : \varphi(t)(\theta) = \varphi(t+\theta)(0) \mbox{ for all }  t \in \mathbb{R} \mbox{ and } \theta \in [-h, 0]\}  \neq \{0\}.
\end{equation*}
Moreover, if we set $\mathcal{Y} = C_T^1(\mathbb{R},\mathbb{C}^n)$
and define the linear operator $\iota : \mathcal{Y} \to \mathcal{N}(D)$ by
\begin{equation} \label{eq:iotaDDE}
    (\iota q)(t)(\theta) = q(t+\theta),
\end{equation}
then $\iota$ is clearly a bijection with (algebraic) inverse $\iota^{-1} : \mathcal{N}(D) \to \mathcal{Y}$ given by $(\iota^{-1}\varphi)(t) = \varphi(t)(0)$. Clearly, $\iota$ and $\iota^{-1}$ are bounded and thus \ref{hyp:iota} is satisfied.

Our next aim is to verify that \ref{hyp:restriction} holds with $\Omega = \mathbb{C} \ni 0$. Therefore, let us first introduce the linear operator $D_0 : \mathcal{D}(D_0) \to C_T(\mathbb{R},X)$ by
\begin{equation*}
    \mathcal{D}(D_0) = \{ \varphi \in \mathcal{D}(D) : \varphi(t)(0) = 0 \mbox{ for all } t \in \mathbb{R} \}, \quad  D_0\varphi = D\varphi,
\end{equation*}
and recall from the argument detailed above \cref{lemma:rangeinclusion} that $D_0$ is closable. We claim that $\mathcal{D}(D) = \mathcal{N} (D) \oplus \mathcal{D}(D_0)$. Let $\varphi \in \mathcal{N} (D) \cap \mathcal{D}(D_0)$, then $\varphi(t)(\theta) = \varphi(t+\theta)(0) = 0$ for all $t \in \mathbb{R}$ and $\theta \in [-h,0]$ which proves that $\varphi = 0$ and thus $\mathcal{N}(D) \cap \mathcal{D}(D_0) = \{0 \}$. Let $\varphi \in \mathcal{D}(D)$ be given and define $\psi \in \mathcal{D}(D)$ by $\psi(t)(\theta) = \varphi(t)(\theta) - \varphi(t+\theta)(0)$. Then $\psi(\cdot)(0) = 0$ and so $\psi \in \mathcal{D}(D_0)$. Define now $\phi \in \mathcal{D}(D)$ by $\phi(t)(\theta) = \varphi(t+\theta)(0)$, then $\phi \in \mathcal{N} (D)$ and since $\varphi = \psi + \phi$, the claim is proven. To verify that $\rho(D_0) = \mathbb{C} \neq \emptyset$, we start by studying solutions of
\begin{equation} \label{eq:resolventD01}
    (zI -D_0)\varphi = \psi
\end{equation}
with $z \in \mathbb{C}, \varphi \in \mathcal{D}(D_0)$ and $\psi \in C_T^1(\mathbb{R},X)$. Notice that \eqref{eq:resolventD01} is equivalent to solving the inhomogeneous transport equation with boundary condition
\begin{equation*}
    \bigg( z + \frac{\partial}{\partial t} - \frac{\partial}{\partial \theta}  \bigg) \varphi(t)(\theta) = \psi(t)(\theta), \quad \varphi(t)(0) = 0, \quad \forall t \in \mathbb{R}, \ \theta \in [-h,0],
\end{equation*}
which has the unique solution 
\begin{equation} \label{eq:resolventD02}
    \varphi(t)(\theta) = \int_\theta^0 e^{z(\theta - s)} \psi(t+\theta -s)(s) ds,
\end{equation}
if we can prove that $\varphi \in \mathcal{D}(D_0)$. To show this, let us first observe that
\begin{align}
\begin{split} \label{eq:derivativesresolventDDE}
    \dot{\varphi}(t)(\theta) &= \int_\theta^0 e^{z(\theta - s)} \dot{\psi}(t+\theta -s)(s) ds, \\
    \varphi(t)'(\theta) &= - \psi(t)(\theta) + \int_\theta^0 e^{z(\theta - s)} [z \psi(t+\theta-s)(s) + \dot{\psi}(t+\theta-s)(s)] ds,
\end{split}
\end{align}
where the last expression follows from the Leibniz integral rule. Since $\psi \in C_T^1(\mathbb{R},X)$, the first equality in \eqref{eq:derivativesresolventDDE} implies that $\varphi \in C_T^1(\mathbb{R},X)$ and the second equality in \eqref{eq:derivativesresolventDDE} implies that $\varphi(t)' \in X$ for all $t \in \mathbb{R}$, which shows that $\varphi \in \mathcal{D}(D_0)$. As a consequence, $C_T^1(\mathbb{R},X) \subseteq \mathcal{R}(zI - D_0) \subseteq C_T(\mathbb{R},X)$ and so $zI - D_0$ has dense range. In \cref{sec:nonclosed}, we actually prove that these inclusions are (in general) strict, see \cref{prop:rangenotequal} for further details. Moreover, for every $z \in \mathbb{C}$ the linear operator $zI - D_0$ has a densely defined linear inverse given by
\begin{equation} \label{eq:resolventD0}
    [R(z,D_0) \varphi](t)(\theta) = \int_\theta^0 e^{z(\theta - s)} \varphi(t+\theta-s)(s) ds, \quad \forall \varphi \in \mathcal{R}(zI - D_0),
\end{equation}
and this operator is bounded, as it holds that
\begin{equation} \label{eq:resolventD0Mz}
    \|R(z,D_0) \varphi \|_{\infty} \leq M_z \| \varphi \|_{\infty}, \quad M_z \coloneqq \frac{1 - e^{-h \Re(z)}}{\Re(z)},
\end{equation}
where $M_z$ is understood as its limiting value $h$ whenever $\Re(z) = 0$. As $0 \in \Omega = \mathbb{C}$, we have that $D_0^{-1}$ is not only an (algebraic) inverse of $D_0$, but in fact an inverse of $D_0$ in terms of \cref{def:resolvent}. We conclude that \ref{hyp:restriction} holds.

It follows directly from \eqref{eq:domainDDDE} and the above argumentation that $\mathcal{D}(D) \subseteq C_T^1(\mathbb{R},X) \subseteq \mathcal{R}(zI-D_0)$ for all $z \in \mathbb{C}$, which shows that \ref{hyp:domainrangeD} is satisfied. 

While \cref{cor:spectralrelations} is proven after the introduction of \ref{hyp:domaincurlyA}, the formula \eqref{eq:resolventADelta} itself is independent of this hypothesis. Hence, $\mathcal{R}(zI-\mathcal{A}) = \mathcal{R}(zI-D_0)$ for all $z \in \rho(\mathcal{A}) \cap \Omega$, which proves that \ref{hyp:domaincurlyA} is satisfied as a consequence of \ref{hyp:domainrangeD}.

Our next aim is to verify \ref{hyp:K}. To do this, consider the multiplication operator $M_L : C_T(\mathbb{R},X) \to C_T(\mathbb{R},\mathbb{C}^n)$ with action $(M_L \varphi)(t) \coloneqq L(t) \varphi(t)$ and the standard differential operator $G : \mathcal{D}(G) \subseteq C_T(\mathbb{R},\mathbb{C}^n) \to C_T(\mathbb{R},\mathbb{C}^n)$ with domain $\mathcal{D}(G) \coloneqq C_T^1(\mathbb{R},\mathbb{C}^n)$ and action $Gq \coloneqq \dot{q}$. Then we can write $K = M_L - GM$. Since $G$ is a closed and $M$ is bounded linear operator, it follows from \cref{lemma:compositionclosed} that $GM$, with domain $\mathcal{D}(GM) = \{ \varphi \in C_T(\mathbb{R},X) : M\varphi \in \mathcal{D}(G) \} \supseteq C_T^1(\mathbb{R},X)$, is closed. Since $M_L$ is bounded, it follows from \cref{lemma:closedsum} that the operator $K$, with (possibly) smaller domain $\mathcal{D}(K) = C_T^1(\mathbb{R},X) \subseteq \mathcal{D}(GM)$ satisfying $\mathcal{D}(K) \supseteq \mathcal{D}(D)$ is closable. Moreover, the restriction $K|_{\mathcal{D}(D_0)} = M_L |_{\mathcal{D}(D_0)}$, since $\mathcal{D}(D_0) \subseteq \mathcal{N}(M)$, is bounded as $M_L$ is bounded. This shows that \ref{hyp:K} is satisfied.

We conclude from \cref{lemma:D0closable} that the linear operator $\mathcal{A}$ is closable and its representation follows from \eqref{eq:D(A)rep}. Therefore, $\mathcal{A}$ is the first operator associated with $D,K$ and $M$.
\end{proof}
The second operator $\hat{\mathcal{A}}$ associated with $D, K$ and $M$ now takes the form 
\begin{equation*}
    \mathcal{D}(\hat{\mathcal{A}}) = \bigg \{ \begin{pmatrix}
    q\\
    \varphi
    \end{pmatrix}
    \in \hat{C}_T^1(\mathbb{R},X) : \varphi \in \mathcal{D}(D), \ q = M\varphi \bigg \}, \quad 
    \hat{\mathcal{A}}
    \begin{pmatrix}
    q\\
    \varphi
    \end{pmatrix}
    =
    \begin{pmatrix}
    K\varphi \\
    D\varphi
    \end{pmatrix}.
\end{equation*}
To confirm our need on the construction of the framework in \cref{sec:characoperators} regarding closable, not necessarily closed linear operators, it is proven in \cref{sec:nonclosed} that the linear operators $D, D_0, \mathcal{A}$ and $\hat{\mathcal{A}}$ are not closed, see \cref{prop:DAnotclosed} for the main result. Moreover, we illustrate in \cref{sec:nonclosed} that in general $C_T^1(\mathbb{R},X) \neq \mathcal{R}(zI-D_0) \neq C_T(\mathbb{R},X)$ for all $z \in \mathbb{C}$, which highlights the necessity of the dense range definition of the resolvent, recall \cref{remark:definitionresolvent}. Similar results also hold for the linear operators $D, D_0, \mathcal{A}$ and $\hat{\mathcal{A}}$ in the upcoming two subsections.

\begin{theorem} \label{thm:charmatrixDDE}
The holomorphic operator-valued function $\Delta : \mathbb{C} \to L(C_T(\mathbb{R},\mathbb{C}^n))$ with values $\Delta(z) : C_T^1(\mathbb{R},\mathbb{C}^n) \to C_T(\mathbb{R},\mathbb{C}^n)$ given by
\begin{equation} \label{eq:DeltaDDE}
     (\Delta(z)q)(t) = \dot{q}(t) + z q(t) - \int_0^h d_2 \zeta(t,\theta) e^{-z \theta} q(t-\theta),
\end{equation}
is a $\frac{2\pi i}{T}\mathbb{Z}$-similar closed characteristic operator for $\hat{\mathcal{A}}$ on $\mathbb{C}$ and the equivalence is given by
\begin{equation} \label{eq:charoperatorAhatDDE}
     \begin{pmatrix}
         \Delta(z) & 0 \\
         0 & I 
     \end{pmatrix}
     =
     F(z)(z I - \hat{\mathcal{A}})E(z), \quad \forall z \in \mathbb{C},
 \end{equation}
where $E : \mathbb{C} \to L(\hat{C}_T(\mathbb{R},X),\hat{C}_T(\mathbb{R},X)_{\hat{\mathcal{A}}})$ and $F : \mathbb{C} \to L(\hat{C}_T(\mathbb{R},X))$ are holomorphic bounded operator-valued functions, such that $E(z) \in \mathcal{L}(C_T^1(\mathbb{R},\mathbb{C}^n) \oplus \mathcal{R}(zI-D_0), \mathcal{D}(\hat{\mathcal{A}}))$ and $F(z) \in \mathcal{L}(C_T(\mathbb{R},\mathbb{C}^n) \oplus \mathcal{R}(zI-D_0))$ are bijective mappings, given by
\begin{alignat*}{2}
     E(z)
     \begin{pmatrix}
     q\\
     \varphi
     \end{pmatrix}
     &=
     \begin{pmatrix}
     q\\
     \psi
     \end{pmatrix}, \quad
         \psi(t)(\theta) &&= e^{z \theta}q(t+\theta)+ \int_{\theta}^0 e^{z(\theta-s)}\varphi(t+\theta-s)(s) ds, \\
     F(z)
     \begin{pmatrix}
     q\\
     \varphi
     \end{pmatrix}
     &=
     \begin{pmatrix}
     q + \phi\\
     \varphi
     \end{pmatrix}, \quad
         \phi(t) &&= \int_0^h d_2\zeta(t,\theta) \int_{-\theta}^0 e^{-z(\theta+s)}\varphi(t-\theta-s)(s) ds.
 \end{alignat*}
\end{theorem}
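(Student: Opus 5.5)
The plan is to specialize \cref{thm:Delta equivalence} to the data $D,K,M,\iota,D_0$ from \cref{lemma:operatorlemmaDDE}. That lemma already supplies \ref{hyp:iota}--\ref{hyp:domaincurlyA} with $\Omega=\mathbb{C}$. The abstract theorem then produces holomorphic bounded $E,F$ with bijective values and the equivalence \eqref{eq:charoperatorAhatDDE}. What remains is to compute the explicit forms of $\Delta$, $E$ and $F$, and then to verify closedness and $\frac{2\pi i}{T}\mathbb{Z}$-similarity.

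\textbf{Step 1: the operators $Q(z)\iota$ and $\Delta(z)$.} By \eqref{eq:Q(z)iota} and the resolvent formula \eqref{eq:resolventD0},
\[
[Q(z)\iota q](t)(\theta)=q(t+\theta)-z\int_\theta^0 e^{z(\theta-s)}q(t+\theta)\,ds=e^{z\theta}q(t+\theta),
\]
since the integrand does not depend on $s$. Inserting this into \eqref{eq:Delta(z)2} with $M,K$ from \eqref{eq:KMdde} gives
\[
(\Delta(z)q)(t)=zq(t)-L(t)[\theta\mapsto e^{z\theta}q(t+\theta)]+\dot q(t).
\]
Here the last term comes from the time derivative at $\theta=0$. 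The Riesz representation \eqref{eq:L(t)RieszDDE}, after the substitution $\theta\to-\theta$, then yields \eqref{eq:DeltaDDE}.

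\textbf{Step 2: the formulas for $E$ and $F$.} The first component of $E(z)(q,\varphi)$ is $MQ(z)\iota q+MR(z,D_0)\varphi=q$, because $R(z,D_0)\varphi$ vanishes at $\theta=0$. The second component is the stated $\psi$. For $F$ we need $-(zM-K)R(z,D_0)\varphi$. Its $M$-part vanishes, and $\dot{(\cdot)}(0)$ applied to $R(z,D_0)\varphi$ is also zero since $R(z,D_0)\varphi(\cdot)(0)\equiv0$. Only $L(t)R(z,D_0)\varphi(t)$ survives; writing it via \eqref{eq:L(t)RieszDDE} with $\theta\to-\theta$ produces $\phi$. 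Boundedness and bijectivity are inherited from \cref{thm:Delta equivalence}.

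\textbf{Step 3: closedness of $\Delta(z)$.} Write $\Delta(z)=G+zI-B_z$. Here $G=d/dt$ on $C_T^1$ is closed, and $B_z q(t)=\int_0^h d_2\zeta(t,\theta)e^{-z\theta}q(t-\theta)$ is bounded on $C_T(\mathbb{R},\mathbb{C}^n)$. The bound uses continuity of $\zeta(\cdot,\theta)$ together with a uniform bound on the total variation, which follows from $L\in C_T$. A closed operator plus a bounded one is closed by \cref{lemma:closedsum}.

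\textbf{Step 4: $\frac{2\pi i}{T}\mathbb{Z}$-similarity.} Check \ref{hyp:ek} and \eqref{eq:periodicconditions} directly. Multiplication by $e^{2\pi ikt/T}$ preserves $C_T^1$, and $(\varepsilon_k\varphi)(t)(0)=0$ whenever $\varphi(t)(0)=0$, which gives invariance of $\mathcal{D}(D_0)$. It also maps $\mathcal{R}(zI-D_0)$ into itself, which follows from the resolvent formula after a shift of $z$. For the relations:
\begin{itemize}
\item $\varepsilon_kD\varepsilon_k^{-1}=D+\frac{2\pi ik}{T}$, because the time derivative of $e^{-2\pi ikt/T}\varphi$ produces this term and $\varphi(t)'$ is unaffected.
\item The corresponding relation for $K$ follows from the same differentiation at $\theta=0$.
\item $M$ commutes with $\varepsilon_k$.
\item $\varepsilon_kQ(z+\frac{2\pi ik}{T})\iota\varepsilon_k^{-1}q(t)(\theta)=e^{2\pi ikt/T}e^{(z+2\pi ik/T)\theta}e^{-2\pi ik(t+\theta)/T}q(t+\theta)=e^{z\theta}q(t+\theta)$.
\end{itemize}
\cref{prop:Deltasimilar} and the subsequent remark then finish the argument.

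The main obstacle is Step 4's invariance of $\mathcal{R}(zI-D_0)$, since that range is not explicitly known. I would handle it using \eqref{eq:periodicconditions} for $D_0$ and the identity $\varepsilon_k(zI-D_0)\varepsilon_k^{-1}=(z-\frac{2\pi ik}{T})I-D_0$ on $\mathcal{D}(D_0)$. This gives $\varepsilon_k\mathcal{R}(zI-D_0)=\mathcal{R}((z-\frac{2\pi ik}{T})I-D_0)$, which equals $\mathcal{R}(zI-D_0)$ by \cref{lemma:rangeinclusion}.
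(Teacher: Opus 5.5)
Your proposal is correct and follows the paper's route. You specialize \cref{thm:Delta equivalence}, compute $Q(z)\iota q=e^{z\theta}q(t+\theta)$ to read off $\Delta$, $E$ and $F$, get closedness from \cref{lemma:closedsum} with $G$ closed and the rest bounded, and verify \ref{hyp:ek} and \eqref{eq:periodicconditions} directly. Your argument for the invariance of $\mathcal{R}(zI-D_0)$ via $\varepsilon_k(zI-D_0)\varepsilon_k^{-1}=(z-\frac{2\pi ik}{T})I-D_0$ and \cref{lemma:rangeinclusion} is more careful than the paper's ``it is clear''. The one item of \ref{hyp:ek} you skip, invariance of $\mathcal{R}(MD-K)$, is immediate because your relations give $\varepsilon_k(MD-K)\varepsilon_k^{-1}=MD-K$.
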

\begin{proof}
We start by proving that the operator-valued function defined in \eqref{eq:DeltaDDE} is a characteristic operator for $\hat{\mathcal{A}}$. Let $q \in \mathcal{Y}$ be given and note for all $t \in \mathbb{R}$ and $\theta \in [-h,0]$ that
\begin{equation} \label{eq:Q(z)DDE}
    (Q(z)\iota q)(t)(\theta) = (\iota q - zR(z,D_0) \iota q)(t)(\theta) 
    = q(t+\theta) - z \int_{\theta}^0 e^{z (\theta-s)} q(t+\theta) ds = e^{z \theta}q(t+\theta),
\end{equation}
where we used \eqref{eq:iotaDDE} and \eqref{eq:resolventD0}. Recalling \eqref{eq:KMdde} yields
\begin{align*}
    (\Delta(z)q)(t) &= ((zM-K)Q(z)\iota q)(t) \\
    &= zq(t) -  \bigg( L(t)[\theta \mapsto e^{z \theta} q(t+\theta)] - \dot{q}(t) \bigg) = \dot{q}(t) + zq(t) - \int_0^h d_2 \zeta(t,\theta) e^{- z \theta} q(t-\theta),
\end{align*}
where we used \eqref{eq:Delta(z)2} in the first equality and \eqref{eq:L(t)RieszDDE} in the last equality. 

We next prove that $\Delta(z)$ is closed. Fix $z \in \mathbb{C}$ and note that we can write $\Delta(z) = G + N(z)$, where $N(z) : C_T(\mathbb{R},\mathbb{C}^n) \to C_T(\mathbb{R},\mathbb{C}^n) $ is the linear operator defined by $(N(z)q)(t) \coloneqq zq(t) - L(t)[\theta \mapsto e^{z \theta}q(t+\theta)]$. Note that $N(z)$ is bounded since
\begin{equation*}
    \|N(z)q\|_{\infty} \leq \big(|z| + \sup_{\theta \in [-h,0]} |e^{z\theta}| \|L\|_\infty \big)  \| q \|_{\infty} \leq \big( |z| + \max\{1,e^{-h \Re (z)} \} \|L\|_\infty  \big) \| q \|_{\infty}.
\end{equation*}
As $\Delta(z)$ is the sum of the closed operator $G$, with domain $\mathcal{Y} = C_T^1(\mathbb{R},\mathbb{C}^n)$, and the bounded operator $N(z)$, it follows from \cref{lemma:closedsum} that $\Delta(z)$ is closed.

Since $\hat{\mathcal{A}}$ from \eqref{eq:D(Ahat)rep} is the second operator associated with $D,K$ and $M$, we can apply \cref{thm:Delta equivalence}. We conclude that $\Delta$ is a characteristic operator for $\hat{\mathcal{A}}$ on $\mathbb{C}$ and the concrete representations of $E$ and $F$ can be verified directly using the explicit representations of $Q(z), R(z,D_0), M$ and $K$ mentioned earlier in this subsection.

Let us now prove that $\Delta$ is a $\frac{2 \pi i}{T}\mathbb{Z}$-similar characteristic operator for $\hat{\mathcal{A}}$ on $\mathbb{C}$. Since $t \mapsto e^{\frac{2 \pi k i}{T}t}$ is analytic on $\mathbb{R}$, it is clear that \ref{hyp:ek} holds. Moreover, a straightforward calculation using \eqref{eq:KMdde} and \eqref{eq:Q(z)DDE} shows that the relations from \eqref{eq:periodicconditions} are satisfied, which proves the claim.
\end{proof}

In order to establish that $\Delta$ is a characteristic operator for $\mathcal{A}$ on $\mathbb{C}$, we need the following result.

\begin{lemma} \label{lemma:characinvertible}
The characteristic operator $\Delta$ from \eqref{eq:DeltaDDE} satisfies \ref{hyp:rangeequal} and $\rho(\Delta(z)) \neq \emptyset$ for all $z \in \mathbb{C}$.
\end{lemma}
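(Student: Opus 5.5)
The plan is to exploit the splitting $\Delta(z) = G + N(z)$ from the proof of \cref{thm:charmatrixDDE}. Here $G$ is the closed differential operator $Gq = \dot q$ on $\mathcal{D}(G) = C_T^1(\mathbb{R},\mathbb{C}^n)$, and $N(z)$ is bounded on $C_T(\mathbb{R},\mathbb{C}^n)$. Both claims should then follow from a Neumann series perturbation of the explicitly invertible operator $\mu I + G$ with $\mu$ real and large.

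The first step is to invert $\mu I + G$ for real $\mu > 0$. For $f \in C_T(\mathbb{R},\mathbb{C}^n)$, set
\begin{equation*}
q(t) = \int_{-\infty}^t e^{-\mu(t-s)} f(s)\, ds .
\end{equation*}
This $q$ is $T$-periodic because $f$ is. It is continuously differentiable with $\dot q + \mu q = f$, and it satisfies $\|q\|_\infty \leq \mu^{-1}\|f\|_\infty$. Uniqueness holds because the only periodic solutions of $\dot q = -\mu q$ are zero. Hence $\mu I + G : C_T^1(\mathbb{R},\mathbb{C}^n) \to C_T(\mathbb{R},\mathbb{C}^n)$ is bijective with bounded inverse of norm at most $1/\mu$. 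The same formula, or the one integrating from $+\infty$, shows that $\mu \in \rho(G)$ for every real $\mu \neq 0$, with $\|R(\mu,G)\| \leq 1/|\mu|$.

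Next comes the claim $\rho(\Delta(z)) \neq \emptyset$ for fixed $z$. For real $\mu$ with $|\mu| > \|N(z)\|$, write
\begin{equation*}
\mu I - \Delta(z) = (\mu I - G)\bigl(I - R(\mu,G)N(z)\bigr).
\end{equation*}
Since $\|R(\mu,G)N(z)\| \leq \|N(z)\|/|\mu| < 1$, the second factor is a bounded bijection of $C_T(\mathbb{R},\mathbb{C}^n)$. Moreover, $R(\mu,G)N(z)$ maps into $C_T^1(\mathbb{R},\mathbb{C}^n)$, so this factor also maps $C_T^1(\mathbb{R},\mathbb{C}^n)$ onto itself. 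Therefore $\mu I - \Delta(z)$ is a bijection from $C_T^1$ onto $C_T$ with bounded inverse, and $\mu \in \rho(\Delta(z))$.

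For \ref{hyp:rangeequal}, take real $z_0 > \|L\|_\infty$. Then
\begin{equation*}
\Delta(z_0) = (z_0 I + G) - B, \qquad (Bq)(t) = L(t)[\theta \mapsto e^{z_0\theta} q(t+\theta)].
\end{equation*}
Since $e^{z_0\theta} \leq 1$ for $\theta \in [-h,0]$, we have $\|B\| \leq \|L\|_\infty$. The same factorisation gives $\Delta(z_0) = (z_0 I + G)(I - (z_0I+G)^{-1}B)$ with $\|(z_0I+G)^{-1}B\| \leq \|L\|_\infty / z_0 < 1$. So $\Delta(z_0)$ is injective, has bounded inverse, and has range all of $C_T(\mathbb{R},\mathbb{C}^n)$; in particular $z_0 \in \rho(\Delta)$. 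Finally, \cref{lemma:T2onto} gives $\mathcal{R}(\Delta(z_0)) \subseteq \mathcal{R}(MD-K) \subseteq C_T(\mathbb{R},\mathbb{C}^n)$. Because the left-hand side is already the whole space, equality holds throughout, which is \ref{hyp:rangeequal}.

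The only step needing real care is the first one: the explicit solution formula for $\dot q + \mu q = f$ in the periodic class. One must check that it is the unique periodic $C^1$ solution, so that $\mu I + G$ is genuinely surjective and not merely densely invertible. Everything after that is a standard Neumann series argument.
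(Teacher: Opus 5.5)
Your proposal is correct and follows essentially the same route as the paper. It uses a Neumann-series perturbation of the explicitly invertible first-order operator $\mu I - G$ (the paper's $S(\mu - z)$, resp.\ $S(-z_0)$) by the bounded delay term at a real, sufficiently large parameter, and then applies \cref{lemma:T2onto} to turn $\mathcal{R}(\Delta(z_0)) = C_T(\mathbb{R},\mathbb{C}^n)$ into \ref{hyp:rangeequal}; the differences are only cosmetic (the half-line integral instead of the periodic formula \eqref{eq:S(z)inverse}, keeping $zq$ in the bounded part, and putting the bounded factor on the right, which you correctly justify by noting that $I - R(\mu,G)N(z)$ maps $C_T^1(\mathbb{R},\mathbb{C}^n)$ onto itself).
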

\begin{proof}
Let us first prove the second claim, i.e. for every $z \in \mathbb{C}$ there exists a $\mu \in \mathbb{C}$ such that $\mu I - \Delta(z)$ is invertible. To do so, consider the linear operators $S(z) : \mathcal{D}(S(z)) \subseteq C_T(\mathbb{R},\mathbb{C}^n) \to C_T(\mathbb{R},\mathbb{C}^n)$, with domain $\mathcal{D}(S(z)) \coloneqq C_T^1(\mathbb{R},\mathbb{C}^n) $, and $C(z) : C_T(\mathbb{R},\mathbb{C}^n) \to C_T(\mathbb{R},\mathbb{C}^n)$ defined by 
\begin{equation} \label{eq:defn_S_C}
    (S(z)q)(t) \coloneqq z q(t) - \dot{q}(t), \qquad (C(z)q)(t) \coloneqq - L(t)[\theta \mapsto e^{z \theta}q(t+\theta)],
\end{equation}
and note that we can write
\begin{equation} \label{eq:mIDelta}
    \mu I - \Delta(z) = S(\mu - z) - C(z).
\end{equation}
The operator $C(z)$ is bounded since $\|C(z)\| \leq \max\{1,e^{-h \Re (z)} \} \|L\|_\infty$ as $L \in C_T(\mathbb{R},\mathcal{L}(X,\mathbb{C}^n))$. Additionally, one can verify that the inverse of the closed linear operator $S(z)$ is given by 
\begin{equation} \label{eq:S(z)inverse}
    (S(z)^{-1}q)(t) = \frac{1}{e^{z T}-1} \bigg( \int_0^t e^{z(t-s)}q(s) ds + \int_t^T e^{z(t+T-s)}q(s) ds\bigg), \quad \forall q \in C_T(\mathbb{R},\mathbb{C}^n),
\end{equation}
whenever $z \in \mathbb{C} \setminus \hspace{-2pt}\frac{2 \pi i }{T} \mathbb{Z}$, and that for $z \in \mathbb{R} \setminus \{0\}$ one has $\|S(z)^{-1}\| \leq 1/|z| \to 0$ as $z \to \pm \infty$. Hence, for every fixed $z \in \mathbb{C}$, there exists a $\mu \in \mathbb{C} \setminus (z + \frac{2 \pi i }{T} \mathbb{Z} )$ such that $\|S(\mu - z)^{-1}\| < \|C(z)\|^{-1}$. Thus, $I - C(z)S(\mu - z)^{-1}$ is an invertible operator in $\mathcal{L}(C_T(\mathbb{R},\mathbb{C}^n))$ by the Neumann series. Hence, $[I - C(z)S(\mu - z)^{-1}]S(\mu - z) = \mu I - \Delta(z)$ is invertible and this shows that $\mu \in \rho(\Delta(z))$, i.e. $\rho(\Delta(z)) \neq \emptyset$.

To verify \ref{hyp:rangeequal}, we have to prove that $\rho(\Delta) \neq \emptyset$ and that there exists a $z_0 \in \rho(\Delta)$ with $\mathcal{R}(\Delta(z_0)) = \mathcal{R}(MD-K)$. To prove the first claim, we can set $\mu = 0$ in the construction above and take a $z_0 \in \mathbb{R}$ with $z_0 > 0$ sufficiently large to obtain $\| S(-z_0)^{-1} \| <  \|L\|_\infty^{-1} \leq \|C(z_0)\|^{-1}$, where the last inequality follows from the fact that $z_0 > 0$. Hence, $-[I - C(z_0)S(-z_0)^{-1}]S(-z_0) = \Delta(z_0)$ is invertible and thus $z_0 \in \rho(\Delta)$, proving that $\rho(\Delta) \neq \emptyset$. To prove the second claim, recall that $\Delta(z_0)$ is closed, which proves that $\mathcal{R}(\Delta(z_0)) = C_T(\mathbb{R},\mathbb{C}^n) = \mathcal{R}(MD-K)$ by \cref{lemma:T2onto}.
\end{proof}

By recalling \cref{thm:rhoAcharac} and combining the two preceding results, we arrive at the following

\begin{corollary}
The holomorphic operator-valued function $\Delta$ from \eqref{eq:DeltaDDE} is a $\frac{2\pi i}{T}\mathbb{Z}$-similar closed characteristic operator for $\mathcal{A}$ on $\mathbb{C}$.
\end{corollary}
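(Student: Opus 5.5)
This follows by assembling results already in place. By \cref{lemma:operatorlemmaDDE}, the operator $\mathcal{A}$ from \eqref{eq:curlya_dde} is the first operator associated with $D$, $K$ and $M$. Moreover, $D$ satisfies \ref{hyp:iota}, \ref{hyp:restriction}, \ref{hyp:domainrangeD} and \ref{hyp:domaincurlyA} with $\Omega=\mathbb{C}$, and $K$ satisfies \ref{hyp:K}. \cref{lemma:characinvertible} supplies \ref{hyp:rangeequal}: there is a real $z_0>0$ with $z_0\in\rho(\Delta)$ and $\mathcal{R}(\Delta(z_0))=C_T(\mathbb{R},\mathbb{C}^n)=\mathcal{R}(MD-K)$. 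All hypotheses of \cref{thm:rhoAcharac} are therefore met. That theorem shows that $\Delta=-(zM-K)D_0R(z,D_0)\iota$ is a characteristic operator for $\mathcal{A}$ on $\mathbb{C}$. By \cref{cor:Deltawelldefined} and the computation \eqref{eq:Q(z)DDE} in the proof of \cref{thm:charmatrixDDE}, this $\Delta$ coincides with the explicit formula \eqref{eq:DeltaDDE}.

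For the adjective \emph{closed}, I will simply reuse the argument from \cref{thm:charmatrixDDE}. There $\Delta(z)=G+N(z)$, with $G$ the closed differentiation operator on $C_T^1(\mathbb{R},\mathbb{C}^n)$ and $N(z)$ bounded, so \cref{lemma:closedsum} gives closedness. This property depends only on $\Delta$ itself and not on which operator it characterizes, so it transfers unchanged.

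For \emph{$\frac{2\pi i}{T}\mathbb{Z}$-similar}, the definition following \cref{prop:Deltasimilar} requires only \ref{hyp:ek} and \eqref{eq:periodicconditions}. These were already checked in the proof of \cref{thm:charmatrixDDE}; \cref{prop:Deltasimilar} then gives similarity of both $z\mapsto zI-\mathcal{A}$ and $z\mapsto\Delta(z)$.

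The only step I expect to need care is confirming that \ref{hyp:domaincurlyA} was legitimately established. Its proof in \cref{lemma:operatorlemmaDDE} appeals to the resolvent formula \eqref{eq:resolventADelta}, which is derived later. I would argue that this formula, and the identity $\mathcal{R}(zI-\mathcal{A})=\mathcal{R}(zI-D_0)$, are obtained in \cref{cor:spectralrelations} without using \ref{hyp:domaincurlyA}: they follow from the explicit $E$, $F$ and from \ref{hyp:rangeequal}. This gives \ref{hyp:domaincurlyA} via $\mathcal{D}(D)\subseteq C_T^1(\mathbb{R},X)\subseteq\mathcal{R}(zI-D_0)$, which avoids circularity. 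Beyond that, the corollary is immediate.
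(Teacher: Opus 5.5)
Your proposal is correct and follows the paper's own argument. You invoke \cref{thm:rhoAcharac} with the hypotheses supplied by \cref{lemma:operatorlemmaDDE} and \cref{lemma:characinvertible}, and you inherit closedness and $\frac{2\pi i}{T}\mathbb{Z}$-similarity from \cref{thm:charmatrixDDE} (via \ref{hyp:ek}, \eqref{eq:periodicconditions} and \cref{prop:Deltasimilar}).

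Your worry about \ref{hyp:domaincurlyA} is moot for this corollary. The construction and verification of $E$ and $F$ in the proof of \cref{thm:rhoAcharac} never use it; it only serves \ref{hyp:SH1} and the spectral comparisons in \cref{cor:spectralrelations}, and \cref{def:characoperator} requires neither. That is fortunate, because your non-circularity argument only gives $\mathcal{R}(zI-\mathcal{A})=\mathcal{R}(zI-D_0)$ for $z\in\rho(\Delta)$. Extending it to all $z\in\rho(\mathcal{A})$ via $G_{\mathcal{A}}$ applies $R(z,\mathcal{A})$ to elements of $\mathcal{N}(z_0I-D)\subseteq\mathcal{D}(D)$, which presupposes an instance of \ref{hyp:domaincurlyA}.
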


Having obtained a characteristic operator $\Delta$ for $\mathcal{A}$, we can next apply \cref{cor:spectralrelations} and characterise the spectrum of $\mathcal{A}$ in terms of the characteristic values of $\Delta$.

\begin{theorem} \label{thm:eigenfunctionsDDE}
The spectrum of $\mathcal{A}$ consists solely of isolated eigenvalues of finite type:
\begin{equation} \label{eq:spectrumDDE}
    \sigma(\mathcal{A}) =  \{ \sigma \in \mathbb{C} : \sigma \textit{ is a characteristic value of } \Delta \} = \{\sigma \in \mathbb{C} : \sigma \mbox{ is a Floquet exponent} \}.
\end{equation}
For $\sigma \in \sigma(\mathcal{A})$, there holds $m_g(\sigma,\mathcal{A}) = m_g(\Delta(\sigma)), m_a(\sigma,\mathcal{A}) = m_a(\Delta(\sigma)), k(\sigma,\mathcal{A}) = k(\Delta(\sigma))$ and $r(\sigma,\mathcal{A}) = r(\Delta(\sigma))$, and the partial multiplicities of $\sigma$ considered as an eigenvalue of $\mathcal{A}$ are equal to the zero multiplicities of $\sigma$ considered as a characteristic value of $\Delta$. Furthermore, if $\{ \{{q}_{i,0},\dots,{q}_{i,k_i-1} \} : i=1,\dots,p\}$ is a canonical system of Jordan chains of $\Delta$ at $\sigma$, then $\{ \{\varphi_{i,0},\dots,\varphi_{i,k_i-1} \} : i=1,\dots,p \}$ is a canonical system of (generalized) eigenvectors of $\mathcal{A}$ at $\sigma$, where
\begin{equation*}
    \varphi_{i,k}(t)(\theta) = e^{\sigma \theta} \sum_{l=0}^k \frac{\theta^l}{l!}q_{i,k-l}(t+\theta), \quad \forall k = 0,\dots,k_{i}-1, \ t \in \mathbb{R}, \ \theta \in [-h,0].
\end{equation*}
\end{theorem}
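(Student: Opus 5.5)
The plan is to combine \cref{lemma:compactspectra}, \cref{cor:spectralrelations} and \cref{cor:Jordanchains}, using the closed characteristic operator $\Delta$ from \eqref{eq:DeltaDDE}. The concrete work is a compactness statement for $\Delta(z)$ and an explicit computation of $D_0R(\sigma,D_0)^{l+1}\iota$.

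\textbf{Step 1: discreteness.} By \cref{lemma:characinvertible}, for each $z \in \mathbb{C}$ there is a $\mu \in \rho(\Delta(z))$. The resolvent factors as
\begin{equation*}
R(\mu,\Delta(z)) = S(\mu-z)^{-1}\,[I - C(z)S(\mu-z)^{-1}]^{-1}.
\end{equation*}
The operator $S(\mu-z)^{-1}$ maps $C_T(\mathbb{R},\mathbb{C}^n)$ boundedly into $C_T^1(\mathbb{R},\mathbb{C}^n)$, as one reads off from \eqref{eq:S(z)inverse}. It is therefore compact on $C_T(\mathbb{R},\mathbb{C}^n)$ by Arzel\`a--Ascoli, using periodicity to work on the compact interval $[0,T]$. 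Hence $\Delta(z)$ has compact resolvent. Next, $\rho(\Delta)\neq\emptyset$ by \cref{lemma:characinvertible}, and \ref{hyp:SH3} holds by \cref{cor:spectralrelations}. So \cref{lemma:compactspectra} applies with $\Omega=\mathbb{C}$. This gives that $\sigma(\mathcal{A})$ consists of isolated eigenvalues of finite type and that the first equality in \eqref{eq:spectrumDDE} holds.

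\textbf{Step 2: Floquet exponents.} By \eqref{eq:DeltaDDE}, the identity $\Delta(\sigma)q=0$ with $q\in C_T^1(\mathbb{R},\mathbb{C}^n)$ is exactly the Floquet equation \eqref{eq:Floqexpgeneral}. Here one rewrites $L(t)[\theta\mapsto e^{\sigma\theta}q(t+\theta)]$ via \eqref{eq:L(t)RieszDDE}. This gives the second equality in \eqref{eq:spectrumDDE}.

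\textbf{Step 3: multiplicities.} The equalities of $m_g$, $m_a$, $k$ and $r$, and of the partial and zero multiplicities, follow directly from \cref{cor:spectralrelations}. This uses that every $\sigma$ is isolated and of finite type by Step 1.

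\textbf{Step 4: Jordan chains.} Apply \cref{cor:Jordanchains}. The remaining task is to evaluate
\begin{equation*}
\varphi_{i,k}=\sum_{l=0}^k(-1)^{l+1}D_0R(\sigma,D_0)^{l+1}\iota q_{i,k-l}.
\end{equation*}
A cleaner route is to expand the root function directly. The expression $-D_0R(z,D_0)\iota q = Q(z)\iota q$, evaluated at $(t,\theta)$, equals $e^{z\theta}q(t+\theta)$ by \eqref{eq:representationQ(z)iota} and \eqref{eq:Q(z)DDE}. Hence
\begin{equation*}
W_\sigma(z)(t)(\theta)=\sum_{i}(z-\sigma)^i e^{z\theta}q_i(t+\theta).
\end{equation*}
Expanding $e^{z\theta}=e^{\sigma\theta}\sum_l (z-\sigma)^l\theta^l/l!$ and collecting the coefficient of $(z-\sigma)^k$ yields the stated formula for $\varphi_{i,k}$.

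The main obstacle is Step 1: justifying compactness of $S(\mu-z)^{-1}$ through equicontinuity from the $C^1$-bound. Everything else is bookkeeping via the earlier corollaries.
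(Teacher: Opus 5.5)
Your proposal is correct and follows essentially the same route as the paper: you show $\Delta(z)$ has compact resolvent via compactness of $S(\mu-z)^{-1}$, then apply \cref{lemma:compactspectra}, \cref{cor:spectralrelations} and the identification with \eqref{eq:Floqexpgeneral}, and you get the Jordan chains by expanding the root function $z\mapsto \sum_i (z-\sigma)^i e^{z\theta}q_i(t+\theta)$. The paper reads that root function off from $E(z)$ in \cref{thm:charmatrixDDE}, and it coincides with your $W_\sigma(z)=\sum_i(z-\sigma)^iQ(z)\iota q_i$; the only other difference is that you prove compactness directly through the factorization $R(\mu,\Delta(z))=S(\mu-z)^{-1}[I-C(z)S(\mu-z)^{-1}]^{-1}$ and Arzel\`a--Ascoli, where the paper cites a perturbation exercise from Engel--Nagel.
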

\begin{proof}
With the aim of applying \cref{lemma:compactspectra}, we next prove that the operator $\Delta(z)$ has compact resolvent for all $z \in \mathbb{C}$. To do so, consider the operators $S$ and $C$ from \eqref{eq:defn_S_C}. Let us first observe that $S(z)^{-1}$ from \eqref{eq:S(z)inverse} is compact for all $z \in \mathbb{C} \setminus \frac{2 \pi i }{T} \mathbb{Z}$ since it maps continuous functions towards $C^1$-smooth functions. Recalling from the proof of \cref{lemma:characinvertible} that $C(z)$ is bounded and $\rho(\Delta(z)) \neq \emptyset$, the equality \eqref{eq:mIDelta} in combination with \cite[Exercise II.4.30]{Engel2000} shows that $R(\mu,\Delta(z))$ is compact for all $\mu \in \mathbb{C} \setminus (z+\frac{2 \pi i}{T} \mathbb{Z})$, which proves the claim. \cref{lemma:compactspectra} tells us that $\sigma(\mathcal{A})$ consists solely of isolated eigenvalues of finite type that are precisely the characteristic values of $\Delta$. But it follows from \eqref{eq:Floqexpgeneral} that a characteristic value is a Floquet exponent, which proves the second equality in \eqref{eq:spectrumDDE}. The claim regarding the multiplicities follows from \cref{cor:spectralrelations}.

To prove the remaining claim regarding the Jordan chains, one can apply directly \cref{cor:Jordanchains} and use \eqref{eq:resolventD0} to obtain an explicit representation of $R(\sigma,D_0)^{l+1}$ for sufficiently large $l \in \mathbb{N}$. However, the proof we present next is somewhat shorter and more insightful. Let $\{{q}_{0},\dots,{q}_{k-1} \}$ be a Jordan chain of $\Delta$ at $\sigma$. From the expression for the operator-valued function $E$ defined in \cref{thm:charmatrixDDE}, we derive that
\begin{equation*}
E(z)
    \begin{pmatrix}
        \sum_{i=0}^{k-1} (z-\sigma)^{i} q_i \\
        0
    \end{pmatrix}
    =
    \begin{pmatrix}
        \sum_{i=0}^{k-1} (z-\sigma)^{i} q_i \\
        t \mapsto e^{z \cdot} \sum_{i=0}^{k-1} (z-\sigma)^i q_i(t+\cdot)
    \end{pmatrix}, 
\end{equation*}
and thus it follows from the equivalence \eqref{eq:charoperatorAhatDDE} that $z \mapsto (t \mapsto e^{z \cdot} \sum_{i=0}^{k-1} (z-\sigma)^i q_i(t+\cdot))$ is a root function for $zI - \hat{\mathcal{A}}$ at $\sigma$. From the proof of \cref{thm:spectralequal}, it follows that we have to expand $e_z \coloneqq (\theta \mapsto e^{z \theta})$ up to order $k$ in a neighbourhood of $\sigma$. Since
\begin{equation*}
    e_z(\theta) = e^{\sigma \theta} e^{(z-\sigma)\theta} = e^{\sigma \theta} \bigg( \sum_{l=0}^{k-1} \frac{\theta^{l}}{l!}(z-\sigma)^l + \mathcal{O}((z-\sigma)^k) \bigg),
\end{equation*}
we have for all $t \in \mathbb{R}$ and $\theta \in [-h,0]$ that 
\begin{equation*}
    e^{z \theta} \sum_{i=0}^{k-1} (z-\sigma)^i q_i(t+\theta) = \sum_{i=0}^{k-1} \bigg( e^{\sigma \theta} \sum_{l=0}^i \frac{\theta^l}{l!}q_{i-l}(t+\theta) \bigg) (z-\sigma)^i + \mathcal{O}((z-\sigma)^k),
\end{equation*}
which shows that the ordered set of vectors $\{\varphi_0,\dots,\varphi_{k-1} \}$, where $\varphi_i(t)(\theta) = e^{\sigma \theta} \sum_{l=0}^i \frac{\theta^l}{l!}q_{i-l}(t+\theta)$, forms a Jordan chain of $\mathcal{A}$ at $\sigma$.
\end{proof}

\begin{corollary} \label{cor:resolventDDE}
If $z \in \mathbb{C}$ is not a Floquet exponent, then the resolvent of $\mathcal{A}$ at $z$ has the following representation
\begin{equation*}
    (R(z,\mathcal{A}) \varphi)(t)(\theta) = e^{z \theta} \varphi_z(t+\theta) + \int_\theta^0 e^{z(\theta-s)} \varphi(t+\theta-s)(s) ds, \quad \forall \varphi \in \mathcal{R}(zI-\mathcal{A}) \supseteq C_T^1(\mathbb{R},X),
\end{equation*}
where $\varphi_z$ is given by
\begin{equation} \label{eq:varphizDelta}
    \varphi_z = \Delta(z)^{-1} \bigg( t \mapsto \varphi(t)(0) + \int_0^h d_2 \zeta(t,\theta)\int_{-\theta}^{0} e^{-z(\theta + s)} \varphi(t -\theta - s)(s) ds \bigg).
\end{equation}
\end{corollary}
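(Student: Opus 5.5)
The plan is to specialise the abstract resolvent formula \eqref{eq:resolventADelta} of \cref{cor:spectralrelations} to the concrete operators of \cref{lemma:operatorlemmaDDE}, namely $D$, $D_0$, $K$, $M$ and $\iota$. First, suppose $z$ is not a Floquet exponent. By \cref{thm:eigenfunctionsDDE}, $z\notin\sigma(\mathcal{A})$, and the first equality in \eqref{eq:specAAhatDelta} (with $\Omega=\mathbb{C}$) gives $z\in\rho(\Delta)$. So $\Delta(z)^{-1}$ is defined on $\mathcal{R}(\Delta(z))$, and \eqref{eq:resolventADelta} reads
\begin{equation*}
R(z,\mathcal{A})\varphi = Q(z)\iota\,\Delta(z)^{-1}\big[M\varphi-(zM-K)R(z,D_0)\varphi\big]+R(z,D_0)\varphi,\qquad \varphi\in\mathcal{R}(zI-\mathcal{A}).
\end{equation*}
The inclusion $\mathcal{R}(zI-\mathcal{A})\supseteq C_T^1(\mathbb{R},X)$ follows from $\mathcal{R}(zI-\mathcal{A})=\mathcal{R}(zI-D_0)$ (also part of \cref{cor:spectralrelations}) together with $C_T^1(\mathbb{R},X)\subseteq\mathcal{R}(zI-D_0)$, which was shown in the proof of \cref{lemma:operatorlemmaDDE}.

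The second step is to evaluate each piece. The term $R(z,D_0)\varphi$ is the integral in \eqref{eq:resolventD0}, which gives the second summand of the claimed formula. By \eqref{eq:Q(z)DDE}, $(Q(z)\iota q)(t)(\theta)=e^{z\theta}q(t+\theta)$, so the first summand is $e^{z\theta}\varphi_z(t+\theta)$ with $\varphi_z=\Delta(z)^{-1}[\cdots]$. It then remains to compute the bracket. First, $(M\varphi)(t)=\varphi(t)(0)$. Second, $R(z,D_0)\varphi\in\mathcal{D}(D_0)\subseteq\mathcal{N}(M)$, so $zMR(z,D_0)\varphi=0$. Third, on $\mathcal{D}(D_0)$ the operator $K$ reduces to $M_L$, since the term $\dot\psi(t)(0)$ vanishes when $\psi(t)(0)\equiv 0$. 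Hence
\begin{equation*}
\big(-(zM-K)R(z,D_0)\varphi\big)(t)=L(t)[R(z,D_0)\varphi](t).
\end{equation*}

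The final step, which is the only real computation, is to write this term via the Riesz representation \eqref{eq:L(t)RieszDDE}. One substitutes $\theta\mapsto-\theta$ in \eqref{eq:resolventD0}:
\begin{equation*}
[R(z,D_0)\varphi](t)(-\theta)=\int_{-\theta}^0 e^{-z(\theta+s)}\varphi(t-\theta-s)(s)\,ds.
\end{equation*}
Integrating against $d_2\zeta(t,\theta)$ over $[0,h]$ then yields exactly the integral in \eqref{eq:varphizDelta}. This agrees with the map $F(z)$ from \cref{thm:charmatrixDDE}, which provides a consistency check.

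The one point needing care is that $\Delta(z)^{-1}$ is only defined on $\mathcal{R}(\Delta(z))$, so one must confirm that the bracket lies there. This does not need a separate argument: the abstract result already guarantees it. Alternatively, the proof of \cref{lemma:characinvertible} shows $\Delta(z)$ is closed with dense range, and for $z\in\rho(\Delta)$ this gives $\mathcal{R}(\Delta(z))=C_T(\mathbb{R},\mathbb{C}^n)$, so the bracket is always in the domain.
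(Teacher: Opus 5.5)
Your proposal is correct and follows essentially the paper's proof. Like the paper, you use \eqref{eq:spectrumDDE} to identify non-Floquet exponents with $\rho(\mathcal{A})=\rho(\Delta)$. You then specialise \eqref{eq:resolventADelta} via \eqref{eq:Q(z)DDE} and \eqref{eq:resolventD0}, spelling out more explicitly than the paper the bracket computation (using $\mathcal{D}(D_0)\subseteq\mathcal{N}(M)$, $K|_{\mathcal{D}(D_0)}=M_L|_{\mathcal{D}(D_0)}$ and the Riesz representation \eqref{eq:L(t)RieszDDE}) and the fact that the bracket lies in $\mathcal{R}(\Delta(z))$.
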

\begin{proof}
Recall from \eqref{eq:spectrumDDE} that $z \in \rho(\mathcal{A})$ if and only if $z \in \rho(\Delta)$ if and only if $z$ is not a Floquet exponent. The representation of $R(z,\mathcal{A})$ follows now from \eqref{eq:resolventADelta}, where we recall the representation of $Q(z)\iota$ and $R(z,D_0)$ from \eqref{eq:Q(z)DDE} and \eqref{eq:resolventD0}, respectively.
\end{proof}

\begin{remark}
One could also use \cite[Proposition 12]{Bosschaert2025} to prove \cref{cor:resolventDDE}. However, using this approach, one must prove in addition that $R(z,\mathcal{A})$ is a densely defined bounded linear operator and that $\sigma(\mathcal{A})$ actually coincides with the set of Floquet exponents. To prove the first claim, one uses the fact that $C_T^1(\mathbb{R},X)$ is dense in $C_T(\mathbb{R},X)$ and that $\Delta(z)^{-1}$ is a bounded linear operator on $C_T(\mathbb{R},\mathbb{C}^n)$ due the closedness of $\Delta(z)$, recall \cref{thm:charmatrixDDE}. To prove the second claim, one needs a result like \cref{lemma:compactspectra}, see \cite{Article2} for more information. \hfill $\lozenge$
\end{remark}

To determine the function $\varphi_z \in C_T^1(\mathbb{R},\mathbb{C}^n)$ in \cref{cor:resolventDDE} in practice, we note that it is the unique solution $v$ of the (in)homogeneous periodic linear classical DDE $\Delta(z)v = w$, where $w$ denotes the function in brackets on right-hand side of \eqref{eq:varphizDelta}. The numerical methods and corresponding software used to compute such solutions are described in \cref{remark:jordanchainpractice}. The next result is a direct consequence of \cref{cor:multiplicity}.

\begin{corollary} \label{cor:multiplicityDDE}
If $\sigma \in \mathbb{C}$ is a Floquet exponent such that $k(\sigma,\mathcal{A}) = r(\Delta(\sigma))$, then the multiplicity theorem $\dim \mathcal{N}((\sigma I - \mathcal{A})^{r(\Delta(\sigma))}) =  m_a(\Delta(\sigma))$ holds.
\end{corollary}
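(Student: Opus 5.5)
The statement follows by specializing \cref{cor:multiplicity} to the DDE setting, so the plan is to check its hypotheses using \cref{thm:eigenfunctionsDDE}. Let $\sigma$ be a Floquet exponent. By \eqref{eq:spectrumDDE}, $\sigma \in \sigma(\mathcal{A})$ and $\sigma$ is a characteristic value of the characteristic operator $\Delta$ from \eqref{eq:DeltaDDE}. \cref{thm:eigenfunctionsDDE} states that every spectral value of $\mathcal{A}$ is an isolated eigenvalue of finite type. The multiplicity identities there, $m_a(\sigma,\mathcal{A}) = m_a(\Delta(\sigma))$ and $r(\sigma,\mathcal{A}) = r(\Delta(\sigma))$, then transfer finiteness of type and isolatedness to $\sigma$ as a characteristic value of $\Delta$. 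For isolatedness I would use $\rho(\Delta) = \rho(\mathcal{A}) \cap \mathbb{C}$ from \eqref{eq:specAAhatDelta}, which holds because characteristic values of $\Delta$ coincide with $\sigma(\mathcal{A})$.

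Next I would confirm that the structural hypotheses of the abstract multiplicity theorem in \cref{thm:spectralequal} hold. By \cref{lemma:operatorlemmaDDE}, \ref{hyp:domaincurlyA} holds, so $\mathcal{A}$ satisfies \ref{hyp:SH1}. \cref{cor:spectralrelations} gives \ref{hyp:SH2} and \ref{hyp:SH3} for $\Delta$. \cref{lemma:characinvertible} gives \ref{hyp:rangeequal}, and in particular $\rho(\Delta)\neq\emptyset$. Together with the assumption $k(\sigma,\mathcal{A}) = r(\Delta(\sigma))$, these are exactly the hypotheses of \cref{cor:multiplicity}, which yields $\dim \mathcal{N}((\sigma I - \mathcal{A})^{r(\Delta(\sigma))}) = m_a(\Delta(\sigma))$.

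As a sanity check, the same conclusion follows directly. By \eqref{eq:EsigmaQsigma}, $E_\sigma(\mathcal{A}) = \mathcal{N}((\sigma I-\mathcal{A})^{k(\sigma,\mathcal{A})})$. Its dimension is $m_a(\sigma,\mathcal{A})$, which equals $m_a(\Delta(\sigma))$ by \cref{thm:eigenfunctionsDDE}.

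There is no real obstacle here. The only point needing care is that isolatedness and finite type of $\sigma$ as a characteristic value of $\Delta$ are inherited from $\mathcal{A}$ via the equality of resolvent sets. That step is already covered by \cref{lemma:compactspectra} through the compact-resolvent argument in the proof of \cref{thm:eigenfunctionsDDE}.
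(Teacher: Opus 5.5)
Your proposal is correct and follows the paper, which states the result as a direct consequence of \cref{cor:multiplicity}; \cref{thm:eigenfunctionsDDE} supplies that each Floquet exponent is an isolated characteristic value of $\Delta$ of finite type. Your sanity check, combining $E_\sigma(\mathcal{A}) = \mathcal{N}((\sigma I - \mathcal{A})^{k(\sigma,\mathcal{A})})$ with $m_a(\sigma,\mathcal{A}) = m_a(\Delta(\sigma))$, is precisely the argument behind the abstract multiplicity theorem in \cref{thm:spectralequal}.
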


The next corollary concerns so-called \emph{elementary solutions} of the periodic linear DDE \eqref{eq:LinearDDE} (at time $s$), i.e. solutions of the form $t \mapsto e^{\sigma (t-s)}p(t)q(t)$, where $p : \mathbb{R} \to \mathbb{R}$ is a polynomial and $q \in C_T^1(\mathbb{R},\mathbb{C}^n)$. In particular, we show that the Jordan chains of $\Delta$ generate the elementary solutions of \eqref{eq:LinearDDE}. This concept of elementary solutions extends the notion of elementary solutions for linear autonomous DDEs, see \cite[Corollary II.1.1.4]{Kaashoek1992}. Moreover, it generalizes the concept of Floquet solutions discussed in \cref{sec:applications}. For alternative approaches, compare with \cite[Lemma 8.1.2]{Hale1993}, \cite[Section XIII.4]{Diekmann1995}, and \cite[Section 11.2]{Kaashoek2022}.

The following result can be proven in two ways. First, a direct approach verifies that \eqref{eq:x(t)Floquet} satisfies \eqref{eq:LinearDDE} by utilizing the Jordan chain structure for $\Delta$ given in \eqref{eq:Jordanchainexplicit}. However, we present a proof that builds on the natural connection between $U, \mathcal{A}$ and $\Delta$.

\begin{corollary} \label{cor:FloquetsolDDE}
The elementary solutions of \eqref{eq:LinearDDE} are given by
    \begin{equation} \label{eq:x(t)Floquet}
        x(t) = e^{\sigma(t-s)} \sum_{l=0}^{k-1} \frac{(t-s)^l}{l!}q_{k-l-1}(t), \quad \forall t \geq s,
    \end{equation}
where $\{{q}_{0},\dots,{q}_{k-1} \}$ is a Jordan chain of $\Delta$ at a Floquet exponent $\sigma$.
\end{corollary}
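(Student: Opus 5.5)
The plan is to pass through $\mathcal{A}$: first turn a Jordan chain of $\Delta$ into a Jordan chain of $\mathcal{A}$ by \cref{thm:eigenfunctionsDDE}, then show that such a chain, propagated by the evolutionary system $U$, yields exactly \eqref{eq:x(t)Floquet}.

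\textbf{Step 1 (chain of $\mathcal{A}$).} Let $\{q_0,\dots,q_{k-1}\}$ be a Jordan chain of $\Delta$ at a Floquet exponent $\sigma$. By \cref{thm:eigenfunctionsDDE} the functions
\[
\varphi_j(t)(\theta)=e^{\sigma\theta}\sum_{l=0}^j\frac{\theta^l}{l!}q_{j-l}(t+\theta)
\]
form a Jordan chain of $\mathcal{A}$ at $\sigma$. Concretely, each $\varphi_j\in C^1_T(\mathbb{R},X)$, with $\varphi_j(t)\in\mathcal{D}(A(t))$, and
\[
A(t)\varphi_j(t)-\dot\varphi_j(t)=\sigma\varphi_j(t)+\varphi_{j-1}(t)\quad(\varphi_{-1}:=0).
\]
Evaluating at $\theta=0$ gives $\varphi_j(t)(0)=q_j(t)$.

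\textbf{Step 2 (the evolution identity).} Fix $s$ and define
\[
w(t)\coloneqq e^{\sigma(t-s)}\sum_{l=0}^{k-1}\frac{(t-s)^l}{l!}\varphi_{k-1-l}(t),\qquad t\ge s.
\]
I will show $w(t)=U(t,s)w(s)$, i.e.\ that $w(t)$ is the history of a solution of \eqref{eq:LinearDDE} with initial datum $w(s)=\varphi_{k-1}(s)$. Differentiate $w$ in $t$ and use the chain relation $\dot\varphi_j=A(t)\varphi_j-\sigma\varphi_j-\varphi_{j-1}$. The factor $e^{\sigma(t-s)}$ cancels the $\sigma$-terms. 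Differentiating the coefficients $(t-s)^l/l!$ produces terms that telescope against the $-\varphi_{j-1}$ terms. This leaves $\dot w(t)=A(t)w(t)$.

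It remains to conclude $w(t)=U(t,s)w(s)$. Since $w(t)\in\mathcal{D}(A(t))$ and $w$ is $C^1$, the function $w$ is a classical solution of the abstract Cauchy problem for the generalized generator $A(t)$. Uniqueness of such solutions for the forward evolutionary system $U$ of \cite{Diekmann1995,Clement1988} then gives the identity. This uniqueness/generator step is the main technical point; it needs care because $A(t)$ is only a generalized generator.

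I would therefore first try the direct route, which avoids that issue. Put $x(t)\coloneqq w(t)(0)$. The identity $\dot w=A(t)w$ means $\dot w(t)=w(t)'$, which is the transport equation; together with the form of $\varphi_j$ it gives $w(t)(\theta)=w(t+\theta)(0)$, so $w(t)=x_t$. The boundary condition $w(t)'(0)=L(t)w(t)$, which holds because $w(t)\in\mathcal{D}(A(t))$, then reads $\dot x(t)=L(t)x_t$. Hence $x$ solves \eqref{eq:LinearDDE}, and $x_t=U(t,s)x_s$ follows from uniqueness of solutions of the DDE initial value problem rather than of the abstract Cauchy problem.

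\textbf{Step 3 (the formula).} Evaluating $w(t)$ at $\theta=0$ and using $\varphi_j(t)(0)=q_j(t)$ yields \eqref{eq:x(t)Floquet}.

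\textbf{Step 4 (converse).} Conversely, let $x(t)=e^{\sigma(t-s)}\sum_l (t-s)^l p_l(t)$ with $p_l$ $T$-periodic be an elementary solution. Note that a product $p(t)q(t)$ with $p$ polynomial is of this form after expanding in powers of $t-s$. Substitute $x$ into \eqref{eq:LinearDDE} and compare coefficients of $(t-s)^l$; this is legitimate because periodic functions and monomials are independent over the periodic functions. The resulting equations are exactly the Jordan chain relations obtained by Taylor-expanding $\Delta(z)\sum_i(z-\sigma)^iq_i$ at $\sigma$, with $q_{k-1-l}=l!\,p_l$. Hence $x$ is of the stated form for a Jordan chain of $\Delta$ at $\sigma$. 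The only bookkeeping required is matching the Taylor coefficients of $z\mapsto L(t)[\theta\mapsto e^{z\theta}q(t+\theta)]$, namely $L(t)[\theta\mapsto\theta^m e^{\sigma\theta}q(t+\theta)/m!]$, with the shifted polynomial terms $(t+\theta-s)^l$.
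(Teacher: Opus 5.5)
Your proof is correct, but it takes a different route from the paper's. Both arguments start from the Jordan chain $\{\varphi_0,\dots,\varphi_{k-1}\}$ of $\mathcal{A}$ supplied by \cref{thm:eigenfunctionsDDE}.

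The paper then imports from \cite{Article2} (Theorem 5) a recursive formula for $U(t,s)\varphi_i(s)$ on generalized eigenvectors. It solves the recursion by induction and evaluates at $\theta=0$, so the link with solutions is built into $U$.

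You instead check by hand that $w(t)=e^{\sigma(t-s)}\sum_{l}\frac{(t-s)^l}{l!}\varphi_{k-1-l}(t)$ satisfies $\dot w=A(t)w$. Your telescoping is right and gives the $+$ signs of \eqref{eq:x(t)Floquet} directly; the closed-form expression in the paper's induction step is printed with a sign that does not match its own recursion. You then use $w(t)(\theta)=w(t+\theta)(0)$ and the boundary condition in $\mathcal{D}(A(t))$ to see that $x(t)=w(t)(0)$ solves the DDE. After that, only uniqueness for the initial value problem is needed.

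What each route buys:
\begin{itemize}
\item Yours is self-contained and needs no Cauchy-problem theory for the generalized generator $A(t)$.
\item Your verification that \eqref{eq:x(t)Floquet} is a solution carries over verbatim to \cref{subsec:MFDEs}, where no $U$ is available.
\item The paper's route, in exchange, exhibits the explicit action of $U$ on the generalized eigenspace.
\end{itemize}

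Your Step~4 goes beyond the paper. The paper's proof only shows that \eqref{eq:x(t)Floquet} gives solutions. Your coefficient comparison reproduces exactly \eqref{eq:Jordanchainexplicit} under $q_{k-1-l}=l!\,p_l$, and so shows that every elementary solution is of this form.

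Three small points to tidy:
\begin{itemize}
\item Define $w$ and $x$ on all of $\mathbb{R}$, so that $x_s$ and the identity $w(t)=x_t$ make sense.
\item In Step~4, compare coefficients only for $t\ge s+h$, unless the elementary form is also assumed on the initial history.
\item Choose $k$ with $p_{k-1}\neq 0$, so that $q_0\neq 0$.
\end{itemize}
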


\begin{proof}
Let $\varphi_i \in \mathcal{N}((\sigma I - \mathcal{A})^k)$ for $i=0,\dots,k-1$ be given. Since $x_t^i = U(t,s)\varphi_i(s)$, we know that $x^i(t) = (U(t,s)\varphi_i(s))(0)$ is a solution of \eqref{eq:LinearDDE} for $t \geq s$. To find $x^i$, recall from \cite[Theorem 5]{Article2} that
\begin{equation*}
    U(t,s)\varphi_i(s) = 
    \begin{cases}
        e^{\sigma (t-s)} \varphi_0(t), \quad  &i=0, \\
        e^{\sigma (t-s)} \varphi_i(t) - \sum_{l=1}^{i}\frac{(s-t)^l}{l!}U(t,s)\varphi_{i-l}(s), \quad &i=1,\dots,k-1.
    \end{cases}
\end{equation*}
An inductive argument, where we changed the order of summation and used the binomial formula, shows that
\begin{equation*}
    U(t,s)\varphi_i(s) = e^{\sigma (t-s)} \bigg(\varphi_i(t) - \sum_{l=1}^{i} \frac{(t-s)^{l}}{l!}\varphi_{i-l}(t) \bigg).
\end{equation*}
Now, \cref{thm:eigenfunctionsDDE} tells us that $\varphi_i(t)(\theta) = e^{\sigma \theta} \sum_{l=0}^{i} \frac{\theta^l}{l!} q_{k-l-1}(t+\theta)$ for all $t \in \mathbb{R}$ and $\theta \in [-h,0]$, and recalling that $x^i(t) = (U(t,s)\varphi_i(s))(0)$ for $t \geq s$ proves \eqref{eq:x(t)Floquet}.
\end{proof}

\begin{remark} \label{remark:jordanchainpractice}
To explicitly construct in practice a Jordan chain $\{{q}_{0},\dots,{q}_{k-1} \}$ for $\Delta$ at $\sigma$, to obtain for example the elementary solutions \eqref{eq:x(t)Floquet}, notice that the set of solutions $\{q_0,\dots,q_{k-1}\}$ of the system
\begin{equation} \label{eq:Jordanchainexplicit}
    \sum_{l=0}^i \frac{1}{l!}\Delta^{(l)}(\sigma)  q_{i-l} = 0, \quad \forall i=0,\dots,k-1,
\end{equation}
form a Jordan chain of length $k$, see \cite[Section 7.4]{Hale1993} or \cite[Section IV.5]{Diekmann1995}. Here, $z \mapsto \Delta^{(l)} (z)$ denotes the $l$th order derivative of the map $z \mapsto \Delta(z)$. A key advantage of \eqref{eq:Jordanchainexplicit} is that $\Delta^{(l)}(\sigma)$ becomes a bounded linear operator for any integer $l \geq 1$. Hence, \eqref{eq:Jordanchainexplicit} can be regarded as a linear system consisting of $kn$ (if $\sigma \in \mathbb{R}$) or $2kn$ (if $\sigma \notin \mathbb{R}$) real scalar-valued periodic linear (inhomogeneous) DDEs. Such systems can be solved by, for example, the well-known orthogonal collocation method \cite{Engelborghs2002a,Engelborghs2001}, that is already implemented in the \verb|MATLAB| continuation package \verb|DDE-BifTool| \cite{Engelborghs2002,Sieber2014} or the freely available \verb|Julia| continuation packages \verb|BifurcationKit| \cite{Veltz2020} and \verb|PeriodicNormalizationDDEs| \cite{Bosschaert2024c} for DDEs (consisting of finitely many discrete delays). \hfill $\lozenge$ 
\end{remark}

\begin{remark} \label{remark:autonomousDDE}
We next illustrate that the results from \cite[Section II.1.1]{Kaashoek1992} on characteristic matrices for \eqref{intro:LDDE} can be obtained from the construction presented here for \eqref{intro:LDDET} in two different ways. First, if the case $T=0$ is understood as the limit $T \downarrow 0$ in our construction, then for $l \in \{0,1\}$ the space $C_T^l(\mathbb{R},E)$ reduces to the space of $E$-valued $C^l$-smooth functions defined on $\mathbb{R}$ that are invariant under arbitrarily small translations. This limiting space consists precisely of constant functions, and hence we may canonically identify $C_0^l(\mathbb{R},E) = C_0(\mathbb{R},E) \cong E$. Second, we can restrict any (linear) operator defined (on a subspace) of the periodic functions $C_T(\mathbb{R},E)$ towards the constant functions $C_0(\mathbb{R},E)$ and identify this space again with $E$. Using one of these approaches and their associated identification, the operator $L \in \mathcal{L}(X,\mathbb{C}^n)$ becomes autonomous and takes the form
\begin{equation*}
    L\varphi = \int_0^h d\zeta(\theta)\varphi(-\theta).
\end{equation*}
The generator $A$ of the $\mathcal{C}_0$-semigroup $\{T(t)\}_{t \geq 0}$ on $X$, defined by $T(t) \coloneqq U(t,0)$ for all $t \geq 0$, has domain $\mathcal{D}(A) = \{ \varphi \in C^1([-h,0],\mathbb{C}^n) :  \varphi'(0) = L \varphi \}$, action $A \varphi = \varphi',$ and is a closed linear operator, see \cite[Theorem I.1.4]{Engel2000}. Using this identification, the closable linear operator $D$ from \cref{lemma:operatorlemmaDDE} reduces to $\mathcal{D}(D) = C^1(\mathbb{R},\mathbb{C}^n)$ with action $D\varphi = \varphi'$ as the derivative with respect to time vanishes. Hence, $D$ becomes a closed linear operator and there holds: $\mathcal{D}(\mathcal{A}) = \mathcal{D}(A)$ and $ \mathcal{D}(\hat{\mathcal{A}}) = \mathcal{D}(\hat{A})$, with action $\mathcal{A} \varphi = A \varphi$ and $ \hat{\mathcal{A}} \varphi = \hat{A} \varphi$, where
\begin{equation*}
        \mathcal{D}(\hat{A})  = \bigg \{
    \begin{pmatrix}
        c \\
        \varphi
    \end{pmatrix}
    \in \mathbb{C}^n \oplus X : \varphi \in \mathcal{D}(D), \ c = \varphi(0) \bigg \}, \quad \hat{A} 
    \begin{pmatrix}
        c \\
        \varphi
    \end{pmatrix}
    =
    \begin{pmatrix}
        K \varphi \\
        D\varphi
    \end{pmatrix}.
\end{equation*}
The bounded linear operator $M$ and closed linear operator $K$ now become bounded linear operators on $X$ given by $M \varphi = \varphi(0)$ and $K \varphi = L \varphi$, as $\dot{\varphi}(t)(0) = 0$ since $t \mapsto \varphi(t)(0)$ is constant. Thus, the operator-valued function $\Delta$ reduces to a matrix-valued function $\Delta : \mathbb{C} \to \mathcal{L}(\mathbb{C}^n)$ of the form
\begin{equation*}
    \Delta(z) = zI - \int_0^h d\zeta(\theta)e^{-z \theta},
\end{equation*}
which is exactly the characteristic matrix found in \cite[Section II.1.1]{Kaashoek1992}. With a similar argumentation, one can also verify that the functions $E$ and $F$, and the resolvent $R(z,\mathcal{A})$ reduce to the expressions found in \cite[Section II.1.1]{Kaashoek1992}. This reduction also holds for the results in the upcoming applications, see \cite[Section II.1.3]{Kaashoek1992} for iDDEs and \cite[Section 4]{Hupkes2008} for MFDEs.
\hfill $\lozenge$
\end{remark}

\begin{remark} \label{remark:charoperatorTh}
The previous remark demonstrated for \eqref{intro:LDDE} that the characteristic \emph{operator} reduces to a characteristic \emph{matrix} under a suitable identification of function spaces. When the period $T$ is rationally related to the maximal delay $h$ in \eqref{intro:LDDET}, that is $T/h \in \mathbb{Q}$, a similar reduction can be carried out. As recalled in \cref{sec:introduction}, several other characteristic matrices have been constructed in this setting. A natural question is how these characteristic matrices relate to the reduced characteristic operator introduced in this section. We expect that these matrices are somehow connected to $\Delta(z)$ by restricting its domain to a particular finite-dimensional subspace of $C_T(\mathbb{R},\mathbb{C}^n)$, although this remains an interesting topic for future research. \hfill $\lozenge$
\end{remark}

\subsection{Delay differential equations with infinite delay} \label{subsec:infinitedelay}

For delay differential equations with infinite delay (iDDE), it is not obvious how to choose a state space $X$, since the `natural candidate' $C(\mathbb{R}_{-},\mathbb{C}^n)$ is not a Banach space with respect to the standard associated supremum norm. Indeed, the choice of state space has been the topic of extensive study in the literature, see for example the articles \cite{Murakami1989,Hino1991,Hale1978,Hino1991} that identify criteria which a state space should satisfy in order to allow for classical results on existence, uniqueness and (in)stability of solutions. In \cite{Diekmann2012}, the authors choose the Banach space $X := C_\rho(\mathbb{R}_{-},\mathbb{C}^n)$ with the weighted norm $\| \cdot \|_\rho$ defined by
\begin{equation} \label{eq:statespaceinfdelay}
    C_\rho(\mathbb{R}_{-},\mathbb{C}^n) \coloneqq \bigg\{ \varphi \in C(\mathbb{R_{-}},\mathbb{C}^n) : \lim_{\theta \to -\infty} e^{\rho \theta} | \varphi(\theta) | = 0 \bigg\},  \quad \| \varphi \|_\rho \coloneqq \sup_{\theta \leq 0} e^{\rho \theta} | \varphi(\theta) |
\end{equation}
for some fixed $\rho > 0$. With this choice of state space, the authors of \cite{Diekmann2012} proved the principle of linearized stability, and argue that the center manifold and Hopf bifurcation theorem can also be formulated and proved (along the same lines of \cite{Diekmann1995} or \cite{Magal2009}). All this suggests that $X$ is a suitable choice of state space for our characteristic operator framework, and we shall adopt this space throughout the remainder of this section. For more general state spaces to which our results also apply, we refer to \cref{remark:statespaceinfdelay} at the end of this subsection. Regardless of the chosen state space, the solution operator $U(\cdot,s)$ cannot in general be expected to become eventually compact (as was the case in \cref{subsec:classicalDDEs}), since the state necessarily contains a part of the initial function defined on $\mathbb{R}_{-}$. In this context, it is therefore noteworthy that the characteristic operator framework allows us to prove that the Floquet exponents are isolated and of finite type in a given right half-plane in $\mathbb{C}$ containing the origin, see \cref{thm:eigenfunctionsDDEinfinite}. 

To state and prove our results, we will assume that $L \in C_T(\mathbb{R},\mathcal{L}(X,\mathbb{C}^n))$. A vector-valued version of the Riesz representation theorem tells us that for every $t \in \mathbb{R}$ the operator $L(t)$ can be represented uniquely as
\begin{equation*}
    L(t) = \int_0^\infty d_2 \mu(t,\theta) \varphi(-\theta).
\end{equation*}
Here, the map $\mu : \mathbb{R} \times \mathbb{R}_{+} \to \mathbb{C}^{n \times n}$ is a matrix-valued function, and $\mu(\cdot,\theta)$ is continuous and $T$-periodic for all $\theta \in \mathbb{R}_{+}$. Moreover, $\mu(t,\cdot)$ is a Borel measure satisfying $\|\mu(t,\cdot)\|_\rho \coloneqq \int_0^\infty e^{-\rho s} d |\mu(t,\cdot)|(s) < \infty$, where $|\mu(t,\cdot)|$ denotes the total variation of $\mu(t,\cdot)$ for all $t \in \mathbb{R}$. With this choice of state space, the initial value problem
\begin{equation} \label{eq:LinearDDEinfinite}
    \begin{dcases}
    \dot{x}(t) = L(t)x_t, \quad &t \geq s, \\
    x_s = \varphi, \quad &\varphi \in X,    
    \end{dcases}    
\end{equation}
is well-posed on $X$, and the family of operators $U \coloneqq \{U(t,s)\}_{t \geq s}$ defined through $x_t = U(t, s)\varphi$ for all $t \geq s$ forms a strongly continuous forward evolutionary system on $X$. The associated (generalized) generator of $U$ (at time $t \in \mathbb{R}$) is given by the closed linear operator $A(t) : \mathcal{D}(A(t)) \subseteq X \to X$ with domain and action
\begin{equation*}
    \mathcal{D}(A(t)) = \{ \varphi \in C_\rho^1(\mathbb{R}_{-},\mathbb{C}^n) : \varphi'(0) = L(t)\varphi \}, \quad A(t)\varphi = \varphi'.
\end{equation*}
To this operator, we associate the linear operator $\mathcal{A} : \mathcal{D}(\mathcal{A}) \subseteq C_T(\mathbb{R},X) \to C_T(\mathbb{R},X)$ defined by 
\begin{equation} \label{eq:curlya_idde}
     \mathcal{D}(\mathcal{A}) = \{ \varphi \in C_T^1(\mathbb{R},X) : \varphi(t) \in \mathcal{D}(A(t)) \mbox{ for all $t \in \mathbb{R}$} \}, \quad (\mathcal{A}\varphi)(t) = A(t)\varphi(t) - \dot{\varphi}(t). 
\end{equation}
Due to the lack of eventual compactness of $U(\cdot,s)$, it turns out that we can only characterize the spectrum of $\mathcal{A}$ in terms of $\Delta$ in the right half-plane $\mathbb{C}_{-\rho} \coloneqq \{ z \in \mathbb{C} : \Re(z) > -\rho \}$,
see \cite[Section II.1.3]{Kaashoek1992} and \cite[Section 4]{Diekmann2012} for the same issue occurring in autonomous iDDEs. 

\begin{lemma}
Let $D : \mathcal{D}(D) \to C_T(\mathbb{R},X)$ be the operator defined by
\begin{equation*}
    \mathcal{D}(D) = \{ \varphi \in C_T^1(\mathbb{R},X) : \varphi(t)' \in X \mbox{ for all $t \in \mathbb{R}$} \}, \quad (D\varphi)(t) = \varphi(t)' - \dot{\varphi}(t).
\end{equation*}
Then $D$ is a closable linear operator satisfying \ref{hyp:iota},\ref{hyp:restriction}, \ref{hyp:domainrangeD} and \ref{hyp:domaincurlyA} with $\Omega = \mathbb{C}_{-\rho}$. Moreover, define the linear operators $K : \mathcal{D}(K) \subseteq C_T(\mathbb{R},X) \to C_T(\mathbb{R},\mathbb{C}^n)$ and $M : C_T(\mathbb{R},X) \to C_T(\mathbb{R},\mathbb{C}^n)$ by
\begin{equation*}
    \mathcal{D}(K) = C_T^1(\mathbb{R},X), \qquad (K\varphi)(t) = L(t) \varphi(t) - \dot{\varphi}(t)(0), \qquad (M\varphi)(t) = \varphi(t)(0), 
\end{equation*}
then $K$ satisfies \ref{hyp:K}, and $\mathcal{A}$ defined in \eqref{eq:curlya_idde} is the first operator associated to $D, K$ and $M$.  
\end{lemma}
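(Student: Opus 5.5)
The plan is to mirror the proof of \cref{lemma:operatorlemmaDDE} step by step. We replace $[-h,0]$ by $\mathbb{R}_{-}$ and the supremum norm by $\|\cdot\|_\rho$, and we track where the weight $e^{\rho\theta}$ forces the restriction to $\Omega=\mathbb{C}_{-\rho}$.

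\textbf{Closability.} I would argue exactly as before. Given $\varphi_m\to 0$ and $D\varphi_m=\psi_m\to\psi$ in $C_T(\mathbb{R},X)$, the transport representation
\[
\varphi_m(t)(\theta)=\varphi_m(t+\theta)(0)+\int_0^\theta \psi_m(t+\theta-s)(s)\,ds
\]
still holds for each fixed $\theta\le 0$. Convergence in $\|\cdot\|_\rho$ implies uniform convergence on compact $\theta$-intervals, so the limit argument \eqref{eq:integralpsi} and the subsequent differentiation in $\theta_2$ go through locally and give $\psi=0$.

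\textbf{Hypotheses (H1)--(H3).} For \ref{hyp:iota}, I would again characterize $\mathcal{N}(D)$ by $\varphi(t)(\theta)=\varphi(t+\theta)(0)$. I would take $\mathcal{Y}=C_T^1(\mathbb{R},\mathbb{C}^n)$ and $(\iota q)(t)(\theta)=q(t+\theta)$. Since $q$ is bounded and periodic and $\rho>0$, we get $\iota q(t)\in X$ and $\iota q(t)'\in X$, with $\|\iota q\|\le\|q\|_\infty$. The inverse is $\varphi\mapsto\varphi(\cdot)(0)$, which is bounded because $\|\varphi(t)\|_\rho\ge|\varphi(t)(0)|$. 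For \ref{hyp:restriction}, I would define $D_0$ as the restriction of $D$ to $\{\varphi\in\mathcal{D}(D):\varphi(\cdot)(0)=0\}$. The direct sum $\mathcal{D}(D)=\mathcal{N}(D)\oplus\mathcal{D}(D_0)$ follows verbatim. The resolvent candidate is \eqref{eq:resolventD0} with $\int_\theta^0$ over $s\in[\theta,0]$. The key estimate is
\[
e^{\rho\theta}|R(z,D_0)\varphi(t)(\theta)|\le \int_\theta^0 e^{(\Re z+\rho)(\theta-s)}\,e^{\rho s}|\varphi(t+\theta-s)(s)|\,ds\le \frac{1}{\Re z+\rho}\|\varphi\|,
\]
valid precisely when $\Re z>-\rho$. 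This is the source of $\Omega=\mathbb{C}_{-\rho}$.

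\textbf{Main obstacle.} I expect the hardest part to be showing that $R(z,D_0)\psi$ actually lies in $\mathcal{D}(D_0)$ for $\psi\in C_T^1(\mathbb{R},X)$. This requires three things. First, $R(z,D_0)\psi(t)\in X$, which means the decay $e^{\rho\theta}|\cdot|\to0$ must hold. Second, $t\mapsto R(z,D_0)\psi(t)$ must be $C^1$ into $X$. Third, the $\theta$-derivative from \eqref{eq:derivativesresolventDDE} must lie in $X$. For decay, I would split the integral at $s=\theta/2$: on one part $e^{\rho s}|\psi(\cdot)(s)|$ is small because $s\le\theta/2\to-\infty$, uniformly in $t$ by compactness of $[0,T]$ and continuity of $\psi$; on the other part the factor $e^{(\Re z+\rho)(\theta-s)}$ is small because $\theta-s\le\theta/2$. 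The same bounds, applied to $\dot\psi$ and to $z\psi+\dot\psi$, handle the derivatives. This yields $C_T^1(\mathbb{R},X)\subseteq\mathcal{R}(zI-D_0)$, so the range is dense and $\Omega=\rho(D_0)\supseteq\mathbb{C}_{-\rho}$; I would take $\Omega=\mathbb{C}_{-\rho}$, which contains $0$. Then \ref{hyp:domainrangeD} follows since $\mathcal{D}(D)\subseteq C_T^1(\mathbb{R},X)$. For \ref{hyp:domaincurlyA}, I would invoke, as in the DDE case, the formula \eqref{eq:resolventADelta}, which is independent of (H5) and gives $\mathcal{R}(zI-\mathcal{A})=\mathcal{R}(zI-D_0)$.

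\textbf{Hypothesis (H4).} I would write $K=M_L-GM$, where $M_L$ is bounded because $L\in C_T(\mathbb{R},\mathcal{L}(X,\mathbb{C}^n))$ and $GM$ is closed by \cref{lemma:compositionclosed}; here $M$ is bounded since $|\varphi(t)(0)|\le\|\varphi(t)\|_\rho$. Hence $K$ is closable by \cref{lemma:closedsum}. Since $\mathcal{D}(D_0)\subseteq\mathcal{N}(M)$, the restriction $K|_{\mathcal{D}(D_0)}=M_L|_{\mathcal{D}(D_0)}$ is bounded. Finally, \eqref{eq:D(A)rep} identifies $\mathcal{A}$ as the first operator associated with $D,K,M$.
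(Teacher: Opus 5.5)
Apart from one step, your proposal is the paper's proof. It uses the same transport-equation argument for closability, localized to compact $\theta$-intervals, and the same $\iota$, $D_0$ and resolvent formula with the bound $1/(\Re z+\rho)$. Your decay argument is essentially the paper's: the split at $s=\theta/2$ is a tidier version of its split at $\min\{M_1,M_2\}$. The treatment of \ref{hyp:K} and \ref{hyp:domaincurlyA} is also the same.

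The missing step is the reverse inclusion $\rho(D_0)\subseteq\mathbb{C}_{-\rho}$. You write $\Omega=\rho(D_0)\supseteq\mathbb{C}_{-\rho}$ and then say you would ``take'' $\Omega=\mathbb{C}_{-\rho}$. However, $\Omega$ is not a free choice here. \ref{hyp:restriction} defines $\Omega$ to be $\rho(D_0)$, and \ref{hyp:domainrangeD} must hold at every point of it. So the lemma's ``with $\Omega=\mathbb{C}_{-\rho}$'' means $\rho(D_0)=\mathbb{C}_{-\rho}$, and the paper's proof explicitly includes the ``only if'' direction that you omit.

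This gap is not cosmetic. Take $\Re z\le-\rho$ (so $z\neq0$) and $\psi=\iota q$ with $0\neq q\in C_T^1(\mathbb{R},\mathbb{C}^n)$. Injectivity along characteristics forces the only possible preimage under $zI-D_0$ to be $\varphi(t)(\theta)=\frac{1-e^{z\theta}}{z}\,q(t+\theta)$. Since $e^{\rho\theta}|1-e^{z\theta}|\ge e^{(\rho+\Re z)\theta}-e^{\rho\theta}\ge 1-e^{\rho\theta}$, you get $e^{\rho\theta}|\varphi(t)(\theta)|\ge\frac{1}{2|z|}|q(t+\theta)|$ for $\theta$ sufficiently negative. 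Because $q$ is periodic and nonzero, this does not tend to $0$, so $\varphi(t)\notin X$.

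Hence $\mathcal{N}(D)\not\subseteq\mathcal{R}(zI-D_0)$, and \ref{hyp:domainrangeD} fails at every such $z$. If any of these points belonged to $\rho(D_0)$, the lemma would be false. This is the paper's computation.

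To conclude that these $z$ are genuinely outside $\rho(D_0)$ in the sense of \cref{def:resolvent}, you can add the following argument.
\begin{itemize}
\item For $\Re z<-\rho$ and $\psi=(zI-D_0)\varphi$, one has $\int_\theta^0 e^{-zs}\psi(\tau-s)(s)\,ds=e^{-z\theta}\varphi(\tau-\theta)(\theta)$. Its modulus is at most $e^{-(\Re z+\rho)\theta}\|\varphi\|$, which tends to $0$ as $\theta\to-\infty$.
\item Hence $\mathcal{R}(zI-D_0)$ lies in the common kernel of the functionals $\psi\mapsto\int_{-\infty}^0 e^{-zs}\psi(\tau-s)(s)\,ds$, $\tau\in\mathbb{R}$.
\item These functionals are bounded on $C_T(\mathbb{R},X)$, by $\|\psi\|/|\Re z+\rho|$, and not all zero: for $\psi=\iota q$ they give $-q(\tau)/z$. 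So the range is not dense.
\item The line $\Re z=-\rho$ is then excluded because $\rho(D_0)$ is open (\cref{prop:resolventopen}).
\end{itemize}
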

\begin{proof}
The proof is analogous to the proof of \cref{lemma:operatorlemmaDDE} until we have to solve the resolvent equation \eqref{eq:resolventD01} for $\varphi$ with right-hand side $\psi \in C_T^1(\mathbb{R},X)$. Equation \eqref{eq:resolventD02} tells us that the (formal) solution $\varphi$ reads
\begin{equation*}
\varphi(t)(\theta) = \int_\theta^0 e^{z(\theta - s)} \psi(t+\theta -s)(s) ds, 
\end{equation*}
but we will prove that this function belongs to $\mathcal{D}(D_0)$ if and only if $\Re(z) > -\rho$, so that $\Omega = \mathbb{C}_{-\rho}$ is nonempty. To do this, we first show that $\varphi$ is an element of the state space $X = C_\rho(\mathbb{R}_-, \mathbb{C}^n)$ if and only if $\Re(z) > -\rho$. Clearly, $\varphi$ is continuous so it remains to show that $e^{\rho \theta}|\varphi(t)(\theta)| \to 0$ as $\theta \to -\infty$ for all $t \in \mathbb{R}$. Let $t \in \mathbb{R}, \varepsilon > 0$ and $z \in \mathbb{C}_{-\rho}$ be given. Since $\psi$ takes values in $X$, we know that $e^{\rho s}\psi(\tau)(s) \to 0$ as $s \to -\infty$ for all $\tau \in \mathbb{R}$ and so
\begin{equation*}
e^{\rho s} | \psi(t+\theta-s)(s)| \leq \max_{\tau \in [0,T]} e^{\rho s} |\psi(\tau)(s)| \to 0,
\end{equation*}
as $s \to - \infty$ due to compactness of $[0,T]$ and $T$-periodicity of $\psi$ in the first component. Hence, there exists an $M_1 < 0$ such that $e^{\rho s}|\psi(t+\theta-s)(s)| \leq (\Re(z) + \rho)\varepsilon /2$ for all $s \leq M_1$. Moreover, since $e^{(\Re(z) + \rho)\theta} \to 0$ as $\theta \to - \infty$, there exists an $M_2 < 0$ such that $e^{(\Re(z) + \rho)\theta} \leq e^{-M_1(\Re(z) + \rho)} \varepsilon/(2 \|\psi\|_\infty) $ for all $\theta \leq M_2$. Hence, for all $\theta \leq M = \min\{M_1,M_2\}$ there holds
\begin{align*}
    e^{\rho \theta} |\varphi(t)(\theta)| &\leq \int_\theta^M e^{(\Re(z) + \rho)(\theta-s)} e^{\rho s} |\psi(t+\theta-s)(s)| ds + \int_M^0 e^{(\Re(z) + \rho)(\theta-s)} e^{\rho s} |\psi(t+\theta-s)(s)| ds \\
    &\leq \frac{(\Re(z) + \rho) \varepsilon}{2} \int_\theta^M e^{(\Re(z) + \rho)(\theta-s)} ds + \| \psi \|_\infty \int_M^0 e^{(\Re(z) + \rho)(\theta-s)} ds \leq \frac{\varepsilon}{2} + \frac{\varepsilon}{2} = \varepsilon,
\end{align*}
where we computed both integrals explicitly and used previous estimates to obtain the result. We conclude that $\varphi \in C_T(\mathbb{R},X)$. Computing the derivatives $\dot{\varphi}(t)(\theta)$ and ${\varphi}(t)'(\theta)$ as in \eqref{eq:derivativesresolventDDE}, we see that $\varphi$ is in $C_T^1(\mathbb{R},X)$ and $\varphi(t)' \in X$ for all $t \in \mathbb{R}$ as $\psi \in C_T^1(\mathbb{R},X)$, and therefore $\varphi \in \mathcal{D}(D_0)$. To prove the converse, assume that $\Re(z) \leq - \rho$ for some fixed $z \in \mathbb{C}$, and consider a nonzero function $\psi \in C_T^1(\mathbb{R},X)$ satisfying the translation property $\psi(t)(\theta) = \psi(t+\theta)(0)$ for all $t \in \mathbb{R}$ and $\theta \in [-h,0]$. Since $e^{\rho \theta} \to 0$ as $\theta \to -\infty$ and $e^{(\Re(z) + \rho) \theta} \geq 1$ for $\theta \leq 0$, there exists an $M_3 < 0$ such that $e^{\rho \theta}(e^{\Re(z) \theta} - 1) > 1/2$ for all $\theta \leq M_3$. Hence, an application of the reverse triangle inequality tells us that
\begin{equation*}
    e^{\rho \theta} |\varphi(t)(\theta)| \geq \frac{e^{\rho \theta}(e^{\Re(z) \theta} - 1)}{|z|}|\psi(t+\theta)(0)|  > \frac{1}{2|z|}|\psi(t+\theta)(0)|, \quad \forall t \in \mathbb{R}, \ \theta \leq M_3,
\end{equation*}
which proves that $e^{\rho \theta} |\varphi(\cdot)(\theta)| \nrightarrow 0$ as $\theta \to - \infty$. We conclude that $\varphi \in \mathcal{D}(D_0)$ if and only if $\Re(z) > - \rho$. Hence, $zI-D_0$ has a densely defined bounded linear inverse for every $z \in \mathbb{C}_{-\rho}$ given by \eqref{eq:resolventD0} since
\begin{equation*}
    \|R(z,D_0)\varphi\|_\infty \leq \frac{1}{\Re(z) + \rho} \| \varphi \|_\infty, \quad \forall \varphi \in \mathcal{R}(zI-D_0).
\end{equation*}
The remaining part of the proof can be completed as performed in the proof of \cref{lemma:operatorlemmaDDE}.
\end{proof}

The second operator $\hat{\mathcal{A}}$ associated with $D, K$ and $M$ now takes the form 
\begin{equation*}
    \mathcal{D}(\hat{\mathcal{A}}) = \bigg \{ \begin{pmatrix}
    q\\
    \varphi
    \end{pmatrix}
    \in \hat{C}_T^1(\mathbb{R},X) : \varphi \in \mathcal{D}(D), \ q = M\varphi \bigg \}, \quad 
    \hat{\mathcal{A}}
    \begin{pmatrix}
    q\\
    \varphi
    \end{pmatrix}
    =
    \begin{pmatrix}
    K\varphi \\
    D\varphi
    \end{pmatrix}.
\end{equation*}

\begin{theorem} \label{thm:charmatrixDDEinfinite}
The holomorphic operator-valued function $\Delta : \mathbb{C} \to L(C_T(\mathbb{R},\mathbb{C}^n))$ with values $\Delta(z) : C_T^1(\mathbb{R},\mathbb{C}^n) \to C_T(\mathbb{R},\mathbb{C}^n)$ given by
\begin{equation*}
    (\Delta(z)q)(t) = \dot{q}(t) + z q(t) - \int_0^\infty d_2 \mu(t,\theta) e^{-z \theta} q(t-\theta),
\end{equation*}
is a $\frac{2\pi i}{T}\mathbb{Z}$-similar closed characteristic operator for $\hat{\mathcal{A}}$ on $\mathbb{C}_{-\rho}$ and the equivalence is given by
\begin{equation*}
    \begin{pmatrix}
        \Delta(z) & 0 \\
        0 & I 
    \end{pmatrix}
    =
    F(z)(z I - \hat{\mathcal{A}})E(z), \quad \forall z \in \mathbb{C}_{-\rho},
\end{equation*}
where $E : \mathbb{C}_{-\rho} \to L(\hat{C}_T(\mathbb{R},X),\hat{C}_T(\mathbb{R},X)_{\hat{\mathcal{A}}})$ and $F : \mathbb{C}_{-\rho} \to L(\hat{C}_T(\mathbb{R},X))$ are holomorphic bounded operator-valued functions such that $E(z) \in \mathcal{L}(C_T^1(\mathbb{R},\mathbb{C}^n) \oplus \mathcal{R}(zI-D_0),\mathcal{D}(\hat{\mathcal{A}}))$ and $F(z) \in \mathcal{L}(C_T(\mathbb{R},\mathbb{C}^n) \oplus \mathcal{R}(zI-D_0))$ are bijective mappings, given by
\begin{alignat*}{2}
    E(z)
    \begin{pmatrix}
    q\\
    \varphi
    \end{pmatrix}
    &=
    \begin{pmatrix}
    q\\
    \psi
    \end{pmatrix}, \quad
        \psi(t)(\theta) &&= e^{z \theta}q(t+\theta)+ \int_{\theta}^0 e^{z(\theta-s)}\varphi(t+\theta-s)(s) ds, \\
    F(z)
    \begin{pmatrix}
    q\\
    \varphi
    \end{pmatrix}
    &=
    \begin{pmatrix}
    q + \phi\\
    \varphi
    \end{pmatrix}, \quad
        \phi(t) &&= \int_0^\infty d_2 \mu(t,\theta) \int_{-\theta}^0 e^{-z(\theta+s)}\varphi(t-\theta-s)(s) ds.
\end{alignat*}
\end{theorem}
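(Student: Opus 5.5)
The plan is to mirror the proof of \cref{thm:charmatrixDDE}, now on $\Omega = \mathbb{C}_{-\rho}$, using the preceding lemma. That lemma already establishes that $D$, $D_0$, $K$, $M$ satisfy \ref{hyp:iota}--\ref{hyp:domaincurlyA} with $\Omega=\mathbb{C}_{-\rho}$, and that $R(z,D_0)$ is given by \eqref{eq:resolventD0}. Consequently \cref{thm:Delta equivalence} applies verbatim and yields holomorphic bounded $E$, $F$ with bijective values and bounded inverses satisfying the equivalence. What remains is to identify $\Delta$, $E$ and $F$ explicitly, to show closedness of $\Delta(z)$, and to check the $\frac{2\pi i}{T}\mathbb{Z}$-similarity.

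\textbf{Step 1 (explicit formulas).} With $\iota q(t)(\theta)=q(t+\theta)$ (now for $\theta\le 0$), compute $Q(z)\iota q=\iota q - zR(z,D_0)\iota q$ exactly as in \eqref{eq:Q(z)DDE}, giving $(Q(z)\iota q)(t)(\theta)=e^{z\theta}q(t+\theta)$. This function lies in $X=C_\rho(\mathbb{R}_-,\mathbb{C}^n)$ precisely because $e^{\rho\theta}|e^{z\theta}|=e^{(\Re z+\rho)\theta}\to 0$ for $\Re z>-\rho$. Inserting it into \eqref{eq:Delta(z)2} together with $K$, $M$ and the Riesz representation of $L(t)$ gives the stated formula for $\Delta(z)$. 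The integral $\int_0^\infty d_2\mu(t,\theta)e^{-z\theta}q(t-\theta)$ converges since $|e^{-z\theta}|\le e^{\rho\theta}$ and $\|\mu(t,\cdot)\|_\rho<\infty$. Substituting the same expressions into the formulas for $E(z)$ and $F(z)$ in \cref{thm:Delta equivalence} gives the stated $\psi$ and $\phi$. For $\phi$, one uses $-(zM-K)R(z,D_0)\varphi=L(t)[R(z,D_0)\varphi](t)$, because $M R(z,D_0)=0$ and $K|_{\mathcal{D}(D_0)}=M_L|_{\mathcal{D}(D_0)}$. Writing $\theta\mapsto-\theta$ in the Stieltjes integral then produces the double integral.

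\textbf{Step 2 (closedness and similarity).} Write $\Delta(z)=G+N(z)$ with $G q=\dot q$ closed on $C_T^1(\mathbb{R},\mathbb{C}^n)$ and $(N(z)q)(t)=zq(t)-L(t)[\theta\mapsto e^{z\theta}q(t+\theta)]$. Boundedness of $N(z)$ follows from $\|\theta\mapsto e^{z\theta}q(t+\theta)\|_\rho\le\sup_{\theta\le0}e^{(\Re z+\rho)\theta}\|q\|_\infty=\|q\|_\infty$ for $z\in\mathbb{C}_{-\rho}$, combined with $\sup_t\|L(t)\|<\infty$ by continuity and periodicity of $L$. Then \cref{lemma:closedsum} gives closedness. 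For \ref{hyp:ek} and \eqref{eq:periodicconditions}, multiplication by $e^{2\pi k i t/T}$ preserves $C_T^1$ regularity, the $\theta$-derivative and the weighted decay. A direct computation then gives $\varepsilon_k Q(z+\tfrac{2\pi k i}{T})\iota\varepsilon_k^{-1}=Q(z)\iota$, because $e^{\frac{2\pi k i}{T}\theta}e^{-\frac{2\pi k i}{T}(t+\theta)}e^{\frac{2\pi k i}{T}t}=1$. The identities for $D$, $K$, $M$ are immediate.

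\textbf{Main obstacle.} The main difficulty I anticipate is the careful bookkeeping in Step 1 on the unbounded interval. Specifically, one must verify that all expressions (the Stieltjes integrals against $\mu$ and the integral in $\phi$) converge and define elements of the correct spaces uniformly in $t$ for $\Re z>-\rho$. This rests on the estimate $e^{\rho\theta}|e^{z\theta}|\le 1$ and on the bound $\|R(z,D_0)\|\le(\Re z+\rho)^{-1}$ from the preceding lemma.
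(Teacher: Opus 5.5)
Your proposal is correct and follows essentially the same route as the paper: invoke \cref{thm:Delta equivalence} via the preceding lemma on $\Omega=\mathbb{C}_{-\rho}$, compute $(Q(z)\iota q)(t)(\theta)=e^{z\theta}q(t+\theta)$ and check that it lies in $X$ because $e^{(\Re z+\rho)\theta}\to 0$, and obtain closedness from $\Delta(z)=G+N(z)$ with $\|N(z)\|\le |z|+\|L\|_\infty$. The paper carries the explicit forms of $E$, $F$ and the $\frac{2\pi i}{T}\mathbb{Z}$-similarity over from the finite-delay case without writing them out, and your Steps 1 and 2 simply spell these out.
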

\begin{proof}
The proof is similar to the proof of \cref{thm:charmatrixDDE}. We must only verify that the map $\theta \mapsto e^{z\theta}q(t+\theta)$ is in $X$ and that $\Delta(z)$ is closed for all $z \in \mathbb{C}_{-\rho}, t \in \mathbb{R}$ and $q \in C_T^1(\mathbb{R},\mathbb{C}^n)$. To prove the first claim, note that $\theta \mapsto e^{z\theta}q(t+\theta)$ is clearly continuous and that $e^{\rho \theta} |e^{z\theta}q(t+\theta)| \leq e^{(\Re(z) + \rho)\theta} \|q\|_\infty \to 0$ as $\theta \to - \infty$ since $z \in \mathbb{C}_{-\rho}$. To prove the second claim, note that the linear operator $N(z)$ from the proof of \cref{thm:charmatrixDDE} is still bounded since $\|N(z)q\|_\infty \leq (|z| + \|L\|_\infty) \|q\|_\infty$ for all $z \in \mathbb{C}_{-\rho}$.
\end{proof}

The next key result shows that the spectrum of $\mathcal{A}$ in the given right half-plane $\mathbb{C}_{-\rho}$ with $\rho > 0$, can be characterised in terms of the characteristic values of the operator $\Delta$. In particular, the spectrum of $\mathcal{A}$ in $\mathbb{C}_{-\rho}$ consists solely of isolated eigenvalues of finite type. This is remarkable, given that for iDDEs the associated solution operator $U(\cdot,s)$ of \eqref{eq:LinearDDEinfinite} is not eventually compact.

\begin{theorem} \label{thm:eigenfunctionsDDEinfinite}
The spectrum of $\mathcal{A}$ with real part greater than $-\rho$ consists solely of isolated eigenvalues of finite type:
\begin{equation*}
    \sigma(\mathcal{A}) \cap \mathbb{C}_{-\rho} =  \{ \sigma \in \mathbb{C}_{-\rho} :  \sigma \textit{ is a characteristic value of } \Delta \}  =  \{\sigma \in \mathbb{C}_{-\rho} :  \sigma \mbox{ is a Floquet exponent} \}.
\end{equation*}
For $\sigma \in \sigma(\mathcal{A}) \cap \mathbb{C}_{-\rho}$, there holds $m_g(\sigma,\mathcal{A}) = m_g(\Delta(\sigma)), m_a(\sigma,\mathcal{A}) = m_a(\Delta(\sigma)), k(\sigma,\mathcal{A}) = k(\Delta(\sigma))$ and $r(\sigma,\mathcal{A}) = r(\Delta(\sigma))$, and the partial multiplicities of $\sigma$ considered as an eigenvalue of $\mathcal{A}$ are equal to the zero multiplicities of $\sigma$ considered as a characteristic value of $\Delta$. Furthermore, if $\{ \{{q}_{i,0},\dots,{q}_{i,k_i-1} \} : i=1,\dots,p\}$ is a canonical system of Jordan chains of $\Delta$ at $\sigma$, then $\{ \{\varphi_{i,0},\dots,\varphi_{i,k_i-1} \} : i=1,\dots,p \}$ is a canonical system of (generalized) eigenvectors of $\mathcal{A}$ at $\sigma$, where
\begin{equation*}
    \varphi_{i,k}(t)(\theta) = e^{\sigma \theta} \sum_{l=0}^k \frac{\theta^l}{l!}q_{i,k-l}(t+\theta), \quad \forall k = 0,\dots,k_{i}-1, \ t \in \mathbb{R}, \ \theta \in \mathbb{R}_{-}.
\end{equation*}
\end{theorem}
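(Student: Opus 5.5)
The proof will mirror that of \cref{thm:eigenfunctionsDDE}, now on $\Omega=\mathbb{C}_{-\rho}$. I will proceed in three steps: first upgrade the equivalence for $\hat{\mathcal{A}}$ to one for $\mathcal{A}$, then establish compact resolvent of $\Delta(z)$, and finally read off the spectral data.

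\textbf{Step 1: equivalence for $\mathcal{A}$.} By \cref{thm:charmatrixDDEinfinite}, $\Delta$ is a closed characteristic operator for $\hat{\mathcal{A}}$ on $\mathbb{C}_{-\rho}$. To pass to $\mathcal{A}$ via \cref{thm:rhoAcharac}, I need \ref{hyp:rangeequal}. I will prove the analogue of \cref{lemma:characinvertible}, writing $\mu I-\Delta(z)=S(\mu-z)-C(z)$ with $S,C$ as in \eqref{eq:defn_S_C}. The key estimate is
\[
\|C(z)\|\le \sup_t\|L(t)\|\,\sup_{\theta\le 0}e^{(\Re z+\rho)\theta}\le\|L\|_\infty ,
\]
valid for every $z\in\mathbb{C}_{-\rho}$. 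Here the weighted norm is essential: the function $\theta\mapsto e^{z\theta}q(t+\theta)$ has $\|\cdot\|_\rho$-norm bounded by $\|q\|_\infty$ precisely because $\Re z>-\rho$. Combined with $\|S(w)^{-1}\|\le 1/|w|$ for real $w\to\pm\infty$, a Neumann series argument gives the following:
\begin{itemize}
\item $\rho(\Delta(z))\neq\emptyset$ for each $z\in\mathbb{C}_{-\rho}$;
\item taking $\mu=0$ and real $z_0$ large, $z_0\in\rho(\Delta)$.
\end{itemize}
Since $\Delta(z_0)$ is closed and invertible, it is surjective, so $\mathcal{R}(\Delta(z_0))=C_T(\mathbb{R},\mathbb{C}^n)=\mathcal{R}(MD-K)$ by \cref{lemma:T2onto}. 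This verifies \ref{hyp:rangeequal}, and \cref{thm:rhoAcharac} then makes $\Delta$ a closed characteristic operator for $\mathcal{A}$ on $\mathbb{C}_{-\rho}$.

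\textbf{Step 2: compact resolvent.} For $w\notin\frac{2\pi i}{T}\mathbb{Z}$, the operator $S(w)^{-1}$ from \eqref{eq:S(z)inverse} maps $C_T$ boundedly into $C^1_T$. By Arzel\`a--Ascoli it is therefore compact. Hence $R(\mu,\Delta(z))=S(\mu-z)^{-1}[I-C(z)S(\mu-z)^{-1}]^{-1}$ is compact, using \cite[Exercise II.4.30]{Engel2000}. By \cref{cor:spectralrelations}, \ref{hyp:SH3} holds, so \cref{lemma:compactspectra} applies. It shows that $\sigma(\mathcal{A})\cap\mathbb{C}_{-\rho}$ consists of isolated eigenvalues of finite type, and that these are exactly the characteristic values of $\Delta$ there.

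\textbf{Step 3: spectral data.} A characteristic value $\sigma$ of $\Delta$ with eigenvector $q$ is exactly a Floquet pair in \eqref{eq:Floqexpgeneral}. Here $L(t)[\theta\mapsto e^{\sigma\theta}q(t+\theta)]$ is well defined since $\Re\sigma>-\rho$, which gives the second equality. The equalities of $m_g,m_a,k,r$ and of the partial and zero multiplicities follow directly from \cref{cor:spectralrelations}.

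For the Jordan chains, I will copy the argument of \cref{thm:eigenfunctionsDDE}. Apply $E(z)$ from \cref{thm:charmatrixDDEinfinite} to $(\sum_i(z-\sigma)^iq_i,0)$; this yields the root function $z\mapsto e^{z\cdot}\sum_i(z-\sigma)^iq_i(t+\cdot)$ for $zI-\hat{\mathcal{A}}$. Expanding $e^{(z-\sigma)\theta}$ to order $k$ gives the stated $\varphi_{i,k}$, and \cref{lemma:graphJ} transfers the chains to $\mathcal{A}$. The one new point is that the Taylor remainder must be $\mathcal{O}((z-\sigma)^k)$ in the $\|\cdot\|_\rho$ norm on the unbounded interval $\mathbb{R}_-$. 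The polynomial factors $\theta^l$ are not bounded there. However, they are absorbed because $e^{(\Re\sigma+\rho-\delta)\theta}|\theta|^l$ is bounded for $|z-\sigma|<\delta<\Re\sigma+\rho$.

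I expect Step 1 to be the main obstacle. The uniform bound on $C(z)$ and the membership of the exponentials in $X$ both hinge on $\Re z>-\rho$, and this is exactly where the restriction to $\mathbb{C}_{-\rho}$ enters. The control of the Taylor remainder in Step 3 is a secondary point of the same nature.
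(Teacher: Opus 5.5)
Your proposal is correct and follows the paper's own route. The paper likewise verifies $\rho(\Delta(z))\neq\emptyset$ and \ref{hyp:rangeequal} by the Neumann-series argument of \cref{lemma:characinvertible}, using $\|C(z)\|\le\|L\|_\infty$ on $\mathbb{C}_{-\rho}$, and then repeats the compact-resolvent, \cref{lemma:compactspectra}, \cref{cor:spectralrelations} and root-function argument of \cref{thm:eigenfunctionsDDE}; your check that the Taylor remainder of $e^{z\theta}$ is $\mathcal{O}((z-\sigma)^k)$ in $\|\cdot\|_\rho$ on $\mathbb{R}_-$ is a sound detail the paper leaves implicit.
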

\begin{proof}
Similar to the proof of \cref{lemma:characinvertible}, one can show that the characteristic operator from \cref{thm:charmatrixDDEinfinite} satisfies $\rho(\Delta(z)) \neq \emptyset$ by taking the real part of $\mu \in \mathbb{C} \setminus (z + \frac{2 \pi i}{T}\mathbb{Z})$ for $z \in \mathbb{C}_{-\rho}$ in the proof of \cref{lemma:characinvertible} sufficiently large. Similarly, one can prove that \ref{hyp:rangeequal} holds. The remaining part of the proof is now analogous to the proof of \cref{thm:eigenfunctionsDDE}.
\end{proof}

For completeness, we next state the form of the resolvent operator $R(z, \mathcal{A})$, the multiplicity theorem, and a characterisation of the elementary solutions. The proofs of these results are omitted because they are similar to those of \cref{cor:resolventDDE}, \cref{cor:multiplicityDDE} and \Cref{cor:FloquetsolDDE}.

\begin{corollary}
If $z \in \mathbb{C}_{-\rho}$ is not a Floquet exponent, then the resolvent of $\mathcal{A}$ at $z$ has the following representation
\begin{equation*}
    (R(z,\mathcal{A}) \varphi)(t)(\theta) = e^{z \theta} \varphi_z(t+\theta) + \int_\theta^0 e^{z(\theta-s)} \varphi(t+\theta-s)(s) ds, \quad \forall \varphi \in \mathcal{R}(zI-\mathcal{A}) \supseteq C_T^1(\mathbb{R},X),
\end{equation*}
where $\varphi_z$ is given by
\begin{equation*}
    \varphi_z = \Delta(z)^{-1} \bigg( t \mapsto \varphi(t)(0) + \int_0^\infty d_2 \mu(t,\theta)\int_{-\theta}^{0} e^{-z(\theta + s)} \varphi(t -\theta - s)(s) ds \bigg).
\end{equation*}
\end{corollary}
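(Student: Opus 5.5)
The statement is the resolvent representation for the iDDE, the analogue of \cref{cor:resolventDDE}. The argument is a direct specialization of the general formula \eqref{eq:resolventADelta} from \cref{cor:spectralrelations}. First I will identify the relevant sets. By \cref{thm:eigenfunctionsDDEinfinite}, a point $z \in \mathbb{C}_{-\rho}$ that is not a Floquet exponent lies outside $\sigma(\mathcal{A}) \cap \mathbb{C}_{-\rho}$. By \eqref{eq:specAAhatDelta} we then have $z \in \rho(\mathcal{A}) \cap \Omega = \rho(\Delta)$, so $\Delta(z)^{-1}$ exists and is bounded on the dense range. Moreover, \cref{cor:spectralrelations} gives $\mathcal{R}(zI-\mathcal{A}) = \mathcal{R}(zI-D_0)$. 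In the proof of the lemma establishing \ref{hyp:restriction} for iDDEs we showed that every $\psi \in C_T^1(\mathbb{R},X)$ lies in $\mathcal{R}(zI-D_0)$ when $\Re(z) > -\rho$. This yields the inclusion $\mathcal{R}(zI-\mathcal{A}) \supseteq C_T^1(\mathbb{R},X)$.

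Next I will substitute the concrete operators into
\[
R(z,\mathcal{A})\varphi = Q(z)\iota\,\Delta(z)^{-1}\bigl[M\varphi - (zM-K)R(z,D_0)\varphi\bigr] + R(z,D_0)\varphi .
\]
The resolvent $R(z,D_0)$ is still given by the integral \eqref{eq:resolventD0}, now with $\theta \in \mathbb{R}_-$. The computation \eqref{eq:Q(z)DDE} carries over verbatim, giving $(Q(z)\iota q)(t)(\theta) = e^{z\theta} q(t+\theta)$. These two facts produce the term $\int_\theta^0 e^{z(\theta-s)}\varphi(t+\theta-s)(s)\,ds$ and the prefactor $e^{z\theta}\varphi_z(t+\theta)$ with $\varphi_z = \Delta(z)^{-1}w$.

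It remains to identify $w = M\varphi - (zM-K)R(z,D_0)\varphi$. Since $R(z,D_0)\varphi \in \mathcal{D}(D_0) \subseteq \mathcal{N}(M)$, the term $zM R(z,D_0)\varphi$ vanishes. Likewise $K$ restricted to $\mathcal{D}(D_0)$ reduces to $M_L$, because the $\dot\psi(t)(0)$ part is zero. Hence
\[
w(t) = \varphi(t)(0) + L(t)\bigl[R(z,D_0)\varphi\bigr](t).
\]
Inserting the Riesz representation $L(t)\psi = \int_0^\infty d_2\mu(t,\theta)\psi(-\theta)$ and evaluating \eqref{eq:resolventD0} at $-\theta$ gives
\[
\int_0^\infty d_2\mu(t,\theta)\int_{-\theta}^0 e^{-z(\theta+s)}\varphi(t-\theta-s)(s)\,ds ,
\]
which is the claimed formula.

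The main obstacle is making sure the infinite-delay integrals are legitimate, namely that the Stieltjes integral over $[0,\infty)$ converges and equals $L(t)$ applied to $R(z,D_0)\varphi(t)$. I would handle this by using only that $R(z,D_0)\varphi(t) \in X$ for $\Re(z) > -\rho$, which was shown in the same lemma, together with the fact that $L(t) \in \mathcal{L}(X,\mathbb{C}^n)$ is represented by $\mu$ with $\|\mu(t,\cdot)\|_\rho < \infty$. Convergence is then automatic.
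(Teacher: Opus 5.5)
Your proposal is correct and follows essentially the same route as the paper. The paper omits the proof as ``similar to \cref{cor:resolventDDE}'', which likewise uses the spectral characterization to place $z$ in $\rho(\Delta)$ and then substitutes the explicit forms of $Q(z)\iota$ and $R(z,D_0)$ into \eqref{eq:resolventADelta}. Your simplification $(zM-K)R(z,D_0)\varphi = -M_L R(z,D_0)\varphi$ and your justification of $C_T^1(\mathbb{R},X) \subseteq \mathcal{R}(zI-\mathcal{A})$ via $\mathcal{R}(zI-\mathcal{A}) = \mathcal{R}(zI-D_0)$ simply fill in details the paper leaves implicit.
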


\begin{corollary}
If $\sigma \in \mathbb{C}_{-\rho}$ is a Floquet exponent such that $k(\sigma,\mathcal{A}) = r(\Delta(\sigma))$, then the multiplicity theorem $\dim \mathcal{N}((\sigma I - \mathcal{A})^{r(\Delta(\sigma))}) =  m_a(\Delta(\sigma))$ holds.
\end{corollary}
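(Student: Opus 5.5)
This is a direct consequence of \cref{cor:multiplicity} applied to the characteristic operator $\Delta$ of \cref{thm:charmatrixDDEinfinite}, viewed as a characteristic operator for $\mathcal{A}$ on $\Omega = \mathbb{C}_{-\rho}$. So the plan is to check, in order, the hypotheses under which \cref{cor:multiplicity} (equivalently the last part of \cref{thm:spectralequal}) applies.

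First, we need that $\Delta$ is a characteristic operator for $\mathcal{A}$ on $\mathbb{C}_{-\rho}$. The lemma preceding \cref{thm:charmatrixDDEinfinite} gives \ref{hyp:iota}--\ref{hyp:domaincurlyA}, and the proof of \cref{thm:eigenfunctionsDDEinfinite} gives \ref{hyp:rangeequal}. Then \cref{thm:rhoAcharac} provides the equivalence.

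Second, \cref{cor:spectralrelations} supplies \ref{hyp:SH2} and \ref{hyp:SH3}. Moreover, \ref{hyp:domaincurlyA} gives \ref{hyp:SH1} for $\mathcal{A}$.

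Third, let $\sigma \in \mathbb{C}_{-\rho}$ be a Floquet exponent. By \cref{thm:eigenfunctionsDDEinfinite}, $\sigma$ is a characteristic value of $\Delta$. The same theorem, obtained from \cref{lemma:compactspectra} and hence from the analytic Fredholm theorem, shows that $\sigma$ is an isolated point of $\sigma(\mathcal{A}) \cap \mathbb{C}_{-\rho}$ and an eigenvalue of finite type. In particular, $\sigma$ is an isolated characteristic value of $\Delta$ of finite type, since $m_a(\sigma,\mathcal{A}) = m_a(\Delta(\sigma)) < \infty$. Because $\mathbb{C}_{-\rho}$ is open, isolation in $\sigma(\mathcal{A}) \cap \mathbb{C}_{-\rho}$ is the same as isolation in $\sigma(\mathcal{A})$, which is what \cref{lemma:Dunford} requires.

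With the assumption $k(\sigma,\mathcal{A}) = r(\Delta(\sigma))$, \cref{cor:multiplicity} now gives
\[
\mathcal{N}\bigl((\sigma I - \mathcal{A})^{r(\Delta(\sigma))}\bigr) = \mathcal{N}\bigl((\sigma I - \mathcal{A})^{k(\sigma,\mathcal{A})}\bigr) = E_\sigma(\mathcal{A}),
\]
and the dimension of this space equals $m_a(\sigma,\mathcal{A}) = m_a(\Delta(\sigma))$.

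The only point that needs care is the second step: confirming that \ref{hyp:rangeequal} and \ref{hyp:domaincurlyA} hold on the half-plane $\mathbb{C}_{-\rho}$. Following \cref{lemma:characinvertible}, one takes $z_0$ real and large, uses the bound $\|C(z_0)\| \le \|L\|_\infty$, and applies a Neumann series argument. Closedness of $\Delta(z_0)$ then gives $\mathcal{R}(\Delta(z_0)) = C_T(\mathbb{R},\mathbb{C}^n)$. Since the paper asserts this in the proof of \cref{thm:eigenfunctionsDDEinfinite}, it can be cited, and the corollary follows.
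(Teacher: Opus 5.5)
Your proposal is correct and is essentially the paper's argument. The paper omits the proof, calling it a direct consequence of \cref{cor:multiplicity} (the last part of \cref{thm:spectralequal}) once the iDDE setting is placed in the framework of \cref{thm:rhoAcharac}, \cref{cor:spectralrelations} and \cref{thm:eigenfunctionsDDEinfinite}; your extra checks (isolation in $\sigma(\mathcal{A})\cap\mathbb{C}_{-\rho}$ equals isolation in $\sigma(\mathcal{A})$ because $\mathbb{C}_{-\rho}$ is open, and the Neumann-series verification of \ref{hyp:rangeequal}) just make explicit details the paper leaves implicit.
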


\begin{corollary} \label{cor:FloquetsolDDEinfinite}
    The elementary solutions of \eqref{eq:LinearDDEinfinite} are given by
    \begin{equation*}
        x(t) = e^{\sigma(t-s)} \sum_{l=0}^{k-1} \frac{(t-s)^l}{l!}q_{k-l-1}(t), \quad \forall t \geq s,
    \end{equation*}
where $\{ {q}_{0},\dots,{q}_{k-1} \}$ is a Jordan chain of $\Delta$ at a Floquet exponent $\sigma \in \mathbb{C}_{-\rho}$.
\end{corollary}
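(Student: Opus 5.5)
The statement to prove is \cref{cor:FloquetsolDDEinfinite}. I will follow the strategy of \cref{cor:FloquetsolDDE} verbatim, replacing the interval $[-h,0]$ by $\mathbb{R}_-$ and the measure $\zeta$ by $\mu$. The argument runs through the forward evolutionary system $U$ on $X=C_\rho(\mathbb{R}_-,\mathbb{C}^n)$.

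First, let $\{q_0,\dots,q_{k-1}\}$ be a Jordan chain of $\Delta$ at a Floquet exponent $\sigma\in\mathbb{C}_{-\rho}$. \cref{thm:eigenfunctionsDDEinfinite} then provides a Jordan chain $\{\varphi_0,\dots,\varphi_{k-1}\}$ of $\mathcal{A}$ at $\sigma$. Its entries have the explicit form $\varphi_i(t)(\theta)=e^{\sigma\theta}\sum_{l=0}^i\frac{\theta^l}{l!}q_{i-l}(t+\theta)$. I index so that this chain matches the indexing $q_{k-l-1}$ in the claimed formula.

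Second, I need the connection between $\mathcal{N}((\sigma I-\mathcal{A})^k)$ and $U$. This is the analogue of \cite[Theorem 5]{Article2}:
\[
U(t,s)\varphi_i(s)=e^{\sigma(t-s)}\varphi_i(t)-\sum_{l=1}^i\frac{(s-t)^l}{l!}U(t,s)\varphi_{i-l}(s).
\]
That theorem is formulated abstractly for a strongly continuous forward evolutionary system with generalized generator $A(t)$. I expect it to hold verbatim here, because $U$ on $C_\rho$ is strongly continuous and $A(t)$ is its generator. To prove it, I would differentiate $\tau\mapsto U(t,\tau)\varphi_i(\tau)$ in $\tau$. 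Using $\partial_\tau U(t,\tau)\psi=-U(t,\tau)A(\tau)\psi$ for $\psi\in\mathcal{D}(A(\tau))$ and $\dot\varphi_i=A(\cdot)\varphi_i-\sigma\varphi_i-\varphi_{i-1}$, one can then integrate by induction on $i$. I regard checking the required differentiability in the weighted space as the main obstacle. It should be handled exactly as in the finite-delay case, since $\varphi_i\in C_T^1(\mathbb{R},X)$ and $\varphi_i(\tau)\in\mathcal{D}(A(\tau))$.

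Third, unwinding the recursion by induction, with a change of summation order and the binomial formula, gives
\[
U(t,s)\varphi_i(s)=e^{\sigma(t-s)}\sum_{l=0}^i\frac{(t-s)^l}{l!}\varphi_{i-l}(t)
\]
(with the paper's sign conventions). Finally, I evaluate at $\theta=0$, using $\varphi_j(t)(0)=q_j(t)$ and $x(t)=(U(t,s)\varphi_{k-1}(s))(0)$. This yields exactly $x(t)=e^{\sigma(t-s)}\sum_{l=0}^{k-1}\frac{(t-s)^l}{l!}q_{k-l-1}(t)$, which is a solution of \eqref{eq:LinearDDEinfinite} with initial history $\varphi_{k-1}(s)\in X$.

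As a cross-check, or alternative proof that bypasses $U$ entirely, I would substitute the formula directly into \eqref{eq:LinearDDEinfinite} and use the Jordan chain relations \eqref{eq:Jordanchainexplicit} for $\Delta$. Differentiation under the integral against $d_2\mu$ is justified because $\Re\sigma>-\rho$, so $e^{-\sigma\theta}\theta^l$ is integrable against $e^{-\rho\theta}d|\mu|$.
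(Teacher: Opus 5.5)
Your proposal is correct and is essentially the paper's argument. The paper only refers back to the proof of \cref{cor:FloquetsolDDE}: the recursion for $U(t,s)\varphi_i(s)$ from \cite[Theorem 5]{Article2}, unwinding it via the binomial formula, and evaluating at $\theta=0$. Your derivation of the recursion from $\tau\mapsto U(t,\tau)\varphi_i(\tau)$ and your direct-substitution cross-check (which the paper also mentions) just fill in details. Your unwound formula with plus signs, $U(t,s)\varphi_i(s)=e^{\sigma(t-s)}\sum_{l=0}^{i}\frac{(t-s)^l}{l!}\varphi_{i-l}(t)$, is the one consistent with the recursion and the convention $(\mathcal{A}-\sigma I)\varphi_i=\varphi_{i-1}$, so the minus sign printed in the paper's corresponding display is a typo.
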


In practice, one can explicitly construct a Jordan chain $\{q_0,\dots,q_{k-1}\}$ for $\Delta$ at $\sigma \in \mathbb{C}_{-\rho}$ by using the same technique as described in \cref{remark:jordanchainpractice}. The only difference is that $\Delta(\sigma)q_0 = 0$ is now a periodic linear homogeneous iDDE and the remaining $k-1$ equations in \eqref{eq:Jordanchainexplicit} to obtain $q_1,\dots,q_{k-1}$ are periodic linear inhomogeneous iDDEs. The mentioned orthogonal collocation method is still suited to solve this particular linear system of differential equations.

\begin{remark} \label{remark:statespaceinfdelay}
As discussed in the introduction of this subsection, there are several potential choices of state spaces $X$ for iDDEs. There are two other natural state spaces on which our results can be applied:

1) Instead of working on the exponentially weighted space $X =  C_\rho(\mathbb{R}_{-}, \mathbb{C}^n)$ from \eqref{eq:statespaceinfdelay}, one can instead choose a weighted function space with a more general weight function, see for example \cite[Section 2]{Faria2002}. More precisely, let $w : \mathbb{R}_{-} \to (0,1]$ be a nondecreasing continuous function satisfying
\begin{equation*}
    w(0) = 1, \quad \lim_{\theta \to -\infty} w(\theta) = 0, \quad \lim_{u \uparrow 0} \frac{w(\theta+u)}{w(\theta)} = 1, \quad \forall \theta \in \mathbb{R}_{-},
\end{equation*}
where in the third equality, the limit is uniform in $\theta$. We can then consider the $w$-weighted Banach space $C_w(\mathbb{R}_{-},\mathbb{C}^n)$ with weighted norm $\| \cdot \|_w$ defined by
\begin{equation*}
    C_w(\mathbb{R}_{-},\mathbb{C}^n) \coloneqq \{ \varphi \in C(\mathbb{R}_-, \mathbb{C}^n) : \lim_{\theta \to -\infty} w(\theta) | \varphi(\theta) | = 0\}, \quad \| \varphi \|_w \coloneqq \sup_{\theta \leq 0} w(\theta) | \varphi(\theta) |.
\end{equation*}
In this setting, we can construct a characteristic operator on $\Omega = \mathbb{C}_\beta = \{ z \in \mathbb{C} : \Re(z) > \beta\}$, where
\begin{equation} \label{eq:Lambdabeta}
    \beta = \inf_{z \in \Lambda} \Re(z), \quad \Lambda = \bigg \{ z \in \mathbb{C} : \lim_{\theta \to - \infty} e^{\Re(z) \theta}  w(\theta) = 0 \bigg\},
\end{equation}
and note that $\beta = - \rho$ if $w(\theta) = e^{\rho \theta}$ for all $\theta \in \mathbb{R}_{-}$, as expected.

2) An alternative popular choice of state space is a weighted $L^p$-space, see for example \cite[Section II.1.3]{Kaashoek1992} or \cite[Section 1]{Naito1976}. This state space is constructed as follows: let $w : \mathbb{R}_{-} \to (0,1]$ be a measurable nondecreasing function such that $\| w \|_{L^1}$ is finite, and consider for every $p \geq 1$ the Banach space $L_w^p(\mathbb{R}_{-},\mathbb{C}^n)$ of measurable functions $\varphi: \mathbb{R}_{-} \to \mathbb{C}^n$ such that for any given $k \geq 0$, the restricted function $\varphi |_{[-k,0]}$ is continuous with norm
\begin{equation*}
    \| \varphi \|_{L_w^p} \coloneqq \sup_{k \geq 0} \bigg( \sup_{\theta \in [-k,0]} | \varphi(\theta) |^p + \int_{-\infty}^0 | \varphi(\theta) |^p w(\theta) d\theta  \bigg)^{1/p} < \infty.
\end{equation*}
In this setting $\beta$ is given as in \eqref{eq:Lambdabeta}, where now $\Lambda = \{ z \in \mathbb{C} : \int_{-\infty}^0 e^{p\Re(z)\theta} w(\theta) < \infty \}.$ \hfill $\lozenge$
\end{remark}

\subsection{Mixed functional differential equations} \label{subsec:MFDEs}

In the previous two examples, the rate of change of the system depended on the current and past states of the system. One can generalize these retarded equations in the sense that the rate of change of a system is allowed to depend on future states as well. Such differential equations arise, for example, naturally in the study of travelling wave solutions to differential equations posed on lattices, see for example \cite{Hupkes2012,Hupkes2010,Wu1997} and the references therein. For these so-called \emph{mixed functional differential equations} (MFDEs), we choose the state space $X$ as the Banach space $C([r_{-},r_{+}],\mathbb{C}^n)$, equipped with the supremum norm $\| \cdot \|_\infty$, where the (maximal) retarded and advanced arguments $r_{-} < 0 < r_{+}$ are assumed to be finite. Furthermore, let us consider a function $L \in C_T(\mathbb{R}, \mathcal{L}(X,\mathbb{C}^n))$. A vector-valued version of the Riesz representation theorem tells us that for every $t \in \mathbb{R}$ the operator $L(t)$ can be represented uniquely as
\begin{equation} \label{eq:L(t)RieszMFDE}
    L(t)\varphi = \int_{r_{-}}^{r_{+}} d_2 \eta(t,\theta) \varphi(\theta),
\end{equation}
and notice the subtle difference of a minus sign in \eqref{eq:L(t)RieszMFDE} compared to \eqref{eq:L(t)RieszDDE}. The map $\eta : \mathbb{R} \times [r_{-},r_{+}] \to \mathbb{C}^{n \times n}$ is a matrix-valued function, and $\eta(\cdot,\theta)$ is continuous and $T$-periodic for all $\theta \in [r_{-},r_{+}]$. Moreover, $\eta(t,\cdot)$ is of bounded variation, right-continuous on the open interval $(r_{-},r_{+})$ and normalized by the requirement $\eta(\cdot,0) = 0$.

In the setting of MFDEs, it turns out that the initial value problem
\begin{equation} \label{eq:LinearMFDE}
    \begin{dcases}
    \dot{x}(t) = L(t)x_t, \quad &t \geq s, \\
    x_s = \varphi, \quad &\varphi \in X.   
    \end{dcases}    
\end{equation}
is in general ill-posed, see for example the simple illustrative example by H\"arterich et al. \cite{Harterich2002}. Despite the ill-posedness of \eqref{eq:LinearMFDE}, the general theory for MFDEs has been developed extensively over the last two decades, see for example \cite{MalletParet1999,MALLETPARET2005,Rustichini1989,Hupkes2006}. We especially mention the work by Hupkes and Verduyn Lunel on periodic center manifolds for periodic MFDEs \cite{Hupkes2008}, as we will solve an open question regarding the spectral structure generated by periodic MFDEs by an application of the characteristic operator, see \cref{remark:MFDEspectral} for further details.

Since the initial value problem \eqref{eq:LinearMFDE} is in general ill-posed, we cannot construct an associated forward evolutionary system $U$, and therefore the Floquet multipliers cannot be defined via the monodromy operator $U(s+T,s)$. However, we can define the Floquet exponents in terms of $\mathcal{A}$ and $\Delta$, as we have also done in the previous two subsections. We emphasize that this definition is natural in the light of previous definitions of eigenvalues for autonomous MFDEs, which are based on a conceptually similar construction, see \cite{MalletParet1999,MALLETPARET2005,Hupkes2006} for further details. Concretely, we define for any $t\in \mathbb{R}$ the closed linear operator $A(t) : \mathcal{D}(A(t)) \subseteq X \to X$ by
\begin{equation*} 
    \mathcal{D}(A(t)) = \{ \varphi \in C^1([r_{-},r_{+}],\mathbb{C}^n) : \varphi '(0) = L(t) \varphi \}, \quad A(t)\varphi = \varphi'.
\end{equation*}
To this operator, we associate the linear operator $\mathcal{A} : \mathcal{D}(\mathcal{A}) \subseteq C_T(\mathbb{R},X) \to C_T(\mathbb{R},X)$ defined by 
\begin{equation} \label{eq:curlya_mfde}
     \mathcal{D}(\mathcal{A}) = \{ \varphi \in C_T^1(\mathbb{R},X) : \varphi(t) \in \mathcal{D}(A(t)) \mbox{ for all $t \in \mathbb{R}$} \}, \quad (\mathcal{A}\varphi)(t) = A(t)\varphi(t) - \dot{\varphi}(t). 
\end{equation}
The proof of the following result is similar to \cref{lemma:operatorlemmaDDE} and therefore omitted.

\begin{lemma}
Let $D : \mathcal{D}(D) \to C_T(\mathbb{R},X)$ be the operator defined by
\begin{equation*}
    \mathcal{D}(D) = \{ \varphi \in C_T^1(\mathbb{R},X) : \varphi(t)' \in X \mbox{ for all $t \in \mathbb{R}$} \}, \quad (D\varphi)(t) = \varphi(t)' - \dot{\varphi}(t).
\end{equation*}
Then $D$ is a closable linear operator satisfying \ref{hyp:iota}, \ref{hyp:restriction}, \ref{hyp:domainrangeD} and \ref{hyp:domaincurlyA} with $\Omega = \mathbb{C}$. Moreover, define the linear operators $K : \mathcal{D}(K) \subseteq C_T(\mathbb{R},X) \to C_T(\mathbb{R},\mathbb{C}^n)$ and $M : C_T(\mathbb{R},X) \to C_T(\mathbb{R},\mathbb{C}^n)$ by
\begin{equation*}
    \mathcal{D}(K) = C_T^1(\mathbb{R},X), \qquad (K\varphi)(t) = L(t) \varphi(t) - \dot{\varphi}(t)(0), \qquad (M\varphi)(t) = \varphi(t)(0), 
\end{equation*}
then $K$ satisfies \ref{hyp:K}, and $\mathcal{A}$ defined in \eqref{eq:curlya_mfde} is the first operator associated to $D, K$ and $M$.
\end{lemma}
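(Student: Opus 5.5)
I will follow the proof of \cref{lemma:operatorlemmaDDE} line by line, replacing the interval $[-h,0]$ by $[r_-,r_+]$ and checking where the advanced part $\theta \in (0,r_+]$ changes the argument. Throughout I put $I=[r_-,r_+]$, $Y=\mathbb{C}^n$ and $X=C([r_-,r_+],\mathbb{C}^n)$ in the construction of \cref{subsec:construction}.

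\textbf{Closability of $D$.} Take $\varphi_m\to 0$ and $D\varphi_m=\psi_m\to\psi$. The transport equation still gives
\begin{equation*}
\varphi_m(t)(\theta)=\varphi_m(t+\theta)(0)+\int_0^\theta \psi_m(t+\theta-s)(s)\,ds ,
\end{equation*}
now for $\theta\in[r_-,r_+]$. Passing to the limit and differentiating in $\theta_2$, exactly as in \eqref{eq:integralpsi}, yields $\psi=0$.

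\textbf{Kernel and (H1).} The kernel of $D$ again consists of the functions with the translation property $\varphi(t)(\theta)=\varphi(t+\theta)(0)$. I take $\mathcal{Y}=C_T^1(\mathbb{R},\mathbb{C}^n)$ and $(\iota q)(t)(\theta)=q(t+\theta)$, with inverse $\varphi\mapsto \varphi(\cdot)(0)$. Both maps are bounded in the supremum norm, so \ref{hyp:iota} holds.

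\textbf{(H2), (H3) and the main obstacle.} I define $D_0$ as the restriction of $D$ to $\{\varphi(t)(0)=0\}$. The splitting $\mathcal{D}(D)=\mathcal{N}(D)\oplus\mathcal{D}(D_0)$ is shown with the same decomposition $\varphi(t)(\theta)=[\varphi(t)(\theta)-\varphi(t+\theta)(0)]+\varphi(t+\theta)(0)$ as before. The key step is the resolvent formula \eqref{eq:resolventD0}, read as an oriented integral from $\theta$ to $0$, which for $\theta>0$ is $-\int_0^\theta$. This formula still solves the transport equation with boundary condition $\varphi(t)(0)=0$. The delicate part, and what I expect to be the main obstacle, is that the bound \eqref{eq:resolventD0Mz} must be redone on both sides of $0$. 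The integration length is at most $\max\{|r_-|,r_+\}$, and the exponential factor is bounded by $e^{|\Re z|\max\{|r_-|,r_+\}}$. This gives
\begin{equation*}
\|R(z,D_0)\|\le \max\{|r_-|,r_+\}\,e^{|\Re z|\max\{|r_-|,r_+\}}
\end{equation*}
for every $z\in\mathbb{C}$, so $\Omega=\mathbb{C}\ni 0$. The regularity computation \eqref{eq:derivativesresolventDDE}, carried out by the Leibniz rule, shows $C_T^1(\mathbb{R},X)\subseteq\mathcal{R}(zI-D_0)$. This yields dense range and \ref{hyp:domainrangeD}. Injectivity of $zI-D_0$ follows from uniqueness of the solution of the transport equation along characteristics through $\theta=0$.

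\textbf{(H5), (H4) and conclusion.} \ref{hyp:domaincurlyA} follows as in the DDE case, since \eqref{eq:resolventADelta} gives $\mathcal{R}(zI-\mathcal{A})=\mathcal{R}(zI-D_0)\supseteq\mathcal{D}(D)$. For \ref{hyp:K} I write $K=M_L-GM$, with $M_L$ bounded because $L\in C_T(\mathbb{R},\mathcal{L}(X,\mathbb{C}^n))$. By \cref{lemma:compositionclosed} and \cref{lemma:closedsum}, $K$ is closable. Moreover $K|_{\mathcal{D}(D_0)}=M_L|_{\mathcal{D}(D_0)}$, which is bounded. Finally, $MD\varphi=K\varphi$ reads $\varphi(t)'(0)-\dot\varphi(t)(0)=L(t)\varphi(t)-\dot\varphi(t)(0)$, that is $\varphi(t)\in\mathcal{D}(A(t))$. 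Hence $\mathcal{A}$ from \eqref{eq:curlya_mfde} is the first operator associated with $D,K$ and $M$, and it is closable by \cref{lemma:D0closable}.
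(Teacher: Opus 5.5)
Your outline is the route the paper intends (its proof is only ``similar to \cref{lemma:operatorlemmaDDE}''), and most transplanted steps are fine on $[r_-,r_+]$: closability along characteristics, the kernel and $\iota$, the splitting, the two-sided bound on $R(z,D_0)$, \ref{hyp:domainrangeD}, \ref{hyp:K} and the identification of $\mathcal{A}$. The step that does not carry over is \ref{hyp:domaincurlyA}. Formula \eqref{eq:resolventADelta}, and with it $\mathcal{R}(zI-\mathcal{A})=\mathcal{R}(zI-D_0)$, is only available for $z\in\rho(\Delta)$. Passing to all $z\in\rho(\mathcal{A})$ needs $\rho(\mathcal{A})\cap\Omega=\rho(\Delta)$, which the paper gets from \cref{thm:rhoAcharac}, \cref{thm:spectralequal} and \cref{cor:spectralrelations}. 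All three rest on \ref{hyp:rangeequal}, in particular on $\rho(\Delta)\neq\emptyset$. For DDEs this is supplied by \cref{lemma:characinvertible}. For MFDEs it is exactly what is not known in general (\cref{remark:rhoDeltaMFDE}; \cref{thm:eigenfunctionsMFDE} has to assume it). So ``as in the DDE case'' does not prove \ref{hyp:domaincurlyA} unconditionally, as the lemma claims; the paper's one-line proof glosses over the same point.

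The fix is short and uses only closedness of $\Delta(z)=G+N(z)$, which holds here as in \cref{thm:charmatrixDDE} because $N(z)$ is bounded.
Let $z\in\rho(\mathcal{A})$ and $\psi\in\mathcal{R}(zI-D_0)$. Choose $\psi_m\in\mathcal{R}(zI-\mathcal{A})\subseteq\mathcal{R}(zI-D_0)$ with $\psi_m\to\psi$, and set $\varphi_m=R(z,\mathcal{A})\psi_m$; these converge because $R(z,\mathcal{A})$ is bounded.
Since $(zI-D)\varphi_m=\psi_m$, you can write $\varphi_m=Q(z)\iota q_m+R(z,D_0)\psi_m$ with $q_m=M\varphi_m$, because $MQ(z)\iota q=q$ and $M$ vanishes on $\mathcal{D}(D_0)$. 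Hence $q_m$ converges in $C_T(\mathbb{R},\mathbb{C}^n)$.
Using $\Delta(z)=(MD-K)Q(z)\iota$ and $DR(z,D_0)=zR(z,D_0)-I$, the boundary condition $(MD-K)\varphi_m=0$ is equivalent to $\Delta(z)q_m=M\psi_m-(zM-K)R(z,D_0)\psi_m$. The right-hand side converges by boundedness of $M$, $R(z,D_0)$ and $KR(z,D_0)$.
Closedness of $\Delta(z)$ then yields $q\in C_T^1(\mathbb{R},\mathbb{C}^n)$ with $\Delta(z)q=M\psi-(zM-K)R(z,D_0)\psi$. Consequently $Q(z)\iota q+R(z,D_0)\psi$ lies in $\mathcal{D}(\mathcal{A})$ and is mapped to $\psi$ by $zI-\mathcal{A}$.
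Therefore $\mathcal{D}(D)\subseteq\mathcal{R}(zI-D_0)\subseteq\mathcal{R}(zI-\mathcal{A})$ for every $z\in\rho(\mathcal{A})$, with no assumption on $\rho(\Delta)$.
An alternative route: injectivity of $zI-\mathcal{A}$ forces injectivity of $\Delta(z)$, and since $\Delta(z)$ has compact resolvent it is Fredholm of index zero, which gives $z\in\rho(\Delta)$.
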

The second operator $\hat{\mathcal{A}}$ associated with $D, K$ and $M$ now takes the form 
\begin{equation*}
    \mathcal{D}(\hat{\mathcal{A}}) = \bigg \{ \begin{pmatrix}
    q\\
    \varphi
    \end{pmatrix}
    \in \hat{C}_T^1(\mathbb{R},X) : \varphi \in \mathcal{D}(D), \ q = M\varphi \bigg \}, \quad 
    \hat{\mathcal{A}}
    \begin{pmatrix}
    q\\
    \varphi
    \end{pmatrix}
    =
    \begin{pmatrix}
    K\varphi \\
    D\varphi
    \end{pmatrix}.
\end{equation*}
The proof of the following result is similar to that of \cref{thm:charmatrixDDE} and therefore omitted.

\begin{theorem}
The holomorphic operator-valued function $\Delta : \mathbb{C} \to L(C_T(\mathbb{R},\mathbb{C}^n))$ with values $\Delta(z) : C_T^1(\mathbb{R},\mathbb{C}^n) \to C_T(\mathbb{R},\mathbb{C}^n)$ given by
\begin{equation*}
    (\Delta(z)q)(t) = \dot{q}(t) + z q(t) - \int_{r_{-}}^{r_{+}} d_2 \eta(t,\theta) e^{z \theta} q(t+\theta),
\end{equation*}
is a $\frac{2\pi i}{T}\mathbb{Z}$-similar closed characteristic operator for $\hat{\mathcal{A}}$ on $\mathbb{C}$ and the equivalence is given by
\begin{equation*}
    \begin{pmatrix}
        \Delta(z) & 0 \\
        0 & I 
    \end{pmatrix}
    =
    F(z)(z I - \hat{\mathcal{A}})E(z), \quad \forall z \in \mathbb{C},
\end{equation*}
where $E : \mathbb{C} \to L(\hat{C}_T(\mathbb{R},X),\hat{C}_T(\mathbb{R},X)_{\hat{\mathcal{A}}})$ and $F : \mathbb{C} \to L(\hat{C}_T(\mathbb{R},X))$ are holomorphic bounded operator-valued functions such that $E(z) : C_T^1(\mathbb{R},\mathbb{C}^n) \oplus \mathcal{R}(zI-D_0) \to \mathcal{D}(\hat{\mathcal{A}})$ and $F(z) \in \mathcal{L}(C_T(\mathbb{R},\mathbb{C}^n) \oplus \mathcal{R}(zI-D_0))$ are bijective mappings, given by
\begin{alignat*}{2}
    E(z)
    \begin{pmatrix}
    q\\
    \varphi
    \end{pmatrix}
    &=
    \begin{pmatrix}
    q\\
    \psi
    \end{pmatrix}, \quad
        \psi(t)(\theta) &&= e^{z \theta}q(t+\theta)+ \int_{\theta}^0 e^{z(\theta-s)}\varphi(t+\theta-s)(s) ds, \\
    F(z)
    \begin{pmatrix}
    q\\
    \varphi
    \end{pmatrix}
    &=
    \begin{pmatrix}
    q + \phi\\
    \varphi
    \end{pmatrix}, \quad
        \phi(t) &&= \int_{r_{-}}^{r_{+}} d_2\eta(t,\theta) \int_{\theta}^0 e^{z(\theta-s)}\varphi(t+\theta-s)(s) ds.
\end{alignat*}
\end{theorem}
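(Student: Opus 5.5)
The plan mirrors the proof of \cref{thm:charmatrixDDE}, now with $X = C([r_{-},r_{+}],\mathbb{C}^n)$, and rests on \cref{thm:Delta equivalence} together with the preceding lemma. That lemma says $D$, $K$, $M$ satisfy \ref{hyp:iota}--\ref{hyp:domaincurlyA} with $\Omega=\mathbb{C}$ and $\hat{\mathcal{A}}$ is the second operator associated with them. So \cref{thm:Delta equivalence} immediately gives holomorphic bounded $E$, $F$ with bijective values and the equivalence $\mathrm{diag}(\Delta(z),I) = F(z)(zI-\hat{\mathcal{A}})E(z)$ on $\mathbb{C}$, where $\Delta(z) = (zM-K)Q(z)\iota$ by \eqref{eq:Delta(z)2}. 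It remains to identify $\Delta$, $E$ and $F$ explicitly, and to check closedness and $\frac{2\pi i}{T}\mathbb{Z}$-similarity.

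\textbf{Step 1: explicit formulas.} With $\iota$ as in \eqref{eq:iotaDDE}, i.e. $(\iota q)(t)(\theta) = q(t+\theta)$ for $\theta\in[r_{-},r_{+}]$, and $R(z,D_0)$ given by \eqref{eq:resolventD0}, I will redo the computation \eqref{eq:Q(z)DDE}. For $\theta>0$ the integral $\int_\theta^0$ is oriented backwards, but the identity $q(t+\theta) - z\int_\theta^0 e^{z(\theta-s)}ds\, q(t+\theta) = e^{z\theta}q(t+\theta)$ still holds for all real $\theta$. This gives $(Q(z)\iota q)(t)(\theta) = e^{z\theta}q(t+\theta)$.

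Applying $zM-K$ and using $M\varphi = \varphi(\cdot)(0)$, $(K\varphi)(t) = L(t)\varphi(t) - \dot\varphi(t)(0)$ and \eqref{eq:L(t)RieszMFDE} yields the stated $\Delta(z)$. The second component of $E(z)$ is $Q(z)\iota q + R(z,D_0)\varphi$, which is exactly $\psi$; its first component $MQ(z)\iota q + MR(z,D_0)\varphi$ reduces to $q$, since $R(z,D_0)\varphi$ vanishes at $\theta=0$. For $F(z)$ the first component is $q-(zM-K)R(z,D_0)\varphi$. Here $M$ and the $\dot{(\cdot)}(0)$ term vanish on $\mathcal{D}(D_0)$, so this equals $q + L(t)[R(z,D_0)\varphi](t)$. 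Inserting \eqref{eq:resolventD0} and \eqref{eq:L(t)RieszMFDE} produces $\phi$. Note that there is no sign flip in $\theta$ here, in contrast with the DDE case.

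\textbf{Step 2: closedness and periodicity.} I will write $\Delta(z) = G + N(z)$ with $G q = \dot q$ on $C^1_T(\mathbb{R},\mathbb{C}^n)$, which is closed, and $N(z)q = zq - L(\cdot)[\theta\mapsto e^{z\theta}q(\cdot+\theta)]$. The operator $N(z)$ is bounded with
\[
\|N(z)\|\le |z| + \max\{e^{r_{-}\Re z},e^{r_{+}\Re z}\}\|L\|_\infty .
\]
Then \cref{lemma:closedsum} gives that $\Delta(z)$ is closed. For \ref{hyp:ek}, multiplication by $e^{2\pi k i t/T}$ is smooth in $t$, so it preserves each of the listed spaces. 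The relations \eqref{eq:periodicconditions} follow by direct calculation from the formulas for $D$, $K$, $M$ and $Q(z)\iota$. For example, $\varepsilon_k Q(z+\tfrac{2\pi k i}{T})\iota\varepsilon_k^{-1}q$ evaluated at $(t,\theta)$ equals $e^{2\pi k i t/T}e^{(z+2\pi k i/T)\theta}e^{-2\pi k i (t+\theta)/T}q(t+\theta) = e^{z\theta}q(t+\theta)$. Then \cref{prop:Deltasimilar} applies.

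\textbf{Main obstacle.} The only step that differs from the retarded case is verifying, inside the omitted lemma, that $R(z,D_0)$ and $\psi$ are well behaved on a two-sided interval. On $(0,r_{+}]$ the transport integral runs the other way, and the bound replacing \eqref{eq:resolventD0Mz} must be
\[
M_z = \max_{\theta\in[r_{-},r_{+}]}\Big|\int_\theta^0 e^{\Re z(\theta-s)}ds\Big| ,
\]
which is still finite for every $z\in\mathbb{C}$. Since the lemma is assumed, I only need to check that the explicit formulas remain valid for $\theta>0$, and that check is Step 1.
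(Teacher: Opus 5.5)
Your proposal is correct and takes the same approach the paper intends; the paper omits this proof as analogous to \cref{thm:charmatrixDDE}. As there, you invoke \cref{thm:Delta equivalence} through the preceding lemma, compute $(Q(z)\iota q)(t)(\theta)=e^{z\theta}q(t+\theta)$ on all of $[r_{-},r_{+}]$ to identify $\Delta$, $E$ and $F$, get closedness from $\Delta(z)=G+N(z)$ via \cref{lemma:closedsum}, and check \ref{hyp:ek} and \eqref{eq:periodicconditions} directly.
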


\begin{theorem} \label{thm:eigenfunctionsMFDE}
If $\rho(\Delta) \neq \emptyset$, then the spectrum of $\mathcal{A}$ consists solely of isolated eigenvalues of finite type:
\begin{equation*}
    \sigma(\mathcal{A}) =  \{ \sigma \in \mathbb{C} : \sigma \textit{ is a characteristic value of } \Delta \} = \{\sigma \in \mathbb{C} : \sigma \mbox{ is a Floquet exponent} \}.
\end{equation*}
For $\sigma \in \sigma(\mathcal{A})$, there holds $m_g(\sigma,\mathcal{A}) = m_g(\Delta(\sigma)), m_a(\sigma,\mathcal{A}) = m_a(\Delta(\sigma)),  k(\sigma,\mathcal{A}) = k(\Delta(\sigma))$ and $r(\sigma,\mathcal{A}) = r(\Delta(\sigma))$, and the partial multiplicities of $\sigma$ considered as an eigenvalue of $\mathcal{A}$ are equal to the zero multiplicities of $\sigma$ considered as a characteristic value of $\Delta$. Furthermore, if $\{ \{{q}_{i,0},\dots,{q}_{i,k_i-1} \} : i=1,\dots,p\}$ is a canonical system of Jordan chains of $\Delta$ at $\sigma$, then $\{ \{\varphi_{i,0},\dots,\varphi_{i,k_i-1} \} : i=1,\dots,p \}$ is a canonical system of (generalized) eigenvectors of $\mathcal{A}$ at $\sigma$, where
\begin{equation*}
    \varphi_{i,k}(t)(\theta) = e^{\sigma \theta} \sum_{l=0}^k \frac{\theta^l}{l!}q_{i,k-l}(t+\theta), \quad \forall k = 0,\dots,k_{i}-1, \  t \in \mathbb{R}, \ \theta \in [r_{-},r_{+}].
\end{equation*}
\end{theorem}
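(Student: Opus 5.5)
The plan is to follow the template of \cref{thm:eigenfunctionsDDE}. The only structural difference is that for MFDEs we cannot produce a point of $\rho(\Delta)$ by a Neumann series in $z$: $e^{z\theta}$ is unbounded for $\theta\in[r_-,r_+]$ as $\Re(z)\to\pm\infty$. This is exactly why $\rho(\Delta)\neq\emptyset$ is assumed.

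\textbf{Step 1: verify \ref{hyp:rangeequal} and conclude that $\Delta$ is a characteristic operator for $\mathcal{A}$.}
\begin{enumerate}
\item Take $z_0\in\rho(\Delta)$, which exists by assumption.
\item Since $\Delta(z_0)$ is closed (it is $G$ plus the bounded operator $N(z_0)$, exactly as in \cref{thm:charmatrixDDE}), its inverse is bounded on a dense range. The closed graph theorem then forces $\mathcal{R}(\Delta(z_0))=C_T(\mathbb{R},\mathbb{C}^n)$.
\item By \cref{lemma:T2onto}, $\mathcal{R}(\Delta(z_0))\subseteq\mathcal{R}(MD-K)\subseteq C_T(\mathbb{R},\mathbb{C}^n)$, so equality holds throughout and \ref{hyp:rangeequal} is satisfied.
\item \cref{thm:rhoAcharac} now makes $\Delta$ a closed characteristic operator for $\mathcal{A}$ on $\Omega=\mathbb{C}$. \cref{cor:spectralrelations} supplies \ref{hyp:SH2}, \ref{hyp:SH3}, the equality of $m_g,m_a,k,r$, and the equality of partial and zero multiplicities.
\end{enumerate}

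\textbf{Step 2: show that $\Delta(z)$ has compact resolvent for every $z\in\mathbb{C}$, so that \cref{lemma:compactspectra} applies.} Decompose $\mu I-\Delta(z)=S(\mu-z)-C(z)$ as in \eqref{eq:mIDelta}, now with
\[
(C(z)q)(t)=-\int_{r_-}^{r_+}d_2\eta(t,\theta)e^{z\theta}q(t+\theta).
\]
This operator is bounded, with norm at most $\max\{e^{r_-\Re z},e^{r_+\Re z}\}\|L\|_\infty$ for fixed $z$.
\begin{enumerate}
\item For fixed $z$, the estimate $\|S(\mu-z)^{-1}\|\le 1/|\mu-z|$ for real $\mu-z\neq0$ lets us choose $\mu$ with $\|S(\mu-z)^{-1}\|<\|C(z)\|^{-1}$. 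The Neumann series then gives $\mu\in\rho(\Delta(z))$. This works because $z$ is fixed and only $\mu$ varies, so the growth of $e^{z\theta}$ does not interfere.
\item $S(\mu-z)^{-1}$ maps $C_T$ into $C^1_T$ boundedly, hence is compact by Arzel\`a--Ascoli.
\item The identity $R(\mu,\Delta(z))=S(\mu-z)^{-1}[I-C(z)S(\mu-z)^{-1}]^{-1}$ writes the resolvent as a compact operator times a bounded one, so it is compact.
\item \cref{lemma:compactspectra}, which combines the analytic Fredholm theorem with $\rho(\Delta)\neq\emptyset$, then shows that every point of $\sigma(\mathcal{A})$ is an isolated eigenvalue of finite type. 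It also gives $\sigma(\mathcal{A})=\{\text{characteristic values of }\Delta\}$.
\end{enumerate}

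\textbf{Step 3: identify characteristic values with Floquet exponents.} Comparing $\Delta(\sigma)q=0$ with \eqref{eq:Floqexpgeneral}, a characteristic value with eigenvector $q\in C^1_T$ is precisely a Floquet pair $(\sigma,q)$.

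\textbf{Step 4: the Jordan chains.} Copy the argument at the end of \cref{thm:eigenfunctionsDDE}.
\begin{enumerate}
\item By the equivalence, $E(z)(\sum_i(z-\sigma)^iq_i,0)$ has second component $t\mapsto e^{z\cdot}\sum_i(z-\sigma)^iq_i(t+\cdot)$. This is a root function for $zI-\hat{\mathcal{A}}$ whose values lie in $J\mathcal{D}(\mathcal{A})$.
\item Expand $e^{z\theta}=e^{\sigma\theta}\sum_l\frac{\theta^l}{l!}(z-\sigma)^l+\mathcal{O}((z-\sigma)^k)$ on $\theta\in[r_-,r_+]$; the remainder is uniform because the interval is compact. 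This produces the stated $\varphi_{i,k}$.
\item \cref{lemma:graphJ} and \cref{cor:Jordanchains} show that these $\varphi_{i,k}$ form a canonical system for $\mathcal{A}$.
\end{enumerate}

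The main obstacle is Step 2, namely confirming compactness of the resolvent without any control of $\|C(z)\|$ uniform in $z$. Since only $\mu$ is taken large, I expect it to go through. The genuinely nontrivial input, nonemptiness of $\rho(\Delta)$, is assumed in the statement.
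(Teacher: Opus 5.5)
Your proposal is correct and follows the paper's proof essentially step for step. You obtain \ref{hyp:rangeequal} from the assumed $z_0\in\rho(\Delta)$ and the closedness of $\Delta(z_0)$, get $\rho(\Delta(z))\neq\emptyset$ and a compact resolvent from the splitting $\mu I-\Delta(z)=S(\mu-z)-C(z)$ with only $\mu$ taken large, then apply \cref{lemma:compactspectra} and \cref{cor:spectralrelations}, and read off the Jordan chains from $E(z)$ as in \cref{thm:eigenfunctionsDDE}.

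One small slip in Step 4: the values of the root function $JQ(z)\iota\sum_i(z-\sigma)^iq_i$ lie in $J\mathcal{D}(\mathcal{A})$ only where $\Delta(z)\sum_i(z-\sigma)^iq_i=0$, not for all $z$. What lies in $J\mathcal{D}(\mathcal{A})$ are the Taylor coefficients $J\varphi_{i,k}$, because every vector $J\varphi$ of a Jordan chain of $\hat{\mathcal{A}}$ automatically satisfies $MD\varphi=K\varphi$, and that is all \cref{lemma:graphJ} needs.
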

\begin{proof}
Analogous to the proof of \cref{lemma:characinvertible}, one can verify that \ref{hyp:rangeequal} holds, provided that $\rho(\Delta) \neq \emptyset$ and $\rho(\Delta(z)) \neq \emptyset$ for all $z \in \mathbb{C}$. The only difference is that the operator norm of $C(z)$ from \cref{lemma:characinvertible} is bounded above by $ \max \{e^{r_{-} \Re(z)},e^{r_{+} \Re (z)} \} \|L\|_\infty < \infty$. The remaining parts of the proof can be completed as in \cref{lemma:characinvertible} and \cref{thm:eigenfunctionsDDE}.
\end{proof}

\begin{remark} \label{remark:rhoDeltaMFDE}
In contrast to the results for classical DDEs (\cref{subsec:classicalDDEs}) and iDDEs (\cref{subsec:infinitedelay}), the proof of \cref{thm:eigenfunctionsMFDE} requires the additional assumption that $\rho(\Delta) \neq \emptyset$. If one attempts to prove, along the same lines of \cref{lemma:characinvertible} that $\rho(\Delta) \neq \emptyset$, the difficulty arises in establishing the existence of a $z_0 \in \mathbb{C}$ such that $\|C(z_0)S(-z_0)^{-1} \| < 1$. A direct computation shows that this inequality is satisfied if $\min\{-r_{-},r_{+} \} < 1/(e\|L\|_\infty)$, meaning that either the retarded or advanced argument, or the operator norm $\|L\|_\infty$, is small enough. According to \cite[Lemma 4.3.2]{Hupkes2008}, there might be the possibility of $\rho(\Delta) = \emptyset$ for MFDEs, but necessary and (alternative) sufficient conditions remain a topic for further research. \hfill $\lozenge$
\end{remark}

The proofs of the following two results are similar to those of \cref{cor:resolventDDE} and \cref{cor:multiplicityDDE}, and therefore omitted.
\begin{corollary}
If $\rho(\Delta) \neq \emptyset$ and $z \in \mathbb{C}$ is not a Floquet exponent, then the resolvent of $\mathcal{A}$ at $z$ has the following representation
\begin{equation*}
    (R(z,\mathcal{A}) \varphi)(t)(\theta) = e^{z \theta} \varphi_z(t+\theta) + \int_\theta^0 e^{z(\theta-s)} \varphi(t+\theta-s)(s) ds, \quad \forall \varphi \in \mathcal{R}(zI-\mathcal{A}) \supseteq C_T^1(\mathbb{R},X),
\end{equation*}
where $\varphi_z$ is given by
\begin{equation*}
    \varphi_z = \Delta(z)^{-1} \bigg( t \mapsto \varphi(t)(0) + \int_{r_{-}}^{r_{+}} d_2 \eta(t,\theta)\int_{\theta}^{0} e^{z(\theta - s)} \varphi(t + \theta - s)(s) ds \bigg).
\end{equation*}
\end{corollary}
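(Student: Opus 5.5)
The plan is to follow the proof of \cref{cor:resolventDDE}. The only differences are the interval $[r_{-},r_{+}]$ in place of $[-h,0]$ and the sign conventions in the Riesz representation \eqref{eq:L(t)RieszMFDE}.

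\textbf{Step 1: identify the resolvent set.} By \cref{thm:eigenfunctionsMFDE}, which applies because $\rho(\Delta)\neq\emptyset$, we have $\sigma(\mathcal{A})=\sigma(\mathcal{A})\cap\mathbb{C}$, and this set equals the set of Floquet exponents. Since $\Omega=\mathbb{C}$ and the spectral identity \eqref{eq:specAAhatDelta} holds, a non-Floquet exponent $z$ lies in $\rho(\mathcal{A})=\rho(\Delta)$. The same corollary gives $\mathcal{R}(zI-\mathcal{A})=\mathcal{R}(zI-D_0)$. This space contains $C_T^1(\mathbb{R},X)$, by the MFDE analogue of the resolvent computation \eqref{eq:resolventD02}--\eqref{eq:derivativesresolventDDE} for $D_0$.

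\textbf{Step 2: specialise the abstract resolvent formula.} I then invoke \eqref{eq:resolventADelta},
\[
R(z,\mathcal{A})\varphi=Q(z)\iota\,\Delta(z)^{-1}\bigl[M\varphi-(zM-K)R(z,D_0)\varphi\bigr]+R(z,D_0)\varphi ,
\]
and substitute the concrete operators:
\begin{itemize}
\item $(Q(z)\iota q)(t)(\theta)=e^{z\theta}q(t+\theta)$, computed as in \eqref{eq:Q(z)DDE};
\item $R(z,D_0)$ given by the integral \eqref{eq:resolventD0}, now with $\theta\in[r_{-},r_{+}]$ and the orientation $\int_\theta^0$ kept for both signs of $\theta$;
\item $(M\varphi)(t)=\varphi(t)(0)$.
\end{itemize}
The last two terms immediately produce the stated expression $e^{z\theta}\varphi_z(t+\theta)+\int_\theta^0 e^{z(\theta-s)}\varphi(t+\theta-s)(s)\,ds$.

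\textbf{Step 3: compute the bracket.} The key observation is that $R(z,D_0)\varphi\in\mathcal{D}(D_0)\subseteq\mathcal{N}(M)$, so $zMR(z,D_0)\varphi=0$. For the same reason the $\dot{(\cdot)}(0)$ part of $K$ vanishes on $\mathcal{D}(D_0)$. Hence
\[
K R(z,D_0)\varphi=L(t)\bigl[R(z,D_0)\varphi\bigr](t).
\]
Applying \eqref{eq:L(t)RieszMFDE} to this gives
\[
\int_{r_{-}}^{r_{+}} d_2\eta(t,\theta)\int_\theta^0 e^{z(\theta-s)}\varphi(t+\theta-s)(s)\,ds .
\]
Therefore the bracket equals $M\varphi+KR(z,D_0)\varphi$, which is exactly the argument of $\Delta(z)^{-1}$ in the statement.

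\textbf{Main obstacle.} This is bookkeeping rather than a genuine difficulty. One must check that the MFDE resolvent of $D_0$ is indeed given by the same integral formula on both sides of $0$, since for $\theta>0$ the integral is taken backwards. One must also keep the signs straight in $(zM-K)$ and use $\mathcal{D}(D_0)\subseteq\mathcal{N}(M)$.
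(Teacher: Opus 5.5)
Your proposal is correct and follows the route the paper intends. The paper omits this proof as analogous to \cref{cor:resolventDDE}, which identifies $\rho(\mathcal{A})=\rho(\Delta)$ with the non-Floquet exponents and then substitutes $Q(z)\iota$ and $R(z,D_0)$ into \eqref{eq:resolventADelta}. Your extra checks are the bookkeeping the paper leaves implicit: that the integral formula for $R(z,D_0)$ solves the transport problem for $\theta$ of either sign, that $MR(z,D_0)\varphi=0$, and that $KR(z,D_0)\varphi=M_L R(z,D_0)\varphi$.
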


\begin{corollary}
Suppose that $\rho(\Delta) \neq \emptyset$. If $\sigma \in \mathbb{C}$ is a Floquet exponent such that $k(\sigma,\mathcal{A}) = r(\Delta(\sigma))$, then the multiplicity theorem $\dim \mathcal{N}((\sigma I - \mathcal{A})^{r(\Delta(\sigma))}) =  m_a(\Delta(\sigma))$ holds.
\end{corollary}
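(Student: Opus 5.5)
The multiplicity statement follows from \cref{cor:multiplicity}, once the MFDE setting is known to satisfy that corollary's hypotheses. The plan mirrors the classical DDE case, \cref{cor:multiplicityDDE}; the only new ingredient is the assumption $\rho(\Delta)\neq\emptyset$, which replaces the Neumann-series argument that fails for MFDEs (\cref{remark:rhoDeltaMFDE}).

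First, I check that $\Delta$ is a characteristic operator for $\mathcal{A}$ on $\Omega=\mathbb{C}$ and that \cref{cor:spectralrelations} applies.
\begin{itemize}
\item The preceding lemma verifies \ref{hyp:iota}--\ref{hyp:domaincurlyA} for the MFDE operators $D,K,M$, with $\Omega=\mathbb{C}$.
\item The proof of \cref{thm:eigenfunctionsMFDE} shows that \ref{hyp:rangeequal} holds under $\rho(\Delta)\neq\emptyset$. Indeed, $\Delta(z_0)$ is closed with bounded inverse for any $z_0\in\rho(\Delta)$, so its range is closed and dense, hence all of $C_T(\mathbb{R},\mathbb{C}^n)$. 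It is squeezed inside $\mathcal{R}(MD-K)$ by \cref{lemma:T2onto}, which gives equality.
\item Then \cref{thm:rhoAcharac} yields the equivalence, and \cref{cor:spectralrelations} gives \ref{hyp:SH2} and \ref{hyp:SH3} together with the multiplicity identities.
\end{itemize}

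Second, I check that the given Floquet exponent $\sigma$ satisfies the hypotheses of \cref{cor:multiplicity}. By \cref{thm:eigenfunctionsMFDE}, which again uses $\rho(\Delta)\neq\emptyset$, $\sigma$ is an eigenvalue of $\mathcal{A}$ and an isolated eigenvalue of finite type. Equivalently, it is an isolated characteristic value of finite type of $\Delta$. That theorem relies on \cref{lemma:compactspectra}, which requires $\rho(\Delta(z))\neq\emptyset$ and compact resolvent for every $z$. This holds exactly as in \cref{lemma:characinvertible} and \cref{thm:eigenfunctionsDDE}: write $\mu I-\Delta(z)=S(\mu-z)-C(z)$ with $C(z)$ bounded (norm at most $\max\{e^{r_-\Re z},e^{r_+\Re z}\}\|L\|_\infty$) and $S(\mu-z)^{-1}$ compact. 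Then choose $\mu$ real and large so that $\|S(\mu-z)^{-1}\|<\|C(z)\|^{-1}$.

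Third, with the assumed identity $k(\sigma,\mathcal{A})=r(\Delta(\sigma))$, \cref{cor:multiplicity} (equivalently \eqref{eq:multiplicitythm} in \cref{thm:spectralequal}) gives
\[
\dim\mathcal{N}\bigl((\sigma I-\mathcal{A})^{r(\Delta(\sigma))}\bigr)=\dim E_\sigma(\mathcal{A})=m_a(\sigma,\mathcal{A})=m_a(\Delta(\sigma)).
\]

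The only potential obstacle is the first step: confirming that nothing in \cref{thm:rhoAcharac} used $\rho(\Delta)\neq\emptyset$ beyond \ref{hyp:rangeequal}. Here this is an explicit hypothesis rather than a derived fact, so I just trace its use through \ref{hyp:rangeequal}; everything else carries over verbatim from the DDE case.
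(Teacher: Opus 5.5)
Your proposal is correct and follows the paper's route. The paper omits the proof as analogous to \cref{cor:multiplicityDDE}: a direct application of \cref{cor:multiplicity} (the multiplicity part of \cref{thm:spectralequal}), once \cref{thm:eigenfunctionsMFDE} guarantees under $\rho(\Delta)\neq\emptyset$ that \ref{hyp:rangeequal} holds and that the Floquet exponent $\sigma$ is an isolated eigenvalue of finite type, which are exactly the hypotheses you check (closed and dense range of $\Delta(z_0)$, compact resolvent via $S(\mu-z)^{-1}$ with $\Re\mu$ large, and $\dim E_\sigma(\mathcal{A})=m_a(\sigma,\mathcal{A})=m_a(\Delta(\sigma))$).
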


Since for MFDEs the initial value problem \eqref{eq:LinearMFDE} is in general ill-posed, and there is no solution operator $U$ available, we cannot use the tools from the proof of \cref{cor:FloquetsolDDE} to obtain the elementary solutions for MFDEs. However, if $\rho(\Delta) \neq \emptyset$ and $\{{q}_{0},\dots,{q}_{k-1} \}$ is a Jordan chain of $\Delta$ at a characteristic value $\sigma$, then one can verify (direct approach of the proof of \cref{cor:FloquetsolDDE}) that the elementary solutions $x$ of the form
\begin{equation*}
    x(t) = e^{\sigma(t-s)} \sum_{l=0}^{k-1} \frac{(t-s)^l}{l!}q_{k-l-1}(t), \quad \forall t \geq s.
\end{equation*}
satisfy \eqref{eq:LinearMFDE}. To explicitly construct in practice a Jordan chain $\{q_0,\dots,q_{k-1}\}$ for $\Delta$ at $\sigma$, one can use the same technique as described in \cref{remark:jordanchainpractice}. The only minor difference is that $\Delta(\sigma)q_0 = 0$ is now a periodic linear homogeneous MFDE and the remaining $k-1$ equations in \eqref{eq:Jordanchainexplicit} to obtain $q_1,\dots,q_{k-1}$ are periodic linear inhomogeneous MFDEs. The mentioned orthogonal collocation method is still suited to solve this particular linear system of differential equations.

\begin{remark} \label{remark:MFDEspectral}
In \cite{Hupkes2008}, Hupkes and Verduyn Lunel state and prove the periodic center manifold theorem for periodic MFDEs. To do so, they need the additional assumption (called (\textcolor{red}{HF}) in \cite{Hupkes2008}) that there exists a spectral gap for the Floquet exponents around the imaginary axis. Although they establish the existence of such a spectral gap for various specific subclasses of MFDEs (discrete delays and advances that are rationally related to the period, see \cite[Lemma 3.3 and Corollary 3.4-3.5]{Hupkes2008}), the general question whether all periodic MFDEs have such a spectral gap remained open in \cite{Hupkes2008}. This question is now (partially) answered in \cref{thm:eigenfunctionsMFDE}: if $\rho(\Delta) \neq \emptyset$, then this theorem states that all Floquet exponents of periodic linear MFDEs are isolated and of finite type, meaning that such a spectral gap exists. 
\hfill $\lozenge$
\end{remark}

\section{Conclusion and outlook} \label{sec:conclusion}
This manuscript considers characteristic operators for periodic linear evolutionary systems, thereby building upon and extending the work by Kaashoek and Verduyn Lunel \cite{Kaashoek1992} on characteristic matrices for autonomous linear evolutionary systems.
We applied our results to three significant subclasses of periodic FDEs: classical delay differential equations (DDEs, \cref{subsec:classicalDDEs}), delay differential equations with infinite delay (iDDEs, \cref{subsec:infinitedelay}) and mixed functional differential equations (MFDEs, \cref{subsec:MFDEs}). For this last class of equations, we resolved an open problem concerning the discrete spectral structure of the Floquet exponents (\cref{remark:MFDEspectral}). We conjecture that our results can also be used to yield spectral information of various other classes of periodic linear differential equations, including DDEs on $L^p$ spaces (with applications in control theory), abstract DDEs (with applications in mathematical neuroscience and PDEs with time delay), neutral FDEs (with applications in electrical and mechanical engineering), hyperbolic PDEs (with various applications in the analysis of waves) and Lotka-McKendrick-von Förster models (with applications in population dynamics). Characteristic matrices for the autonomous counterparts of the aforementioned classes of differential equations are provided in \cite[Section II]{Kaashoek1992}. Nevertheless, verifying the hypotheses of the general scheme from \cref{sec:construction} for the mentioned periodic models under consideration is far from straightforward and involves numerous technical challenges. For instance, consider the difference in length and complexity between the proofs of periodic classical DDEs (\cref{subsec:classicalDDEs}) and that of autonomous classical DDEs (\cite[Section II.1]{Kaashoek1992}). In some cases, it may even be necessary to relax or invoke alternative hypotheses to achieve comparable results.

When applying the abstract results to various classes of (functional) differential equations (\cref{sec:applications}), we constructed a characteristic operator $\Delta : \Omega \to L(\mathcal{F}_T(\mathbb{R},\mathbb{C}^n))$ for a given linear operator $\mathcal{A} \in L(\mathcal{F}_T(\mathbb{R},X))$ with $X = \mathcal{F}(I,\mathbb{C}^n)$, where $I \subseteq \mathbb{R}$ is a closed interval. For classical DDEs, it was observed in \cite[Section 3]{Article2} that the closure of $\mathcal{A}$ is the generator of an evolution semigroup $\mathcal{U}$ on $\mathcal{F}_T(\mathbb{R}, X)$ with $\mathcal{F} = C$. The choice of a $T$-periodic function space $\mathcal{F}_T(\mathbb{R},X)$ is not standard as evolution semigroups are in general considered on $C_0(\mathbb{R},X)$, the space of all $X$-valued continuous functions on $\mathbb{R}$ that vanish at infinity, or $L^p(\mathbb{R},X)$ due to their connections with the existence and regularity of non-autonomous Cauchy problems, see \cite{Engel2000,Chicone1997,Latushkin1996,Latushkin2004,Arendt2009,Nickel1997} for more information. However, the choice of a $T$-periodic function space seems now to be very natural regarding its applicability to characteristic operators. A promising direction of further research is to formulate and prove general results regarding the (spectral) relationship between $\mathcal{A}$, $\mathcal{U}$ and an associated characteristic operator $\Delta$, including spectral mapping theorems, Fredholm properties, and related aspects. In particular, it would be interesting to investigate the properties of the adjoints $\mathcal{A}^\star$, $\mathcal{U}^\star$ and $\Delta^\star$ in terms of the periodic pairings $\langle \cdot, \cdot \rangle_T$ from \cite[Section 3.3]{Article2} and \cite[Section 3]{Bosschaert2025} since these operators have, for example, applications in (numerical) bifurcation analysis \cite{Bosschaert2025,Article2,Kuznetsov2005,Witte2013,Witte2014}. We believe that this evolution semigroup approach could also offer valuable insights into the theory of series expansions, completeness theorems and small solutions for various classes of periodic evolution equations, see \cite[Section 8.3]{Hale1993}, \cite[Section 11]{Kaashoek2022} and \cite[Section 6.2]{Frasson2003} for applications towards time-delayed models when the period is a multiple of the delay, recall also \cref{remark:charoperatorTh} as a direction for further research in this spirit.

We finalize this paper with four open problems regarding our results:
\begin{enumerate}
    \item In the three examples from \cref{sec:applications}, it became clear that $\sigma(\mathcal{A}) \cap \Omega$ consists solely of isolated eigenvalues of finite type, as a direct consequence of \cref{lemma:compactspectra}. This spectral property aligns perfectly with the spectral structure of the characteristic matrix from Kaashoek and Verduyn Lunel \cite{Kaashoek1992} for autonomous linear evolution equations, see also \cref{remark:Yfinitedim}. This naturally raises the question of whether or not a characteristic operator $\Delta$ for $\mathcal{A}$ on $\Omega$ always exhibits this spectral structure. We conjecture that this is not the case as $\Delta(z)$ does not necessarily admit a compact resolvent for all $z \in \Omega$. However, a simple counterexample remains elusive.

    \item Resolving the preceding problem requires extending the spectral theory for closable linear operators (\cref{subsec:spectralprop}) and closable operator-valued functions (\cref{subsec:equivalence}) beyond eigenvalues and isolated spectral points. As noted in the discussion following \cref{lemma:spectraequalclosure}, this extension must account for the approximate point spectrum. Furthermore, a comprehensive theory necessitates a detailed analysis of the compression, continuous, residual, and essential spectrum for closable linear operators, alongside their associated (semi-)Fredholm properties and their relationship to the corresponding spectra of the closure as detailed in \cref{lemma:spectraequalclosure}, \cref{lemma:mgakineq} and \cref{lemma:orderpole}.
    
    \item In the three examples from \cref{sec:applications}, 
    the holomorphic operator-valued function $\Delta$ was always closed. However, we could not prove this directly from the general formula \eqref{eq:Delta(z)2}. An interesting topic for further research is to determine necessary and sufficient conditions on the operators $M, D, K, Q(z)$ and $\iota$ under which $\Delta(z)$ is (non-)closed, and whether this property is (in)dependent of $z \in \Omega$.
    
    \item We formulated and proved all our results on a $T$-periodic function space $\mathcal{F}_T(\mathbb{R},Z)$. The natural question arises if our results also hold in a $(T,z)$-periodic function space $\mathcal{F}_T(\mathbb{R},Z)_z$, with $(T,z) \in \mathbb{R}_{+} \times S^1$, consisting of \emph{Floquet-periodic} functions $f$ satisfying $f(t+T) = zf(t)$ for all $t \in \mathbb{R}$. It turns out that such Floquet-periodic functions appear naturally in the study of codimension one \cite{Kuznetsov2005,Bosschaert2025} and two \cite{Witte2013,Witte2014} bifurcations of limit cycles in (in)finite-dimensional dynamical systems. In particular, it would be interesting to see how the results on periodic spectral patterns from \cref{subsec:periodicspectral} should be generalized towards Floquet-periodic functions.
\end{enumerate}

\section*{Acknowledgements}
The authors thank Stein Meereboer (Radboud University) and Vitorio Courtens (Hasselt University) for helpful discussions and suggestions.

\appendix

\section{Additional notes on closed and closable linear operators} \label{appendix:closedclosable}
In this section of the appendix, we recall and prove some basic results related to closed and closable linear operators.

\begin{lemma}[{\cite[Lemma III.2.4]{Engel2000}} or {\cite[Problem 5.6]{Kato1995}}] \label{lemma:closedsum}
    Let $X$ and $Y$ be Banach spaces, $A : \mathcal{D}(A) \subseteq X \to Y$ a closed linear operator and $B : \mathcal{D}(B) \subseteq X \to Y$ a bounded linear operator with $\mathcal{D}(B) \supseteq \mathcal{D}(A)$. Then $A + B : \mathcal{D}(A) \subseteq X \to Y$ is closed.
\end{lemma}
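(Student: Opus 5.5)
The plan is to use the sequential characterization of closedness. Let $(\varphi_m)_m$ be a sequence in $\mathcal{D}(A+B) = \mathcal{D}(A)$ such that $\varphi_m \to \varphi$ in $X$ and $(A+B)\varphi_m \to \psi$ in $Y$. We must show that $\varphi \in \mathcal{D}(A)$ and $(A+B)\varphi = \psi$. Equivalently, in the graph formulation of \cref{def:closedandclosable}, we show that the pair $(\psi,\varphi)$ lies in $\Gamma(A+B)$.

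The key step is to separate the two summands. Since $B$ is bounded on $\mathcal{D}(B) \supseteq \mathcal{D}(A)$, it extends uniquely by continuity to a bounded operator $\overline{B}$ on $\overline{\mathcal{D}(B)}$. Boundedness gives that $(B\varphi_m)_m$ is Cauchy, because $\|B\varphi_m - B\varphi_k\| \leq \|B\|\,\|\varphi_m-\varphi_k\|$, and hence it converges to some $\chi \in Y$. Then $A\varphi_m = (A+B)\varphi_m - B\varphi_m \to \psi - \chi$. Closedness of $A$ yields $\varphi \in \mathcal{D}(A)$ and $A\varphi = \psi - \chi$.

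It remains to identify the limit $\chi$ with $B\varphi$. This is the only point requiring care, since $B$ is assumed bounded only on its own domain, which need not be closed. However, we have just shown that $\varphi \in \mathcal{D}(A) \subseteq \mathcal{D}(B)$, so $B\varphi$ is defined. Continuity of $B$ on $\mathcal{D}(B)$ then gives $B\varphi_m \to B\varphi$, so $\chi = B\varphi$ by uniqueness of limits. Consequently $(A+B)\varphi = \psi - \chi + \chi = \psi$, which proves that $A+B$ is closed.
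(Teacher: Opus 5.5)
Your proof is correct; the paper gives no proof of its own for this lemma and only cites Engel--Nagel and Kato, and your argument is the standard direct proof of this fact: the sequence $(B\varphi_m)_m$ is Cauchy and so converges to some $\chi$, closedness of $A$ applies to $A\varphi_m\to\psi-\chi$, and then $\chi=B\varphi$ follows from $\varphi\in\mathcal{D}(A)\subseteq\mathcal{D}(B)$ and continuity of $B$. The remark about extending $B$ by continuity to $\overline{\mathcal{D}(B)}$ is never used and can be dropped, since completeness of $Y$ already supplies the limit $\chi$.
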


\begin{lemma}[{\cite[Proposition B.2]{Engel2000}}] \label{lemma:compositionclosed}
    Let $X,Y$ and $Z$ be Banach spaces, $A : \mathcal{D}(A) \subseteq Y \to Z$ a closed linear operator and $B : X \to Y$ a bounded linear operator. Then $AB : \mathcal{D}(AB) \subseteq X \to Z$ with domain $\mathcal{D}(AB) \coloneqq \{\varphi \in X : B \varphi \in \mathcal{D}(A) \}$ is closed.
\end{lemma}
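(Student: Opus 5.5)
The claim is that $AB$ is closed, with domain $\mathcal{D}(AB)=\{\varphi\in X : B\varphi\in\mathcal{D}(A)\}$. I will verify this directly from \cref{def:closedandclosable} with a sequential argument. Take a sequence $(\varphi_m)_m$ in $\mathcal{D}(AB)$ with $\varphi_m\to\varphi$ in $X$ and $AB\varphi_m\to\psi$ in $Z$. I need to show that $\varphi\in\mathcal{D}(AB)$ and $AB\varphi=\psi$, which says exactly that $(\psi,\varphi)$ lies in $\Gamma(AB)$.

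\textbf{Step 1.} Set $y_m\coloneqq B\varphi_m$. By the definition of $\mathcal{D}(AB)$, each $y_m$ lies in $\mathcal{D}(A)$. Since $B$ is bounded, $y_m\to B\varphi$ in $Y$.

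\textbf{Step 2.} We have $Ay_m=AB\varphi_m\to\psi$. So $(Ay_m,y_m)\to(\psi,B\varphi)$ in $Z\times Y$, with every pair $(Ay_m,y_m)$ in $\Gamma(A)$. Because $\Gamma(A)$ is closed, $(\psi,B\varphi)\in\Gamma(A)$. This gives $B\varphi\in\mathcal{D}(A)$ and $AB\varphi=\psi$.

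\textbf{Step 3.} By Step 2, $\varphi\in\mathcal{D}(AB)$ and $AB\varphi=\psi$, so $\Gamma(AB)$ is closed.

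I expect no step to be a genuine obstacle; the argument uses only continuity of $B$ and closedness of $A$. The one point to watch is that completeness of $X$, $Y$ and $Z$ is never used: the sequential characterization of closed graphs holds in any normed spaces. I also note that $AB$ need not be densely defined, and closedness does not require it, so the statement holds as given.
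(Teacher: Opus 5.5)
Your proof is correct: continuity of $B$ gives $B\varphi_m \to B\varphi$, and closedness of $\Gamma(A)$ then yields $B\varphi \in \mathcal{D}(A)$ with $AB\varphi = \psi$. This is exactly closedness of $\Gamma(AB)$. The paper gives no proof of its own and only cites Engel--Nagel; your sequential verification is the standard argument behind that result.
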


\begin{lemma}[{\cite[Proposition B.4]{Engel2000}}] \label{lemma:closabledef}
Let $X$ and $Y$ be Banach spaces. A linear operator $A : \mathcal{D}(A) \subseteq X \to Y$ is closable if and only if for every sequence $(\varphi_m)_m$ in $\mathcal{D}(A)$ converging in norm to zero and $(A \varphi_m)_m$ converges in norm to some $\psi$ in $Y,$ one has $\psi = 0$.
\end{lemma}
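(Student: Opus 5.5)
The statement is the standard closability criterion, \cref{lemma:closabledef}. I would prove both directions directly from \cref{def:closedandclosable} and the graph characterization $\Gamma(\overline{A}) = \overline{\Gamma(A)}$. The overall plan is to show that sequential condition is exactly what makes $\overline{\Gamma(A)}$ a graph.

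\textbf{Necessity.} Assume $A$ is closable, and let $B$ be a closed linear extension. Take a sequence $(\varphi_m)_m$ in $\mathcal{D}(A)$ with $\varphi_m \to 0$ and $A\varphi_m \to \psi$. Then $(A\varphi_m,\varphi_m) = (B\varphi_m,\varphi_m)$ lies in $\Gamma(B)$ and converges to $(\psi,0)$. Since $\Gamma(B)$ is closed, $(\psi,0)\in\Gamma(B)$, so $0\in\mathcal{D}(B)$ and $\psi = B0 = 0$ by linearity.

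\textbf{Sufficiency.} Assume the sequential condition holds. I would show that the closed linear subspace $G := \overline{\Gamma(A)}\subseteq Y\times X$ is the graph of a linear operator $B$. The closure of a linear subspace is again a linear subspace, so $G$ is one. The key step, and the only real obstacle, is single-valuedness: if $(\psi_1,\varphi)$ and $(\psi_2,\varphi)$ both lie in $G$, then $\psi_1=\psi_2$.

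To prove this, note that by linearity $(\psi_1-\psi_2,0)\in G$. Hence there is a sequence $(\varphi_m)_m$ in $\mathcal{D}(A)$ with $\varphi_m\to 0$ and $A\varphi_m\to\psi_1-\psi_2$. The hypothesis then gives $\psi_1-\psi_2=0$.

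So we may define $\mathcal{D}(B)=\{\varphi : \exists\,\psi,\ (\psi,\varphi)\in G\}$ and $B\varphi=\psi$. This $B$ is linear because $G$ is a subspace, it is closed because $\Gamma(B)=G$ is closed, and it extends $A$ because $\Gamma(A)\subseteq G$. Therefore $A$ is closable. No earlier results beyond the definitions are needed.
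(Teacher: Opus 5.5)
Your proof is correct: necessity uses only that the graph of a closed extension is closed, and sufficiency shows that $\overline{\Gamma(A)}$ contains no pair $(\psi,0)$ with $\psi\neq 0$, so it is the graph of a closed linear extension of $A$. The paper gives no proof of this lemma and simply cites Engel--Nagel, Proposition B.4; your argument is the standard graph-closure argument behind that reference.
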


\begin{lemma} \label{lemma:closablesum}
   Let $X$ and $Y$ be Banach spaces, $A : \mathcal{D}(A) \subseteq X \to Y$ a closable linear operator and $B : \mathcal{D}(B) \subseteq X \to Y$ a bounded linear operator. Then $B$ and $A + B : \mathcal{D}(A) \cap \mathcal{D}(B) \subseteq X \to Y$ are closable.
\end{lemma}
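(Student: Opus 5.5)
The statement has two claims. That $B$ is closable is immediate. $B$ is bounded on $\mathcal{D}(B)$, so it extends by continuity to a bounded operator $\overline{B}$ on $\overline{\mathcal{D}(B)}$. The graph of $\overline{B}$ is closed, since it is the graph of a continuous map on a closed subspace. Hence $\overline{B}$ is a closed linear extension of $B$, and $B$ is closable in the sense of \cref{def:closedandclosable}.

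For $A+B$ I will use the sequential criterion of \cref{lemma:closabledef}. Take a sequence $(\varphi_m)_m$ in $\mathcal{D}(A)\cap\mathcal{D}(B)$ with $\varphi_m \to 0$, and suppose $(A+B)\varphi_m \to \psi$ in $Y$. We must show $\psi = 0$. Boundedness of $B$ gives $\|B\varphi_m\| \le \|B\|\,\|\varphi_m\| \to 0$, so $B\varphi_m \to 0$. Therefore
\begin{equation*}
A\varphi_m = (A+B)\varphi_m - B\varphi_m \to \psi .
\end{equation*}
The sequence $(\varphi_m)_m$ lies in $\mathcal{D}(A)$, tends to zero, and $A\varphi_m \to \psi$. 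Since $A$ is closable, \cref{lemma:closabledef} gives $\psi = 0$. Applying \cref{lemma:closabledef} once more, now to $A+B$, shows that $A+B$ is closable.

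There is no real obstacle here. The only point to check is that the argument never requires $\mathcal{D}(B)$ to be closed or to contain $\mathcal{D}(A)$. It only evaluates $B$ on the intersection $\mathcal{D}(A)\cap\mathcal{D}(B)$, where $B$ is still bounded. This is why the domain of $A+B$ is taken to be that intersection.
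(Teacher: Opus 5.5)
Your proof is correct. The argument for $A+B$ is exactly the paper's: boundedness gives $B\varphi_m \to 0$, and the sequential criterion of \cref{lemma:closabledef} applied to $A$ then forces $\psi = 0$. The only difference is for $B$ itself: the paper also uses the sequential criterion, via $\|\psi\| \le \|\psi - B\varphi_m\| + \|B\|\,\|\varphi_m\|$. Your explicit continuous extension to $\overline{\mathcal{D}(B)}$ is equally valid, since $Y$ is complete, and it has the extra benefit of identifying the closure of $B$.
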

\begin{proof}
To prove the first assertion, let $(\varphi_m)_m$ be a sequence in $\mathcal{D}(B)$ converging in norm to zero and suppose that $(B\varphi_m)_m$ converges in norm to some $\psi \in Y$. Then $\|\psi\| \leq \| \psi - B \varphi_m \| + \|B\| \|\varphi_m\| \to 0$ as $m \to \infty$, which shows that $\psi = 0$ and thus $B$ is closable.

To prove the second assertion, let $(\varphi_m)_m$ be a sequence in $\mathcal{D}(A + B) = \mathcal{D}(A) \cap \mathcal{D}(B)$ converging in norm to zero and suppose that $((A+B)\varphi_m)_m$ converges in norm to some $\psi \in Y$. Then $\|A \varphi_m - \psi \| \leq \|(A+B)\varphi_m - \psi \| + \|B \| \|\varphi_m\| \to 0$ as $m \to \infty$, which shows that $A\varphi_m \to \psi$ as $m \to \infty$. But, $A$ is closable and thus $\psi = 0$, which proves the claim.
\end{proof}

\begin{lemma} \label{lemma:compoclosable}
    Let $X,Y$ and $Z$ be Banach spaces, $A : \mathcal{D}(A) \subseteq Y \to Z$ a closable linear operator and $B : \mathcal{D}(B) \subseteq X \to Y$ a bounded linear operator. Then $AB : \mathcal{D}(AB) \subseteq X \to Z$ with domain $\mathcal{D}(AB) \coloneqq \{\varphi \in \mathcal{D}(B) : B \varphi \in \mathcal{D}(A) \}$ is closable.
\end{lemma}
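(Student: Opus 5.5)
The plan is to use the sequential characterization of closability from \cref{lemma:closabledef}. Take a sequence $(\varphi_m)_m$ in $\mathcal{D}(AB)$ with $\varphi_m \to 0$ in $X$. Suppose that $AB\varphi_m \to \psi$ in $Z$. We must show $\psi = 0$.

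\textbf{Step 1.} Since $\varphi_m \in \mathcal{D}(AB) \subseteq \mathcal{D}(B)$ and $B$ is bounded on its domain, we have $\|B\varphi_m\| \leq \|B\|\,\|\varphi_m\| \to 0$. Hence the sequence $\phi_m \coloneqq B\varphi_m$ converges to zero in $Y$.

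\textbf{Step 2.} By the definition of $\mathcal{D}(AB)$, each $\phi_m$ lies in $\mathcal{D}(A)$. Moreover, $A\phi_m = AB\varphi_m \to \psi$.

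\textbf{Step 3.} Since $A$ is closable, \cref{lemma:closabledef} applied to $A$ and the sequence $(\phi_m)_m$ gives $\psi = 0$. Applying \cref{lemma:closabledef} once more, now in the converse direction to $AB$, shows that $AB$ is closable.

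There is no real obstacle. The only point needing care is that $B$ need not be everywhere defined, so the estimate in Step 1 must be taken on $\mathcal{D}(B)$, which is legitimate because $\mathcal{D}(AB) \subseteq \mathcal{D}(B)$ by definition.
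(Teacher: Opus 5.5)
Your proof is correct and is essentially the paper's argument: the sequential characterization of closability from \cref{lemma:closabledef}, boundedness of $B$ giving $B\varphi_m \to 0$, and closability of $A$ forcing $\psi = 0$. Your remark that the norm estimate for $B$ only needs to hold on $\mathcal{D}(B) \supseteq \mathcal{D}(AB)$ is accurate.
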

\begin{proof}
Let $(\varphi_m)_m$ be a sequence in $\mathcal{D}(AB)$ converging in norm to zero and suppose that $(AB\varphi_m)_m$ converges in norm to some $\psi \in Z$. Then $\psi_m = B\varphi_m \to 0$ as $m \to \infty$ since $B$ is bounded. Hence, $AB \varphi_m = A \psi_m \to \psi$ as $m \to \infty$. Since $A$ is closable, we have that $\psi = 0$, which proves the claim.
\end{proof}

\section{Non-closedness and necessity of the resolvent set definition} \label{sec:nonclosed}
In this section of the appendix, we verify that the operators $D, D_0, \mathcal{A}$ and $\hat{\mathcal{A}}$ appearing in \cref{subsec:classicalDDEs} are not closed and that the ranges of $zI-D_0$ and $zI-\mathcal{A}$ are not the full space $C_T(\mathbb{R},X)$ when $T > 0$. Recall that a detailed discussion of these operators for $T = 0$ is provided in \cref{remark:autonomousDDE}. Moreover, the results from this section also apply to the operators $D, D_0, \mathcal{A}$ and $\hat{\mathcal{A}}$ appearing in \cref{subsec:infinitedelay} and \cref{subsec:MFDEs}, but some parts of the proofs might be slightly different.

\begin{proposition} \label{prop:DAnotclosed}
The operators $D, D_0, \mathcal{A}$ and $\hat{\mathcal{A}}$ in the setting of \cref{subsec:classicalDDEs} are not closed.
\end{proposition}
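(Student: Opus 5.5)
The plan is to exhibit, for each operator, a sequence in its domain that converges in the graph norm to a point outside the domain. Equivalently, the graph is not closed. The work is done for $D_0$ first, and the other operators are then handled by reduction to it. The natural source of non-closedness is the requirement $\varphi \in C_T^1(\mathbb{R},X)$, i.e. that $t \mapsto \varphi(t)$ be differentiable in $X$. The operator $D\varphi = \varphi(t)' - \dot\varphi(t)$ controls only the combined transport derivative $\partial_\theta - \partial_t$, not $\partial_t$ separately.

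\textbf{Step 1: a graph limit of $D_0$ outside $C_T^1(\mathbb{R},X)$.} I would pick $\psi \in C_T(\mathbb{R},X)$ that is continuous but not differentiable in $t$. For instance, take $\psi(t)(\theta) = g(t)v$ with $g$ a continuous, nowhere differentiable $T$-periodic scalar function and $v \in \mathbb{C}^n$ nonzero; this is where $T>0$ is used. I then define $\varphi = R(0,D_0)\psi$ via formula \eqref{eq:resolventD0}, that is,
\begin{equation*}
\varphi(t)(\theta) = \int_\theta^0 g(t+\theta-s)\,ds\, v = \int_{t}^{t+\theta}\!\!\!\!\!\!\!\!\quad g(u)\,du\cdot(-v),
\end{equation*}
up to orientation, so that $\varphi(t)(\theta) = -(G(t+\theta)-G(t))v$ with $G' = g$.

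Next I take $g_m$ to be smooth $T$-periodic mollifications with $g_m \to g$ uniformly, and set $\psi_m = g_m v \in C_T^1(\mathbb{R},X)$. Step 1 of the proof of \cref{lemma:operatorlemmaDDE} shows $\varphi_m = R(0,D_0)\psi_m \in \mathcal{D}(D_0)$. Moreover $D_0\varphi_m = -\psi_m \to -\psi$, and $\varphi_m \to \varphi$ by the bound \eqref{eq:resolventD0Mz}.

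The key check is that $\varphi \notin C_T^1(\mathbb{R},X)$. We have $\dot\varphi(t)(\theta) = -(g(t+\theta)-g(t))v$ formally. Differentiability of $t\mapsto\varphi(t)$ in the sup norm would force $t \mapsto G(t+\theta)-G(t)$ to be differentiable with derivative $g(t+\theta)-g(t)$; this does hold since $G\in C^1$. So this choice fails. I therefore need $\psi$ whose failure lives in the transported variable: take $\psi(t)(\theta) = g(t)\chi(\theta)v$ with $\chi$ smooth and nonconstant. Then $\varphi(t)(\theta)=\int_\theta^0 g(t+\theta-s)\chi(s)ds\,v$, and $\partial_t$ of this involves $\int_\theta^0 g'(t+\theta-s)\chi(s)ds$. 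Integrating by parts in $s$ produces $\chi(\theta)g(t)-\chi(0)g(t+\theta)$ plus a $C^1$ term, which is continuous. Hmm — again differentiable.

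This shows the obstacle: the resolvent gains one derivative along characteristics. So instead I would let non-differentiability come from the \emph{domain} condition $\varphi(t)'\in X$ jointly with time regularity. I would choose $\varphi_m(t)(\theta) = h_m(t+\theta) - h_m(t)$ with $h_m$ smooth, converging uniformly together with first derivatives in the sense needed, to a limit $h \in C^1$ whose derivative is only continuous. Then $D\varphi_m = 0 - (h_m'(t)\cdot(-1)\ldots)$; concretely $D\varphi_m(t)(\theta) = h_m'(t)$. This converges if $h_m' \to h'$ uniformly, and the limit $\varphi$ then lies in $C_T^1$, which again fails. The genuinely hardest step, which I would attack first, is therefore to find a sequence whose $t$-derivatives and $\theta$-derivatives individually blow up while their difference converges. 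The candidate is $\varphi_m(t)(\theta) = \frac{1}{m}\sin(m^2(t+\theta))\chi(\theta)$-type functions with $\chi(0)=0$. Along characteristics, $(\partial_\theta-\partial_t)\varphi_m = \frac1m\sin(m^2(t+\theta))\chi'(\theta)\to 0$, while $\varphi_m\to0$. Summing a convergent series $\sum_m$ of such terms with decaying weights $m^{-2}$ gives a limit $\varphi$ in $\mathcal{D}(\overline{D_0})$ whose $t$-derivative $\sum m^{-2}\cdot m\cos(\cdots)\chi$ diverges at suitable points. Hence $\varphi\notin\mathcal{D}(D_0)$, and $D_0$ is not closed.

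\textbf{Step 2: transfer to $D$, $\mathcal{A}$ and $\hat{\mathcal{A}}$.} For $D$ the same sequence works, since $\mathcal{D}(D_0)\subseteq\mathcal{D}(D)$ and the limit is not in $C_T^1(\mathbb{R},X)\supseteq\mathcal{D}(D)$. For $\mathcal{A}$ I would add the constraint $MD\varphi=K\varphi$. Since $\chi(0)=0$ gives $M\varphi_m=0$ and $\dot\varphi_m(t)(0)=0$, I modify $\chi$ so that also $\chi'(0)=0$. Then $\varphi_m(t)'(0)=0$, and it remains to satisfy $L(t)\varphi_m(t)=0$. I would correct this by adding $\iota$-type terms $Q(0)\iota q_m$ from \cref{lemma:Q(z)closable}, with $q_m$ solving $\Delta(0)$-type equations, provided $0\in\rho(\Delta)$. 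Alternatively, I would use $z_0\in\rho(\Delta)$ and the projection $I-P$ from \cref{lemma:DAhatcompo}, which is bounded in graph norm on the relevant pieces. This way the correction converges in $C^1$ and does not destroy the non-differentiability. Finally, $\hat{\mathcal{A}}$ is treated via $J$: the sequence $J\varphi_m$ lies in $\mathcal{D}(\hat{\mathcal{A}})$, $\hat{\mathcal{A}}J\varphi_m=(K\varphi_m,D\varphi_m)$ converges because $K|_{\mathcal{D}(D_0)}$ is bounded by \ref{hyp:K}, and the limit is not in $\mathcal{D}(\hat{\mathcal{A}})$.
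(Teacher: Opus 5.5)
Your final construction follows essentially the paper's route. You build graph-convergent sequences of the form $f_m(t+\theta)\chi(\theta)$ with $\chi(0)=0$, on which $D$ acts as $f_m(t+\theta)\chi'(\theta)$, and transfer them to $D$ by extension, to $\hat{\mathcal{A}}$ via $J$, and to $\mathcal{A}$ by a correction from $\mathcal{N}(zI-D)$ obtained by solving a periodic inhomogeneous DDE. However, the decisive step for $D_0$ is not justified as written. You conclude $\varphi=\sum_m m^{-3}\sin(m^2(t+\theta))\chi(\theta)\notin\mathcal{D}(D_0)$ because the termwise $t$-differentiated series diverges ``at suitable points''. That inference is invalid in general: a series can have a differentiable sum while its differentiated series diverges. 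You would still need a genuine estimate, e.g.\ $f(x)/x\geq c\log(1/x)$ for small $x>0$, where $f(x)=\sum_m m^{-3}\sin(m^2x)$. Also, $\sin(m^2(t+\theta))$ must be rescaled to $\sin(\tfrac{2\pi}{T}m^2(t+\theta))$ to be $T$-periodic.

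The simplest repair comes from correcting your diagnosis of the obstacle. $R(0,D_0)$ integrates, and hence smooths, only \emph{along} the characteristics $t+\theta=\mathrm{const}$. Your test functions $g(t)v$ and $g(t)\chi(\theta)v$ failed precisely because they vary along these lines. Roughness \emph{across} characteristics survives: for $\psi(t)(\theta)=W(t+\theta)$, \eqref{eq:resolventD0} gives $R(0,D_0)\psi(t)(\theta)=-\theta W(t+\theta)$, cf.\ \cref{prop:rangenotequal}. So take any $f\in C_T(\mathbb{R},\mathbb{C}^n)\setminus C_T^1(\mathbb{R},\mathbb{C}^n)$, smooth $T$-periodic $f_m\to f$ uniformly, and set $\varphi_m(t)(\theta)=f_m(t+\theta)\chi(\theta)$. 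Then $\varphi_m\to\varphi$ and $D_0\varphi_m\to f(t+\theta)\chi'(\theta)$ uniformly. Moreover $\varphi\notin C_T^1(\mathbb{R},X)$: evaluation at a $\theta_0$ with $\chi(\theta_0)\neq 0$ is bounded, so continuous differentiability of $t\mapsto\varphi(t)$ would force $f\in C_T^1$. This is exactly the paper's argument with $\chi(\theta)=\theta$ (and $\chi\equiv 1$, i.e.\ kernel elements, for $D$). There is no ``hardest step''.

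Your transfers are sound. For $\hat{\mathcal{A}}$, you use $J\varphi_m=(0,\varphi_m)$ and the boundedness of $K|_{\mathcal{D}(D_0)}=M_L|_{\mathcal{D}(D_0)}$. This is slightly more direct than the paper's splitting $\hat{\mathcal{A}}=\hat{\mathcal{A}_0}+\hat{L}$.

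For $\mathcal{A}$, your first variant ($\iota q_m$ with $\Delta(0)q_m=M_L\varphi_m$) is the paper's inhomogeneous-DDE step, and like it, it implicitly needs $0\in\rho(\Delta)$. Your second variant removes this assumption and is worth spelling out. Take $z_0\in\rho(\Delta)$ from \cref{lemma:characinvertible}, $q_m=\Delta(z_0)^{-1}(MD-K)\varphi_m$ and $\tilde\varphi_m=\varphi_m-Q(z_0)\iota q_m$.
\begin{itemize}
\item Since $(MD-K)Q(z_0)\iota=\Delta(z_0)$ by \eqref{eq:zM_on_Q} and $\Delta(z_0)$ is onto, $(MD-K)\tilde\varphi_m=0$, so $\tilde\varphi_m\in\mathcal{D}(\mathcal{A})$.
\item $(MD-K)\varphi_m=f_m\chi'(0)-M_L\varphi_m$ converges uniformly, so $\mathcal{A}\tilde\varphi_m=D\varphi_m-z_0Q(z_0)\iota q_m$ converges.
\item $q_m\to\Delta(z_0)^{-1}\lim_m(MD-K)\varphi_m\in C_T^1(\mathbb{R},\mathbb{C}^n)$, so the correction converges to an element of $\mathcal{N}(z_0I-D)\subseteq\mathcal{D}(D)$.
\end{itemize}
Hence the limit of $\tilde\varphi_m$ lies outside $\mathcal{D}(D)$. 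In this version the extra requirement $\chi'(0)=0$ is unnecessary.
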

\begin{proof}
To prove that $D$ is not closed, let $(\psi_m)_m$ be a sequence in $C_T^1(\mathbb{R},\mathbb{C}^n)$ converging uniformly to some $\psi \in C_T(\mathbb{R},\mathbb{C}^n) \setminus C_T^1(\mathbb{R},\mathbb{C}^n)$. Define the sequence $(\varphi_m)_m$ in $\mathcal{D}(D)$ by $\varphi_m(t)(\theta) = \psi_m(t+\theta)$ and let $\varphi(t)(\theta) = \psi(t+\theta)$. Then $\| \varphi_m - \varphi \|_\infty = \| \psi_m - \psi \|_{\infty} \to 0$ and $D\varphi_m = 0$ as $m \to \infty$, but $\varphi \notin \mathcal{D}(D)$ since $\varphi$ is not continuously differentiable in the first variable. We conclude that $D$ is not closed. 

To prove that $D_0$ is not closed, let $(\psi_m)_m$ be a sequence in $C_T^1(\mathbb{R},\mathbb{C}^n)$ converging uniformly to some $\psi \in C_T(\mathbb{R},\mathbb{C}^n) \setminus C_T^1(\mathbb{R},\mathbb{C}^n)$. Define the sequence $(\varphi_m)_m$ in $\mathcal{D}(D_0)$ by $\varphi_m(t)(\theta) = \theta \psi_m(t+\theta)$ and let $\varphi(t)(\theta) = \theta \psi(t+\theta)$. Then $\| \varphi_m - \varphi \|_\infty \leq h \| \psi_m - \psi \|_{\infty} \to 0$ as $m \to \infty$ and we obtain $(D_0\varphi_m)(t)(\theta) = \psi_m(t+\theta)$. Hence, $(D_0\varphi_m)_m$ converges in norm to $t \mapsto \psi(t+\cdot)$. However, $\varphi$ is not continuously differentiable in the first variable and so $\varphi \notin \mathcal{D}(D_0)$, which proves that $D_0$ is not closed.

Before we prove that $\mathcal{A}$ and $\hat{\mathcal{A}}$ are not closed, let us first show that the linear operator $\hat{\mathcal{A}_0} : \mathcal{D}(\hat{\mathcal{A}_0}) \subseteq \hat{C}_T(\mathbb{R},X) \to \hat{C}_T(\mathbb{R},X)$ defined by
\begin{equation*}
    \mathcal{D}(\hat{\mathcal{A}_0}) \coloneqq \mathcal{D}(\hat{\mathcal{A}}), \quad \hat{\mathcal{A}_0}
    \begin{pmatrix}
    q\\
    \varphi
    \end{pmatrix}
    \coloneqq
    \begin{pmatrix}
    -GM\varphi \\
    D\varphi
    \end{pmatrix},
\end{equation*}
is not closed, where we recall the definition of $G$ and $M$ from the proof of \cref{lemma:operatorlemmaDDE}. To do this, let $(\psi_m)_m$ be a sequence in $C_T^1(\mathbb{R},\mathbb{C}^n)$ converging uniformly to some $\psi \in C_T(\mathbb{R},\mathbb{C}^n) \setminus C_T^1(\mathbb{R},\mathbb{C}^n)$. Define the sequence $(\varphi_m)_m$ in $\mathcal{D}(D)$ by $\varphi_m(t)(\theta) = \theta^2 \psi_m(t+\theta)$ and let $\varphi(t)(\theta) = \theta^2 \psi(t+\theta)$. Then $((0,\varphi_m))_m$ is a sequence in $\mathcal{D}(\hat{\mathcal{A}_0})$ and $\| (0,\varphi_m) - (0,\varphi) \|_\infty \leq h^2 \| \psi_m - \psi \|_\infty \to 0$ as $m \to \infty$ and $(\hat{\mathcal{A}_0}(0,\varphi_m))(t)(\theta) = (0, 2 \theta \psi_m(t+\theta))$. Hence, $(\hat{\mathcal{A}_0}(0,\varphi_m))_m$ converges in norm to $(0, t \mapsto (\theta \mapsto 2 \theta \psi(t+\theta)))$. However, $\varphi$ is not continuously differentiable in the first variable and so $\varphi \notin \mathcal{D}(D)$. Hence, $(0,\varphi) \notin \mathcal{D}(\hat{\mathcal{A}_0})$, i.e. $\hat{\mathcal{A}_0}$ is not closed. To prove that $\hat{\mathcal{A}}$ is not closed, consider the perturbation $\hat{L} : \hat{C}_T(\mathbb{R},X) \to \hat{C}_T(\mathbb{R},X)$ by 
\begin{equation*}
    \hat{L}
    \begin{pmatrix}
    q \\
    \varphi
    \end{pmatrix}
    \coloneqq
    \begin{pmatrix}
    M_L \varphi \\
    0
    \end{pmatrix}
\end{equation*}
and note that $\hat{L} \in \mathcal{L}(\hat{C}_T(\mathbb{R},X))$ as $L \in C_T(\mathbb{R},\mathcal{L}(X,\mathbb{C}^n))$. We observe that $\hat{\mathcal{A}} = \hat{\mathcal{A}_0} + \hat{L}$. If $\hat{\mathcal{A}}$ were closed, then $\hat{\mathcal{A}} - \hat{L} = \hat{\mathcal{A}_0}$ would be closed by \cref{lemma:closedsum}, which contradicts our previous finding. Thus, $\hat{\mathcal{A}}$ is not closed.

Let us now prove that $\mathcal{A}$ is not closed. Choose a sequence $(\psi_m)_m$ in $C_T^1(\mathbb{R}, X)$ converging uniformly to $\psi \in C_T(\mathbb{R}, X)$ such that the function $\Psi \in C_T(\mathbb{R},X)$ defined by
\begin{equation} \label{eq:Psiappendix}
    \Psi(t)(\theta) \coloneqq \int_0^\theta \psi(t+\theta-s)(s) ds
\end{equation}
is not in $\mathcal{D}(D)$. Note that such a sequence $(\psi_m)_m$ exists as shown above for the non-closedness of the linear operator $D$. Consider now a sequence $(q_m)_m$ in $C_T^1(\mathbb{R}, \mathbb{C}^n)$ satisfying the periodic inhomogeneous DDE
\begin{equation} \label{eq:dotqnDDE}
    \dot{q}_m(t) = L(t)[q_m]_t + H_m(t), \quad H_m(t) \coloneqq L(t)\bigg [\theta \mapsto \int_0^\theta \psi_m(t+\theta-s)(s) ds \bigg],
\end{equation}
and define the sequence $(\varphi_m)_m$ in $\mathcal{D}(D)$ by $\varphi_m(t)(\theta) = q_m(t+\theta) + \int_0^\theta \psi_m(t+\theta-s)(s) ds$. As $\dot{\varphi}_m(t)(0) = L(t)\varphi_m(t)$ due to \eqref{eq:dotqnDDE}, the sequence $(\varphi_m)_m$ is in $\mathcal{D}(\mathcal{A})$ and satisfies $\mathcal{A} \varphi_m = \psi_m$ for all $m \in \mathbb{N}$. Clearly, $H_m \to H$ uniformly since $\mathcal{A}\varphi_m = \psi_m \to \psi$ as $m \to \infty$, where $H(t) = L(t)[\theta \mapsto \int_0^\theta \psi(t+\theta-s)(s) ds]$. By the continuous dependence of solutions on the inhomogeneity, the sequence $(q_m)_m$ converges in $C_T^1(\mathbb{R},\mathbb{C}^n)$ to a map $q$ satisfying $\dot{q}(t) = L(t)q_t + H(t)$. Consequently, $\varphi_m \to \varphi$ uniformly as $m \to \infty$, where $\varphi(t)(\theta) = q(t+\theta) + \Psi(t)(\theta)$. Suppose by way of contradiction that $\varphi \in \mathcal{D}(D)$. Since $q \in C_T^1(\mathbb{R},\mathbb{C}^n)$, we obtain that $\Psi = \varphi - q \in \mathcal{D}(D)$, which is a contradiction due to the assumption on $\Psi$. Thus $\varphi \notin \mathcal{D}(D)$, and therefore $\varphi \notin \mathcal{D}(\mathcal{A})$, from which we conclude that $\mathcal{A}$ is not closed.
\end{proof}

\begin{proposition} \label{prop:rangenotequal}
In the setting of \cref{subsec:classicalDDEs}, the following statements hold:
\begin{enumerate}
    \item $\mathcal{R}(zI-D_0) = \{ \varphi \in C_T(\mathbb{R},X) : \mathcal{K}_z\varphi \in C_T(\mathbb{R},X) \}$, where $\mathcal{K}_z \in \mathcal{L}(C_T(\mathbb{R},X))$ is defined by
    \begin{equation*}
        (\mathcal{K}_z \varphi)(t)(\theta) \coloneqq \int_\theta^0 e^{z(\theta -s)}\varphi(t + \theta - s)(s) ds, \quad \forall z \in \mathbb{C}.
    \end{equation*}
    \item $C_T^1(\mathbb{R},X) \neq \mathcal{R}(zI-D_0) \neq C_T(\mathbb{R},X)$ for all $z \in \mathbb{C}$.
    \item $C_T^1(\mathbb{R},X) \neq \mathcal{R}(zI-\mathcal{A}) \neq C_T(\mathbb{R},X)$ for all $z \in \rho(\mathcal{A})$.
\end{enumerate}
\begin{proof}
Recall from \cref{lemma:operatorlemmaDDE} that $\rho(D_0) = \Omega = \mathbb{C}$. The first statement follows from \eqref{eq:resolventD02} and \eqref{eq:derivativesresolventDDE}. Note that $\mathcal{K}_z$ is bounded as $\|\mathcal{K}_z \varphi \| \leq M_z \|\varphi\|$ for all $\varphi \in C_T(\mathbb{R},X)$, where $M_z$ is defined in \eqref{eq:resolventD0Mz}.

Let us now prove the second statement. According to \cref{lemma:rangeinclusion}, it remains to show that $C_T^1(\mathbb{R},X) \neq \mathcal{R}(D_0) \neq C_T(\mathbb{R},X)$. Fix a $c \in \mathbb{R}$, consider a function $W\in C_T(\mathbb{R},\mathbb{C}^n) \setminus C_T^1(\mathbb{R},\mathbb{C}^n)$ and define the function $\varphi_c \in C_T(\mathbb{R},X)$ by $\varphi_c(t)(\theta) \coloneqq W(t+c\theta)$. Then a straightforward computation shows that
\begin{equation*}
    (\mathcal{K}_0 \varphi_c)(t)(\theta) =
    \begin{dcases}
        \theta W(t+\theta), \quad &c = 1, \\
        \frac{1}{c-1}\int_{t+\theta}^{t+c\theta}W(s) ds, \quad &c \neq 1.
    \end{dcases}
\end{equation*}
Hence, $\mathcal{K}_0 \varphi_1 \notin C_T^1(\mathbb{R},X)$ and $\mathcal{K}_0 \varphi_c \in C_T^1(\mathbb{R},X)$ for $c \neq 1$, while $\varphi_c \in C_T(\mathbb{R},X) \setminus C_T^1(\mathbb{R},X)$ for all $c \in \mathbb{R}$. This completes the proof due to the characterization provided in the first statement.

The third statement follows now from \cref{cor:spectralrelations} since $\Omega = \mathbb{C}$.
\end{proof}
\end{proposition}

\bibliographystyle{siamplain}
\bibliography{references}

@Book{Iooss1999,
  author    = {G{\'{e}}rard Iooss and Moritz Adelmeyer},
  publisher = {World Scientific},
  title     = {Topics in Bifurcation Theory and Applications},
  year      = {1999},
  month     = {jan},
  doi       = {10.1142/3990},
}

@Article{Wu1974,
  author    = {Wu, M.},
  journal   = {IEEE Transactions on Automatic Control},
  title     = {A note on stability of linear time-varying systems},
  year      = {1974},
  issn      = {0018-9286},
  month     = apr,
  number    = {2},
  pages     = {162--162},
  volume    = {19},
  doi       = {10.1109/tac.1974.1100529},
  publisher = {Institute of Electrical and Electronics Engineers (IEEE)},
}

@Book{Coppel1978,
  author    = {Coppel, W. A.},
  publisher = {Springer Berlin Heidelberg},
  title     = {Dichotomies in Stability Theory},
  year      = {1978},
  isbn      = {9783540359760},
  doi       = {10.1007/bfb0067780},
  issn      = {1617-9692},
  journal   = {Lecture Notes in Mathematics},
}

@Article{Iooss1988,
  author    = {G Iooss},
  journal   = {Journal of Differential Equations},
  title     = {Global characterization of the normal form for a vector field near a closed orbit},
  year      = {1988},
  month     = {nov},
  number    = {1},
  pages     = {47--76},
  volume    = {76},
  comment   = {One of the Lemmas in 'Numerical Periodic Normalization paper of Yuri' uses a technique in here},
  doi       = {10.1016/0022-0396(88)90063-0},
  publisher = {Elsevier {BV}},
}

@Article{Kuznetsov2005,
  author    = {{\mbox {Yu}}. A. Kuznetsov and W. Govaerts and E. J. Doedel and A. Dhooge},
  journal   = {{SIAM} Journal on Numerical Analysis},
  title     = {Numerical Periodic Normalization for Codim 1 Bifurcations of Limit Cycles},
  year      = {2005},
  month     = {jan},
  number    = {4},
  pages     = {1407--1435},
  volume    = {43},
  doi       = {10.1137/040611306},
  publisher = {Society for Industrial {\&} Applied Mathematics ({SIAM})},
}

@PhdThesis{Lyapunov1892,
  author  = {Aleksandr Mikhailovich {L}yapunov},
  school  = {Kharkov},
  title   = {Stability of Motion},
  year    = {1892, English translation, Academic Press, 1966.},
}

@Article{Clement1988,
  author    = {Ph. Cl{\'{e}}ment and O. Diekmann and M. Gyllenberg and H. J. A. M. Heijmans and H. R. Thieme},
  journal   = {Proceedings of the Royal Society of Edinburgh: Section A Mathematics},
  title     = {Perturbation theory for dual semigroups {II}. Time-dependent perturbations in the sun-reflexive case},
  year      = {1988},
  number    = {1-2},
  pages     = {145--172},
  volume    = {109},
  doi       = {10.1017/s0308210500026731},
  publisher = {Cambridge University Press ({CUP})},
}

@Article{Bosschaert2024a,
  author    = {Bosschaert, M. M. and Kuznetsov, {\mbox{Yu}}. A.},
  journal   = {SIAM Journal on Applied Dynamical Systems},
  title     = {Bifurcation Analysis of {B}ogdanov–{T}akens Bifurcations in Delay Differential Equations},
  year      = {2024},
  issn      = {1536-0040},
  month     = jan,
  number    = {1},
  pages     = {553--591},
  volume    = {23},
  doi       = {10.1137/22m1527532},
  publisher = {Society for Industrial & Applied Mathematics (SIAM)},
}

@MastersThesis{Janssens2010,
  author  = {Sebastiaan G. Janssens},
  school  = {Utrecht University},
  title   = {On a Normalization Technique for Codimension Two Bifurcations of Equilibria of Delay Differential Equations},
  year    = {2010},
  comment = {Calculation of critical normal form coefficients with examples and introduction to DDE theory.},
  url     = {http://dspace.library.uu.nl/handle/1874/312252},
}

@Article{Bosschaert2020,
  author    = {Maikel M. Bosschaert and Sebastiaan G. Janssens and \mbox{Yu. A.} Kuznetsov},
  journal   = {{SIAM} Journal on Applied Dynamical Systems},
  title     = {Switching to Nonhyperbolic Cycles from Codimension Two Bifurcations of Equilibria of Delay Differential Equations},
  year      = {2020},
  month     = {jan},
  number    = {1},
  pages     = {252--303},
  volume    = {19},
  doi       = {10.1137/19m1243993},
  publisher = {Society for Industrial {\&} Applied Mathematics ({SIAM})},
}

@Book{Kato1995,
  author    = {Kato, Tosio},
  publisher = {Springer Berlin Heidelberg},
  title     = {Perturbation Theory for Linear Operators},
  year      = {1995},
  isbn      = {9783642662829},
  doi       = {10.1007/978-3-642-66282-9},
  issn      = {1431-0821},
  journal   = {Classics in Mathematics},
}

@Article{Gohberg1978,
  author    = {Gohberg, I. C and Kaashoek, M. A and Lay, D. C},
  journal   = {Journal of Functional Analysis},
  title     = {Equivalence, linearization, and decomposition of holomorphic operator functions},
  year      = {1978},
  issn      = {0022-1236},
  month     = apr,
  number    = {1},
  pages     = {102--144},
  volume    = {28},
  doi       = {10.1016/0022-1236(78)90081-2},
  publisher = {Elsevier BV},
}

@Article{Engstroem2017,
  author    = {Engström, Christian and Torshage, Axel},
  journal   = {Integral Equations and Operator Theory},
  title     = {On Equivalence and Linearization of Operator Matrix Functions with Unbounded Entries},
  year      = {2017},
  issn      = {1420-8989},
  month     = nov,
  number    = {4},
  pages     = {465--492},
  volume    = {89},
  doi       = {10.1007/s00020-017-2415-5},
  publisher = {Springer Science and Business Media LLC},
}

@Article{Magal2009,
  author    = {Magal, Pierre and Ruan, Shigui},
  journal   = {Memoirs of the American Mathematical Society},
  title     = {Center manifolds for semilinear equations with non-dense domain and applications to Hopf bifurcation in age structured models},
  year      = {2009},
  issn      = {1947-6221},
  number    = {951},
  pages     = {1-80},
  volume    = {202},
  doi       = {10.1090/s0065-9266-09-00568-7},
  publisher = {American Mathematical Society (AMS)},
}

@Article{Frasson2003,
  author    = {Miguel V. S. Frasson and Sjoerd M. {Verduyn Lunel}},
  journal   = {Integral Equations and Operator Theory},
  title     = {Large Time Behaviour of Linear Functional Differential Equations},
  year      = {2003},
  month     = {sep},
  number    = {1},
  pages     = {91--121},
  volume    = {47},
  doi       = {10.1007/s00020-003-1155-x},
  publisher = {Springer Science and Business Media {LLC}},
}

@Article{Kaashoek1992,
  author    = {M. A. Kaashoek and S. M. {Verduyn Lunel}},
  journal   = {Transactions of the American Mathematical Society},
  title     = {Characteristic matrices and spectral properties of evolutionary systems},
  year      = {1992},
  month     = {feb},
  number    = {2},
  pages     = {479--517},
  volume    = {334},
  doi       = {10.1090/s0002-9947-1992-1155350-0},
  publisher = {American Mathematical Society ({AMS})},
}

@Book{Engel2000,
  author    = {Klaus-Jochen Engel and Rainer Nagel},
  publisher = {Springer-Verlag},
  title     = {One-Parameter Semigroups for Linear Evolution Equations},
  year      = {2000},
  doi       = {10.1007/b97696},
}

@Book{Hale1993,
  author    = {Jack K. Hale and Sjoerd M. {Verduyn Lunel}},
  publisher = {Springer New York},
  title     = {Introduction to Functional Differential Equations},
  year      = {1993},
  doi       = {10.1007/978-1-4612-4342-7},
}

@Book{Diekmann1995,
  author    = {Odo Diekmann and Sjoerd M. {Verduyn Lunel} and Stephan A. van Gils and Hanns-Otto Walther},
  publisher = {Springer New York},
  title     = {Delay Equations},
  year      = {1995},
  doi       = {10.1007/978-1-4612-4206-2},
}

@Article{Harterich2002,
  author    = {Jorg H\"arterich and Bjorn Sandstede and Arnd Scheel},
  journal   = {Indiana University Mathematics Journal},
  title     = {Exponential dichotomies for linear non-autonomous functional differential equations of mixed type},
  year      = {2002},
  number    = {5},
  pages     = {1081--1110},
  volume    = {51},
  doi       = {10.1512/iumj.2002.51.2188},
  publisher = {Indiana University Mathematics Journal},
}

@Book{Kaashoek2022,
  author    = {Marinus A. Kaashoek and Sjoerd M. {Verduyn Lunel}},
  publisher = {Springer International Publishing},
  title     = {Completeness Theorems and Characteristic Matrix Functions},
  year      = {2022},
  doi       = {10.1007/978-3-031-04508-0},
}

@Book{Gohberg1990,
  author    = {Israel C. Gohberg and Seymour Goldberg and Marinus A. Kaashoek},
  publisher = {Birkhäuser Basel},
  title     = {Classes of Linear Operators Vol. {I}},
  year      = {1990},
  doi       = {10.1007/978-3-0348-7509-7},
}

@Article{Article2,
  author    = {Lentjes, Bram and Spek, Len and Bosschaert, Maikel M. and Kuznetsov, {\mbox{Yu}}. A.},
  journal   = {Journal of Differential Equations},
  title     = {Periodic normal forms for bifurcations of limit cycles in {DDE}s},
  year      = {2025},
  issn      = {0022-0396},
  month     = apr,
  pages     = {631--694},
  volume    = {423},
  doi       = {10.1016/j.jde.2025.01.064},
  publisher = {Elsevier BV},
}

@Book{Taylor1986,
  author    = {A. E. Taylor and D. C. Lay},
  publisher = {Krieger},
  title     = {Introduction to Functional Analysis},
  year      = {1986},
  isbn      = {9780898749519},
}

@Article{MalletParet1999,
  author    = {Mallet-Paret, John},
  journal   = {Journal of Dynamics and Differential Equations},
  title     = {The {F}redholm Alternative for Functional Differential Equations of Mixed Type},
  year      = {1999},
  issn      = {1040-7294},
  number    = {1},
  pages     = {1--47},
  volume    = {11},
  doi       = {10.1023/a:1021889401235},
  publisher = {Springer Science and Business Media LLC},
}

@Article{Article1,
  author    = {Lentjes, Bram and Spek, Len and Bosschaert, Maikel M. and Kuznetsov, {\mbox{Yu}} A.},
  journal   = {Journal of Dynamics and Differential Equations},
  title     = {Periodic Center Manifolds for DDEs in the Light of Suns and Stars},
  year      = {2023},
  issn      = {1572-9222},
  month     = aug,
  number    = {1},
  pages     = {815--858},
  volume    = {37},
  doi       = {10.1007/s10884-023-10289-9},
  publisher = {Springer Science and Business Media LLC},
}

@Article{Wu1997,
  author    = {Wu, Jianhong and Zou, Xingfu},
  journal   = {Journal of Differential Equations},
  title     = {Asymptotic and Periodic Boundary Value Problems of Mixed {FDE}s and Wave Solutions of Lattice Differential Equations},
  year      = {1997},
  issn      = {0022-0396},
  month     = apr,
  number    = {2},
  pages     = {315--357},
  volume    = {135},
  doi       = {10.1006/jdeq.1996.3232},
  publisher = {Elsevier BV},
}

@Article{Faria2002,
  author    = {Faria, Teresa and Huang, Wenzhang and Wu, Jianhong},
  journal   = {SIAM Journal on Mathematical Analysis},
  title     = {Smoothness of Center Manifolds for Maps and Formal Adjoints for Semilinear {FDE}s in General {B}anach Spaces},
  year      = {2002},
  issn      = {1095-7154},
  month     = jan,
  number    = {1},
  pages     = {173--203},
  volume    = {34},
  doi       = {10.1137/s0036141001384971},
  publisher = {Society for Industrial & Applied Mathematics (SIAM)},
}

@Misc{Veltz2020,
  author      = {Veltz, Romain},
  month       = Jul,
  title       = {{BifurcationKit.jl}},
  year        = {2020},
  hal_id      = {hal-02902346},
  hal_version = {v1},
  institution = {{Inria Sophia-Antipolis}},
  url         = {https://hal.archives-ouvertes.fr/hal-02902346},
}

@InBook{Krauskopf2022,
  author    = {Krauskopf, Bernd and Sieber, Jan},
  pages     = {195--245},
  publisher = {Springer International Publishing},
  title     = {Bifurcation Analysis of Systems With Delays: Methods and Their Use in Applications},
  year      = {2022},
  isbn      = {9783031011290},
  month     = sep,
  booktitle = {Controlling Delayed Dynamics},
  doi       = {10.1007/978-3-031-01129-0\_7},
  issn      = {2309-3706},
}

@Article{Chicone1997,
  author    = {Chicone, C. and Latushkin, Y.},
  journal   = {Journal of Differential Equations},
  title     = {Center Manifolds for Infinite Dimensional Nonautonomous Differential Equations},
  year      = {1997},
  issn      = {0022-0396},
  month     = dec,
  number    = {2},
  pages     = {356--399},
  volume    = {141},
  doi       = {10.1006/jdeq.1997.3343},
  publisher = {Elsevier BV},
}

@Article{Latushkin2004,
  author    = {Latushkin, Yuri and Tomilov, Yuri},
  journal   = {Illinois Journal of Mathematics},
  title     = {Fredholm properties of evolution semigroups},
  year      = {2004},
  issn      = {0019-2082},
  month     = jul,
  number    = {3},
  pages     = {999-1020},
  volume    = {48},
  doi       = {10.1215/ijm/1258131066},
  publisher = {Duke University Press},
}

@Article{Latushkin1996,
  author    = {Latushkin, Y. and Montgomery-Smith, S. and Randolph, T.},
  journal   = {Journal of Differential Equations},
  title     = {Evolutionary Semigroups and Dichotomy of Linear Skew-Product Flows on Locally Compact Spaces with {B}anach Fibers},
  year      = {1996},
  issn      = {0022-0396},
  month     = feb,
  number    = {1},
  pages     = {73--116},
  volume    = {125},
  doi       = {10.1006/jdeq.1996.0025},
  publisher = {Elsevier BV},
}

@Article{Nickel1997,
  author    = {Nickel, Gregor},
  journal   = {Abstract and Applied Analysis},
  title     = {Evolution semigroups for nonautonomous {C}auchy problems},
  year      = {1997},
  issn      = {1085-3375},
  number    = {1–2},
  pages     = {73--95},
  volume    = {2},
  doi       = {10.1155/s1085337597000286},
  publisher = {Hindawi Limited},
}

@Article{Arendt2009,
  author    = {Arendt, Wolfgang and J. Rabier, Patrick},
  journal   = {Communications on Pure \& Applied Analysis},
  title     = {Linear evolution operators on spaces of periodic functions},
  year      = {2009},
  issn      = {1553-5258},
  number    = {1},
  pages     = {5--36},
  volume    = {8},
  doi       = {10.3934/cpaa.2009.8.5},
  publisher = {American Institute of Mathematical Sciences (AIMS)},
}

@Article{Rustichini1989,
  author    = {Rustichini, Aldo},
  journal   = {Journal of Dynamics and Differential Equations},
  title     = {Functional differential equations of mixed type: The linear autonomous case},
  year      = {1989},
  issn      = {1572-9222},
  month     = apr,
  number    = {2},
  pages     = {121--143},
  volume    = {1},
  doi       = {10.1007/bf01047828},
  publisher = {Springer Science and Business Media LLC},
}

@InProceedings{MALLETPARET2005,
  author    = {Mallet-Paret, John and {Verduyn Lunel}, Sjoerd Michiel},
  booktitle = {Equadiff 2003},
  title     = {MIXED-TYPE FUNCTIONAL DIFFERENTIAL EQUATIONS, HOLOMORPHIC FACTORIZATION, AND APPLICATIONS},
  year      = {2005},
  month     = feb,
  publisher = {World Scientific},
  doi       = {10.1142/9789812702067\_0007},
}

@Book{Harm2008,
  author    = {Harm Bart and André C. M. Ran and Israel C. Gohberg and Marinus A. Kaashoek},
  publisher = {Birkhäuser Basel},
  title     = {Factorization of matrix and operator functions: the state space method},
  year      = {2008},
  isbn      = {9783764382681},
  volume    = {Operator Theory: Advances and Applications 178},
  doi       = {10.1007/978-3-7643-8268-1},
  journal   = {Operator Theory: Advances and Applications},
}

@Book{Mortad2022,
  author    = {Mortad, Mohammed Hichem},
  publisher = {Springer International Publishing},
  title     = {Counterexamples in Operator Theory},
  year      = {2022},
  isbn      = {9783030978143},
  doi       = {10.1007/978-3-030-97814-3},
}

@Article{Naito1976,
  author    = {Naito, Toshiki},
  journal   = {Journal of Differential Equations},
  title     = {On autonomous linear functional differential equations with infinite retardations},
  year      = {1976},
  issn      = {0022-0396},
  month     = jul,
  number    = {2},
  pages     = {297--315},
  volume    = {21},
  doi       = {10.1016/0022-0396(76)90124-8},
  publisher = {Elsevier BV},
}

@Article{Dhooge2003,
  author    = {A. Dhooge and W. Govaerts and {\mbox{Yu}}. A. Kuznetsov},
  journal   = {{ACM} Transactions on Mathematical Software},
  title     = {{MATCONT}: A {MATLAB} package for numerical bifurcation analysis of {ODE}s},
  year      = {2003},
  month     = {jun},
  number    = {2},
  pages     = {141--164},
  volume    = {29},
  doi       = {10.1145/779359.779362},
  publisher = {Association for Computing Machinery ({ACM})},
}

@Article{Dhooge2008,
  author    = {Dhooge, A. and Govaerts, W. and Kuznetsov, {\mbox{Yu}}. A. and Meijer, H. G. E. and Sautois, B.},
  journal   = {Mathematical and Computer Modelling of Dynamical Systems},
  title     = {New features of the software {MATCONT} for bifurcation analysis of dynamical systems},
  year      = {2008},
  issn      = {1744-5051},
  month     = apr,
  number    = {2},
  pages     = {147--175},
  volume    = {14},
  doi       = {10.1080/13873950701742754},
  publisher = {Informa UK Limited},
}

@Misc{Lentjes2026,
  author    = {Lentjes, Bram and Dani\"els, Seppe and Follon, Meinder and Kuznetsov, {\mbox{Yu}}. A.},
  title     = {Center Manifolds and Normal Forms for Nonlinearly Periodically Forced {DDE}s},
  year      = {2026},
  copyright = {Creative Commons Attribution 4.0 International},
  doi       = {10.48550/ARXIV.2601.18918},
  publisher = {arXiv},
}

@Misc{Bosschaert2025,
  author    = {Bosschaert, M. M. and Lentjes, B. and Spek, L. and Kuznetsov, {\mbox{Yu}}. A.},
  title     = {Numerical Periodic Normalization at Codim 1 Bifurcations of Limit Cycles in {DDE}s},
  year      = {2025},
  copyright = {Creative Commons Attribution 4.0 International},
  doi       = {10.48550/ARXIV.2505.19786},
  publisher = {arXiv},
}

@Article{Witte2013,
  author    = {V. {De Witte} and F. {Della Rossa} and W. Govaerts and  \mbox {Yu}. A. Kuznetsov},
  journal   = {{SIAM} Journal on Applied Dynamical Systems},
  title     = {Numerical Periodic Normalization for Codim 2 Bifurcations of Limit Cycles: computational Formulas, Numerical Implementation, and Examples},
  year      = {2013},
  month     = {jan},
  number    = {2},
  pages     = {722--788},
  volume    = {12},
  doi       = {10.1137/120874904},
  publisher = {Society for Industrial {\&} Applied Mathematics ({SIAM})},
}

@Book{Kuznetsov2023a,
  author    = {Kuznetsov, {\mbox {Yu}}. A.},
  publisher = {Springer, Cham},
  title     = {Elements of {A}pplied {B}ifurcation {T}heory},
  year      = {2023},
  edition   = {4th},
  isbn      = {978-3-031-22006-7; 978-3-031-22007-4},
  series    = {Applied Mathematical Sciences},
  volume    = {112},
  doi       = {10.1007/978-3-031-22007-4},
  pages     = {xxvi+703},
}

@Article{LentjesCMODE,
  author    = {Lentjes, Bram and Windmolders, Mattias and Kuznetsov, {\mbox{Yu}}. A.},
  journal   = {International Journal of Bifurcation and Chaos},
  title     = {Periodic Center Manifolds for Nonhyperbolic Limit Cycles in {ODE}s},
  year      = {2023},
  issn      = {1793-6551},
  month     = dec,
  number    = {15},
  pages     = {1-29},
  volume    = {33},
  doi       = {10.1142/s0218127423501845},
  publisher = {World Scientific Pub Co Pte Ltd},
}

@Article{Witte2014,
  author    = {V. {De Witte} and W. Govaerts and  \mbox {Yu}. A. Kuznetsov and H. G. E. Meijer},
  journal   = {Physica D: Nonlinear Phenomena},
  title     = {Analysis of bifurcations of limit cycles with {L}yapunov exponents and numerical normal forms},
  year      = {2014},
  month     = {feb},
  pages     = {126--141},
  volume    = {269},
  doi       = {10.1016/j.physd.2013.12.002},
  publisher = {Elsevier {BV}},
}

@Article{Hupkes2006,
  author    = {Hupkes, H. J. and {Verduyn Lunel}, S. M.},
  journal   = {Journal of Dynamics and Differential Equations},
  title     = {Center Manifold Theory for Functional Differential Equations of Mixed Type},
  year      = {2006},
  issn      = {1572-9222},
  month     = dec,
  number    = {2},
  pages     = {497--560},
  volume    = {19},
  doi       = {10.1007/s10884-006-9055-9},
  publisher = {Springer Science and Business Media LLC},
}

@Article{Hupkes2008,
  author    = {H. J. Hupkes and S. M. {Verduyn Lunel}},
  journal   = {Journal of Differential Equations},
  title     = {Center manifolds for periodic functional differential equations of mixed type},
  year      = {2008},
  month     = {sep},
  number    = {6},
  pages     = {1526--1565},
  volume    = {245},
  doi       = {10.1016/j.jde.2008.02.026},
  publisher = {Elsevier {BV}},
}

@Article{Engelborghs2001,
  author    = {K. Engelborghs and T. Luzyanina and K. J. Hout and D. Roose},
  journal   = {{SIAM} Journal on Scientific Computing},
  title     = {Collocation Methods for the Computation of Periodic Solutions of Delay Differential Equations},
  year      = {2001},
  month     = {jan},
  number    = {5},
  pages     = {1593--1609},
  volume    = {22},
  doi       = {10.1137/s1064827599363381},
  publisher = {Society for Industrial {\&} Applied Mathematics ({SIAM})},
}

@Book{Hale2009,
  author    = {Hale, Jack K.},
  publisher = {Dover Publications},
  title     = {Ordinary differential equations},
  year      = {2009},
  isbn      = {9780486472119},
}

@Article{Just2000,
  author    = {Just, Wolfram},
  journal   = {Physica D: Nonlinear Phenomena},
  title     = {On the eigenvalue spectrum for time-delayed {F}loquet problems},
  year      = {2000},
  issn      = {0167-2789},
  month     = aug,
  number    = {1–2},
  pages     = {153--165},
  volume    = {142},
  doi       = {10.1016/s0167-2789(00)00051-8},
  publisher = {Elsevier BV},
}

@InCollection{Lunel2001,
  author    = {Sjoerd M. {Verduyn Lunel}},
  booktitle = {Systems, Approximation, Singular Integral Operators, and Related Topics},
  publisher = {Birkhäuser Basel},
  title     = {Spectral theory for delay equations},
  year      = {2001},
  pages     = {465--507},
  doi       = {10.1007/978-3-0348-8362-7\_19},
}

@Article{Skubachevskii2006,
  author    = {Skubachevskii, Alexander L. and Walther, Hans-Otto},
  journal   = {Journal of Dynamics and Differential Equations},
  title     = {On the {F}loquet Multipliers of Periodic Solutions to Non-linear Functional Differential Equations},
  year      = {2006},
  issn      = {1572-9222},
  month     = apr,
  number    = {2},
  pages     = {257--355},
  volume    = {18},
  doi       = {10.1007/s10884-006-9006-5},
  publisher = {Springer Science and Business Media LLC},
}

@Article{Szalai2006,
  author    = {R{\'{o}}bert Szalai and G{\'{a}}bor St{\'{e}}p{\'{a}}n and S. John Hogan},
  journal   = {{SIAM} Journal on Scientific Computing},
  title     = {Continuation of Bifurcations in Periodic Delay-Differential Equations Using Characteristic Matrices},
  year      = {2006},
  month     = {jan},
  number    = {4},
  pages     = {1301--1317},
  volume    = {28},
  doi       = {10.1137/040618709},
  publisher = {Society for Industrial {\&} Applied Mathematics ({SIAM})},
}

@Article{Sieber2011,
  author    = {Jan Sieber and Robert Szalai},
  journal   = {{SIAM} Journal on Applied Dynamical Systems},
  title     = {Characteristic Matrices for Linear Periodic Delay Differential Equations},
  year      = {2011},
  month     = {jan},
  number    = {1},
  pages     = {129--147},
  volume    = {10},
  doi       = {10.1137/100796455},
  publisher = {Society for Industrial {\&} Applied Mathematics ({SIAM})},
}

@Article{Wolff2022a,
  author    = {Babette A. J. de Wolff},
  journal   = {Dynamical Systems},
  title     = {Characteristic matrix functions for delay differential equations with symmetry},
  year      = {2022},
  month     = {oct},
  number    = {1},
  pages     = {30--51},
  volume    = {38},
  doi       = {10.1080/14689367.2022.2132136},
  publisher = {Informa {UK} Limited},
}

@Article{Pyragas1992,
  author    = {Pyragas, K.},
  journal   = {Physics Letters A},
  title     = {Continuous control of chaos by self-controlling feedback},
  year      = {1992},
  issn      = {0375-9601},
  month     = nov,
  number    = {6},
  pages     = {421--428},
  volume    = {170},
  doi       = {10.1016/0375-9601(92)90745-8},
  publisher = {Elsevier BV},
}

@Misc{Bosschaert2024c,
  author       = {Maikel M. Bosschaert and Bram Lentjes and Len Spek and \mbox {Yu}. A. Kuznetsov},
  howpublished = {GitHub},
  title        = {{P}eriodic{N}ormalization{DDE}s},
  year         = {2024},
  booktitle    = {GitHub},
  publisher    = {GitHub repository},
  url          = {https://github.com/mmbosschaert/PeriodicNormalizationDDEs.git},
}

@Article{Engelborghs2002,
  author    = {K. Engelborghs and T. Luzyanina and D. Roose},
  journal   = {{ACM} Transactions on Mathematical Software},
  title     = {Numerical bifurcation analysis of delay differential equations using {DDE}-{B}if{T}ool},
  year      = {2002},
  month     = {mar},
  number    = {1},
  pages     = {1--21},
  volume    = {28},
  doi       = {10.1145/513001.513002},
  publisher = {Association for Computing Machinery ({ACM})},
}

@Misc{Sieber2014,
  author    = {Sieber, Jan and Engelborghs, Koen and Luzyanina, Tatyana and Samaey, Giovanni and Roose, Dirk},
  title     = {{DDE}-{B}if{T}ool Manual - Bifurcation analysis of delay differential equations},
  year      = {2014},
  copyright = {arXiv.org perpetual, non-exclusive license},
  doi       = {10.48550/ARXIV.1406.7144},
  publisher = {arXiv},
}

@Article{Engelborghs2002a,
  author    = {K. Engelborghs and E. J. Doedel},
  journal   = {Numerische Mathematik},
  title     = {Stability of piecewise polynomial collocation for computing periodic solutions of delay differential equations},
  year      = {2002},
  month     = {jun},
  number    = {4},
  pages     = {627--648},
  volume    = {91},
  doi       = {10.1007/s002110100313},
  publisher = {Springer Science and Business Media {LLC}},
}

@Article{Gohberg1971,
  author    = {Gohberg, I. C. and Sigal, E. I.},
  journal   = {Mathematics of the USSR-Sbornik},
  title     = {AN OPERATOR GENERALIZATION OF THE LOGARITHMIC RESIDUE THEOREM AND THE THEOREM OF {R}OUCHé},
  year      = {1971},
  issn      = {0025-5734},
  month     = apr,
  number    = {4},
  pages     = {603--625},
  volume    = {13},
  doi       = {10.1070/sm1971v013n04abeh003702},
  publisher = {IOP Publishing},
}

@Book{Hino1991,
  author    = {Hino, Yoshiyuki and Murakami, Satoru and Naito, Toshiki},
  publisher = {Springer Berlin Heidelberg},
  title     = {Functional Differential Equations with Infinite Delay},
  year      = {1991},
  isbn      = {9783540473886},
  doi       = {10.1007/bfb0084432},
  issn      = {1617-9692},
  journal   = {Lecture Notes in Mathematics},
}

@Article{Hale1978,
  author  = {Hale, Jack Kenneth and Kato, Junji},
  journal = {Funkcial Ekvac},
  title   = {Phase space for retarded equations with infinite delay},
  year    = {1978},
  pages   = {11-41},
  volume  = {21},
}

@Article{Diekmann2012,
  author    = {Diekmann, Odo and Gyllenberg, Mats},
  journal   = {Journal of Differential Equations},
  title     = {Equations with infinite delay: Blending the abstract and the concrete},
  year      = {2012},
  issn      = {0022-0396},
  month     = jan,
  number    = {2},
  pages     = {819--851},
  volume    = {252},
  doi       = {10.1016/j.jde.2011.09.038},
  publisher = {Elsevier BV},
}

@Article{Murakami1989,
  author  = {Murakami, Satoru and Naito, Toshiki},
  journal = {Funkcialaj Ekvacioj},
  title   = {Fading Memory Spaces and Stability Properties for Functional Differential Equations with Infinite Delay},
  year    = {1989},
  pages   = {91-105},
  volume  = {32},
}

@Article{Hupkes2010,
  author    = {Hupkes, Hermen Jan and Sandstede, Björn},
  journal   = {SIAM Journal on Applied Dynamical Systems},
  title     = {Traveling Pulse Solutions for the Discrete {F}itz{H}ugh–{N}agumo System},
  year      = {2010},
  issn      = {1536-0040},
  month     = jan,
  number    = {3},
  pages     = {827--882},
  volume    = {9},
  doi       = {10.1137/090771740},
  publisher = {Society for Industrial & Applied Mathematics (SIAM)},
}

@Article{Hupkes2012,
  author    = {Hupkes, H. and Sandstede, B.},
  journal   = {Transactions of the American Mathematical Society},
  title     = {Stability of pulse solutions for the discrete {F}itz{H}ugh–{N}agumo system},
  year      = {2012},
  issn      = {1088-6850},
  month     = jul,
  number    = {1},
  pages     = {251--301},
  volume    = {365},
  doi       = {10.1090/s0002-9947-2012-05567-x},
  publisher = {American Mathematical Society (AMS)},
}

\end{sloppypar}
\end{document}